\newtheorem{Thm}{Theorem}[section]
\newtheorem{Lem}[Thm]{Lemma}
\newtheorem{Rem}[Thm]{Remark}
\numberwithin{equation}{section}
\newcommand{\1}{\mathbf{1}}
\newcommand{\R}{\mathbb{R}}
\newcommand{\E}{\mathcal{E}}
\newcommand{\D}{\mathcal{D}}
\renewcommand{\L}{\mathcal{L}}
\newcommand{\g}{\mathbf{g}}
\newcommand{\f}{\mathbf{f}}
\newcommand{\G}{\overline{G}}
\newcommand{\ve}{\varepsilon}
\renewcommand{\th}{\theta}
\newcommand{\wt}{\widetilde}
\newcommand{\ol}{\overline}
\newcommand{\pa}{\partial}
\newcommand{\na}{\nabla}
\newcommand{\al}{\alpha}
\newcommand{\lam}{\lambda}
\newcommand{\wh}{\widehat}
\renewcommand{\S}{\mathbb{S}}
\newcommand{\<}{\langle}
\renewcommand{\>}{\rangle}
\newcommand{\T}{\mathbb{T}}
\newcommand{\vertiii}[1]{{\left\vert\kern-0.25ex\left\vert\kern-0.25ex\left\vert #1 \right\vert\kern-0.25ex\right\vert\kern-0.25ex\right\vert}}
\begin{document}
	
	\title[Rarefaction Waves for the VPB system]{Stability of Rarefaction Waves for the Non-cutoff Vlasov-Poisson-Boltzmann System with Physical Boundary}
	\author{Dingqun Deng}
	\address{Yanqi Lake Beijing Institute of Mathematical Sciences and Applications, Tsinghua University, Beijing, People's Republic of China}
	\curraddr{}
	\email{dingqun.deng@gmail.com}
	\thanks{}
\date{\today}

%
%
%
%

\keywords{Wave patterns \and rarefaction wave \and Vlasov-Poisson-Boltzmann system \and Botlzmann equation \and specular boundary condition}
\subjclass{Primary 35Q20, 35L67; Secondary  76P05, 82C40, 35L65.   }
	
	\begin{abstract}
		In this paper, we are concerned with the Vlasov-Poisson-Boltzmann (VPB) system in three-dimensional spatial space without angular cutoff in a rectangular duct with or without physical boundary conditions. Near a local Maxwellian with macroscopic quantities given by rarefaction wave solution of one-dimensional compressible Euler equations, we establish the time-asymptotic stability of planar rarefaction wave solutions for the Cauchy problem to VPB system with periodic or specular-reflection boundary condition. In particular, we successfully introduce physical boundaries, namely, specular-reflection boundary, to the models describing wave patterns of kinetic equations. Moreover, we treat the non-cutoff collision kernel instead of the cutoff one. As a simplified model, we also consider the stability and large time behavior of the rarefaction wave solution for the Boltzmann equation.
	\end{abstract}

	\maketitle

	\tableofcontents

	\section{Introduction}
	
	\subsection{Models and Spatial Domain}
	In this paper, we consider the planar rarefaction wave for the Vlasov-Poisson-Boltzmann (VPB) system near a local Maxwellian with polynomial perturbations, which describes the motion of plasma particles of two species (e.g. ions and electrons) in domain $\Omega$. The VPB system without angular cutoff in domain $\Omega$ takes the form 
	\begin{equation}\label{1}\left\{
		\begin{aligned}
			&\pa_tF_+ + v\cdot \na_xF_+ - \na_x\phi\cdot\na_vF_+ = Q(F_+,F_+)+Q(F_-,F_+),\\
			&\pa_tF_- + v\cdot \na_xF_- + \na_x\phi\cdot\na_vF_- = Q(F_+,F_-)+Q(F_-,F_-),\\
			&-\Delta_x\phi = \int_{\R^3}(F_+-F_-)\,dv,\\ 
			& F_\pm(0,x,v)=F_{0,\pm}(x,v),\quad E(0,x)=E_0(x).
		\end{aligned}\right. 
	\end{equation}
	Here, the unknown $F_\pm(t,x,v)\ge 0$ is the distribution functions for the particles of ions $(+)$ and electrons $(-)$, respectively, at position $x\in\Omega$ and velocity $v\in\R^3$ and time $t\ge 0$. 
	The self-consistent electrostatic field takes the form $E(t,x)= -\na_x\phi(t,x)$. 
	The non-cutoff Boltzmann collision operator is given by 
\begin{align*}
	Q(G,F) = \int_{\R^3}\int_{\mathbb{S}^{2}} B(v-v_*,\sigma)\big[G(v'_*)F(v')-G(v_*)F(v)\big]\,d\sigma dv_*.
\end{align*}  
where $(v,v_*)$ and $(v',v'_*)$ are velocity pairs given in terms of the $\sigma$-representation by
\begin{align*}
	v'=\frac{v+v_*}{2}+\frac{|v-v_*|}{2}\sigma,\quad v'_*=\frac{v+v_*}{2}-\frac{|v-v_*|}{2}\sigma,\quad \sigma\in\mathbb{S}^2,
\end{align*}
that satisfy 
conservation laws of momentum and energy:
\begin{equation*}
	v+v_*=v'+v'_*,\quad 
	|v|^2+|v_*|^2=|v'|^2+|v'_*|^2.
\end{equation*}
The Boltzmann collision kernel $B(v-v_*,\sigma)$ depends only on $|v-v_*|$ and the deviation angle $\theta$ through $\cos\theta=\frac{v-v_*}{|v-v_*|}\cdot\sigma$. Without loss of generality, we can assume $B(v-v_*,\sigma)$ is supported on $0<\theta\le\pi/2$, since one can reduce the situation with standard {\it symmetrization}: $\overline{B}(v-v_*,\sigma)={B}(v-v_*,\sigma)+{B}(v-v_*,-\sigma)$, (cf. \cite{Glassey1996,Villani1998}). Moreover, we assume $B(v-v_*,\sigma)$ takes the product form as follows:
\begin{equation*}
	B(v-v_*,\sigma) = |v-v_*|^\gamma b(\cos\theta),
\end{equation*}
where $|v-v_*|^\gamma$ is called the kinetic part with $\gamma>-3$, and $b(\cos\theta)$ is called the angular part satisfying that there are $C_b>0$ and $0<s<1$ such that  
\begin{align*}
	\frac{1}{C_b\theta^{1+2s}}\le \sin\theta b(\cos\theta)\le \frac{C_b}{\theta^{1+2s}}, \quad\forall\,\theta\in (0,\frac{\pi}{2}].
\end{align*} 
For the angular non-cutoff Boltzmann operator, it is convenient to call it {\em hard potential} when $\gamma+2s\ge 0$ and {\em soft potential} when $-3<\gamma+2s<0$. We will assume in this work that 
\begin{equation*}
	\begin{aligned}
		&\gamma>\max\{-3,-2s-\frac{3}{2}\}\ \ \text{ for Boltzmann case,}\\
		&\gamma\ge 0, \ \frac{1}{2}\le s<1\ \ \text{  for Vlasov-Poisson-Boltzmann case}. 
	\end{aligned}
\end{equation*}

\smallskip 
We want to study the time-asymptotic stability of planar rarefaction wave to three dimensional Vlasov-Poisson-Boltzmann equation \eqref{1} in domain $\Omega$ with specular boundary condition. Thus, we denote 																	
\begin{align*}
	M_{[\rho_\pm,u_\pm,\theta_\pm]}:=\frac{\rho_\pm}{(2\pi R\theta_\pm)^{\frac{3}{2}}}\exp\Big(-\frac{|v-u_\pm|^2}{2R\theta_\pm}\Big)
\end{align*}
 and assume the far field condition for the initial data on $x_1$-direction:
\begin{equation}\label{F01}\left\{
	\begin{aligned}
		F_{0,+}(x,v)\to M_{[\rho_\pm,u_\pm,\theta_\pm]} \quad \text{ as }x_1\to\pm\infty, \\
		F_{0,-}(x,v)\to M_{[\rho_\pm,u_\pm,\theta_\pm]} \quad \text{ as }x_1\to\pm\infty, 
	\end{aligned}\right.
\end{equation}
where $(\rho_\pm,u_\pm,\theta_\pm)$ are two constant states satisfying $\rho_\pm>0$, $u_\pm = (u_{1\pm},0,0)^t$, $\theta_\pm>0$ and $R>0$ is the gas constant.
To study solutions $F_\pm(t,x,v)$ to \eqref{1} and \eqref{F01}, we define the sum and difference of $F_+$ and $F_-$ as 
\begin{align*}
	F_1 = \frac{F_++F_-}{2},\ \text{ and }\ F_2 = \frac{F_+-F_-}{2}.
\end{align*}
Correspondingly, the VPB system \eqref{1} can be rewritten as the system of $F_1,F_2$:
\begin{equation}\label{F1}\left\{
	\begin{aligned}
		&\pa_tF_1 + v\cdot \na_xF_1 - \na_x\phi\cdot\na_vF_2 = 2Q(F_1,F_1),\\
		&\pa_tF_2 + v\cdot \na_xF_2 - \na_x\phi\cdot\na_vF_1 = 2Q(F_1,F_2),\\
		&-\Delta_x\phi = 2\int_{\R^3}F_2\,dv,\\ 
		& F_{1}(0,x,v)=F_{0,1}(x,v),\ F_{2}(0,x,v)=F_{0,2}(x,v),\ E(0,x)=E_0(x),
	\end{aligned}\right. 
\end{equation}
with initial data $F_{0,1}$ and $F_{0,2}$ satisfying 
\begin{equation}\label{F0}\left\{
	\begin{aligned}
		F_{0,1}(x,v)&\to M_{[\rho_\pm,u_\pm,\theta_\pm]} \quad \text{ as }x_1\to\pm\infty, \\
		F_{0,2}(x,v)&\to 0 \quad \text{ as }x_1\to\pm\infty.  
	\end{aligned}\right.
\end{equation}
The Vlasov-Poisson-Boltzmann system \eqref{F1} describes the motion of plasma particles of two species (e.g. ions and electrons) under the effect of self-consistent electrostatic potential determined by the Poisson equation. In particular, when the effect of the electrical field is neglected and the two species condition reduces to one species, namely $\phi=F_2=0$, the VPB system reduces the Boltzmann equation. Thus, we also consider the Boltzmann equation as a simplified model:
\begin{equation}\label{FB}\left\{
	\begin{aligned}
		&\pa_tF + v\cdot \na_xF = 2Q(F,F),\\
		& F(0,x,v)=F_{0}(x,v),
	\end{aligned}\right. 
\end{equation}
with initial data $F_{0}(x,v)$ satisfying 
\begin{equation}\label{F0B}
		F_{0}(x,v)\to M_{[\rho_\pm,u_\pm,\theta_\pm]} \quad \text{ as }x_1\to\pm\infty.
\end{equation}							
Next we specify the spatial domains considered in this work. 
\subsubsection{Case of Torus}
In this case, we consider a simple model with periodic domain on transverse direction. That is, we let 
\begin{align*}
	\Omega = \R\times\T^2,
\end{align*}
where $\T^2 = [-\pi,\pi]^2$ is the periodic domain.

\subsubsection{Case of Rectangular Duct}
We will also consider the generalized rectangular duct as the following. Let 
\begin{align*}
	\Omega = \cup_{i=1}^N\R\times (a_{i,2},b_{i,2})\times (a_{i,3},b_{i,3}), 
\end{align*}
where $a_{i,j},b_{i,j}\in\R$ satisfy $a_{i,j}<b_{i,j}$ for $1\le i\le N$ and $j=2,3$. Then $\pa\Omega$ can be divided into two parts: $\Gamma_2$ and $\Gamma_3$, where $\Gamma_j$ $(j=2,3)$ is orthogonal to $x_j$-axis. Further, we don't distinguish $\Gamma_j$ and the interior of $\Gamma_j$ since the boundary of $\Gamma_j$ has {\it zero} spherical measure. Then the unit outward normal vector exists almost everywhere on $\Gamma_j$, which is $e_j$ or $-e_j$, with $e_j$ being the standard unit vector. Note that $\Omega$ could be non-convex in general. 
With $n=n(x)$ being the outward normal direction at $x\in\pa\Omega$, we decompose $\pa\Omega$ as 
\begin{align*}
	\gamma_- &= \{(x,v)\in\pa\Omega\times\R^3 : n(x)\cdot v<0\},\quad\text{(the incoming set),}\\
	\gamma_+ &= \{(x,v)\in\pa\Omega\times\R^3 : n(x)\cdot v>0\},\quad\text{(the outgoing set),}\\
	\gamma_0 &= \{(x,v)\in\pa\Omega\times\R^3 : n(x)\cdot v=0\},\quad\text{(the grazing set).}
\end{align*}
We will denote $(\tau_1,\tau_2)$ to be the other two standard unit vectors $\{e_i\}_{i\neq j}$ such that $(n,\tau_1,\tau_2)$ forms a unit normal basis in $\R^3$. 
For the generalized rectangular duct $\Omega$, we consider the {\it specular-reflection} boundary condition:
\begin{align}
	\label{specular}
	F_\pm(t,x,R_xv) = F_\pm(t,x,v)\ \ \text{ on } \gamma_-,\text{ where } R_xv = v - 2n(x)(n(x)\cdot v). 
\end{align}
For the electrostatic field, we assume the Neumann boundary condition
\begin{align}\label{Neumann}
	\pa_{n}\phi=0, \ \text{ on }x\in \pa\Omega.  
\end{align}



\smallskip



We would like to discuss the nonlinear wave patterns related to the system of fluid dynamics, i.e., Euler equations and Navier-Stokes equations (cf. \cite{Huang2010,Huang2010a,Xin2010,Guo2021a}).
There are three basic wave patterns of hyperbolic conservation laws, that is, two nonlinear waves, shock and rarefaction waves, in the genuinely nonlinear characteristic fields and a linear wave, contact discontinuity, in the linearly degenerate field. These dilation invariant solutions, and their linear superposition in the increasing order of characteristic speed are called Riemann solutions.
According to the nature, a rarefaction wave tends to constant states locally in space as the time goes to infinity. Thus, it would expect that there's a corresponding Boltzmann wave that is time-asymptotically in local thermal equilibrium and tends to the fluid rarefaction wave (for Euler system).

\smallskip

In the present paper, we are interested in the nonlinear wave patterns of the Boltzmann equation. In the 1980s, Caflisch and Nicolaenko \cite{Caflisch1982} constructed the shock profile solutions of the Boltzmann equation with the cutoff assumption for hard potentials. Later on, Liu and Yu \cite{Liu2004a} proved the stability of viscous shock wave by using the energy method with micro-macro decomposition with the shock profile given by \cite{Caflisch1982,Liu2013}. In \cite{Yu2010,Yu2004}, Yu established the large time behavior of shock profiles solutions of the Boltzmann equation for the hard sphere model. For the stability of rarefaction waves solution, we refer to \cite{Liu2006a} for Boltzmann equation, \cite{Duan2020a} for Vlasov-Poisson-Landau system and \cite{Duan2021c} for Landau equation with small Knudsen number. \cite{Li2017} discussed the nonlinear stability of spatial one-dimensional shock and rarefaction waves for the cutoff Vlasov-Poisson-Boltzmann system. The stability of contact wave for Boltzmann equation are discussed in \cite{Huang2006} with the zero mass condition and \cite{Huang2008} without the zero mass condition. For the contact wave of Vlasov-Poisson-Boltzmann system, we refer to \cite{Huang2016}. 

\smallskip
The above-mentioned works are all in the framework of Grad's angular cutoff assumption and spatially one-dimensional space. For the three-dimensional spatial space, Duan and Liu \cite{Duan2017b} established the stability of rarefaction wave for VPB system for a disparate mass binary mixture near the quasi-neutral Euler system. 
We also refer planar rarefaction wave to \cite{Wang2019} for the cutoff Boltzmann equation, \cite{Wang2019a} for the cutoff Vlasov-Poisson-Boltzmann system and \cite{Duan2021} for Vlasov-Poisson-Landau system. Moreover, we mention Duan and Liu \cite{Duan2021b}, which gives the global stability of the solution to Boltzmann equation for shear flow.

\smallskip
For the time-asymptotic behavior of rarefaction wave, we refer to \cite{Kawashima2008} for the Navier-Stokes equations in the half space. In \cite{Matsumura1986} and \cite{Matsumura1992}, the authors found the large time decay for smoothing approximated rarefaction waves for the rarefaction wave solution of Euler equations. The time asymptotic decay rate of rarefaction waves in zeroth order for Burgers' equations can be found in \cite{Ito1996}. 

\smallskip
In this work, we are going the consider the stability of rarefaction waves solution to non-cutoff Vlasov-Poisson-Boltzmann system. 
We will consider the rectangular duct with specular-reflection boundary condition as our spatial domain, which has an actual physical boundary. 
It's an interesting and difficult topic when the physical boundary conditions are involved in the theory of wave patterns for fluid dynamics. This should be a new boundary model for kinetic equations such as Boltzmann equation as far as we know, and we believe that similar boundary can be applied to other wave patterns and other kinetic equation. The rectangular duct considered in this work has a good reflection property on the boundary, which allows performing high-order derivatives energy estimates.  
	
	\smallskip
	For the boundary theory, we refer to \cite{Guo2009} as classic work for the Boltzmann equation in bounded domains. Later, \cite{Liu2016} generalize the result to soft potential and \cite{Cao2019} give the global strong solution for the Vlasov-Poisson-Boltzmann system. Recently, \cite{Guo2020,Deng2021e} generalized the result to Landau equation and Vlasov-Poisson-Boltzmann/Landau system for specular-reflection boundary condition. Moreover, we mention \cite{Duan2020} for the existence of the Boltzmann equation in a finite channel with inflow and specular-reflection boundary conditions. 

\subsection{Reformulation of the Problem}

%
%

We first consider the macro-micro decomposition as in \cite{Liu2004,Liu2004a}. 
Define collision invariant functions $\xi_i(v)$ by 
\begin{align*}
	\xi_0(v)=1,\quad \xi_i(v)=v_i\ (i=1,2,3),\quad \xi_4(v)=\frac{1}{2}|v|^2.
\end{align*} 
Then 
one has 
\begin{align*}
	\int_{\R^3}\xi_i(v)Q(F_1,F_1)\,dv=0, \text{ for }0\le i\le 4.
\end{align*}
For any solution $F_1$ to equation \eqref{F1}, there are five macroscopic (fluid) quantities: the mass density $\rho(t,x)$, momentum $\rho(t,x)u(t,x)$ and energy density $e(t,x)+\frac{1}{2}|u(t,x)|^2$:
\begin{equation}\label{rhouth}\left\{
	\begin{aligned}
		&\rho(t,x) \equiv \int_{\R^3}F_1(t,x,v)\,dv,\\
		&\rho(t,x)u_i(t,x) \equiv \int_{\R^3}\xi_i(v)F_1(t,x,v)\,dv,\text{ for }i=1,2,3,\\
		&\rho(t,x)\big(e(t,x)+\frac{1}{2}|u(t,x)|^2\big) \equiv \int_{\R^3}\xi_4(v)F_1(t,x,v)\,dv.
	\end{aligned}\right. 
\end{equation}
We define the local Maxwellian associated with solution $F_1(t,x,v)$ to the equation \eqref{F1} in terms of macroscopic quantities by 
\begin{align*}
	M:= M_{[\rho,u,\theta]}(t,x,v) = \frac{\rho(t,x)}{(2\pi R \theta(t,x))^{\frac{3}{2}}}\exp\Big(-\frac{|v-u(t,x)|^2}{2R\theta(t,x)}\Big).
\end{align*}
Here $\th(t,x)$ is the temperature which is related to the internal energy $e$ by $e=\frac{3}{2}R\th$ and $u(t,x)=(u_1,u_2,u_3)(t,x)$ is the fluid velocity. Define the $L^2$ inner product over $\R^3_v$ by $L^2_v$. Then the macroscopic space $\ker L_M$ ($L_M$ is given in \eqref{LM}) is spanned by orthonormal basis
\begin{equation}\label{mathfrackN}\left\{
	\begin{aligned}
		&\chi_0(v) = \frac{1}{\sqrt{\rho}}M,\\
		&\chi_i(v) = \frac{v_i-u_i}{\sqrt{R\th\rho}}M,\text{ for }i=1,2,3,\\
		&\chi_4(v) = \frac{1}{\sqrt{6\rho}}\big(\frac{|v-u|^2}{R\th}-3\big)M,\\
		&(\chi_i,\frac{\chi_j}{M})_{L^2_v} =\delta_{ij},\text{ for }i,j=0,1,2,3,4.
	\end{aligned}\right. 
\end{equation}
The macroscopic projection $P_0$ from $L^2_v$ to $\ker L_M$ and the microscopic projection $P_1$ from $L^2_v$ to $(\ker L_M)^{\perp}$ are defined by 
\begin{align}\label{P0}
	P_0f = \sum_{i=0}^4(f,\frac{\chi_i}{M})_{L^2_v}\,\chi_i,\quad P_1f=f-P_0f.
\end{align}
A function $f$ is called microscopic if 
\begin{align}
	\label{micro}
	(f,\xi_i)_{L^2_v}=0, \ \forall\,0\le i\le 4. 
\end{align}
Note that $P_1f$ is microscopic. By using the famous collision invariant (cf. \cite{Ukai}), $Q(f,f)$ is also microscopic. 
Besides projection $P_0$ for local Maxwellian $M$, we further denote the projection $P_{\mu}$ and $P_{\mu,2}$ for global Maxwellian 
$\mu$
by letting 
\begin{align*}
	P_{\mu}f = \Big(\int_{\R^3}\mu^{\frac{1}{2}}f(u)\,du+v\cdot\int_{\R^3}u\mu^{\frac{1}{2}}f(u)\,du+(|v|^2-3)\int_{\R^3}\frac{|u|^2-3}{6}\mu^{\frac{1}{2}}f(u)\,du\Big)\mu^{\frac{1}{2}}, 
\end{align*}
and 
\begin{align}
	\label{Pmu2}
	P_{\mu,2}f = \int_{\R^3}\mu^{\frac{1}{2}}f(u)\,du\,\mu^{\frac{1}{2}}. 
\end{align}
With projections $P_0$ and $P_1$, we can decompose the solution $F_1(t,x,v)$ to \eqref{F1} into macroscopic (fluid) part and microscopic (non-fluid) part; cf. \cite{Liu2004}: 
\begin{equation*}
	F_1(t,x,v) = M(t,x,v)+G(t,x,v),\quad P_0F_1=M,\quad P_1F_1=G.
\end{equation*}
Then the equation \eqref{F1}$_1$ becomes 
\begin{equation}\label{2}
	\pa_t(M+G)+v\cdot\na_x(M+G)-\na_x\phi\cdot\na_v F_2= 2Q(M+G,M+G).
\end{equation}
Taking the inner product of \eqref{2} with collision invariant $\xi_i(v)$ over $\R^3_v$, we have the fluid-type system:
\begin{equation}\label{fluid}\left\{
	\begin{aligned}
		&\pa_t\rho + \na_x\cdot(\rho u) = 0,\\
		&\pa_t(\rho u) + \na_x\cdot(\rho u\otimes u) +\na_x p +\na_x\phi\int_{\R^3}F_2\,dv = -\int_{\R^3}v\otimes v\cdot\na_x G\,dv,\\
		&\pa_t\big(\rho(e+\frac{|u|^2}{2})\big) + \na_x\cdot\big(\rho u(e+\frac{|u|^2}{2})+pu\big)+\na_x\phi\cdot\int_{\R^3}vF_2\,dv = -\frac{1}{2}\int_{\R^3}|v|^2 v\cdot\na_x G\,dv,
	\end{aligned}\right. 
\end{equation}
where $p=\frac{2}{3}\rho e=R\rho\th$ is the pressure and $e=\th$ for mono-atomic gas. We assume the gas constant $R=\frac{2}{3}$ for convenience. For the microscopic part, noticing $Q(M,M)=0$, we apply $P_1$ to \eqref{2} to obtain 
\begin{align}\label{G1}
	\pa_tG + P_1v\cdot\na_x(M+G) -P_1\na_x\phi\cdot\na_vF_2 = L_MG + 2Q(G,G),
\end{align}
where \begin{equation}
	\label{LM}L_MG=2Q(M,G)+2Q(G,M)
\end{equation} is the linearized collision operator and the null space $\ker L_M$ of $L_M$ is spanned by $\{\chi_j\}_{j=0}^4$. 
By direct calculation, we have 
\begin{align}\label{P1M}
	P_1v\cdot\na_xM &= P_1v\cdot \Big(\frac{|v-u|^2\na_x\th}{2R\th^2}+\frac{(v-u)\cdot\na_x u}{R\th}\Big)M. 
\end{align}
Since $L_M$ is injective on $(\ker L_M)^{\perp}$, we can consider its inverse on $(\ker L_M)^\perp$ and \eqref{G1} yields 
\begin{align}\label{G}
	G 
	&= L_M^{-1}\Big(P_1v\cdot \Big(\frac{|v-u|^2\na_x\th}{2R\th^2}+\frac{(v-u)\cdot\na_x u}{R\th}\Big)M\Big) + L_M^{-1}\Theta,
\end{align}
where  
\begin{align}\label{Theta2}
	\Theta = \pa_tG +P_1v\cdot\na_xG-P_1\na_x\phi\cdot\na_vF_2- 2Q(G,G).
\end{align}
We will use the microscopic perturbation function $\overline{G}(t,x,v)$ defined by 
\begin{align}\label{olG}
	\overline{G}(t,x,v) = L_M^{-1}P_1v_1\Big\{	\frac{|v-u|^2\bar\th_{x_1}}{2R\th^2}+\frac{(v-u)\cdot \bar{u}_{x_1}}{R\th}\Big\}M.
\end{align}
Then we denote the perturbation around $(\bar\rho,\bar u,\bar\theta)$ and $\ol G$ by 
\begin{equation}\label{g}
	\left\{\begin{aligned}
		&\wt\rho(t,x) = \rho(t,x)-\bar\rho(t,x),\\
		&\wt u(t,x) = u(t,x)-\bar u(t,x),\\
		&\wt\th(t,x) = \th(t,x)-\bar\th(t,x),\\
		&\g(t,x) = \mu^{-\frac{1}{2}}(G(t,x)-\overline G(t,x)),\\
		&\f = \mu^{-\frac{1}{2}}F_2(t,x).
	\end{aligned}\right.
\end{equation}
Corresponding to projection \eqref{Pmu2}, we denote the macroscopic components of $\f$ by 
\begin{align*}
	a=\int_{\R^3}\sqrt{\mu(u)}\f(u)\,du. 
\end{align*}
We also denote linearized operator $\L$, $\L_2$ and nonlinear collision term $\Gamma(f,g)$ respectively by 
\begin{align*}
	\L f &= 2\mu^{-\frac{1}{2}}Q(\mu,\mu^{1/2}f)+ 2\mu^{-\frac{1}{2}}Q(\mu^{1/2}f,\mu),
	\\	\L_2 f &= 2\mu^{-\frac{1}{2}}Q(\mu,\mu^{1/2}f),
\end{align*}
and 
\begin{equation*}
	\Gamma(f,g) = 2\mu^{-\frac{1}{2}}Q(\mu^{1/2}f,\mu^{1/2}g). 
\end{equation*}
For $F_2$, we apply $F_1=M+G$ and $F_2=\sqrt\mu\f$ in \eqref{F1}$_2$ to obtain that 
\begin{align}
\label{F2}
\pa_t\f+ v\cdot\na_x\f-\frac{\na_x\phi\cdot\na_vF_1}{\sqrt\mu} =\L_2\f+\Gamma\big(\frac{M-\mu}{\sqrt\mu},\f\big) + \Gamma\big(\frac{G}{\sqrt\mu},\f\big), 
\end{align}
For the microscopic parts $\g$, we have from \eqref{G1} that 
\begin{multline*}
	\pa_tG + P_1v\cdot\na_xG-P_1\na_x\phi\cdot\na_vF_2=2Q(\g,M)+2Q(M,\g)+2Q(G,G)\\
	-P_1v\cdot\Big\{\frac{|v-u|^2\na_x\wt\th}{2R\th^2}+\frac{(v-u)\cdot \na_x\wt{u}}{R\th}\Big\}M,
\end{multline*}
and hence, 
\begin{multline}
	\label{g2}
	\pa_t \g+v\cdot\na_x\g-\frac{P_1\na_x\phi\cdot\na_vF_2}{\sqrt{\mu}} =\L\g + \Gamma\big(\frac{M-\mu}{\sqrt\mu},\g\big)+ \Gamma\big(\g,\frac{M-\mu}{\sqrt\mu}\big) + \Gamma\big(\frac{G}{\sqrt\mu},\frac{G}{\sqrt\mu}\big) \\+ \frac{P_0(v\sqrt\mu\cdot\na_x\g)}{\sqrt\mu} - \frac{P_1(v\cdot\na_x\ol G)}{\sqrt\mu} - \frac{\pa_t\ol G}{\sqrt\mu} - \frac{1}{\sqrt\mu}P_1v\cdot\Big\{\frac{|v-u|^2\na_x\wt\th}{2R\th^2}+\frac{(v-u)\cdot\na_x\wt u}{R\th}\Big\}M.
\end{multline}


Next we give the definition for nonlinear asymptotic rarefaction wave profile to the equation \eqref{1} as in \cite{Matsumura1986,Liu2006a}. For planar rarefaction waves, given two end states $(\rho_\pm,u_\pm,\th_\pm)$,  we consider the one dimensional compressible Euler system:
\begin{equation}\label{Euler}\left\{
	\begin{aligned}
		&\pa_t\rho + \pa_{x_1}(\rho u) = 0,\\
		&\pa_t(\rho u_1) + \pa_{x_1}(\rho u_1^2+ p) = 0,\\
		&\pa_t(\rho u_i) + \pa_{x_1}(\rho u_1u_i) = 0,\quad \text{for }i=2,3,\\
		&\pa_t\big(\rho(e+\frac{|u|^2}{2})\big) + \pa_{x_1}\big(\rho u(e+\frac{|u|^2}{2})+pu\big) = 0,
	\end{aligned}\right. 
\end{equation}
with the Riemann initial data 
\begin{equation}\label{Euler0}
	(\rho,u,\th)|_{t=0}=	(\rho^r_0,u^r_0,\th^r_0)(x) = \left\{\begin{aligned}
		(\rho_-,u_-,\th_-),\quad x_1<0,\\
		(\rho_+,u_+,\th_+),\quad x_1>0.
	\end{aligned}\right.
\end{equation}
Combining Euler system \eqref{Euler} and \eqref{Euler0} with the state equation: 
\begin{align*}
	p = \frac{2}{3}\rho \th = k\rho^{\frac{5}{3}}\exp(S),
\end{align*}
where $k=\frac{1}{2\pi e}$ is a constant and $S$ is the macroscopic energy given in \eqref{S1}, the Euler system \eqref{Euler} for $(\rho,u_1,S)$ has three distinct eigenvalues 
\begin{align*}
	\lambda_i(\rho,u_1,S) = u_1 + (-1)^{\frac{i+1}{2}}\sqrt{p_\rho(\rho,S)},\ i=1,3,\quad\lambda_2(\rho,u_1,S) = u_1,
\end{align*}
with corresponding right eigenvectors 
\begin{align*}
	r_i(\rho,u_1,S) = \big((-1)^{\frac{i+1}{2}}\rho,\sqrt{p_\rho(\rho,S)},0\big)^t,\ i=1,3,\quad r_2(\rho,u_1,S) = (p_S,0,-p_\rho)^t,
\end{align*}
such that 
\begin{align*}
	r_i(\rho,u_1,S)\cdot\na_{(\rho,u_1,S)}\lambda_i(\rho,u_1,S)\neq 0,\ i=1,3,\ \text{ and }\	r_2(\rho,u_1,S)\cdot\na_{(\rho,u_1,S)}\lambda_2(\rho,u_1,S)= 0.
\end{align*}
Thus the two $i$-Riemann invariants $\mathcal{R}^{(j)}_i$ $(i=1,3,j=1,2)$ can be defined by (cf. \cite{RichardCourant1999,Smoller2012}) 
\begin{align}\label{invariant}
	\mathcal{R}^{(1)}_i = u_1+(-1)^{\frac{i+1}{2}}\int^\rho\frac{\sqrt{p_z(z,S)}}{z}\,dz,\quad \mathcal{R}^{(2)}_i = S,
\end{align}
such that 
\begin{align*}
	r_i(\rho,u_1,S)\cdot \na_{(\rho,u_1,S)}\mathcal{R}^{(j)}_i(\rho,u_1,S) = 0, \ \ \forall\,i=1,3,\,j=1,2.
\end{align*}
Given the right state $(\rho_+,u_{1+},\th_+)$ with $\rho_+>0$ and $\th_+>0$, the $i$-rarefaction wave curve $(i=1,3)$ in the phase space $(\rho,u_1,\th)$ with $\rho>0$ and $\th>0$ can be defined by (cf. \cite{Lax1957})
\begin{align*}
	R_i(\rho_+,u_{1+},\th_+) := \big\{(\rho,u_1,\th):\pa_{x_1}\lambda_{i}>0,\ \mathcal{R}^{(j)}_i(\rho,u_1,S)=\mathcal{R}^{(j)}_i(\rho_+,u_{1+},\th_+),\ j=1,2\big\}.
\end{align*}
Without loss of generality, we only consider the $3$-rarefaction wave to the Euler system \eqref{Euler} with \eqref{Euler0} in this paper, and the stability of $1$-rarefaction wave can be considered similarly. The $3$-rarefaction wave to the Euler system \eqref{Euler} and \eqref{Euler0} can be expressed explicitly by the Riemann solution to the inviscid Burgers equation; cf. \cite{Liu1977,Liu1985}:
\begin{equation}\label{Burgers}
	\left\{\begin{aligned}
		&w_t + ww_{x_1} = 0,\\
		&w(0,x_1) = \left\{\begin{aligned}
			&w_-,\quad x_1<0,\\
			&w_+,\quad x_1>0.
		\end{aligned}\right.
	\end{aligned}\right.
\end{equation}
If $w_-<w_+$, then the Riemann problem \eqref{Burgers} admits a rarefaction wave solution 
$w^r(t,x_1)=w^r(\frac{x_1}{t})$ given by 
\begin{equation*}
	w^r(\frac{x_1}{t}) = \left\{\begin{aligned}
		&w_-,\quad \frac{{x_1}}{t}\le w_-,\\
		&\frac{{x_1}}{t},\quad w_-\le  \frac{{x_1}}{t}\le w_+,\\
		&w_+,\quad  \frac{{x_1}}{t}\ge w_+.
	\end{aligned}\right.
\end{equation*}
The $3$-rarefaction wave $(\rho^r,u^r,\th^r)$ to the Euler system \eqref{Euler} with \eqref{Euler0} can be defined explicitly by 
\begin{equation}\label{rarer}
	\left\{\begin{aligned}
		&w_\pm=\lam_3(\rho_\pm,u_\pm,\th_\pm),\quad w^r(\frac{x_1}{t})=\lam_3(\rho^r,u^r_1,\th^r)(\frac{{x_1}}{t}),\\
		&\mathcal{R}^{(j)}_3(\rho^r,u^r_1,\th^r)(\frac{{x_1}}{t}) = \mathcal{R}^{(j)}_3(\rho_\pm,u_\pm,\th_\pm),\ j=1,2,\ u^r_2=u^r_3=0,
	\end{aligned}\right.
\end{equation}
where $\mathcal{R}^{(j)}_3$ $(j=1,2)$ are the $3$-rarefaction invariants defined in \eqref{invariant}.
This solution is not smooth enough for our analysis and hence, we will construct an approximate smoothing rarefaction wave to the $3$-rarefaction wave given in \eqref{rarer}. Motivated by \cite{Matsumura1986,Matsumura1992}, the approximate rarefaction wave can be constructed by the Burgers equation 
\begin{equation}\label{Burger2}
	\left\{\begin{aligned}
		&\bar{w}_t+\bar{w}\bar{w}_{x_1}=0,\\
		&\bar{w}(0,{x_1}) = \bar{w}_0({x_1}) = \frac{w_++w_-}{2}+\frac{w_+-w_-}{2}k_0\int^{x_1}_0(1+y^2)^{-1}\,dy,
	\end{aligned}\right.
\end{equation}
where 
$k_0$ is a positive constant such that $k_0\int^{\infty}_0(1+y^2)^{-1}\,dy=1$. 
The solution $\bar{w}(t,x)$ of problem \eqref{Burger2} can be given explicitly by 
\begin{align}\label{barw}
	\bar{w}(t,{x_1}) = \bar{w}_0(x_0(t,{x_1})),
\end{align}
with $x_0(t,{x_1})$ given by relation 
\begin{align*}
	x_1= x_0(t,{x_1}) + \bar{w}_0(x_0(t,{x_1}))t.
\end{align*}
Correspondingly, the approximate rarefaction wave $(\bar{\rho},\bar{u},\bar{\th})$ for the $3$-rarefaction wave $(\rho^r,u^r,\th^r)$ to Euler system \eqref{Euler} and \eqref{Euler0} can be defined by 
\begin{equation}\label{rarea}
	\left\{\begin{aligned}
		&w_\pm=\lam_3(\rho_\pm,u_\pm,\th_\pm),\quad \bar w(t+1,x_1)=\lam_3(\bar{\rho},\bar{u}_1,\bar{\th})(t,{x_1}),\\
		&\mathcal{R}^{(j)}_3(\bar{\rho},\bar{u}_1,\bar{\th})(t,{x_1}) = \mathcal{R}^{(j)}_3(\rho_\pm,u_\pm,\th_\pm),\ j=1,2,\ \bar{u}_2=\bar{u}_3=0,
	\end{aligned}\right.
\end{equation}
where $\bar{w}(t,{x_1})$ is the solution to \eqref{Burger2} given by \eqref{barw}. Then the approximate planar $3$-rarefaction wave $(\bar{\rho},\bar{u},\bar{\th})(t,{x_1})$ satisfies Euler system 
\begin{equation}\label{Euler2}\left\{
	\begin{aligned}
		&\bar\rho_t + (\bar\rho \bar u_1)_{x_1} = 0,\\
		&(\bar \rho \bar u_1)_t + (\bar\rho\bar u_1^2+\bar p)_{x_1} = 0,\\
		&(\bar\rho\bar u_i)_t + (\bar\rho\bar u_1\bar u_i)_{x_1} = 0,\quad \text{for }i=2,3,\\
		&\big(\bar\rho\bar\th\big)_t + \big(\bar\rho\bar u_1\bar\th \big)_{x_1}+\bar p\bar u_{1x_1} = 0,
	\end{aligned}\right. 
\end{equation}
where $\bar{p}=R\bar\rho\bar\th$. The properties of the approximate rarefaction wave will be given in Lemma \ref{Lem21}.

Therefore, in this work, we will study the stability of the 1D local Maxwellian 
\begin{align}\label{localM}
	\overline{M} = M_{[\bar\rho,\bar u,\bar\th]}(v) = \frac{\bar\rho(t,{x_1})}{(2\pi R \bar\theta(t,{x_1}))^{\frac{3}{2}}}\exp\Big(-\frac{|v-\bar u(t,{x_1})|^2}{2R\bar\theta(t,{x_1})}\Big). 
\end{align}
 Then we deduce the stability of local Maxwellian $M_{[\rho^r,u^r,\th^r]}$ by using approximation between $M_{[\rho^r,u^r,\th^r]}$ and $M_{[\bar\rho,\bar u,\bar\th]}$.


\subsection{Notations and Main Results}
Now we present some notations throughout the paper. 
Let $\<v\>=\sqrt{1+|v|^2}$ and $I$ be the identity mapping. 
The notation $a\approx b$ (resp. $a\gtrsim b$, $a\lesssim b$) for positive real function $a$, $b$ means there exists $C>0$ not depending on possible free parameters such that $C^{-1}a\le b\le Ca$ (resp. $a\ge C^{-1}b$, $a\le Cb$) on their domains.
If each component of $\al'$ is not greater than that of $\al$'s, we denote by $\al'\le\al$.  $\al'<\al$ means $\al'\le\al$ and $\al'\neq \al$. We will write $C>0$ (large) and $\lambda>0$ (small) to be a generic constant, which may be different in different lines.
Let
$$\pa^\al_\beta =  \pa^{\al_0}_t\pa^{\al_1}_{x_1}\pa^{\al_2}_{x_2}\pa^{\al_3}_{x_3}\pa^{\beta_1}_{v_1}\pa^{\beta_2}_{v_2}\pa^{\beta_3}_{v_3},$$
where $\al=(\al_0,\al_1,\al_2,\al_3)$ and $\beta=(\beta_1,\beta_2,\beta_3)$ are multi-indices. 
For any $k\in\R$, denote the Sobolev space $L^2_k$ and $L^2_xL^2_k$, respectively, as 
\begin{align*}
	|f|^2_{L^2_k} = \int_{\R^3}|\<v\>^kf|^2\,dv,\quad \|f\|_{L^2_xL^2_k}^2 = \int_{\Omega}|\<v\>^kf|^2_{L^2_v}\,dx.
\end{align*}
To describe the Boltzmann collision operator, as in \cite{Gressman2011}, we denote
\begin{equation*}
	|f|^2_{L^2_D}:=|\<v\>^{\frac{\gamma+2s}{2}}f|^2_{L^2_v}+ \int_{\R^3}dv\,\<v\>^{\gamma+2s+1}\int_{\R^3}dv'\,\frac{(f'-f)^2}{d(v,v')^{3+2s}}\1_{d(v,v')\le 1},
\end{equation*}
and 
\begin{equation*}
\|f\|^2_{L^2_xL^2_D} := \int_{\Omega}|f|^2_{L^2_D}dx.
\end{equation*}
The fractional differentiation effects are measured using the anisotropic metric on the {\it lifted} paraboloid
$d(v,v'):=\{|v-v'|^2+\frac{1}{4}(|v|^2-|v'|^2)^2\}^{1/2}$.
Then by \cite[Eq. (2.15)]{Gressman2011}, we have 
\begin{align}\label{123}
	|\<v\>^{\frac{\gamma}{2}}\<D_v\>^sf|^2_{L^2_v}+|\<v\>^{\frac{\gamma+2s}{2}}f|^2_{L^2_v}\lesssim |f|^2_{L^2_D}\lesssim |\<v\>^{\frac{\gamma+2s}{2}}\<D_v\>^sf|^2_{L^2_v}
\end{align}

We consider the weight function $w(\al)$ as 
	\begin{align}\label{w}
	w(\alpha)=
		&\<v\>^{k-|\alpha|+2}, 
\end{align}
respectively.  
We will also write $w(|\al|)=w(\al)$ for brevity. 
In order to prove the stability of the local Maxwellian $M_{[\bar\rho,\bar u,\bar\th]}$ defined in \eqref{localM}, we define the ``instant energy functional" $\E_k(t)$ and ``dissipation rate" $\D_k(t)$ respectively by 
\begin{align}\label{E}
	\E_k(t) &\approx \sum_{|\al|\le 3}\Big(\|\pa^\al(\wt\rho,\wt u,\wt\th)\|_{L^2_x}^2 + \|w(\al)\pa^\al\g\|^2_{L^2_{x,v}}+ \|w(\al)\pa^\al\f\|^2_{L^2_{x,v}} +\|\pa^\al\na_x\phi\|_{L^2_x}^2\Big). 
\end{align}
and 
\begin{align}
	\label{D}
	\D_k(t) &\notag= \sum_{1\le|\al|\le 3}\|\pa^\al(\wt\rho,\wt u,\wt\th)(t)\|^2_{L^2_x} + \sum_{|\al|\le 3}\|w(\al)\pa^\al \g\|^2_{L^2_xL^2_D}\\&\qquad+ \sum_{|\al|\le 3}\|w(\al)\pa^\al\f\|^2_{L^2_{x}L^2_D} +\sum_{|\al|\le 3}\|\pa^\al\na_x\phi\|_{L^2_x}^2,
\end{align}
where $(\wt{\rho},\wt u,\wt\th)$ and $\g$, $\f$ are given by \eqref{g}. 
Here index $k$ indicates the power on weight $w$ in \eqref{w}. 
Letting $F_2=\phi=0$, the system \eqref{F1} and \eqref{F0} reduce to pure Boltzmann equation \eqref{FB} and \eqref{F0B}. In this case, we also use the notations for energy functionals $\E_k(t)$ and $\D_k(t)$ with $F_2=\phi=0$ for brevity.

\smallskip 
In our analysis, we will assume the {\it a priori} assumption that 
\begin{align}\label{priori2}
	\sup_{0\le t\le T}{\E_k}(t)\le \ve, 
\end{align}
for any $T>0$ and small $\ve>0$. 
For simplicity, we choose $(\rho_\pm,u_\pm,\th_\pm)$ in \eqref{F0} to be close enough to the state $(1,0,\frac{3}{2})$ and, as in \cite{Liu2006a}, we will use a global Maxwellian $\mu$. 
Using the approximate rarefaction wave (Lemma \ref{Lem21}), there exists a constant $\eta_0=\eta_0(\ve)>0$ small enough such that $\lim_{\ve\to0}\eta_0(\ve)=0$ and for all $(t,x)$, 
\begin{equation}\label{eta}
	|\rho(t,x)-1|+|u(t,x)|+|\th(t,x)-\frac{3}{2}|<\eta_0,\quad \frac{3}{2}<\th(t,x)<3.
\end{equation}
Also, we denote the wave strength 
\begin{align}
	\label{delta}\delta=|(\rho_+-\rho_-,u_+-u_-,\th_+-\th_-)|. 
\end{align}

With the above preparation, we are ready to state our main results. 
\begin{Thm}\label{Main1}
	Let $k\ge \max\{0,\gamma+2s\}$. Consider Boltzmann equation \eqref{FB} for $\gamma>\{-3,-2s-\frac{3}{2}\}$ and Vlasov-Poisson-Boltzmann system \eqref{F1} for $\gamma\ge 0$, $\frac{1}{2}\le s<1$. 
	Assume that \eqref{eta} holds for some small $\eta_0>0$ and the Riemann solution of Euler system \eqref{Euler} with \eqref{Euler0} consists of a $3$-rarefaction wave $(\rho^r,u^r,\th^r)$ given by \eqref{rarer}. There exists $\delta_0,\ve_0>0$ small enough such that if the wave strength $\delta\le \delta_0$, the initial data $F_0$ satisfies $F_{0,\pm}(x,v)\ge 0$ and 
	\begin{align}
		\label{small2}
		\E_k(0)\le \ve_0,
	\end{align}
	then the Boltzmann equation \eqref{FB} or VPB system \eqref{F1} in $\Omega$ (with boundary condition \eqref{specular} and \eqref{Neumann} for the case of rectangular duct) admits a unique global solution $F_\pm(t,x,v)\ge 0$ satisfying 
	\begin{align}\label{es1}
		\sup_{0\le t\le T}\overline\E_k(t) + \int^T_0\D_k(t)\,dt \lesssim \ve_0 + \delta^\frac{1}{6},
	\end{align}
	for $T>0$, where $\overline{\E_k}$ is given by \eqref{olE} satisfying 
	\begin{align*}
		\E_k(t) -\delta^{\frac{2}{3}} \lesssim \ol\E_k(t) \lesssim \E_k(t) +\delta^{\frac{2}{3}}.
	\end{align*} 
Here $\E_k(t)$ and $\D_k(t)$ are given by \eqref{E} and \eqref{D} respectively. 
	Consequently, 
	\begin{align}\label{es2}
		\sum_{|\al|\le 3}\|w(\al)\mu^{-1/2}\pa^\al(F_\pm(t,x,v)-M_{[\rho^r,u^r,\th^r]})\|_{L^2_xL^2_v}\lesssim \ve_0 + \delta^\frac{1}{6}.
	\end{align}
	Moreover, assuming the initial data satisfies 
	\begin{align*}
		\E_k(0)+\E_{k-\gamma/2}(0) \le A, 
	\end{align*} 
	for some $A>0$, we have the time-asymptotic behavior 
	\begin{align}\label{es3}
		\lim_{t\to\infty}\|\<v\>^{k}\mu^{-1/2}\big(F_\pm-M_{[\rho^r,u^r,\th^r]}(\frac{x_1}{1+t})\big)\|^2_{L^\infty_xL^2_v}=0.
	\end{align}
	
\end{Thm}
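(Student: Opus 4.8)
The plan is to reformulate the whole problem in terms of the perturbation variables $(\wt\rho,\wt u,\wt\th)$, $\g$, $\f$, $\na_x\phi$ introduced in \eqref{g}, and run a unified energy/dissipation estimate on the modified functional $\ol\E_k$ (to be defined as $\E_k$ corrected by the cross terms that absorb the rarefaction-wave source terms and, for the VPB case, the electrostatic interaction). The global existence of $F_\pm\ge 0$ follows by the standard continuation argument once one closes the a priori estimate
\begin{equation*}
	\sup_{0\le t\le T}\ol\E_k(t) + \int_0^T \D_k(t)\,dt \lesssim \ve_0 + \delta^{1/6}
\end{equation*}
under the smallness assumption \eqref{priori2}. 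Positivity of $F_\pm$ is preserved because the reformulation keeps $F_\pm=M+G\pm\sqrt\mu\f$ a mild solution of \eqref{1} with the specular boundary condition \eqref{specular}, for which the nonnegativity is standard once regularity is in hand.

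The core of the argument is the weighted energy estimate for derivatives $\pa^\al$ with $|\al|\le 3$. First I would perform the macroscopic (fluid) estimate: subtracting the approximate Euler system \eqref{Euler2} from the fluid-type system \eqref{fluid}, testing against $\pa^\al(\wt\rho,\wt u,\wt\th)$, and using the dissipative structure of the linearized Euler operator together with the key fact (Lemma \ref{Lem21}) that the rarefaction profile satisfies $\pa_{x_1}\bar u_1>0$, which produces a \emph{good} sign for the quadratic form rather than a bad one --- this is what makes rarefaction waves stable without needing an anti-derivative trick. The rarefaction source terms (spatial derivatives of $\bar\rho,\bar u,\bar\th$) are controlled by the decay estimates $\|\pa_{x_1}^{k}(\bar\rho,\bar u,\bar\th)\|_{L^p}\lesssim \delta(1+t)^{-1+1/p}$ type bounds from Lemma \ref{Lem21}, which after time integration give the $\delta^{1/6}$-order contribution. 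Second, the microscopic estimate: apply $w(\al)^2\pa^\al$ to the $\g$-equation \eqref{g2} and the $\f$-equation \eqref{F2}, integrate in $x,v$, and use the coercivity $-\langle \L\g,\g\rangle \gtrsim |\g|^2_{L^2_D}$ (with the non-cutoff gain quantified via \eqref{123}) modulo a macroscopic piece absorbed by the fluid estimate. The specular boundary condition is handled because on the flat faces of the rectangular duct the reflection $R_x v$ is an involution flipping a single velocity component, so all boundary terms arising from integration by parts in $x_j$ vanish (after pairing incoming and outgoing contributions) --- this is exactly the "good reflection property" advertised in the introduction, and it is compatible with taking tangential and normal derivatives up to order $3$.

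For the VPB case one additionally needs the electrostatic estimates: from $-\Delta_x\phi = 2\int F_2\,dv = 2\int\sqrt\mu\,\f\,dv$ and the Neumann condition \eqref{Neumann}, elliptic regularity gives $\|\pa^\al\na_x\phi\|_{L^2_x}\lesssim \|\pa^\al a\|_{L^2_x}\lesssim \|\pa^\al\f\|_{L^2_{x,v}}$, and the $\na_x\phi\cdot\na_v F_1$ coupling terms are treated by the usual cancellation when the electric-field energy $\|\na_x\phi\|^2$ is included in $\E_k$; the restriction $\gamma\ge 0$, $\tfrac12\le s<1$ is used precisely here to ensure enough velocity decay/regularity to close the $\f$ estimates against the $\na_x\phi\cdot\na_v$ term without losing weight. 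Combining the macroscopic and microscopic estimates, choosing the constants in $\ol\E_k$ so that the cross terms are dominated, and absorbing all the rarefaction-error terms of size $\lesssim \delta^{2/3} + \delta^{1/6}\int(1+t)^{-1}\cdots$, yields \eqref{es1}; then \eqref{es2} follows from $\|M_{[\rho^r,u^r,\th^r]} - M_{[\bar\rho,\bar u,\bar\th]}\|\lesssim \delta^{?}$ (the approximation between the exact and smoothed rarefaction waves) plus the definition of $\g,\f$. Finally \eqref{es3} is obtained by the Sobolev embedding $L^\infty_x\hookrightarrow H^2_x$ applied to a higher-moment version of the energy, together with a standard interpolation argument showing $\int_0^\infty \|\cdot\|^2\,dt<\infty$ and $\tfrac{d}{dt}\|\cdot\|^2\in L^1$, which forces the $L^\infty_x L^2_v$ norm to vanish as $t\to\infty$; here the extra assumption $\E_k(0)+\E_{k-\gamma/2}(0)\le A$ supplies the additional weight needed to run the argument uniformly when $\gamma<0$ in the Boltzmann case.

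\textbf{Main obstacle.} I expect the hardest part to be the interplay between the physical (specular) boundary and the high-order \emph{normal} derivative estimates for the microscopic component $\g$: unlike tangential derivatives, normal derivatives $\pa_{x_j}$ (with $j$ the direction orthogonal to a face) do not commute nicely with the specular reflection, so one must either use the equation to trade normal derivatives for tangential ones plus time derivatives (a "mixed" counting of $|\al|\le 3$), or carefully exploit the fact that the rectangular geometry makes $R_x$ a fixed reflection on each face. Keeping the weight $w(\al)=\langle v\rangle^{k-|\al|+2}$ consistent through these manipulations --- while simultaneously controlling the non-cutoff commutators $[\pa^\al, \L]$ and the velocity-growth from the $\na_x\phi\cdot\na_v$ terms --- is the delicate bookkeeping that the proof must get right.
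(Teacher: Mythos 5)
Your proposal follows essentially the same route as the paper: a priori weighted energy estimates on $(\wt\rho,\wt u,\wt\th)$, $\g$, $\f$, $\na_x\phi$ up to $|\al|\le 3$, the entropy–entropy-flux argument exploiting $\bar u_{1x_1}>0$, the coercivity of $\L$ for the microscopic parts, the elliptic $\phi$-estimates via the Neumann condition, the high-order specular compatibility (using the equation to trade normal derivatives), and the closing ODE followed by Sobolev embedding and $L^1$-in-time integrability of $\|\na_x(\cdot)\|^2$ for the time-asymptotic limit. This matches the structure of Sections 4–6.

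One place your sketch mislabels the mechanism, though. You attribute the dissipation on the fluid gradients to ``the dissipative structure of the linearized Euler operator.'' The Euler operator is hyperbolic and has no dissipation; in this setting the good dissipative term $\|\na_x(\wt u,\wt\th)\|^2_{L^2_x}$ in the macroscopic estimate does \emph{not} come from the Euler part of \eqref{fluid} at all. It comes from the micro–macro feedback: writing $G = L_M^{-1}(P_1 v\cdot\na_x M) + L_M^{-1}\Theta$ as in \eqref{G}, the leading contribution $L_M^{-1}(P_1 v\cdot\na_x M)$ expands in Burnett functions $A_j, B_{ij}$ whose pairings with $\wh A_j,\wh B_{ij}$ produce the viscosity $\mu(\th)$ and heat conductivity $\kappa(\th)$ of \eqref{mu}--\eqref{kappa}; substituting this back into the right-hand side of the fluid equations yields the Navier--Stokes-type dissipation. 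This is what Lemma~\ref{Lem31} exploits in \eqref{342a}, and it is crucial: without it the rarefaction argument would produce the $\bar u_{1x_1}$-weighted decay but no gradient dissipation on $(\wt u,\wt\th)$ to absorb the various error terms, and the estimate would not close. So when you fill in the macroscopic step you should not test the Euler equations naively against $\pa^\al(\wt\rho,\wt u,\wt\th)$; you must retain the microscopic right-hand sides $\int v\otimes v\cdot\na_x G\,dv$ (and the analogous energy term) as the source of dissipation, and quantify them using \eqref{LM1a} and the Burnett-function identities of Lemma~\ref{LemBur}. Otherwise the sketch is the right plan, including the identification of the high-order normal-derivative specular compatibility as the delicate point, which the paper resolves exactly in the way you anticipate (Lemma~\ref{Lemspecular} and Lemma~\ref{LemMacro}).
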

\begin{Rem}
In our analysis, we will mainly focus on the case of rectangular duct. The proof for $\R\times\T^2$ is similar. 
In fact, the only different calculations are on the boundary values when applying integration by parts. For the case of torus, one can always apply the integration-by-parts technique since $\T^2$ is periodic.  
\end{Rem}

For the case of rectangular duct with specular-reflection boundary conditions, we present the high-order compatible specular boundary argument as in \cite{Deng2021} to investigate the boundary conditions of $(\rho,u,\th)$, $\g$ and $\f$ on $\pa\Omega$. With the specular-reflection effect, the boundary terms arising from operator $v\cdot\na_x$ vanish perfectly by using reflect operator $R_xv = v - 2n(x)(n(x)\cdot v)$. Moreover, we can obtain boundary values
\begin{align*}
	\pa_{x_ix_ix_i}\big(\rho(x),\th(x),u_j(x)\big)=\pa_{x_i}\big(\rho(x),\th(x),u_j(x)\big)=
	\pa_{x_ix_i}u_i(x) = u_i(x) =0,
\end{align*}
on $\Gamma_i$ $(i=2,3)$. 
 Then we can perform the integration-by-parts technique with respect to spatial variables. 

\smallskip In contrast to the cutoff Boltzmann equation, the non-cutoff case needs to calculate the estimate on $L^{-1}_M$ by using coercive estimate on $L_M$, which possesses a regularizing effect of order $s$ with respect to the velocity variable. With this, we can calculate the estimate on $L^{-1}_MQ(G,G)$ and $L^{-1}_M\na_vF_2$, which can be controlled by using dissipation norm $|\cdot|_{L^2_D}$.

\smallskip

The rest of the paper is organized as follows. In Section \ref{Sec2}, we list the basic properties for rarefaction wave of Euler equations and the properties of Boltzmann collision operators. In Section \ref{Sec3}, we derive the high-order specular boundary condition and estimate on $L^{-1}_M$ and collision operator. In Section \ref{Sec4}, we calculate the energy estimates for the macroscopic components. In Section \ref{Sec5}, we derive the energy estimates of the microscopic components. In Section \ref{Sec6}, we conclude Theorem \ref{Main1} by using the {\it a priori} arguments.

\section{Preliminaries}\label{Sec2}

In this section, 
we give some properties for the rarefaction wave for Euler equations given in \eqref{rarer} and \eqref{rarea}, and then list some basic properties on Burnett functions and $\ol G$. Also, we derive the high-order compatible specular boundary condition for the case of rectangular duct. 


\smallskip

We first give some properties of the smoothing approximated $3$-rarefaction wave. 
\begin{Lem}[\cite{Ito1996}]\label{Lem20}
If $w_-<w_+$, the smooth rarefaction wave $\bar w$ has the following properties. 
	
	\noindent (i) $w_-<w(t,x_1)<w_+$, $w_{x_1}(t,x_1)>0$, for $(t,x_1)\in\R_+\times\R$. 
	
	\noindent (ii) For any $1\le q\le \infty$, there exists $C_q>0$ such that 
	\begin{equation*}
		\|w(t,x_1)-w^r(\frac{x_1}{1+t})\|_{L^q_{x_1}}\le C_q(w_+-w_-)^{\frac{1}{q}},
	\end{equation*}
\begin{equation*}
	\|\pa_{x_1}w(t,x_1)\|_{L^q_{x_1}}\le C_q\min\big\{w_+-w_-,(w_+-w_-)^{\frac{1}{q}}(1+t)^{-1+\frac{1}{q}}\big\},
\end{equation*}
\begin{equation*}
	\|\pa^j_{x_1}w(t,x_1)\|_{L^q_{x_1}}\le C_q\min\big\{w_+-w_-,(1+t)^{-1}\big\},\ j\ge 2.
\end{equation*}

\noindent (iii) Time asymptotically, for $1<q\le \infty$, 
\begin{align*}
	\big\|w(t,x_1)-w^r\big(\frac{x_1}{t+1}\big)\big\|_{L^q_{x_1}}\le C_q(1+t)^{-\frac{1}{2}+\frac{1}{2q}}. 
\end{align*}
\end{Lem}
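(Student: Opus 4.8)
The statement to be proven is Lemma~\ref{Lem20}, which collects $L^q$ estimates for the smooth approximate rarefaction wave $\bar w$ solving the Burgers problem \eqref{Burger2}. Since this lemma is attributed to \cite{Ito1996} (and is essentially the same construction as in \cite{Matsumura1986,Matsumura1992}), the plan is to reproduce the standard argument based on the explicit solution formula \eqref{barw}. First I would verify part (i): differentiating the implicit relation $x_1 = x_0 + \bar w_0(x_0)t$ in $x_1$ gives $1 = (\partial_{x_1}x_0)(1 + \bar w_0'(x_0)t)$, so $\partial_{x_1}x_0 = (1+\bar w_0'(x_0)t)^{-1} > 0$ because $\bar w_0' > 0$ by construction (the integrand $(1+y^2)^{-1}$ is positive and $w_+ > w_-$). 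Then $\bar w_{x_1}(t,x_1) = \bar w_0'(x_0)\,\partial_{x_1}x_0 = \bar w_0'(x_0)/(1+\bar w_0'(x_0)t) > 0$, and the bounds $w_- < \bar w < w_+$ follow since $\bar w = \bar w_0(x_0)$ and $\bar w_0$ takes values strictly between $w_-$ and $w_+$ (the tails of the integral $k_0\int_0^{x_1}(1+y^2)^{-1}dy$ approach $\pm 1$ but never reach them).

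For part (ii), the key is to get pointwise-in-$t$ bounds on $\partial_{x_1}^j\bar w$ and then integrate in $x_1$. From the expression $\bar w_{x_1} = \bar w_0'(x_0)/(1+t\,\bar w_0'(x_0))$ one reads off two competing bounds: $\bar w_{x_1} \le \bar w_0'(x_0) \lesssim (w_+-w_-)\langle x_0\rangle^{-2}$ (using $\bar w_0'(y) = \tfrac{w_+-w_-}{2}k_0(1+y^2)^{-1}$), and $\bar w_{x_1} \le 1/t$ (and more precisely $\le (1+t)^{-1}$ after using that the problem is posed with initial time shifted to $t=1$ in \eqref{rarea}, or by an elementary adjustment). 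To pass from pointwise bounds in $x_0$ to $L^q$ bounds in $x_1$ I would change variables $x_1 \mapsto x_0$ with Jacobian $dx_1 = (1 + t\,\bar w_0'(x_0))\,dx_0$; then $\|\bar w_{x_1}\|_{L^q_{x_1}}^q = \int |\bar w_0'(x_0)|^q (1+t\bar w_0'(x_0))^{1-q}\,dx_0$, and splitting the region of integration according to whether $t\,\bar w_0'(x_0) \lessgtr 1$ (equivalently $\langle x_0\rangle^2 \lessgtr t$) yields the stated $\min\{w_+-w_-,\,(w_+-w_-)^{1/q}(1+t)^{-1+1/q}\}$ after elementary estimates on $\int \langle x_0\rangle^{-2q}dx_0$ over $\langle x_0\rangle^2 \gtrsim t$ and the analogous piece. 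The higher derivatives $j\ge 2$ are handled the same way: differentiating the implicit relation repeatedly expresses $\partial_{x_1}^j\bar w$ as a rational combination of $\bar w_0^{(m)}(x_0)$ and powers of $(1+t\bar w_0'(x_0))^{-1}$, each term decaying, and the same change of variables plus region-splitting gives the bound $\min\{w_+-w_-,(1+t)^{-1}\}$; the loss of the extra $(1+t)^{1/q}$ compared to $j=1$ comes from the fact that $\bar w_0^{(j)}$ is integrable against the Jacobian without needing the $q$-th power improvement. The first inequality in (ii), $\|\bar w(t,\cdot) - w^r(\tfrac{x_1}{1+t})\|_{L^q_{x_1}} \le C_q(w_+-w_-)^{1/q}$, follows by comparing the two explicit profiles: they agree outside the fan, and inside the fan $|\bar w - w^r| \le w_+ - w_-$ on a set whose measure is $O((1+t)(w_+-w_-))$ for $q=1$ (general $q$ by interpolation with the trivial $L^\infty$ bound $w_+-w_-$).

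For part (iii), the sharper time-decay $\|\bar w - w^r(\tfrac{x_1}{1+t})\|_{L^q_{x_1}} \le C_q(1+t)^{-1/2+1/(2q)}$ for $1 < q \le \infty$, the point is that $\bar w$ is a smoothing of $w^r$ concentrated in a boundary layer of the fan: near the two corners $x_1/(1+t) = w_\pm$ the discrepancy $|\bar w - w^r|$ is $O(1)$ only on an $x_1$-interval of width $O(\sqrt{1+t})$ (this is where the $(1+y^2)^{-1}$ tail of $\bar w_0'$, after the time rescaling, produces a layer of width $\sqrt{t}$ rather than $O(1)$), so the $L^q$ norm over that layer is $O((\sqrt{1+t})^{1/q}) = O((1+t)^{1/(2q)})$, while away from the corners the exponential/algebraic closeness is negligible. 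Normalizing by the total — or rather, noting the $L^\infty$ size there actually decays like $(1+t)^{-1/2}$ on the relevant portion after a more careful look at the implicit formula — gives the stated exponent. Concretely I would estimate $|\bar w(t,x_1) - w^r(\tfrac{x_1}{1+t})|$ directly from \eqref{barw}: at the right corner, $x_0 \approx $ large positive, $\bar w_0(x_0) \approx w_+ - \tfrac{c}{x_0}$, and the constraint $x_1 = x_0 + t\bar w_0(x_0)$ forces $x_0 \sim \sqrt{t}$ when $x_1/(1+t)$ is within $O(1/\sqrt t)$ of $w_+$, from which both the layer width $\sqrt t$ and the amplitude $1/\sqrt t$ in that regime are read off. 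The main obstacle in writing this out in full is the bookkeeping in part (iii): getting the precise exponent $-1/2 + 1/(2q)$ requires tracking how the $\langle x_0\rangle^{-2}$ tail of $\bar w_0'$ interacts with the $(1+t\bar w_0')^{-1}$ factor near the edges of the fan, and being careful about the endpoint $q = \infty$ (giving the $(1+t)^{-1/2}$ rate) versus the non-endpoint range. Since the lemma is quoted from \cite{Ito1996}, in the paper itself it would suffice to cite that reference; if a self-contained proof is desired, the computation above is the route, with all constants tracked through the single change of variables $x_1 \leftrightarrow x_0$.
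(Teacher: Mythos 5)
The paper does not actually prove Lemma~\ref{Lem20}; it simply cites \cite[Lemma~2.1]{Matsumura1986} for parts (i)--(ii) and \cite[Lemma~2, p.~318]{Ito1996} for part (iii). Your reconstruction of the argument behind those references --- implicit differentiation of $x_1 = x_0 + t\bar w_0(x_0)$ to compute $\partial_{x_1}^j\bar w$ as rational expressions in $\bar w_0^{(m)}(x_0)$ and $(1+t\bar w_0'(x_0))^{-1}$, the change of variables $x_1 \leftrightarrow x_0$ with Jacobian $1 + t\bar w_0'(x_0)$, the region splitting according to $t\bar w_0'(x_0) \lessgtr 1$, and the corner-layer analysis of width $O(\sqrt{t})$ for (iii) --- is the standard computation carried out in those references, and your treatment of part (i), the derivative bounds in (ii), and part (iii) is correct.

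Your argument for the first inequality of (ii), however, does not work as written. First, $\bar w$ and $w^r(\tfrac{x_1}{1+t})$ do \emph{not} agree outside the fan: $\bar w(t,x_1) = \bar w_0(x_0)$ lies strictly between $w_-$ and $w_+$ for every $x_1$, and with the kernel $(1+y^2)^{-1}$ in \eqref{Burger2} the tail is only algebraic, $|\bar w_0(x_0) - w_\pm| \sim (w_+-w_-)/|x_0|$, so the contribution from outside the fan cannot simply be dropped. Second, and more seriously, inside the fan you invoke only the crude bound $|\bar w - w^r| \le w_+ - w_-$ over a set of measure $O\big((1+t)(w_+-w_-)\big)$, which gives $\|\bar w - w^r\|_{L^1} \lesssim (1+t)(w_+-w_-)^2$; this grows in $t$, and interpolating it against the $L^\infty$ bound produces $(1+t)^{1/q}(w_+-w_-)^{1+1/q}$ rather than the claimed $t$-uniform bound $C_q(w_+-w_-)^{1/q}$. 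The ingredient you are missing is the exact cancellation inside the fan: there $w^r(\tfrac{x_1}{1+t}) = \tfrac{x_1}{1+t}$, so that
\[
\bar w(t,x_1) - w^r\Big(\tfrac{x_1}{1+t}\Big) \;=\; \bar w_0(x_0) - \frac{x_0 + t\,\bar w_0(x_0)}{1+t} \;=\; \frac{\bar w_0(x_0) - x_0}{1+t},
\]
and this factor $(1+t)^{-1}$ precisely offsets the $(1+t)$-growth of the Jacobian $1 + t\bar w_0'(x_0)$ under the $x_0$-change of variables. You should therefore treat this estimate with the same machinery you already set up for $\partial_{x_1}^j\bar w$ and for part (iii), rather than by a blunt measure-times-sup bound; without exploiting the above identity the argument does not close.
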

The proof of Lemma \ref{Lem20} (i) and (ii) can be found in \cite[Lemma 2.1]{Matsumura1986}. The time asymptotic decay can be found in \cite[Lemma 2, pp. 318]{Ito1996}. Using Lemma \ref{Lem20} and applying the method in \cite[Section 3, pp. 345-346]{Liu2006a}, we can obtain the decay properties for $(\bar\rho,\bar u,\bar\th)$ as the followings. 
\begin{Lem}\label{Lem21}
	The approximate $3$-rarefaction wave $(\bar\rho,\bar u,\bar\th)(t,x)$ defined in \eqref{rarea} satisfies the following properties: 
	
	\medskip
	\noindent	
	(i) $\bar u_{1x_1}>0$ for $x\in\R$ and $t\ge 0$.\\
	(ii) The following estimates hold for all $t>0$ and $q\in[1,\infty]$:
	\begin{equation*}
		\|(\bar\rho,\bar u,\bar\th)(t,\cdot)-(\rho^r,u^r,\th^r)(\frac{\cdot}{1+t})\|_{L^q_{x_1}}\le C\delta,
	\end{equation*}
	\begin{equation*}
		\|\pa_{x_1}(\bar\rho,\bar u,\bar\th)(t,\cdot)\|_{L^q_{x_1}}\le C_q\delta^{1/q}(1+t)^{-1+1/q},
	\end{equation*}
	\begin{equation*}
		\|\pa^j_{x_1}(\bar\rho,\bar u,\bar\th)(t,\cdot)\|_{L^q_{x_1}}\le C_q\min\{\delta,\,(1+t)^{-1}\},\quad j\ge 2.
	\end{equation*}
	(iii) Time asymptotically, for $q\in(1,\infty]$, 
	\begin{align*}
		\big\|(\bar\rho,\bar u,\bar\th)(t,\cdot)-(\rho^r,u^r,\th^r)\big(\frac{\cdot}{1+t}\big)\big\|_{L^q_{x_1}}\le C_q(1+t)^{-\frac{1}{2}+\frac{1}{2q}}. 
	\end{align*}
\end{Lem}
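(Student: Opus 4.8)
The plan is to reduce every assertion to the scalar estimates for the smooth Burgers profile $\bar w$ already recorded in Lemma~\ref{Lem20}. The key structural fact is that along the $3$-rarefaction curve both Riemann invariants $\mathcal{R}^{(1)}_3$ and $\mathcal{R}^{(2)}_3=S$ are frozen at their right-state values, so the state $(\rho,u_1,\th)$ is confined to a one-parameter curve in phase space; since the end states $(\rho_\pm,u_\pm,\th_\pm)$ lie within a fixed small distance of $(1,0,\frac{3}{2})$ and $\delta\le\delta_0$, the entire curve between them stays in a fixed neighborhood of $(1,0,\frac{3}{2})$ on which $r_3\cdot\na_{(\rho,u_1,S)}\lambda_3\neq0$ (genuine nonlinearity), so it projects diffeomorphically onto the $\lambda_3$-axis. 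I would therefore first construct $C^\infty$ maps $\mathcal{P},\mathcal{U},\mathcal{T}$ whose derivatives are bounded uniformly on that neighborhood, and write, using the defining relations \eqref{rarer} and \eqref{rarea},
\begin{equation*}
	(\bar\rho,\bar u_1,\bar\th)(t,x_1)=(\mathcal{P},\mathcal{U},\mathcal{T})\big(\bar w(t+1,x_1)\big),\qquad (\rho^r,u^r_1,\th^r)\big(\tfrac{x_1}{1+t}\big)=(\mathcal{P},\mathcal{U},\mathcal{T})\big(w^r(\tfrac{x_1}{1+t})\big);
\end{equation*}
along the way I would record $|w_+-w_-|\approx\delta$, which holds because $\lambda_3$ restricted to this curve is bi-Lipschitz (again by genuine nonlinearity, cf.~\eqref{delta}). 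The transverse components $\bar u_2,\bar u_3$ and $u^r_2,u^r_3$ vanish identically and need no attention.

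Part (i) would then follow by differentiating $\mathcal{R}^{(1)}_3=\text{const}$ and $S=\text{const}$ along the curve: with $p=\frac{2}{3}\rho\th=k\rho^{5/3}e^{S}$ this gives $\mathrm{d}u_1=-\frac{\sqrt{p_\rho}}{\rho}\,\mathrm{d}\rho$ and hence $\mathcal{U}'=\mathrm{d}u_1/\mathrm{d}\lambda_3>0$ (a strictly positive constant up to a small perturbation); since $\bar w(t+1,\cdot)$ is strictly increasing in $x_1$ by Lemma~\ref{Lem20}(i), one obtains $\bar u_{1x_1}=\mathcal{U}'(\bar w)\,\bar w_{x_1}>0$.

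For (ii) and (iii) the plan is just to push Lemma~\ref{Lem20} through the chain rule. Writing $g=(\mathcal{P},\mathcal{U},\mathcal{T})$, the zeroth-order bound comes from $|g(a)-g(b)|\lesssim|a-b|$ and $\|\bar w(t+1,\cdot)-w^r(\frac{\cdot}{1+t})\|_{L^q_{x_1}}\lesssim\delta$ for every $q\in[1,\infty]$, which is the $q=1$ and $q=\infty$ endpoints of Lemma~\ref{Lem20}(ii) together with interpolation (the innocuous discrepancy between the similarity variables $\frac{x_1}{2+t}$ and $\frac{x_1}{1+t}$ coming from the time shift $t\mapsto t+1$ is absorbed by the pointwise bound $|w^r(\frac{x_1}{1+t})-w^r(\frac{x_1}{2+t})|\le w_+-w_-\lesssim\delta$ and a one-line $L^1$ estimate on the rarefaction fan). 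For the derivatives, $\pa_{x_1}(\bar\rho,\bar u_1,\bar\th)=g'(\bar w)\,\pa_{x_1}\bar w(t+1,\cdot)$, and for $j\ge2$ the higher-order chain rule expresses $\pa^j_{x_1}(\bar\rho,\bar u_1,\bar\th)$ as a finite sum of terms $g^{(\ell)}(\bar w)\prod_m\pa^{j_m}_{x_1}\bar w(t+1,\cdot)$ with $\sum_m j_m=j$, $j_m\ge1$; I would estimate each such term by placing one factor in $L^q_{x_1}$ and the remaining ones in $L^\infty_{x_1}$, invoking $\|\pa_{x_1}\bar w(t+1,\cdot)\|_{L^q_{x_1}}\lesssim\delta^{1/q}(1+t)^{-1+1/q}$, $\|\pa^j_{x_1}\bar w(t+1,\cdot)\|_{L^q_{x_1}}\lesssim\min\{\delta,(1+t)^{-1}\}$ and $\|\pa_{x_1}\bar w(t+1,\cdot)\|_{L^\infty_{x_1}}\lesssim\min\{\delta,(1+t)^{-1}\}$ from Lemma~\ref{Lem20}(ii) (using $2+t\approx1+t$), which reproduces exactly the stated bounds in (ii); part (iii) is the identical computation fed instead by Lemma~\ref{Lem20}(iii).

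The substantive step — and essentially the only place where anything beyond the chain rule and Lemma~\ref{Lem20} is needed — is the parametrization in the first paragraph: one must verify that the $3$-rarefaction curve is a regular curve whose $\lambda_3$-parametrization, and its inverse, have derivatives bounded uniformly for $\delta\le\delta_0$, and that $|w_+-w_-|\approx\delta$. I expect this to be the only delicate point; it rests on $r_3\cdot\na\lambda_3\neq0$ at the base state together with the explicit pressure law $p=k\rho^{5/3}e^{S}$, and once it is in place everything else is routine chain-rule bookkeeping, following the scheme of \cite[Section~3]{Liu2006a}.
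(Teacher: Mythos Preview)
Your proposal is correct and follows precisely the approach the paper indicates: the paper does not give a detailed proof but simply says ``Using Lemma~\ref{Lem20} and applying the method in \cite[Section~3, pp.~345--346]{Liu2006a}'', and that method is exactly the $\lambda_3$-parametrization of the $3$-rarefaction curve combined with the chain rule that you describe. Your handling of the time shift $t\mapsto t+1$ and the identification $|w_+-w_-|\approx\delta$ via genuine nonlinearity are the right auxiliary observations.
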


We will frequently apply the following equivalence of norms and Gagliardo–Nirenberg interpolation inequality. 
\begin{Lem}[\cite{Deng2020a}, Corollary 2.5]
	For any $k,l\in\R$ and any $f\in H^l_k$,  we have 
	\begin{align}\label{vdv}
		|\<v\>^k\<D_v\>^lf|_{L^2_v}\approx |\<D_v\>^l\<v\>^kf|_{L^2_v}.
	\end{align}
\end{Lem}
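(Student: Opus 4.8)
The statement to prove is the norm equivalence $|\<v\>^k\<D_v\>^lf|_{L^2_v}\approx |\<D_v\>^l\<v\>^kf|_{L^2_v}$ for all $k,l\in\R$ and $f\in H^l_k$. The plan is to reduce the claim to a commutator estimate: writing $A=\<v\>^k$ (multiplication operator) and $B=\<D_v\>^l$ (Fourier multiplier), one has $BAf = ABf + [B,A]f$, so it suffices to show $|[B,A]f|_{L^2_v}\lesssim |ABf|_{L^2_v}$ and, symmetrically, $|AB f|_{L^2_v}\lesssim |BAf|_{L^2_v}$; each of these, together with the triangle inequality, yields one direction of the equivalence. Since the roles of $A$ and $B$ are symmetric under the Fourier transform (conjugating by $\F$ swaps multiplication by $\<v\>^k$ with the multiplier $\<D_v\>^k$ and vice versa, up to the sign of the exponents), it is enough to bound the commutator in one configuration.

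The main tool is pseudodifferential calculus: $\<v\>^k$ and $\<\xi\>^l$ are symbols in the Hörmander class $S^k_{1,0}$ and $S^l_{1,0}$ respectively (in fact in the nicer class $S(\<v\>^k,g)$, $S(\<\xi\>^l,g)$ for the standard metric $g=|dv|^2+|d\xi|^2/\<\xi\>^2$ or simply $|dv|^2+|d\xi|^2$), so their composition and commutator obey the symbolic calculus: $[\<D_v\>^l,\<v\>^k]$ is a pseudodifferential operator whose symbol has principal part $\tfrac{1}{i}\{\<\xi\>^l,\<v\>^k\} = \tfrac{1}{i}\,l\<\xi\>^{l-2}\xi\cdot k\<v\>^{k-2}v$, which lies in $S^{l-1}_{1,0}\cdot S^{k-1}_{1,0}$, i.e. it gains one degree in each variable relative to the product symbol $\<\xi\>^l\<v\>^k$. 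Consequently $[\<D_v\>^l,\<v\>^k] = T_{l-1}\,\<D_v\>^l\<v\>^k$ up to lower-order terms, where $T_{l-1}$ has order $\le -1$ and hence is bounded on $L^2$; more carefully, one writes $[\<D_v\>^l,\<v\>^k] = R\circ \<v\>^{k}\<D_v\>^l$ with $R$ of order $0$ bounded on $L^2_v$, giving $|[\<D_v\>^l,\<v\>^k]f|_{L^2_v}\lesssim |\<v\>^k\<D_v\>^lf|_{L^2_v}$. For non-integer or negative $k,l$ one simply uses that $\<v\>^k$ and $\<\xi\>^l$ are smooth, elliptic symbols of the stated orders, so no integrality is needed.

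An equivalent and perhaps cleaner route, matching the style of \cite{Deng2020a} to which the lemma is attributed, avoids invoking heavy $\Psi$DO machinery: first prove the case $l\in\{0\}\cup 2\mathbb{N}$ by direct computation — for $l=2$, $\<D_v\>^2 = I-\Delta_v$, and $[\Delta_v,\<v\>^k]f = (\Delta_v\<v\>^k)f + 2\na_v\<v\>^k\cdot\na_v f$, whose two terms are controlled by $|\<v\>^{k-2}f|_{L^2_v}$ and $|\<v\>^{k-1}\na_v f|_{L^2_v}$ respectively, both dominated by $|\<v\>^k\<D_v\>f|_{L^2_v}\le |\<v\>^k\<D_v\>^2 f|_{L^2_v}$ after a further interpolation/integration-by-parts; iterating handles all even $l$, and duality handles negative even $l$. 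Then obtain general real $l$ by complex interpolation of the family of operators $\<v\>^k\<D_v\>^l(\<D_v\>^l\<v\>^k)^{-1}$ (or by the Stein–Weiss interpolation theorem applied between two even integer endpoints), using that the map $l\mapsto \<D_v\>^l$ is an analytic family of operators; the resulting bound is uniform on compact $l$-intervals, which suffices. The exponent $k$ is carried along as an inert parameter throughout.

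The step I expect to be the main obstacle is making the commutator bound \emph{uniform in the weight exponent $k$} and valid for \emph{negative} $k$ and $l$ simultaneously: when $k<0$ the operator $\<v\>^k$ is bounded but the estimate must still relocate the full weight $\<v\>^k$ to the left of $\<D_v\>^l$ without losing powers, and when $l<0$ the multiplier $\<D_v\>^l$ is smoothing so one must be careful that $(\<D_v\>^l\<v\>^k)^{-1}=\<v\>^{-k}\<D_v\>^{-l}$ is densely defined and the interpolation endpoints are genuinely comparable. This is handled by working on the Schwartz class first (where all operators act unambiguously and all quantities are finite), proving the two-sided bound there, and then extending by density using that $H^l_k=\{f:|\<v\>^k\<D_v\>^lf|_{L^2_v}<\infty\}$ is, by definition/standard theory, the closure of $\mathcal{S}(\R^3)$ in that norm; the symbolic calculus estimates are scale-invariant enough that the constants depend only on finitely many seminorms of $\<v\>^k$ and $\<\xi\>^l$, hence continuously on $(k,l)$, which gives the uniformity on compacta that the applications in the paper require.
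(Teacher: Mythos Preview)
The paper does not prove this lemma at all: it is stated with attribution to \cite{Deng2020a}, Corollary 2.5, and no argument is given. So there is no ``paper's own proof'' to compare against; the result is simply imported.

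Your proposal is a correct outline of how such a result is actually established. The pseudodifferential-calculus route is the cleanest: $\<v\>^k\in S(\<v\>^k,g)$ and $\<\xi\>^l\in S(\<\xi\>^l,g)$ for the flat metric $g=|dv|^2+|d\xi|^2$, so the commutator $[\<D_v\>^l,\<v\>^k]$ has symbol in $S(\<v\>^{k-1}\<\xi\>^{l-1},g)$ by the standard composition formula, and is therefore bounded by either of the two norms in question. Your alternative route (even-integer $l$ by direct computation, then complex interpolation) also works and is closer in spirit to what one finds in \cite{Deng2020a}. One small caution: in your ``cleaner route'' the bound $|\<v\>^{k-1}\na_v f|_{L^2_v}\lesssim |\<v\>^k\<D_v\>^2 f|_{L^2_v}$ that you invoke is itself a weighted commutator estimate of the same type you are proving, so to avoid circularity you should obtain it directly by integration by parts (which is straightforward for integer exponents) rather than by appealing to the result being proved.
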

\begin{Lem}\label{Lemembedd}
	For any $f\in H^2_x(\Omega)$, we have 
	\begin{align}
		\label{Gag}
		\|f\|_{L^\infty_{x}}&\lesssim\|f\|_{H^2_{x}}^{17/18}\|\na_xf\|_{L^2_{x}}^{1/18}, \\
		\label{Gag1}
		\|f\|_{L^\infty_{x}}&\lesssim\|f\|_{H^2_x}^{1/2}\|\na_xf\|_{H^2_x}^{1/2}. 
	\end{align}
	For $f\in H^1_x(\Omega)$, we have 
	\begin{equation}
		\label{GagL3}
		\|f\|_{L^3_x} \lesssim\|f\|_{L^2_{x}}^{1/2}\|f\|_{H^1_{x}}^{1/2},
	\end{equation}
	\begin{equation}
		\label{GagL3a} 
		\|f\|_{L^3_x} \lesssim\|\pa_{x_1}f\|_{L^2_{x}}^{1/6}\|f\|_{L^2_{x_1}H^1_{x_2,x_3}}^{5/6},  
	\end{equation}
	and 
	\begin{align}
		\label{GagL6}
		\|f\|_{L^6_x} \lesssim\|f\|_{H^1_x}.
	\end{align}
\end{Lem}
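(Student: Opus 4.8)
The plan is to establish all five bounds as Gagliardo--Nirenberg-type interpolation inequalities by exploiting that in each case $\Omega$ is, up to a finite disjoint union and (for the rectangular duct) an extension by reflection across the flat faces $\Gamma_2,\Gamma_3$, a slab $\R\times D$ with $D$ a bounded rectangle, or $\R\times\T^2$. Because the faces are flat, the reflection extension commutes with $\pa_{x_1}$ and is bounded $H^k\to H^k$, so it will suffice to combine the classical one-, two- and three-dimensional interpolation inequalities with the anisotropic splitting of the slab into the unbounded direction $x_1$ and the transverse variable $x'=(x_2,x_3)$; the recurring one-dimensional tool is $\|g\|_{L^\infty_{x_1}}^2\lesssim\|g\|_{L^2_{x_1}}\|\pa_{x_1}g\|_{L^2_{x_1}}$, which comes from the fundamental theorem of calculus. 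With these reductions, \eqref{GagL6} is simply the three-dimensional embedding $H^1\hookrightarrow L^6$, and \eqref{GagL3} follows by interpolating $L^3$ between $L^2$ and $L^6$: $\|f\|_{L^3_x}\le\|f\|_{L^2_x}^{1/2}\|f\|_{L^6_x}^{1/2}\lesssim\|f\|_{L^2_x}^{1/2}\|f\|_{H^1_x}^{1/2}$.

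For \eqref{GagL3a} I would slice in $x_1$. Writing $\|f\|_{L^3_x}^3=\int_\R\|f(x_1,\cdot)\|_{L^3(D)}^3\,dx_1$ and using the two-dimensional bound $\|g\|_{L^3(D)}^3\lesssim\|g\|_{L^2(D)}^2\|g\|_{H^1(D)}$, one dominates the remaining integral, via Cauchy--Schwarz in $x_1$, by $\big(\sup_{x_1}\|f(x_1,\cdot)\|_{L^2(D)}\big)\,\|f\|_{L^2_x}\,\|f\|_{L^2_{x_1}H^1_{x'}}$, and then estimates $\sup_{x_1}\|f(x_1,\cdot)\|_{L^2(D)}^2\lesssim\|f\|_{L^2_x}\|\pa_{x_1}f\|_{L^2_x}$ by the one-dimensional tool applied to $x_1\mapsto\|f(x_1,\cdot)\|_{L^2(D)}$. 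Since $\|f\|_{L^2_x}\le\|f\|_{L^2_{x_1}H^1_{x'}}$, collecting powers gives $\|f\|_{L^3_x}^3\lesssim\|\pa_{x_1}f\|_{L^2_x}^{1/2}\|f\|_{L^2_{x_1}H^1_{x'}}^{5/2}$, which is \eqref{GagL3a}.

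For the $L^\infty$-bounds I would again slice: $\|f\|_{L^\infty_x}\lesssim\|f\|_{L^\infty_{x_1}H^s_{x'}}$ by the two-dimensional Sobolev embedding $H^s(D)\hookrightarrow L^\infty(D)$ ($s>1$), and then $\|f\|_{L^\infty_{x_1}H^s_{x'}}^2\lesssim\|f\|_{L^2_{x_1}H^s_{x'}}\|\pa_{x_1}f\|_{L^2_{x_1}H^s_{x'}}$. For \eqref{Gag1}, choosing $s=2$ gives at once $\|f\|_{L^\infty_x}^2\lesssim\|f\|_{L^2_{x_1}H^2_{x'}}\|\pa_{x_1}f\|_{L^2_{x_1}H^2_{x'}}\le\|f\|_{H^2_x}\|\na_xf\|_{H^2_x}$. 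For \eqref{Gag} one must instead take $s=1+\ve$ with $\ve>0$ small and then interpolate: $\|f\|_{L^2_{x_1}H^{1+\ve}_{x'}}$ between $\|f\|_{L^2_x}$ and $\|f\|_{H^2_x}$, and $\|\pa_{x_1}f\|_{L^2_{x_1}H^{1+\ve}_{x'}}$ against $\|\pa_{x_1}f\|_{L^2_x}$ (which is $\le\|\na_xf\|_{L^2_x}$) and a higher-order transverse norm, finally dominating every remaining $\|f\|_{L^2_x}$ by $\|f\|_{H^2_x}$; optimizing over $\ve$ and the interpolation weights produces exactly $\|f\|_{L^\infty_x}\lesssim\|f\|_{H^2_x}^{17/18}\|\na_xf\|_{L^2_x}^{1/18}$. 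The exponents $\tfrac{17}{18},\tfrac1{18}$ are those dictated by this balancing; they also coincide with what the homogeneous three-dimensional inequality $\|f\|_{L^\infty}\lesssim\|\na^2f\|_{L^2}^{13/18}\|\na f\|_{L^2}^{1/18}\|f\|_{L^2}^{2/9}$ yields after bounding $\|\na^2f\|_{L^2}$ and $\|f\|_{L^2}$ by $\|f\|_{H^2_x}$.

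The step I expect to be the main obstacle is \eqref{Gag}, for two linked reasons. First, it must produce a \emph{genuine} gradient factor $\|\na_xf\|_{L^2_x}$ rather than merely $\|f\|_{H^1_x}$, which is essential for closing the energy estimates since the zeroth-order norm of the macroscopic perturbation does not enter the dissipation $\D_k$; but on the unbounded slab there is no Poincar\'e inequality, so $\|f\|_{L^2_x}\not\lesssim\|\na_xf\|_{L^2_x}$ and a naive extension to $\R^3$ destroys the homogeneous structure --- it is the anisotropic slicing that recovers the gradient. Second, in that slicing the factor carrying the $x_1$-derivative has to sit in a transverse Sobolev space of order slightly above $1$, which nominally demands more than two derivatives of $f$; the content of the small exponent $\tfrac1{18}$ is precisely that only a vanishing fraction of this excess regularity is truly needed and can be absorbed into the available $H^2_x$-norm. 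Once this balance is correctly set up, the remaining estimates are routine.
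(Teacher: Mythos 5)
Your arguments for \eqref{GagL3}, \eqref{GagL3a}, \eqref{GagL6} and \eqref{Gag1} are sound---indeed your route for \eqref{GagL3a} (two-dimensional Gagliardo--Nirenberg in $x'$ followed by the one-dimensional $L^\infty$--$L^2$ trick in $x_1$) is a valid variant of the paper's, which instead does 1D Gagliardo--Nirenberg in $x_1$ and then H\"older/Sobolev in $x'$; both correctly isolate the genuine $\|\partial_{x_1}f\|_{L^2_x}^{1/6}$ factor. But the slicing argument you sketch for \eqref{Gag} cannot close. With $s=1+\varepsilon$ you arrive at $\|f\|_{L^\infty_x}^2\lesssim\|f\|_{L^2_{x_1}H^{1+\varepsilon}_{x'}}\|\partial_{x_1}f\|_{L^2_{x_1}H^{1+\varepsilon}_{x'}}$, and the second factor is the obstruction: any interpolation $\|\partial_{x_1}f\|_{L^2_{x_1}H^{1+\varepsilon}_{x'}}\lesssim\|\partial_{x_1}f\|_{L^2_x}^{1-\theta}\|\partial_{x_1}f\|_{L^2_{x_1}H^N_{x'}}^{\theta}$ requires $\theta N=1+\varepsilon$, while controlling the right-hand factor by $\|f\|_{H^2_x}$ forces $N\le 1$, hence $\theta>1$. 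These constraints are incompatible for every $\varepsilon>0$: the product on the right genuinely consumes $2+\varepsilon$ derivatives of $f$, and no balancing of weights removes the excess.

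Your alternative via the three-factor inequality $\|f\|_{L^\infty(\R^3)}\lesssim\|\nabla^2f\|_{L^2}^{13/18}\|\nabla f\|_{L^2}^{1/18}\|f\|_{L^2}^{2/9}$ is itself correct on the whole space (it is the weight-$\tfrac13$ convex combination of $\|f\|_{L^\infty}\lesssim\|\nabla^2f\|^{2/3}\|\nabla f\|^{1/6}\|f\|^{1/6}$ and $\|f\|_{L^\infty}\lesssim\|\nabla^2f\|^{3/4}\|f\|^{1/4}$), but it does not transfer through the transverse extension $E$: that operator is bounded $H^1(\Omega)\to H^1(\R^3)$ but not $\dot H^1(\Omega)\to\dot H^1(\R^3)$, so $\|\nabla(Ef)\|_{L^2(\R^3)}$ is controlled only by $\|f\|_{H^1(\Omega)}$, and the genuine gradient factor you need is lost precisely where it matters. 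The paper's route avoids both pitfalls by composing the two-factor bound $\|Ef\|_{L^\infty(\R^3)}\lesssim\|Ef\|_{L^3(\R^3)}^{1/3}\|\nabla Ef\|_{L^6(\R^3)}^{2/3}$ with \eqref{GagL3a} and \eqref{GagL6}: the second factor is bounded crudely by $\|f\|_{H^2(\Omega)}^{2/3}$, while the genuine gradient factor $\|\partial_{x_1}f\|_{L^2}^{1/18}\le\|\nabla_xf\|_{L^2}^{1/18}$ comes entirely from substituting \eqref{GagL3a} into $\|f\|_{L^3}^{1/3}$; since you have already proved \eqref{GagL3a}, this composition closes \eqref{Gag} with no fractional transverse regularity and no reliance on the extension preserving $\dot H^1$.
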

\begin{proof}
	Denote $\Omega_1$ to be $\T^2$ or $\cup_{i=1}^N (a_{i,2},b_{i,2})\times (a_{i,3},b_{i,3})$. 
	Firstly, we use extension theorem \cite[Thoerem VI.5, pp. 181]{Stein1971} to extend function $f$ in domain $\Omega_1$ with Lipschitz boundary to a function $Ef$ in $\R^2$ such that $Ef=f$ in $\Omega$ and 
	\begin{align*}
		\|Ef\|_{H^k(\R^2)}\lesssim \|f\|_{H^k(\Omega_1)},
	\end{align*}
	for any $k\ge 0$. 
By Gagliardo–Nirenberg interpolation inequality on $\R^3$, 
(cf. \cite[Theorem 12.83]{Leoni2017} and \cite[Page 125]{Nirenberg1959}), we obtain 
	\begin{align*}
		\|Ef\|_{L^3_x(\R^3)}&\lesssim \|Ef\|_{L^2_x(\R^3)}^{1/2}\|\na_xEf\|_{L^2_x(\R^3)}^{1/2}\\
		&\lesssim\|f\|_{L^2_x(\Omega)}^{1/2}\|f\|_{H^1_x(\Omega)}^{1/2},  
	\end{align*}
	which gives \eqref{GagL3}. 
	For \eqref{GagL3a}, we apply Gagliardo–Nirenberg interpolation inequality on $x_1\in\R$ to obtain 
	\begin{align*}
		\|f\|_{L^3_{x_1}}\lesssim \|\na_{x_1}f\|_{L^2_{x_1}}^{1/6}\|f\|_{L^2_{x_1}}^{5/6}. 
	\end{align*}
	For $(x_2,x_3)$ direction, 
	we can apply Sobolev inequality to obtain  
	\begin{align*}
		\|f\|_{L^3_x}&\lesssim \big\|\|\na_{x_1}f\|_{L^2_{x_1}}^{1/6}\|f\|_{L^2_{x_1}}^{5/6}\big\|_{L^3_{x_2,x_3}}\\
		&\lesssim \|\na_{x_1}f\|_{L^2_{x}}^{1/6}\big\|\|f\|_{L^2_{x_1}}\big\|_{L^{\frac{10}{3}}_{x_2,x_3}}^{5/6}\\
		&\lesssim \|\na_{x_1}f\|_{L^2_{x}}^{1/6}\|f\|_{L^2_{x_1}H^1_{x_2,x_3}}^{5/6}. 
	\end{align*}
	For \eqref{GagL6}, we apply Gagliardo–Nirenberg inequality on $\R^3$ to obtain 
	\begin{align*}
		\|f\|_{L^6_x(\Omega)}\lesssim \|Ef\|_{L^6_x(\R^3)}\lesssim \|\na_xEf\|_{L^2_x(\R^3)}\lesssim \|f\|_{H^1_x(\Omega)}. 
	\end{align*}
For \eqref{Gag}, by Gagliardo–Nirenberg interpolation inequality on $\R^3$ and \eqref{GagL3a}, \eqref{GagL6}, we have 
\begin{align*}
	\|Ef\|_{L^\infty_{x}(\R^3_x)}\lesssim \|Ef\|_{L^3_{x}(\R^3_x)}^{1/3}\|\na_xEf\|_{L^6_{x}(\R^3_x)}^{2/3}
	\lesssim \|f\|_{L^3_{x}(\Omega)}^{1/3}\|f\|_{H^2_{x}(\Omega)}^{2/3}
	\lesssim \|\na_{x_1}f\|_{L^2_{x}(\Omega)}^{1/18}\|f\|_{H^2_{x}(\Omega)}^{17/18}.  
\end{align*}
For \eqref{Gag1}, we apply Gagliardo–Nirenberg interpolation inequality on $\R$ to obtain 
\begin{align*}
	\|f\|_{L^\infty_{x_1}}\lesssim \|\pa_{x_1}f\|_{L^2_{x_1}}^{1/2}\|f\|_{L^2_{x_1}}^{1/2}. 
\end{align*}
By Fourier transform $\wh f=\int_{\R^2}f(x')e^{-2\pi ix'\cdot\xi}\,dx'$ on $\R^2$, we have 
\begin{multline}\label{25}
	\|Ef\|_{L^\infty_{x_2,x_3}(\R^2)}\lesssim\|\wh{Ef}\|_{L^1_{x_2,x_3}(\R^2)}
	\lesssim \|\<\xi\>^2\wh{Ef}\|_{L^1_{x_2,x_3}(\R^2)}\|\<\xi\>^{-2}\|_{L^1_{x_2,x_3}(\R^2)}\\
	\lesssim \|Ef\|_{H^2_{x_2,x_3}(\R^2)}\lesssim \|f\|_{H^2_{x_2,x_3}(\Omega_1)}.
\end{multline}
Combining the above estimates, we have 
\begin{align*}
	\|f\|_{L^\infty_x}\lesssim \big\|\|f\|_{L^\infty_{x_1}}\big\|_{L^\infty_{x_2,x_3}}\lesssim \|\pa_{x_1}f\|^{1/2}_{L^2_{x_1}H^2_{x_2,x_3}}\|f\|^{1/2}_{L^2_{x_1}H^2_{x_2,x_3}}, 
\end{align*}
which implies \eqref{Gag1}. 
	Then we conclude the Lemma. 
\end{proof}

Next we reformulate $\ol{G}$. Recall the Burnett functions (cf. \cite{Guo2006,Ukai,Duan2020a}) defined by 
\begin{align*}
	\wh{A}_j(v) = \frac{|v|^2-5}{2}v_j\ \text{ and }\ \wh{B}_{ij}(v)=v_iv_j-\frac{1}{3}\delta_{ij}|v|^2\ \text{ for }i,j=1,2,3,
\end{align*}
where $\delta_{ij}$ is the Kronecker delta. 
Noticing that $\wh{A}_j(\frac{v-u}{\sqrt{R\th}})M$ and $\wh{B}_{ij}(\frac{v-u}{\sqrt{R\th}})M$ are orthogonal to the null space $\ker L_M$ of the linearized operator $L_M$, we can define $A_j(\frac{v-u}{\sqrt{R\th}})$ and $B_{ij}(\frac{v-u}{\sqrt{R\th}})$ such that $P_0A_j(\frac{v-u}{\sqrt{R\th}})=P_0B_{ij}(\frac{v-u}{\sqrt{R\th}})=0$ and 
\begin{align}\label{21}
	A_j\big(\frac{v-u}{\sqrt{R\th}}\big) = L^{-1}_M\big(\wh{A}_j(\frac{v-u}{\sqrt{R\th}})M\big)\ \text{ and }\
	B_{ij}\big(\frac{v-u}{\sqrt{R\th}}\big) = L^{-1}_M\big(\wh{B}_{ij}(\frac{v-u}{\sqrt{R\th}})M\big).
\end{align}	
By definition \eqref{olG}, \eqref{21} and direct computation on $P_1=I-P_0$, we have (cf. \cite[Lemma 6.7]{Duan2020a}) 
\begin{align}\label{olG3}
	P_1v_1M\Big\{\frac{|v-u|^2\bar\th_{x_1}}{2R\th^2}+\frac{(v-u)\cdot \bar{u}_{x_1}}{R\th}\Big\} = \frac{\sqrt{R}}{\sqrt{\th}}\bar{\th}_{x_1}\wh A_1\big(\frac{v-u}{\sqrt{R\th}}\big)M + \bar u_{1x_1}\wh B_{11}\big(\frac{v-u}{\sqrt{R\th}}\big)M. 
\end{align}	
and 
\begin{align}\label{olG2}
	\ol G = \frac{\sqrt{R}}{\sqrt{\th}}\bar{\th}_{x_1} A_1\big(\frac{v-u}{\sqrt{R\th}}\big) + \bar u_{1x_1}B_{11}\big(\frac{v-u}{\sqrt{R\th}}\big). 
\end{align}
We list some elementary properties of the Burnett functions; cf. \cite{Bardos1991}, \cite[Proposition 4.3 and Lemma 4.4]{Bardos1993}, \cite[Page 647]{Guo2006} and \cite[Proposition 3.2.1]{Ukai}. One may refer to \cite[Proposition 4.3 and Lemma 4.4]{Bardos1993} for the proof. The last statement of Lemma \ref{LemBur} comes from \cite[Eq. (A.2), pp. 3636]{Duan2015}. 
\begin{Lem}\label{LemBur}
	The Burnett functions have the following properties:
	\begin{itemize}
		\item $-(\wh A_i,A_i)_{L^2_v}$ is positive and independent of $i$;
		\item $(\wh A_i,A_j)_{L^2_v}=0$ for any $i\neq j$;
		\item $(\wh A_i,B_{jk})_{L^2_v}=0$ for $i,j,k$;
		\item $(\wh B_{ij},B_{kl})_{L^2_v}=(\wh B_{kl},B_{ij})_{L^2_v}=(\wh B_{ji},B_{kl})_{L^2_v}$, which is independent of $i,j$ for fixed $k,l$;
		\item $-(\wh B_{ij},B_{ij})_{L^2_v}$ is positive and independent of $i,j$ for $i\neq j$;
		\item $-(\wh B_{ii},B_{jj})_{L^2_v}$ is positive and independent of $i$ for $i\neq j$;
		\item $-(\wh B_{ii},B_{ii})_{L^2_v}$ is positive and independent of $i$;
		\item $(\wh B_{ij},B_{kl})_{L^2_v}=0$ unless either $(i,j)=(k,l)$ or $(l,k)$, or $i=j$ and $k=l$;
		\item $(\wh B_{ii},B_{ii})_{L^2_v}=3(\wh B_{ij},B_{ij})_{L^2_v}$ for $i\neq j$. 
	\end{itemize}
\end{Lem}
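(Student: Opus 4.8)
\medskip
\noindent\textbf{Proof sketch.} The plan is to deduce all nine assertions from the algebraic structure of the linearised operator --- self-adjointness, coercivity, and rotational covariance --- rather than from direct moment computations. Since every quantity in the statement is unaffected by the linear change of velocity $v\mapsto(v-u)/\sqrt{R\th}$, which conjugates $L_M$ to the operator linearised at a centred reference Maxwellian $\mu$ up to an overall positive factor coming from the kinetic weight $|v-v_*|^\ga$ (this rescaling changes none of the orthogonalities, sign conditions, or ratios), I would first reduce to $M=\mu$. The input is the standard non-cutoff coercivity structure: $L_\mu:=2Q(\mu,\cdot)+2Q(\cdot,\mu)$ is self-adjoint and nonpositive for $\langle f,g\rangle:=\int_{\R^3}fg\,\mu^{-1}\,dv$ with $\ker L_\mu=\mathrm{span}\{1,v_1,v_2,v_3,|v|^2\}\mu$, so that $L_\mu^{-1}$ is well defined, self-adjoint and \emph{negative-definite} on the microscopic subspace $(\ker L_\mu)^\perp$ (here the assumption $\ga>\max\{-3,-2s-\tfrac32\}$ enters; cf. \cite{Gressman2011} and \eqref{123}). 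A one-line moment check shows $\wh A_i\mu$ and $\wh B_{ij}\mu$ are microscopic --- the constant $5$ in $\wh A_i$ and the trace subtraction in $\wh B_{ij}$ are exactly what guarantee this --- so that all nine quantities become values of the single symmetric negative-semidefinite bilinear form $\mathcal B(f,g):=\langle f,L_\mu^{-1}g\rangle$, namely $(\wh A_i,A_j)_{L^2_v}=\mathcal B(\wh A_i\mu,\wh A_j\mu)$, $(\wh A_i,B_{jk})_{L^2_v}=\mathcal B(\wh A_i\mu,\wh B_{jk}\mu)$ and $(\wh B_{ij},B_{kl})_{L^2_v}=\mathcal B(\wh B_{ij}\mu,\wh B_{kl}\mu)$.

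The heart of the matter is rotational covariance. Because $B(v-v_*,\sigma)$ depends on the velocities only through $|v-v_*|$ and $\cos\th$, $L_\mu$ commutes with the $O(3)$-action $(R_{\mathcal{O}}f)(v):=f(\mathcal{O}^{t}v)$, and since $\mu$ and $dv$ are $O(3)$-invariant, so are $L_\mu^{-1}$ and $\mathcal B$. Under $R_{\mathcal{O}}$ the triple $\{\wh A_i\mu\}_i$ transforms as the vector representation and $\{\wh B_{ij}\mu\}$ as the symmetric traceless rank-two representation; hence $T^A_{ij}:=(\wh A_i,A_j)_{L^2_v}$ is an $O(3)$-invariant $2$-tensor, $T^B_{ijkl}:=(\wh B_{ij},B_{kl})_{L^2_v}$ an $O(3)$-invariant $4$-tensor that is symmetric and traceless in $(i,j)$ and in $(k,l)$ and symmetric under swapping these pairs, while $(\wh A_i,B_{jk})_{L^2_v}$ vanishes by parity (applying $R_{-I}$ flips $\wh A_i\mu$ but fixes $\wh B_{jk}\mu$). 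By the classification of isotropic tensors on $\R^3$ there are scalars $c_A,c_B$ with $T^A_{ij}=c_A\delta_{ij}$ and $T^B_{ijkl}=c_B\big(\delta_{ik}\delta_{jl}+\delta_{il}\delta_{jk}-\tfrac23\delta_{ij}\delta_{kl}\big)$; moreover $c_A,c_B<0$, since $\wh A_1\mu$ and $\wh B_{12}\mu$ are nonzero elements of $(\ker L_\mu)^\perp$ and $L_\mu^{-1}$ is negative-definite there.

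From here everything is bookkeeping. The vanishing statements, the symmetries $(\wh B_{ij},B_{kl})=(\wh B_{kl},B_{ij})=(\wh B_{ji},B_{kl})$ (from $\wh B_{ij}=\wh B_{ji}$ and self-adjointness), and all the ``independent of the indices'' assertions are read directly off $T^A$ and $T^B$; $-(\wh A_i,A_i)>0$ and $-(\wh B_{ij},B_{ij})>0$ ($i\neq j$) follow from $c_A,c_B<0$; and the sign conditions for the diagonal stress components $(\wh B_{ii},B_{jj})$, $(\wh B_{ii},B_{ii})$ together with the proportionality between $(\wh B_{ii},B_{ii})$ and $(\wh B_{ij},B_{ij})$, $i\neq j$, then come from evaluating the components of $T^B$ in combination with $\sum_i\wh B_{ii}\equiv0$ (hence $\sum_iB_{ii}=0$), as recorded in \cite[Eq.~(A.2), pp.~3636]{Duan2015}. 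The main obstacle is the coercivity package for the non-cutoff operator --- self-adjointness of $L_\mu$, the identification $\ker L_\mu=\mathrm{span}\{1,v,|v|^2\}\mu$ so that $L_\mu^{-1}$ exists, and negative-definiteness on the complement --- which simultaneously underlies the well-definedness of the Burnett functions and all the strict-positivity claims; this I would borrow from the literature rather than reprove. Everything else is routine, and as an alternative one can instead verify each identity by direct Gaussian moment computation against $\mu$, the route of \cite{Bardos1993,Duan2015}.
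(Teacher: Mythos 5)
Your approach is the standard one — rotational covariance of $L_\mu$, self-adjointness, negative-definiteness of $L_\mu^{-1}$ on $(\ker L_\mu)^\perp$, and classification of isotropic tensors — and is essentially the argument of the cited references \cite{Bardos1993,Duan2015}, which the paper itself merely points to rather than reproducing (the references lean more on explicit spherical-harmonic and Gaussian moment calculations, but that is a cosmetic difference). The reduction to $M=\mu$, the check that $\wh A_i\mu$, $\wh B_{ij}\mu$ are microscopic, the parity argument for $(\wh A_i,B_{jk})_{L^2_v}=0$, and the identification $T^A_{ij}=c_A\delta_{ij}$, $T^B_{ijkl}=c_B\big(\delta_{ik}\delta_{jl}+\delta_{il}\delta_{jk}-\tfrac{2}{3}\delta_{ij}\delta_{kl}\big)$ with $c_A,c_B<0$ are all correct.

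The gap is that you delegate the final ``bookkeeping'' to \cite{Duan2015} instead of carrying it out, and this is precisely where something is amiss. Evaluating your own tensor formula gives $T^B_{iiii}=\tfrac{4}{3}c_B$, $T^B_{iijj}=-\tfrac{2}{3}c_B$ and $T^B_{ijij}=c_B$ for $i\neq j$, hence $-(\wh B_{ii},B_{jj})_{L^2_v}=\tfrac{2}{3}c_B<0$ and $(\wh B_{ii},B_{ii})_{L^2_v}=\tfrac{4}{3}(\wh B_{ij},B_{ij})_{L^2_v}$, which contradicts the sixth bullet (positivity of $-(\wh B_{ii},B_{jj})_{L^2_v}$) and the ninth bullet (factor $3$) of the lemma as printed. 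This is not an error in your tensor form: it is exactly the formula the paper itself records and uses later, $\big(B_{ij},\wh B_{ml}\big)_{L^2_v}=-\tfrac{\mu(\th)}{R\th}\big(\delta_{im}\delta_{jl}+\delta_{il}\delta_{jm}-\tfrac{2}{3}\delta_{ij}\delta_{ml}\big)$, which gives the same sign and the same ratio $\tfrac{4}{3}$; a direct Gaussian-moment check in the BGK case, where $B_{ij}=-\wh B_{ij}\mu$, confirms $(\wh B_{11},B_{11})=-\tfrac{4}{3}$, $(\wh B_{11},B_{22})=\tfrac{2}{3}$, $(\wh B_{12},B_{12})=-1$. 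So bullets 6 and 9 appear to carry typos (a stray minus sign, and $3$ where $\tfrac{4}{3}$ should stand), and a careful execution of your argument would have surfaced this. Hand-waving the concluding computation is the one place your proposal falls short, and it happens to be the one step at which the target statement and your (correct) derivation diverge.
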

In terms of Burnett functions, as in \cite[Eq. (4.10), pp. 710]{Bardos1993}, the viscosity coefficient $\mu(\th)$ and the heat conductivity coefficient $\kappa(\th)$ can be represented by 
\begin{equation}\label{mu}
	\mu(\th) = -R\th\int_{\R^3}B_{ij}\big(\frac{v-u}{\sqrt{R\th}}\big)\wh B_{ij}\big(\frac{v-u}{\sqrt{R\th}}\big)\,dv >0, \quad i\neq j,
\end{equation}
and 
\begin{equation}\label{kappa}
	\kappa(\th) = -R^2\th\int_{\R^3}A_j\big(\frac{v-u}{\sqrt{R\th}}\big)\wh A_j\big(\frac{v-u}{\sqrt{R\th}}\big)\,dv >0.
\end{equation}

\medskip
In the following we shall recall the estimates on Boltzmann collision operator.

\begin{Lem}\label{LemGamma}
	Assume $\gamma>\max\{-3,-2s-3/2\}$. Denote $w_l=\<v\>^l$
		 and $\wt\L$ be the linearized operator $\L$ or $\L_2$. 
	Then for any multi-index $\al$, we have 
	\begin{equation}\label{L1}
		-(\pa^\alpha \wt\L g, w_l^2\pa^\al g)_{L^2_v}\gtrsim |w_l\pa^\al g|^2_{L^2_D}
- C|\pa^\al g|^2_{L^2_{B_C}},
	\end{equation}
	where $L^2_{B_C}$ is the $L^2_v$ space restricted on ball $B_C=\{|v|\le C\}$. 
	The null space $\ker \L$ and $\ker\L_2$ of $\L$ and $\L_2$ is spanned by $\big\{\sqrt\mu,v_i\sqrt\mu$ $(i=1,2,3),|v|^2\sqrt\mu\big\}$ and $\big\{\sqrt\mu\big\}$ respectively. For $g\in(\ker \L)^\perp$ and $h\in(\ker\L_2)^\perp$, there exists $\lam>0$ such that 
	\begin{equation}
		\begin{aligned}\label{Lg}
			-(\L g,g)_{L^2_v}\ge \lam|g|^2_{L^2_D},\\
			-(\L_2h,h)_{L^2_v}\ge \lam|h|^2_{L^2_D}. 
		\end{aligned}
	\end{equation}
	For the estimate on $\Gamma$, if $\gamma+2s\ge 0$, we have 
	\begin{align}\label{Gamma}
		|(\Gamma(f,g), w_l^2 h)_{L^2_v}| \lesssim \big(|f|_{L^2_v}|w_lg|_{L^2_D}+|w_lf|_{L^2_v}|g|_{L^2_D}\big)|w_l h|_{L^2_D}. 
	\end{align}
	If $\gamma>\max\{-3,-2s-3/2\}$, then 
	\begin{align}\label{Gamma1}
		|(\Gamma(f,g), w_l^2 h)_{L^2_v}| \lesssim \big(|\<v\>^{\frac{\gamma+2s}{2}}f|_{L^2_v}|w_lg|_{L^2_D}+|\<v\>^{\frac{\gamma+2s}{2}}w_lf|_{L^2_v}|g|_{L^2_D}\big)|w_l h|_{L^2_D}. 
	\end{align}
Consequently, denoting $\nu(v)=\min\{1,\<v\>^{(\gamma+2s)/2}\}$, we have 
\begin{align}\label{2134}
	|w_l\Gamma(f,g)|_{H^{-s}_{-\gamma/2}}\lesssim |\nu f|_{L^2_v}|w_lg|_{L^2_D}+|\nu w_lf|_{L^2_v}|g|_{L^2_D}. 
\end{align}
\end{Lem}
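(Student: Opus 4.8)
The plan is to assemble the six displays of Lemma~\ref{LemGamma} from the known non-cutoff trilinear and coercivity estimates in the literature, adapting them to the polynomial weight $w_l=\langle v\rangle^l$ and to the two linearized operators $\L$, $\L_2$ simultaneously. First I would establish the weighted coercivity \eqref{L1}. For $\al=0$ this is the standard weighted coercivity estimate for the non-cutoff linearized Boltzmann operator (Gressman–Strain type): writing $\wt\L g = -\mathcal{N}(g) + \mathcal{K}(g)$ where $\mathcal{N}$ is the ``norm part'' controlling $|w_lg|_{L^2_D}^2$ from below and $\mathcal{K}$ is a compact-type remainder, one absorbs $\mathcal{K}$ by $\eta|w_lg|_{L^2_D}^2 + C_\eta|g|^2_{L^2_{B_C}}$. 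For $\L_2 f = 2\mu^{-1/2}Q(\mu,\mu^{1/2}f)$ one only has the ``gain from the first slot $\mu$'' piece, which is exactly the coercive norm part, so the same lower bound holds; in fact $\L_2$ is simpler. For general $\al$, I would apply $\pa^\al$, use the Leibniz rule — noting $\pa^\al$ in $(\al_0,\al_1,\al_2,\al_3)$ commutes with the velocity integration since $\wt\L$ acts only in $v$ and has $v$-independent coefficients (the operators $\L,\L_2$ are built from the fixed global Maxwellian $\mu$) — so $\pa^\al\wt\L g = \wt\L(\pa^\al g)$, and \eqref{L1} reduces to the $\al=0$ case with $g$ replaced by $\pa^\al g$. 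Then \eqref{Lg} follows from the classical spectral gap of $\L$ and $\L_2$ on the orthogonal complement of their null spaces, the null spaces being the five- and one-dimensional spaces of collision invariants times $\sqrt\mu$.

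Next I would prove the trilinear estimates \eqref{Gamma} and \eqref{Gamma1}. These are the weighted non-cutoff upper bounds on $\Gamma$; \eqref{Gamma} (hard potential, $\gamma+2s\ge0$) and \eqref{Gamma1} (the full range $\gamma>\max\{-3,-2s-3/2\}$, where one must pay a factor $\langle v\rangle^{(\gamma+2s)/2}$ on the $f$ slot to make the velocity decay integrable) are by now standard and can be quoted essentially verbatim from Gressman–Strain and the subsequent weighted refinements (e.g. Alexandre–Morimoto–Ukai–Xu–Yang, or the formulation in Duan's and the author's earlier papers using exactly the $L^2_D$, $d(v,v')$ framework recalled in \eqref{123}). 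The one point requiring a line of argument is the placement of the weight $w_l^2$ against the third slot $h$: by the cancellation/change-of-variables structure of $Q$, moving $w_l$ from $h$ onto $f$ and $g$ produces only lower-order commutator terms bounded by the same right-hand side, so $(\Gamma(f,g),w_l^2h)_{L^2_v}$ is controlled by $\big(\text{norm of }w_lf\big)\times\big(\text{norm of }g\big)\times|w_lh|_{L^2_D}$ plus the symmetric term, which is exactly \eqref{Gamma}/\eqref{Gamma1}. Finally, \eqref{2134} is just the dual reformulation of \eqref{Gamma1}: by \eqref{123} and \eqref{vdv}, $|w_l\Gamma(f,g)|_{H^{-s}_{-\gamma/2}} = \sup\{(w_l\Gamma(f,g),h)_{L^2_v} : |h|_{H^s_{\gamma/2}}\le1\}$, and $|h|_{H^s_{\gamma/2}}\approx|\langle v\rangle^{(\gamma+2s)/2}\langle D_v\rangle^s\langle v\rangle^{-s}\cdots|$ is comparable to $|h|_{L^2_D}$ up to harmless weights; applying \eqref{Gamma1} with $w_l h$ replaced by $h$ and recognizing $\nu(v)=\min\{1,\langle v\rangle^{(\gamma+2s)/2}\}$ collapses the hard- and soft-potential cases into one bound.

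The main obstacle is bookkeeping rather than a genuine new difficulty: one must keep the two operators $\L$ and $\L_2$, the two potential regimes, the polynomial weight $w_l$, and the arbitrary derivative $\pa^\al$ all consistent, and one must be careful that the ``compact remainder'' in \eqref{L1} is genuinely supported (up to small error) on a fixed ball $B_C$ independent of $\al$ and $l$ — this is what lets the macroscopic/microscopic splitting later absorb it. I would therefore organize the proof as: (1) cite/recall the unweighted coercivity and spectral gap for $\L$, note the analogue for $\L_2$; (2) upgrade to the weight $w_l$ via the standard weighted energy identity, isolating the $B_C$ error; (3) commute $\pa^\al$ through; (4) quote \eqref{Gamma}, \eqref{Gamma1} from the cited references with the weight-transfer remark; (5) dualize to get \eqref{2134}. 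No step should require more than the estimates already recalled in \eqref{123} and the references listed before the lemma.
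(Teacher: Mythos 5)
Your proposal correctly identifies the overall structure: cite coercivity for $\wt\L$, commute $\pa^\al$ through (trivial here since $\al$ contains no velocity derivatives and $\L,\L_2$ act only in $v$), quote the trilinear bounds, dualize to get \eqref{2134}. For \eqref{L1}, \eqref{Lg}, \eqref{Gamma} this is exactly what the paper does — it cites Gressman--Strain, Alexandre--Morimoto--Ukai--Xu--Yang, and an earlier paper of the author.

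The one place you underestimate the work is \eqref{Gamma1}. You write that it ``can be quoted essentially verbatim'' with the weight transfer producing ``only lower-order commutator terms.'' In fact, the exact bound with the extra factor $\langle v\rangle^{(\gamma+2s)/2}$ sitting on the first slot $f$ is not in the cited references, and the paper proves it from scratch by re-running the Gressman--Strain machinery: introducing the dyadic-in-angle partition $\{\chi_k\}$, the functionals $T^{k,l}_{\pm}(f,g,h)$, the angular integral bound $\int_{\mathbb{S}^2}B_k\,d\sigma\lesssim 2^{2sk}|v-v_*|^{\gamma+2s}$, and then a careful Cauchy--Schwarz in \eqref{233} followed by a case split on $|v'|^2\lessgtr \frac12(|v|^2+|v_*|^2)$ to decide whether the pre-post-collision weight $w_l^2(v')\mu'^{1/2}_*$ pays full Maxwellian decay or costs a polynomial factor on $v$ and $v_*$. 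In particular, the integrability of $\int \sqrt{\mu_*}\,|v-v_*|^{2(\gamma+2s)}\,dv_*$ is precisely where $\gamma+2s>-3/2$ enters, and this is stated as the only place that hypothesis is used. So your sketch of ``why this should work'' is right, but the step you dismiss as a standard commutator is the one genuinely new estimate in the lemma, and a referee would want to see the Cauchy--Schwarz splitting carried out rather than cited.

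A small remark on your argument for $\L_2$ in \eqref{L1}: saying ``$\L_2$ is simpler because only the gain piece is coercive'' is a reasonable heuristic, but the paper instead cites Proposition 4.8 of Alexandre--Morimoto--Ukai--Xu--Yang; if you want a self-contained argument you should check that the missing $Q(\mu^{1/2}f,\mu)$ term in $\L_2$ does not destroy the lower bound, which is not completely trivial since it is that term that supplies the full five-dimensional null space for $\L$.
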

\begin{proof}
		When $\wt\L=\L$, the proofs of \eqref{L1} can be found in \cite[Lemma 2.6 and pp. 784]{Gressman2011}.
		When $\wt\L=\L_2$, \eqref{L1} can seen from \cite[Propostion 4.8, pp. 983]{Alexandre2012}.
	The estimate \eqref{Lg} can be seen from \cite[Proposition 2.1, pp. 925]{Alexandre2012}. 
	Note that $|\cdot|_{L^2_D}$ is equivalent to the triple norm $\vertiii{\cdot}$ in \cite{Alexandre2012}. 
	We refer to \cite[Lemma 2.2, pp. 1612]{Deng2021b} for the proof of \eqref{Gamma}. 
	Here we give the proof on \eqref{Gamma1}. 
	We would like to perform the calculation in \cite{Gressman2011} and hence write the following functionals.
	Let $\{\chi_k\}_{k=-\infty}^{k=+\infty}$ be a partition of unity on $(0,\infty)$ such that $|\chi_k|\le 1$ and supp$(\chi_k)\subset [2^{-k-1},2^{-k}]$. For each $k$, we define 
	$
	B_k = B(v-v_*,\sigma)\chi_k(|v-v'|),
	$
	and
	\begin{align*}
		T^{k,l}_+(f,g,h)=\int_{\R^3}dv\int_{\R^3}dv_*\int_{\mathbb{S}^{2}}d\sigma\,B_k(v-v_*,\sigma)f_*gh'w_l^2(v')\mu^{1/2}(v_*'),\\
		T^{k,l}_-(f,g,h)=\int_{\R^3}dv\int_{\R^3}dv_*\int_{\mathbb{S}^{2}}d\sigma\,B_k(v-v_*,\sigma)f_*ghw_l^2(v)\mu^{1/2}(v_*).
	\end{align*}
	Given the support of $\chi_k$, we have 
	\begin{align}\label{355}
		\int_{\S^2}d\sigma\,B_k\lesssim |v-v_*|^\gamma\int^{2^{-k}|v-v_*|^{-1}}_{2^{-k-1}|v-v_*|^{-1}}d\th\,\th^{-1-2s}\lesssim 2^{2sk}|v-v_*|^{\gamma+2s}, 
	\end{align}
	and 
	\begin{align}\label{355a}
		\int_{\R^3}dv_*\,\sqrt{\mu_*}|v-v_*|^{2(\gamma+2s)}\lesssim \<v\>^{2(\gamma+2s)}. 
	\end{align}
	Applying the same proof in \cite[Proposition 3.1]{Gressman2011}, we have that for any integer $k$, any $m\ge 0$ and $l\in\R$, 
	\begin{align}\label{232}
		\big|T^{k,l}_-(f,g,h)\big|\lesssim 2^{2sk}|\<v\>^{-m}g|_{L^2_v}|\<v\>^{\frac{\gamma+2s}{2}}w_lh|_{L^2_v}|\<v\>^{\frac{\gamma+2s}{2}}w_lf|_{L^2_v}.
	\end{align}
	Next we give the estimate on $T^{k,l}_+(g,h,f)$.
	By Cauchy-Schwarz inequality, 
	\begin{align}\label{233}\notag
		|T^{k,l}_+(g,h,f)|&\lesssim \Big(\int_{\R^3}dv\int_{\R^3}dv_*\int_{\S^{2}}d\sigma\,\frac{B_k(v-v_*,\sigma)}{|v-v_*|^{\gamma+2s}}|g_*|^2|h|^2\<v'\>^{\gamma+2s}w_l^2(v')\sqrt{\mu'_*}\Big)^{\frac{1}{2}}\\
		&\qquad\times \Big(\int_{\R^3}dv\int_{\R^3}dv_*\int_{\S^{2}}d\sigma\,\frac{B_k(v-v_*,\sigma)}{|v-v_*|^{-\gamma-2s}}|f'|^2\<v'\>^{-\gamma-2s}w_l^2(v')\sqrt{\mu'_*}\Big)^{\frac{1}{2}}.
	\end{align}
	After a pre-post change of variables, the second factor on the right hand side of \eqref{233} is bounded above by a uniform constant times $2^{sk}|\<v\>^{\frac{\gamma+2s}{2}}w_lf|_{L^2_v}$ by using \eqref{355} and \eqref{355a}. 
	This is the only place we implicitly use $\gamma+2s>-3/2$. For the first factor on the right hand side of \eqref{233}, we need the following calculations. If $|v'|^2\le \frac{1}{2}(|v|^2+|v_*|^2)$, then from the collisional conservation laws, we have $\mu'_*\le \sqrt{\mu\mu_*}$ and hence 
	\begin{align*}
		\<v'\>^{\gamma+2s}w_l^2(v')\sqrt{\mu'_*}\lesssim \<v\>^{-m}\<v_*\>^{-m},
	\end{align*}
	for any $m>0$. On the region $|v'|^2\ge \frac{1}{2}(|v|^2+|v_*|^2)$, it follows from collisional geometry that $|v'|^2\approx |v|^2+|v_*|^2$. Then 
	\begin{align*}
		\<v'\>^{\gamma+2s}w_l^2(v')\sqrt{\mu'_*}\lesssim \big(w_l^2(v_*)+w_l^2(v)\big)\<v\>^{\gamma+2s}\<v_*\>^{\gamma+2s}. 
	\end{align*}
	Substituting the above two estimates into the first factor on the right hand side of \eqref{233}, we have 
	\begin{align}\label{234}
		|T^{k,l}_+(g,h,f)|\lesssim 2^{2sk}\Big(|\<v\>^{\frac{\gamma+2s}{2}}g|_{L^2_v}|\<v\>^{\frac{\gamma+2s}{2}}w_lh|_{L^2_v}+|\<v\>^{\frac{\gamma+2s}{2}}w_lg|_{L^2_v}|\<v\>^{\frac{\gamma+2s}{2}}h|_{L^2_v}\Big)|\<v\>^{\frac{\gamma+2s}{2}}w_lf|_{L^2_v}. 
	\end{align}
	Combining estimate \eqref{232}, \eqref{234} and \cite[Propositon 3.1, 3.4, 3.6, 3.7]{Gressman2011} and using the same arguments as Section 6.1 in \cite{Gressman2011}, we can obtain 
	\begin{align*}
		|(\Gamma(g,h),w^2_lf)|\lesssim \Big(|\<v\>^{\frac{\gamma+2s}{2}}g|_{L^2_v}|w_lh|_{L^2_D}+|\<v\>^{\frac{\gamma+2s}{2}}w_lg|_{L^2_v}|h|_{L^2_D}\Big)|w_lf|_{L^2_D}. 
	\end{align*}
	This implies \eqref{Gamma1}. Noticing $|\cdot|_{L^2_D}\lesssim |\cdot|_{H^s_{\gamma/2}}$, by duality, we can obtain from \eqref{Gamma} and \eqref{Gamma1} that 
	\begin{align*}
		|w_l\Gamma(f,g)|_{H^{-s}_{-\gamma/2}}\lesssim |\nu f|_{L^2_v}|w_lg|_{L^2_D}+|\nu w_lf|_{L^2_v}|g|_{L^2_D}. 
	\end{align*}
	 This completes the proof of Lemma \ref{LemGamma}. 	
\end{proof}.

\section{Basic Estimates}\label{Sec3}
In this section, we shall apply the lemmas in Section \ref{Sec2} to derive some basic estimates on $(\bar\rho,\bar u,\bar\th)$, Boltzmann collision operator $Q$.

We begin with the large time decay on $(\bar\rho,\bar u,\bar\th)$. 
\begin{Lem}\label{LL31}
	For any $|\al|\ge 1$, we have 
	\begin{equation}\label{pat1}
		\begin{aligned}
			\|\pa_t(\bar\rho,\bar u,\bar\th)\|_{L^2_x}^2&\lesssim  \delta(1+t)^{-1},\\
			\|\pa^\al \pa_t(\bar\rho,\bar u,\bar\th)\|_{L^2_x}^2&\lesssim  \delta^{\frac{2}{3}}(1+t)^{-\frac{4}{3}}.
		\end{aligned} 
	\end{equation}
\end{Lem}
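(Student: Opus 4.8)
The plan is to trade each time derivative acting on the rarefaction profile $(\bar\rho,\bar u,\bar\th)$ for spatial derivatives by using the Euler system \eqref{Euler2}, and then read off the decay from Lemma \ref{Lem21}. Since $\bar u_2=\bar u_3=0$, $\bar p=R\bar\rho\bar\th$, and $(\bar\rho,\bar u,\bar\th)$ stays in a fixed bounded set with $\bar\rho$ bounded away from $0$ (Lemma \ref{Lem21}(ii) and the choice of end states), the three equations in \eqref{Euler2} can be solved algebraically for the time derivatives, giving an identity of the schematic form
\begin{equation*}
\pa_t(\bar\rho,\bar u_1,\bar\th)=\Phi(\bar\rho,\bar u_1,\bar\th)\,\pa_{x_1}(\bar\rho,\bar u_1,\bar\th),
\end{equation*}
with $\Phi$ a smooth matrix-valued function of its bounded arguments (the $\bar\rho^{-1}$ appearing in the momentum and energy equations is harmless). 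Because the profile is independent of $x_2,x_3$, the operator $\pa^\al$ equals $\pa_t^{\al_0}\pa_{x_1}^{\al_1}$ (and vanishes if $\al_2+\al_3>0$). Iterating the identity above and using the Leibniz rule, an induction on $|\al|$ shows that $\pa^\al\pa_t(\bar\rho,\bar u,\bar\th)$ is a finite sum of terms
\begin{equation*}
c(\bar\rho,\bar u_1,\bar\th)\prod_{i=1}^{m}\pa_{x_1}^{j_i}(\text{one of }\bar\rho,\bar u_1,\bar\th),\qquad j_i\ge 1,\quad \sum_{i=1}^m j_i\ge |\al|+1,
\end{equation*}
with $c$ bounded and smooth.

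Then I would estimate each such term in $L^2_{x_1}$ via Lemma \ref{Lem21}(ii): $\|\pa_{x_1}(\bar\rho,\bar u,\bar\th)\|_{L^2_{x_1}}\lesssim\delta^{1/2}(1+t)^{-1/2}$, $\|\pa_{x_1}(\bar\rho,\bar u,\bar\th)\|_{L^\infty_{x_1}}\lesssim(1+t)^{-1}$, and $\|\pa_{x_1}^{j}(\bar\rho,\bar u,\bar\th)\|_{L^q_{x_1}}\lesssim\min\{\delta,(1+t)^{-1}\}$ for every $j\ge2$ and $q\in[1,\infty]$. For $|\al|\ge1$ we have $\sum j_i\ge 2$, so either some $j_i\ge2$ — put that factor in $L^2_{x_1}$ and all others in $L^\infty_{x_1}$ — or at least two factors carry exactly one derivative — put one in $L^2_{x_1}$, the others in $L^\infty_{x_1}$. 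In either case
\begin{equation*}
\|\text{term}\|_{L^2_{x_1}}\lesssim \min\{\delta,(1+t)^{-1}\}\quad\text{or}\quad \|\text{term}\|_{L^2_{x_1}}\lesssim\delta^{1/2}(1+t)^{-3/2},
\end{equation*}
and squaring, together with the elementary inequalities $\min\{\delta^2,(1+t)^{-2}\}\le\delta^{2/3}(1+t)^{-4/3}$ and $\delta(1+t)^{-3}\le\delta^{2/3}(1+t)^{-4/3}$ (each checked by splitting into the regimes $1+t\le\delta^{-1}$ and $1+t\ge\delta^{-1}$), gives the second estimate in \eqref{pat1}. The first estimate is the same computation with only the constraint $\sum j_i\ge1$, yielding $\|\pa_t(\bar\rho,\bar u,\bar\th)\|_{L^2_{x_1}}\lesssim\delta^{1/2}(1+t)^{-1/2}$. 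Finally, since the profile depends on $x_1$ only, I multiply by the finite measure of the cross-section of $\Omega$ to pass from $L^2_{x_1}$ to $L^2_x$.

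I expect no serious obstacle here; the only points needing care are the bookkeeping that guarantees every product term, after eliminating $\pa_t$, carries at least $|\al|+1\ge 2$ spatial derivatives in total (so that one factor always decays faster than $(1+t)^{-1/2}$ in $L^2_{x_1}$ and nothing decays slower than $(1+t)^{-1}$ in $L^\infty_{x_1}$), and the observation that the stated rate $\delta^{2/3}(1+t)^{-4/3}$ is precisely the weakening of the sharper $\min\{\delta,(1+t)^{-1}\}$-type bounds that makes the two-regime comparison close — the half-power loss in $\delta$ is unavoidable and causes no problem.
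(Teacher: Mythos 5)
Your proposal is correct and follows the same route as the paper's proof: solve the approximate Euler system \eqref{Euler2} for the time derivatives, iterate via Leibniz and induction to eliminate all $\pa_t$'s, and then read off the decay from Lemma \ref{Lem21}, passing from $L^2_{x_1}$ to $L^2_x$ via the finite cross-section measure. The only difference is presentational: you track the derivative budget ($\sum j_i\ge|\al|+1$) explicitly, whereas the paper writes out only the $\al_0=0$ case and invokes induction on $\pa_t$; the final two-regime comparison to $\delta^{2/3}(1+t)^{-4/3}$ is the same in both.
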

\begin{proof}
	For the derivatives with respect to $t$ on $(\bar\rho,\bar{u},\bar\th)$, we apply \eqref{Euler2} to deduce that 
	\begin{equation*}
		\left\{
		\begin{aligned}
			&\bar\rho_t + \bar\rho_{x_1}\bar u+\bar\rho\bar u_{x_1} = 0,\\
			&\bar u_{1t}+\bar u_1\bar u_{1x_1}+\frac{R\bar\th}{\rho}\bar\rho_{x_1}+R\bar\th_{x_1} = 0,\\
			&\bar\th_t+\bar u_1\bar\th_{x_1}+{R\bar\th}{}\bar u_{1x_1} = 0. 
		\end{aligned}\right.
	\end{equation*}
	Thus, by Lemma \ref{Lem21}, we have 
	\begin{align*}
		\|\pa_t(\bar\rho,\bar u,\bar\th)\|_{L^2_x}& 
	\lesssim \|\na_x(\bar\rho,\bar u,\bar\th)\|_{L^2_x}
\lesssim\delta^{\frac{1}{2}}(1+t)^{-\frac{1}{2}}. 
	\end{align*}
Similarly, for $\al=(\al_0,\al_1,\al_2,\al_3)$ such that $\al_0=0$ and $|\al|\ge 1$, we have 
	\begin{align*}
	\|\pa_t\pa^{\al}(\bar\rho,\bar u,\bar\th)\|_{L^2_x}&\lesssim
	\|\na_x\pa^{\al}(\bar\rho,\bar u,\bar\th)\|_{L^2_x} + \sum_{\al'\le\al}\|\pa^{\al'}\na_x(\bar\rho,\bar u,\bar\th)\|_{L^2_x}\sum_{0\neq\al'\le\al}\|\pa^{\al'}(\bar\rho,\bar u,\bar\th)\|_{L^\infty_x}\\
	&\lesssim \delta^{\frac{1}{3}}(1+t)^{-\frac{2}{3}}. 
\end{align*}
The proof for $|\al_0|\ge 1$ is by induction on $\pa_t$ and we can obtain the second estimate of \eqref{pat1}. This conclude Lemma \ref{LL31}. 
\end{proof}

\subsection{Compatible Specular Boundary Condition}
In this subsection, we derive the specular reflection boundary condition for high order derivatives and the corresponding boundary values for macroscopic components $(\rho,u,\th)$. 
\begin{Lem}
	\label{Lemspecular}
	Let $(F_\pm,\phi)$ be the solution to \eqref{1} (with \eqref{specular} and \eqref{Neumann} for the case of rectangular duct). Fix $i=2,3$, $x\in\Gamma_i$ and $v\in\R^3$ such that $v\cdot n(x)\neq 0$. Then for any multi-index $\al$ such that $|\al|\le 3$, we have 
	\begin{equation}\label{24}
		\pa^\al F_\pm(x,v)=(-1)^{|\al_i|}\pa^\al F_\pm(x,R_xv). 
	\end{equation}
\end{Lem}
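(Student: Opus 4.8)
The plan is to prove \eqref{24} by induction on $|\al|$, exploiting the specular reflection relation \eqref{specular} and the flatness of the boundary $\Gamma_i$ (so that $n(x)=\pm e_i$ is locally constant and $R_x v = v - 2v_ie_i$ is the simple reflection $v_i\mapsto -v_i$). The base case $|\al|=0$ is exactly \eqref{specular}. For the inductive step I would first establish the claim for purely spatial-and-temporal derivatives, then handle the general multi-index, using the kinetic equation \eqref{1} to convert velocity derivatives into spatial/temporal ones where needed.

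First I would treat tangential and time derivatives: since $\Gamma_i$ is the hyperplane $\{x_i = \text{const}\}$, differentiating the identity $F_\pm(t,x,v) = F_\pm(t,x,R_xv)$ in $t$, $x_1$, and the tangential spatial variable $x_j$ ($j\neq i$, $j\neq 1$ among $\{2,3\}$) commutes with the reflection and with restriction to $\Gamma_i$, giving $\pa^{\al}F_\pm(x,v) = \pa^{\al}F_\pm(x,R_xv)$ for such $\al$ with $\al_i = 0$, matching $(-1)^{|\al_i|}=1$. For a normal derivative $\pa_{x_i}$, one cannot differentiate along $\Gamma_i$ directly; instead I would use the transport equation \eqref{1}: on $\Gamma_i$, $\pa_{x_i}F_\pm = v_i^{-1}\big(-\pa_t F_\pm - \sum_{k\neq i}v_k\pa_{x_k}F_\pm \pm \na_x\phi\cdot\na_v F_\pm + Q(\cdots)\big)$, valid since $v_i = v\cdot n \neq 0$. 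The right-hand side involves only $\pa_t$, tangential $\pa_{x_k}$, velocity derivatives $\na_v$, and the bilinear collision term. Under $v\mapsto R_xv$ (i.e. $v_i\mapsto -v_i$): the prefactor $v_i^{-1}$ picks up a sign $-1$; each $v_k$ with $k\neq i$ is unchanged; $\na_v$ in the $v_i$-component picks up a sign while other components do not; $\phi$ and the collision operator $Q$ are invariant under the velocity reflection (since $Q$ only depends on $|v-v_*|$ and the geometry of collisions, which is preserved by the orthogonal map $R_x$, and similarly $\na_x\phi$ is $v$-independent and $\pa_{n}\phi=0$ kills the awkward normal term). Tracking all signs carefully yields $\pa_{x_i}F_\pm(x,v) = -\pa_{x_i}F_\pm(x,R_xv)$ on $\Gamma_i$, which is \eqref{24} for $\al = e_i$ in the $x_i$-slot.

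With these two building blocks, I would run the induction: given $|\al|\le 3$, write $\pa^\al = \pa^{\al'}\pa_{x_i}^{a}\pa_{\beta}$ where $\al'$ collects time and tangential spatial derivatives, $a = \al_i$, and $\pa_\beta$ is a pure velocity derivative. Velocity derivatives of \eqref{specular} are handled by the chain rule: $\pa_{v_m}$ applied to $F_\pm(x,v)=F_\pm(x,R_xv)$ produces $\pa_{v_m}F_\pm(x,v) = \sum_l (R_x)_{lm}\pa_{v_l}F_\pm(x,R_xv)$, and since $R_x$ is diagonal with $(R_x)_{ii}=-1$ and $(R_x)_{ll}=1$ for $l\neq i$, each $\pa_{v_i}$ contributes a sign $-1$ and the others none — so velocity derivatives do not affect the parity in $\al_i$, consistent with the statement. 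Each normal derivative $\pa_{x_i}$ is eliminated in favor of lower-order-in-$x_i$ quantities via the equation as above, each time producing exactly one sign $-1$ and reducing $a$; this process terminates because $|\al|\le 3$ bounds the number of substitutions, and at each stage the resulting terms (after applying the inductive hypothesis to the strictly lower-order pieces, including the quadratic collision terms which split into products of lower-order factors via the Leibniz rule, each factor being reflection-covariant) obey \eqref{24}. Collecting signs gives the total factor $(-1)^{\al_i}$.

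The main obstacle I anticipate is the bookkeeping for the collision operator $Q$ and the Poisson coupling under repeated use of the equation: one must verify that $Q(F_\pm,F_\pm)(x,R_xv) = [Q(\text{reflected arguments})](x,v)$ with the correct sign behavior after differentiation — i.e. that the orthogonal change of variables $v\mapsto R_xv$, $v_*\mapsto R_xv_*$, $\sigma\mapsto R_x\sigma$ leaves $B(v-v_*,\sigma)$ and the post-collision velocities equivariant, so that differentiating $Q$ and then reflecting matches reflecting and then differentiating, with each $\pa_{x_i}$ still yielding one factor $-1$; and that the Neumann condition $\pa_n\phi = 0$ together with $\phi$ being even in $v_i$ (trivially, as it is $v$-independent) ensures the electric-field term $\mp\na_x\phi\cdot\na_v F_\pm$ transforms with the same parity as $\pa_{x_i}F_\pm$ so it can be absorbed consistently. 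Once the reflection-equivariance of $Q$ and the field term is set up cleanly as a lemma, the induction is routine.
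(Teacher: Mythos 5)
Your proposal is correct and follows essentially the same route as the paper's proof: establish the parity for normal derivatives by solving the kinetic equation for $v\cdot n\,\pa_n F_\pm$, track signs under $v\mapsto R_xv$ (using that $Q$ and the field term $\na_x\phi\cdot\na_v$ are reflection-equivariant, the latter because the Neumann condition kills the normal component of $\na_x\phi$), and then extend to general $\al$ by commuting with tangential and time derivatives. One small remark: the multi-index $\al$ in the statement is purely time-spatial ($\al=(\al_0,\al_1,\al_2,\al_3)$, no velocity component $\beta$), so your discussion of the chain rule for $\pa_{v_m}$ and the accompanying claim that ``velocity derivatives do not affect the parity in $\al_i$'' is not needed here and slightly muddles the bookkeeping — if velocity derivatives were included the parity factor would become $(-1)^{\al_i+\beta_i}$, which is how it is used later in Lemma 3.4 but is outside the scope of this lemma. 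Also, for the second-order normal derivative you will need not just $\pa_n\phi=0$ but $\pa_n\pa_{\tau_j}\phi=0$ on $\Gamma_i$ (obtained by taking a tangential derivative of the Neumann condition), which the paper spells out explicitly; your proposal gestures at this (``set up cleanly as a lemma'') without carrying it through, but the idea is correct.
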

Note that $\al_i$ represent the normal direction on $\Gamma_i$ and $\al_j$ $(j\neq i)$ are the tangent direction on $\Gamma_i$. That is, $\pa_n = \pa_{x_i}$ or $-\pa_{x_i}$. 

\begin{proof}
	We first prove that for $0\le m\le 3$:
	\begin{align}\label{24aa}
		\pa^m_{x_i}F(x,v) = (-1)^m\pa_{x_i}^mF(x,R_xv).
	\end{align} 
	If $m=0$, noticing $R_x$ sends $\gamma_-$ to $\gamma_+$ and $R_xR_xv=v$, we can obtain \eqref{24aa} from \eqref{specular}. 
	Next we claim that 
	\begin{equation}\label{2.10}
		\na_x\phi\cdot\na_vF_\pm(R_xv) =\na_x\phi\cdot\na_vF_\pm(v), \quad Q(f,g)(R_xv)=Q(f(R_xv),g(R_xv)). 
	\end{equation}
	In fact, 
	noticing that for $x\in\Gamma_i$, $R_xv$ sends $v_i$ to $-v_i$ and preserves the other components, we have $\pa_{v_j}F_\pm(R_xv)=\pa_{v_j}F_\pm(v)$ on $\Gamma_i$ for $j=1,2,3$ such that $j\neq i$.
	Thus, we have 
	\begin{align*}
		\na_x\phi\cdot\na_vF_\pm(R_xv) = \sum_{j\neq i}\pa_{x_j}\phi\,\pa_{v_j}F_\pm(R_xv)
		= \na_x\phi\cdot\na_vF_\pm(v).
	\end{align*} 
	For the collision term, 
	%
	%
	we apply the Carleman representation (cf. \cite{Alexandre2000} and \cite[Appendix]{Global2019}) to find that 
	\begin{align*}
		Q(f,g)(R_xv) &= \int_{\R^3_h}\int_{E_{0,h}}\tilde{b}(\al,h)\1_{|\al|\ge|h|}\frac{|\al+h|^{\gamma+1+2s}}{|h|^{3+2s}}\\&\qquad\qquad\qquad\times\big(f(R_xv+\al)g(R_xv-h)-f(R_xv+\al-h)g(R_xv)\big)\,d\al dh\\ &=Q(f(R_xv),g(R_xv)),
	\end{align*}
	where we apply rotation $R_x^{-1}$ on $(\al,h)$ and $E_{0,h}$ is the hyper-plane orthogonal to $h$ containing the origin. This completes the claim. 
	
	\smallskip
	For the normal derivatives, we need to apply equation \eqref{1} to obtain the compatible boundary condition. In fact, notice that  
	\begin{equation*}
		v\cdot\na_xF = v\cdot n(x)\pa_{n}F + v\cdot \tau_1(x)\pa_{\tau_1}F +v\cdot \tau_2(x)\pa_{\tau_2}F, 
	\end{equation*}  
	where $\{\tau_j\}_{j=1,2} = \{e_k\}_{k\neq i}$ are the tangent direction on $\Gamma_i$. 
	Then we can rewrite \eqref{1} as 
	\begin{align}\label{3.10}
		v\cdot n\pa_n F_\pm =-v\cdot \tau_1\pa_{\tau_1}F_\pm -v\cdot \tau_2\pa_{\tau_2}F_\pm\pm\na_x\phi\cdot\na_vF_\pm -\pa_t{F_\pm} +Q(F_\pm,F_\pm)+Q(F_\mp,F_\pm).  
	\end{align}	
	Using \eqref{2.10}, we know that the right hand side of \eqref{3.10} preserves its value under change of variable $v\mapsto R_xv$. Then we have  
	\begin{align*}
		R_xv\cdot n\pa_n F(R_xv) = v\cdot n\pa_nF(v).
	\end{align*}
	Noticing $R_xv\cdot n=-v\cdot n$, we can obtain \eqref{24aa} for the case $m=1$.

	\smallskip
	For the second order normal derivatives, we apply $\pa_n$ to \eqref{3.10} again to obtain 
	\begin{multline}\label{24a}
		v\cdot n\pa_n\pa_n F_\pm =-v\cdot \tau_1\pa_{\tau_1}\pa_nF_\pm -v\cdot \tau_2\pa_{\tau_2}\pa_nF_\pm \pm\pa_n\na_x\phi\cdot\na_vF_\pm \pm\na_x\phi\cdot\pa_n\na_vF_\pm\\  -\pa_t{\pa_nF} +Q(\pa_nF_\pm,F_\pm)+Q(F_\pm,\pa_nF_\pm)+Q(\pa_nF_\mp,F_\pm)+Q(F_\mp,\pa_nF_\pm). 
	\end{multline}
	Then by \eqref{Neumann}, \eqref{24aa} with $m=0,1$ and \eqref{2.10}, we have  
	\begin{align*}
		&\notag\quad\,\pm\pa_n\na_x\phi\cdot\na_vF_\pm(R_xv)\pm\na_x\phi\cdot\pa_n\na_vF_\pm
		(R_xv)\\
		&\notag=\pm\pa_n\pa_{x_i}\phi\,\pa_{v_i}F_\pm (R_xv)\pm\sum_{j\neq i}\pa_{x_j}\phi\,\pa_n\pa_{v_j}F_\pm (R_xv)\\
		&= \pm\pa_n\pa_{x_i}\phi\,\pa_{v_i}F_\pm (v)\pm\sum_{j\neq i}\pa_{x_j}\phi\,\pa_n\pa_{v_j}F_\pm (v). 
	\end{align*}
Note that by taking tangent derivative on \eqref{Neumann}, we have $\pa_n\pa_{x_j}\phi=0$ on $\Gamma_i$ for $j\neq i$. 
	In view of \eqref{2.10}, one see that the right hand side of \eqref{24a} change the sign after taking change of variable $v\mapsto R_xv$.  
	Thus, 
	\begin{align*}
		R_xv\cdot n\pa_n\pa_n F(R_xv) = -v\cdot n\pa_n\pa_n F(v). 
	\end{align*}
	Noticing $R_xv\cdot n=-v\cdot n$, we can obtain \eqref{24aa} for the case $m=2$.
	The third normal derivative estimate can be derived similarly by taking normal derivative on \eqref{24a} again, then one can obtain \eqref{24aa} for $m=3$ and we omit the details for brevity. 
	Since $\Gamma_i$ is a plane orthogonal to $n(x)$, applying tangent derivatives $\pa_{\tau_j}$ $(j=1,2)$ and time derivative $\pa_t$ on \eqref{24aa}, we obtain \eqref{24}. 
	This completes the proof of Lemma \ref{Lemspecular}.  
\end{proof}

As a corollary, 
we have the following compatible conditions for macroscopic and microscopic quantities. 
\begin{Lem}
	\label{LemMacro}
	For $i=2,3$ and any $x\in\Gamma_i$, we have 
	\begin{equation}
		\begin{aligned}\label{boundaryrhouth}
			&
			\pa_{x_ix_ix_i}\big(\rho(x),\th(x),u_j(x)\big)=
			\pa_{x_i}\big(\rho(x),\th(x),u_j(x)\big)=
			\pa_{x_ix_i}u_i(x) = u_i(x) =0,
		\end{aligned}
	\end{equation}
	for $j=1,2,3$, $j\neq i$. 
	Consequently, for 
	$|\al|\le 3$, we have 
	\begin{align}\label{233a}
		\pa^{\al}M(R_xv) = (-1)^{|\al_i|}\pa^\al M(v),
	\end{align}
	and 
	\begin{align}\label{29}
		\pa^{\al}\g(R_xv) = (-1)^{|\al_i|}\pa^\al \g(v). 
	\end{align}
	Moreover, we have 
	\begin{align}\label{boundphi}
		\pa_{x_ix_ix_i}\phi(x) =\pa_{x_ix_ix_ix_ix_i}\phi(x) = 0. 
	\end{align}
\end{Lem}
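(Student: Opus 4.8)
\emph{Proof plan.} Every assertion will be read off from the pointwise reflection identity \eqref{24} of Lemma \ref{Lemspecular}, by integrating it against the collision invariants and then propagating the resulting cancellations through the Leibniz and chain rules. Fix $i\in\{2,3\}$ and $x\in\Gamma_i$; then $n(x)=\pm e_i$, the operator $R_xv=v-2e_i(e_i\cdot v)$ simply flips the $i$-th velocity component, it is an orthogonal involution of unit Jacobian, and $x_1$ is tangential to $\Gamma_i$.

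For \eqref{boundaryrhouth}: combining the $+$ and $-$ relations in \eqref{24} gives the same identity for $F_1=(F_++F_-)/2$ and $F_2=(F_+-F_-)/2$, namely $\pa^\al F_1(x,v)=(-1)^{|\al_i|}\pa^\al F_1(x,R_xv)$ and similarly for $F_2$, whenever $|\al|\le 3$. Multiplying the $F_1$-identity by a collision invariant $\xi_m(v)$, integrating over $\R^3_v$, and substituting $v\mapsto R_xv$ in the integral (Jacobian $1$) yields $\int\xi_m\pa^\al F_1\,dv=(-1)^{|\al_i|}\int\xi_m(R_xv)\pa^\al F_1\,dv$; since $\xi_0,\xi_4$ and $\xi_j$ $(j\ne i)$ are even under $R_x$ while $\xi_i$ is odd, this forces, on $\Gamma_i$ and for $|\al|\le 3$,
\begin{equation*}
	\pa^\al\rho=\pa^\al(\rho u_j)=\pa^\al\big(\rho\th+\tfrac{1}{2}\rho|u|^2\big)=0\ \ (j\ne i,\ |\al_i|\ \text{odd}),\qquad \pa^\al(\rho u_i)=0\ \ (|\al_i|\ \text{even}).
\end{equation*}
A straightforward induction on $|\al|$ using the Leibniz rule and $\rho>0$ (from \eqref{eta}) then upgrades these to $\pa^\al u_i=0$ for even $|\al_i|$ and $\pa^\al\rho=\pa^\al\th=\pa^\al u_j=0$ $(j\ne i)$ for odd $|\al_i|$; restricting to $|\al_i|\in\{1,3\}$ and $|\al_i|\in\{0,2\}$ gives \eqref{boundaryrhouth}.

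For the reflection identities \eqref{233a} and \eqref{29}: write $M=\mathcal M(\rho(x),u(x),\th(x);v)$ and, by \eqref{olG2}, $\ol G=\bar\th_{x_1}(t,x_1)\,\wt{\mathcal A}(\rho,u,\th;v)+\bar u_{1x_1}(t,x_1)\,\mathcal B(\rho,u,\th;v)$, where $\wt{\mathcal A},\mathcal B$ are built from the Burnett functions \eqref{21}. Each of $\mathcal M,\wt{\mathcal A},\mathcal B$ satisfies the reflection symmetry $\mathcal C(\rho,u,\th;R_xv)=\mathcal C(\rho,R_xu,\th;v)$: for $\mathcal M$ this is $|R_xv-u|=|v-R_xu|$, and for $\wt{\mathcal A},\mathcal B$ it descends from the orthogonal covariance of $Q$ recorded in the proof of Lemma \ref{Lemspecular}, which makes $L_M$ intertwine with $v\mapsto R_xv$ and hence $L_M^{-1}$ do so on $(\ker L_M)^\perp$. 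Since plugging the locally constant vector $R_xv$ into the velocity slot commutes with $\pa^\al$, one gets $[\pa^\al M](x,R_xv)=\pa^\al_x\!\big[\mathcal M(\rho(x),R_xu(x),\th(x);v)\big]$, and likewise for $\ol G$ — using further that $\bar\th_{x_1},\bar u_{1x_1}$ depend only on $(t,x_1)$, hence are annihilated by $\pa_{x_i}$, so only Leibniz terms carrying the full normal order on $\wt{\mathcal A},\mathcal B$ survive. Expanding by the Fa\`a di Bruno formula, a typical summand of $\pa^\al M$ acquires, in $\pa^\al_x[\mathcal M(\rho,R_xu,\th;v)]$, a sign $(-1)^p$ with $p$ the number of derivative blocks falling on the $u_i$-slot (since $R_xu$ flips $u_i$), while its $\mathcal M$-factor is unchanged on $\Gamma_i$ (there $R_xu=u$ because $u_i=0$). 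By the boundary jet of the previous step such a summand is nonzero on $\Gamma_i$ only if every $u_i$-block has odd normal order and every other block even normal order, which forces $p\equiv|\al_i|\pmod 2$; hence $[\pa^\al M](x,R_xv)=(-1)^{|\al_i|}[\pa^\al M](x,v)$, i.e.\ \eqref{233a}, and the same for $\ol G$. Finally $\mu=\mu(v)$ is radial, hence $R_x$-invariant, and $G=F_1-M$, so combining \eqref{24}, \eqref{233a} and the identity for $\ol G$ yields $\pa^\al\g(x,R_xv)=\mu^{-1/2}(v)\,\pa^\al(G-\ol G)(x,R_xv)=(-1)^{|\al_i|}\pa^\al\g(x,v)$, which is \eqref{29}.

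For the Poisson boundary values \eqref{boundphi}, I would use that $a=\int_{\R^3}F_2\,dv$ with $\pa^\al F_2(x,v)=(-1)^{|\al_i|}\pa^\al F_2(x,R_xv)$, so the change-of-variables argument of the second step gives $\pa^\al a=0$ on $\Gamma_i$ for odd $|\al_i|$, $|\al|\le 3$; in particular $\pa_{x_i}a=\pa_{x_i}^3a=0$. Differentiating the Poisson equation $-\Delta_x\phi=2a$ once and three times in $x_i$, solving for $\pa_{x_i}^3\phi$ and $\pa_{x_i}^5\phi$, and invoking the Neumann condition \eqref{Neumann}, i.e.\ $\pa_{x_i}\phi\equiv 0$ on $\Gamma_i$ — so that its tangential second derivatives $\pa_{x_j}^2\pa_{x_i}\phi$ $(j\ne i)$ vanish there, and, once $\pa_{x_i}^3\phi\equiv 0$ on $\Gamma_i$ is known, so do $\pa_{x_j}^2\pa_{x_i}^3\phi$ — makes all the remaining terms drop, so \eqref{boundphi} follows (in the pure Boltzmann case $\phi\equiv 0$ and it is vacuous). \emph{The main obstacle} is exactly the sign bookkeeping in the third paragraph: one must track, through the Fa\`a di Bruno expansion of $\pa^\al$ composed with $(\rho,u,\th)(x)$, which derivative blocks hit the normal velocity component $u_i$, and reconcile the parity of that count with $|\al_i|$ by means of the mixed vanishing pattern (odd normal orders annihilate $\rho,\th,u_j$; even normal orders annihilate $u_i$) obtained in the first step; a secondary point requiring care is that the reflection symmetry survives the inversion $L_M^{-1}$, which rests on the orthogonal covariance of the collision operator used in Lemma \ref{Lemspecular}.
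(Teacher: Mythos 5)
Your proposal is correct, and it rests on the same pillars as the paper's own proof — the pointwise reflection identity \eqref{24} from Lemma \ref{Lemspecular}, integration against collision invariants together with the change of variables $v\mapsto R_xv$, the resulting boundary jets of $(\rho,u,\th)$, and the orthogonal covariance of $Q$ already established in the proof of Lemma \ref{Lemspecular} — but you organize the bookkeeping differently. For \eqref{233a} and for the $\G$ part of \eqref{29}, the paper computes the first two normal derivatives of $M$ and of the source $h=v_1M\{\cdots\}$ by hand, checks the sign in each case, and then remarks that the third order is similar; you replace that case-by-case computation with a single Fa\`a di Bruno parity argument (each surviving term picks up $(-1)^p$ with $p\equiv|\al_i|\pmod 2$, forced by the boundary jets), which covers all $|\al|\le3$ at once. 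Likewise, the paper reaches the reflection symmetry of $\G$ by writing $L_M\G=P_1h$, verifying the parity of $h$ and $P_0h$, and then inverting $L_M$ using $Q$-covariance; you instead invoke the intertwining $L_{M_{[\rho,u,\th]}}^{-1}$ with $v\mapsto R_xv$ (at the price $u\mapsto R_xu$) directly on the Burnett functions $A_1,B_{11}$ in the explicit formula \eqref{olG2}. The two routes are equivalent in substance — in both cases the crux is that $Q$-covariance lets the reflection pass through $L_M^{-1}$ — but yours is slightly more conceptual and avoids differentiating $h$ and $P_0h$ separately. One concrete gain: you supply the argument for $\pa_{x_i}^5\phi=0$ in \eqref{boundphi} by differentiating the Poisson equation three times in $x_i$, using $\pa^3_{x_i}a=0$ (from \eqref{24} with $|\al|=3$) and the tangential derivative $\pa_{x_j}^2\pa_{x_i}^3\phi=0$ of the already-established $\pa^3_{x_i}\phi=0$; the paper's printed proof of \eqref{boundphi} only carries out the cubic case explicitly and leaves the quintic case implicit.
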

\begin{proof}
	Noticing $\pa_{x_i}$ is the normal derivative on $\Gamma_i$, we have from \eqref{24} that for $v\cdot n\neq 0$, 
	\begin{align}\label{210}
		\pa^{\al}F_\pm(R_xv) = (-1)^{|\al_i|}\pa^\al F_\pm(v). 
	\end{align}		
	According to \eqref{rhouth} and using changing of variable $v\mapsto R_xv$, we have
	\begin{align}\label{235}
		\pa_{x_i}\rho(x) = \int_{\R^3}\pa_{x_i}F_1(v)\,dv = \int_{\R^3}\pa_{x_i}F_1(R_xv)\,dv = -\int_{\R^3}\pa_{x_i}F_1(v)\,dv =0. 
	\end{align}
	For $j=1,2,3$ with $j\neq i$, we have $(R_xv)_j=v_j$ and hence 
	\begin{align*}
		\pa_{x_i}u_j(x) 
		= \rho^{-1}\int_{\R^3}(R_xv)_j\pa_{x_i}F_1(R_xv)\,dv = -\rho^{-1}\int_{\R^3}v_j\pa_{x_i}F_1(v)\,dv = 0. 
	\end{align*}
	Similarly, using \eqref{210} and $(R_xv)_i=v_i$, we have 
	\begin{equation*}
		u_i(x) 
		= \rho^{-1}\int_{\R^3}(R_xv)_iF_1(R_xv)\,dv = -\rho^{-1}\int_{\R^3}v_iF_1(v)\,dv = 0, 
	\end{equation*}
	and 
	\begin{equation*}
		\pa_{x_ix_i}u_i(x) = \rho^{-1}\int_{\R^3}(R_xv)_i\pa_{x_ix_i}F_1(R_xv)\,dv = -\rho^{-1}\int_{\R^3}v_i\pa_{x_ix_i}F_1(v)\,dv = 0. 
	\end{equation*}
	Consequently, for $\th = e$, we have $\pa_{x_i}\big(\rho(e+\frac{|u|^2}{2})\big) = \rho\pa_{x_i}\th$ on $\Gamma_i$ and hence 
	\begin{align*}
		\pa_{x_i}\th 
		= \rho^{-1}\int_{\R^3}\frac{|R_xv|^2}{2}\pa_{x_i}F_1(R_xv)\,dv = -\rho^{-1}\int_{\R^3}\frac{|v|^2}{2}\pa_{x_i}F_1(v)\,dv = 0. 
	\end{align*}
The boundary values for third-order derivative in \eqref{boundaryrhouth} can be obtain similarly. 
	
	\smallskip
	Using boundary values \eqref{boundaryrhouth} and that $R_xv$ sends $v_i$ to $-v_i$ on $\Gamma_i$, we have 
	$
	M(R_xv) = M(v), 
	$
	\begin{equation*}
		\pa_{x_i}M(R_xv) = \frac{(R_xv)_i\pa_{x_i}u_i}{2R\th}M(R_xv) = - \pa_{x_i}M(v), 
	\end{equation*}
	and 
	\begin{equation*}
		\pa_{x_ix_i}M(R_xv) = \frac{|(R_xv)_i|^2|\pa_{x_i}u_i|^2}{(2R\th)^2}M(R_xv) =\pa_{x_ix_i}M(v). 
	\end{equation*}
Then we obtain \eqref{233a} by taking tangent derivatives. 
	The third-order normal derivative estimates can be derived similarly and we obtain \eqref{233a}. 
	Applying $F_1=M+G$ and \eqref{210}, we know that 
	\begin{align}\label{229a}
		\pa^{\al}G(R_xv) = (-1)^{|\al_i|}\pa^\al G(v).
	\end{align}
	Next we calculate $\ol G=L_M^{-1}P_1h$, where $h=v_1M\big\{\frac{|v-u|^2\bar\th_{x_1}}{2R\th^2}+\frac{(v-u)\cdot \bar{u}_{x_1}}{R\th}\big\}$. 
	Using \eqref{boundaryrhouth} and $\bar{u}_2=\bar{u}_3=0$, we have
	$h(R_xv)=h(v)$,
	\begin{equation*}
		\pa_{x_i}h(R_xv) = \frac{(R_xv)_i\pa_{x_i}u_i}{2R\th}h(R_xv)  +v_1M\frac{((R_xv)_i\pa_{x_i}u_i)\bar\th_{x_1}}{R\th^2} = -\pa_{x_i}h(v), 
	\end{equation*}
	and 
	\begin{equation*}
		\pa_{x_ix_i}h(R_xv) = \frac{|(R_xv)_i\pa_{x_i}u_i|^2}{(2R\th)^2}h(R_xv) +2v_1M\frac{((R_xv)_i\pa_{x_i}u_i)\bar\th_{x_1}}{R\th^2}\frac{(R_xv)_i\pa_{x_i}u_i}{2R\th}
		= \pa_{x_ix_i}h(v). 
	\end{equation*}
	The third-order normal derivative estimates can be derived similarly and one can deduce 
	\begin{align}\label{313}
		\pa^{\al}h(R_xv) = (-1)^{|\al_i|}\pa^\al h(v). 
	\end{align} 
	Using the change of variable $v\mapsto R_xv$, similar to \eqref{235}, 
	one can obtain  
	\begin{align*}
		(h,\frac{\chi_i}{M})_{L^2_v} &= (\pa_{x_i}h,\frac{\chi_j}{M})_{L^2_v} = (\pa_{x_ix_i}h,\frac{\chi_i}{M})_{L^2_v} 
		=0,
	\end{align*} for $1\le j\le 4$ such that $j\neq i$. Then we have 
	\begin{align}\label{P0h}
		\pa^{\al}P_0h(R_xv) = (-1)^{|\al_i|}\pa^\al P_0 h(v). 
	\end{align}
	Thus, using $P_1=I-P_0$ and $L_M\ol G = P_1h$, we have from \eqref{313} and \eqref{P0h} that 
	\begin{align*}
		\pa^{\al}L_M\G(R_xv) = (-1)^{|\al_i|}\pa^\al L_M\G(v). 
	\end{align*}
	Applying the second identity of \eqref{2.10}, we know that 
	\begin{align*}
		L_M(\pa^{\al}\G(R_xv)) = L_M(\pa^{\al}\G)(R_xv) = (-1)^{|\al_i|}L_M(\pa^{\al}\G)(v).
	\end{align*}
	Applying $L_M^{-1}$, we have 
	\begin{align*}
		\pa^{\al}\G(R_xv)=(-1)^{|\al_i|}\pa^{\al}\G(v). 
	\end{align*} 
	Noticing $\sqrt\mu\g=G-\G$ and applying \eqref{229a}, one can obtain \eqref{29}. 
	For \eqref{boundphi}, applying $\pa_{x_i}$ to the third equation of \eqref{1}, we know that 
	\begin{align}\label{345}
		\pa_{x_i}\Delta_x\phi = \int_{\R^3}\pa_{x_i}(F_+-F_-)\,dv = 0,
	\end{align}
	by change of variable $v\mapsto R_xv$. 
	It follows from \eqref{Neumann} that $\pa_{x_i}\phi=0$. Taking tangent derivatives, we have $\pa_{x_ix_jx_j}\phi=0$ for $j\neq i$. Together with \eqref{345}, we have $\pa_{x_ix_ix_i}\phi=0$. 
	This completes the proof of Lemma \ref{LemMacro}.		
\end{proof}

\subsection{Estimate on $L^{-1}_M$}
In this section, we calculate the estimate on $L^{-1}_M$.

\begin{Lem}\label{LemLM}
	Assume $\gamma>-3$ and $\gamma+2s>-\frac{3}{2}$. Let $l\ge 0$ and $w_l=\<v\>^l$. Then 
	\begin{align}\label{LM1a}
		|w_lM^{-1/2}L_M^{-1}f|_{H^s_{\gamma/2}}+|w_{l}\mu^{-1/2}L_M^{-1}f|_{H^s_{\gamma/2}}\lesssim |w_lM^{-1/2}f|_{H^{-s}_{-\gamma/2}}\lesssim |w_{l}\mu^{-\frac{1}{2}}f|_{H^{-s}_{-\gamma/2}}. 
	\end{align}
Moreover, 
for any $i,j=1,2,3$ and $l\ge0$, we have 
\begin{align}
	\label{decayAB}
		\big|w_l\mu^{-1/2}A_j\big(\frac{v-u}{\sqrt{R\th}}\big)\big|_{H^s_{\gamma/2}}^2+\big|w_l\mu^{-1/2}B_{ij}\big(\frac{v-u}{\sqrt{R\th}}\big)\big|_{H^s_{\gamma/2}}^2\le C<\infty. 
\end{align}
\end{Lem}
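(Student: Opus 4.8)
The plan is to derive the first estimate \eqref{LM1a} directly from the coercivity of $L_M$ via a duality argument, and then obtain \eqref{decayAB} as a consequence by applying \eqref{LM1a} to the explicit source terms defining the Burnett functions $A_j$ and $B_{ij}$. First I would record the coercivity/upper-bound estimates for $L_M$ in the weighted dissipation norm: since $M$ is a local Maxwellian uniformly close to the global Maxwellian $\mu$ under the a priori bound \eqref{eta}, the operator $L_M$ satisfies, on $(\ker L_M)^\perp$, a lower bound of the form $-(L_M g, M^{-1} g)_{L^2_v} \gtrsim |M^{-1/2} g|^2_{L^2_D}$ together with the matching upper bound $|(L_M g, M^{-1} h)_{L^2_v}| \lesssim |M^{-1/2} g|_{L^2_D} |M^{-1/2} h|_{L^2_D}$; these transfer to the global Maxwellian weight $\mu^{-1/2}$ because $M^{-1/2}\mu^{1/2}$ and its derivatives are bounded with bounded inverse on the relevant range of $v$, thanks to $\frac32 < \theta(t,x) < 3$ in \eqref{eta}. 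The weighted versions with $w_l = \langle v\rangle^l$ follow from commutator estimates of the type in Lemma \ref{LemGamma}, absorbing the lower-order ball term $|\cdot|^2_{L^2_{B_C}}$ into the dissipation norm on $(\ker L_M)^\perp$.

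Given these, the estimate \eqref{LM1a} is obtained as follows. Write $g = L_M^{-1} f$, so $g \in (\ker L_M)^\perp$ and $L_M g = f$. To bound $|w_l M^{-1/2} g|_{H^s_{\gamma/2}}$, I would test: for an arbitrary $\varphi$,
\begin{align*}
	(w_l M^{-1/2} g, \varphi)_{L^2_v} = (g, w_l M^{-1/2}\varphi)_{L^2_v},
\end{align*}
and use the coercivity to write $|w_l M^{-1/2} g|_{L^2_D}^2 \lesssim -(L_M g, w_l^2 M^{-1} g)_{L^2_v} = (f, w_l^2 M^{-1} g)_{L^2_v} = (w_l M^{-1/2} f, w_l M^{-1/2} g)_{L^2_v}$, and then estimate the right side by $|w_l M^{-1/2} f|_{H^{-s}_{-\gamma/2}} |w_l M^{-1/2} g|_{H^s_{\gamma/2}}$ using the duality pairing of $H^{-s}_{-\gamma/2}$ with $H^s_{\gamma/2}$ together with the norm equivalence $|\cdot|_{L^2_D} \approx |\cdot|_{H^s_{\gamma/2}}$ modulo lower weights from \eqref{123}. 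Dividing through gives $|w_l M^{-1/2} g|_{H^s_{\gamma/2}} \lesssim |w_l M^{-1/2} f|_{H^{-s}_{-\gamma/2}}$; the $\mu^{-1/2}$ version on the left follows by switching weights as above, and the final inequality $|w_l M^{-1/2} f|_{H^{-s}_{-\gamma/2}} \lesssim |w_l \mu^{-1/2} f|_{H^{-s}_{-\gamma/2}}$ is again the weight comparison $M^{-1/2} = (M^{-1/2}\mu^{1/2})\mu^{-1/2}$ with a smooth bounded multiplier.

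For \eqref{decayAB}, I would use \eqref{21}: $A_j(\frac{v-u}{\sqrt{R\theta}}) = L_M^{-1}(\widehat A_j(\frac{v-u}{\sqrt{R\theta}}) M)$ and similarly for $B_{ij}$. Applying \eqref{LM1a} with $f = \widehat A_j(\frac{v-u}{\sqrt{R\theta}}) M$, it suffices to bound $|w_l \mu^{-1/2} \widehat A_j(\frac{v-u}{\sqrt{R\theta}}) M|_{H^{-s}_{-\gamma/2}} \lesssim |w_l \mu^{-1/2} \widehat A_j(\frac{v-u}{\sqrt{R\theta}}) M|_{L^2_v}$, which is finite since $\widehat A_j$ is a polynomial, $M \lesssim \mu^{1-\kappa}$ for some $\kappa>0$ (because $\theta < 3$ keeps the Gaussian decay of $M$ strictly faster than needed relative to $\mu$ — here one uses that $\mu$ is chosen with temperature slightly above $3$, or one simply notes $M/\sqrt{\mu}$ has Gaussian decay under \eqref{eta}), so the polynomial-times-Gaussian weight is integrable against $\langle v\rangle^{2l}$. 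The main obstacle, and the point requiring the most care, is verifying the weight-transfer estimates between $M^{-1/2}$ and $\mu^{-1/2}$ uniformly in $(t,x)$: one must check that the multiplier $M^{-1/2}\mu^{1/2}$, which is a Gaussian with parameters depending on $(\rho,u,\theta)(t,x)$, together with its $v$-derivatives up to order governing the $H^{\pm s}$ norms, is bounded with bounded reciprocal on the support where it matters, and that the a priori smallness \eqref{eta} (in particular $\frac32 < \theta < 3$) indeed guarantees the Gaussian $M^{-1/2}\mu^{1/2}$ does not blow up at infinity — this is exactly why $\mu$ is taken with a fixed temperature comfortably larger than the sup of $\theta$. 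Once that comparison is in hand the rest is routine.
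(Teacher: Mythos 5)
Your outline---establish a weighted coercivity bound for $L_M$ in the $L^2_D$ norm on $(\ker L_M)^\perp$, run the duality argument $|w_lM^{-1/2}L_M^{-1}f|^2_{L^2_D}\lesssim (w_lM^{-1/2}f,\,w_lM^{-1/2}L_M^{-1}f)_{L^2_v}$ to get \eqref{LM1a}, then feed $f=\wh A_j M$, $f=\wh B_{ij}M$ into it for \eqref{decayAB}---is the same architecture as the paper's proof. However, the weight-transfer step, which you yourself flag as the delicate one, contains a real error. You assert that $M^{-1/2}\mu^{1/2}$ is ``bounded with bounded reciprocal.'' Under \eqref{eta} the global Maxwellian $\mu$ has temperature $\theta_\mu$ chosen strictly \emph{below} $\inf\theta$ (so that $R\theta>1$); this is exactly what makes $M^{-1/2}\mu^{1/2}$ bounded, but it forces the reciprocal $M^{1/2}\mu^{-1/2}=(M/\mu)^{1/2}$ to \emph{grow} like a Gaussian as $|v|\to\infty$. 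Hence one cannot pass from the $M^{-1/2}$-weighted to the $\mu^{-1/2}$-weighted norm of $L_M^{-1}f$ by a two-sided multiplier bound, and your parenthetical claim that $\mu$ has ``temperature comfortably larger than the sup of $\theta$'' has the inequality reversed (and is also inconsistent with your other remark that $M\mu^{-1/2}$ decays, which needs $\theta<2\theta_\mu$; both constraints together are why \eqref{eta} imposes $\tfrac32<\theta<3$ with $\theta_\mu=\tfrac32$). The correct fix, which is what the paper does, is a smallness-absorption argument: write $\mu^{-1/2}=M^{-1/2}+\mu^{-1/2}\big(1-\mu^{1/2}M^{-1/2}\big)$, observe that $1-\mu^{1/2}M^{-1/2}$ is $O(\eta_0)$ uniformly in $v$ by \eqref{eta}, so the remainder contributes $\eta_0\,|w_l\mu^{-1/2}L_M^{-1}f|_{H^s_{\gamma/2}}$, and absorb it into the left side by taking $\eta_0$ small. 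The final inequality $|w_lM^{-1/2}f|_{H^{-s}_{-\gamma/2}}\lesssim|w_l\mu^{-1/2}f|_{H^{-s}_{-\gamma/2}}$ then needs only the one-sided bound $M^{-1/2}\mu^{1/2}\le C$, with no reciprocal.

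A second, smaller gap: the coercivity $-\big(L_Mg,\,w_l^2M^{-1}g\big)_{L^2_v}\gtrsim|w_lM^{-1/2}g|^2_{L^2_D}$ for microscopic $g$, free of low-velocity remainders, does not come from Lemma \ref{LemGamma} (which concerns $\L$, $\L_2$, $\Gamma$ built from $\mu$, not from the local Maxwellian $M$). The paper obtains it by first invoking the spectral-gap coercivity $-\big(M^{-1/2}L_M(M^{1/2}h),h\big)_{L^2_v}\gtrsim|\<v\>^{\gamma/2}\{I-P_3\}h|^2_{L^2_v}$ around the local Maxwellian from \cite{Mouhot2006a}, then the $L^2_D$-coercivity with a $-C|h|^2_{L^2_{\gamma/2}}$ remainder adapted from \cite{Alexandre2012} and its weighted analogue from \cite{Gressman2011}, and finally taking a small linear combination so that the remainder ball terms are absorbed by the spectral gap. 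Your sketch omits how those remainders are killed, and the spectral-gap input is essential for that.
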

\begin{proof}
We consider operator $M^{-\frac{1}{2}}L_M\big(M^{\frac{1}{2}}h\big)$, whose null space is given by $$\text{Span}\{M^{\frac{1}{2}},v_iM^{1/2}(i=1,2,3),|v|^2M^{\frac{1}{2}}\}.$$
%
Recalling \eqref{mathfrackN}, we denote $P_3$ to be the projection from $L^2_v$ to 
this null space as 
\begin{align*}
	P_3 f = \sum^4_{i=0}\big(f,\frac{\chi_i}{M^{\frac{1}{2}}}\big)_{L^2_v}\,\frac{\chi_i}{M^{\frac{1}{2}}} = M^{-\frac{1}{2}}P_0(M^{\frac{1}{2}}f). 
\end{align*}
Denote $h$ such that $h=\{I-P_3\}h$. 
Then it follows from \cite[Theorem 1.1]{Mouhot2006a} that 
\begin{align*}
	-\big(M^{-\frac{1}{2}}L_M\big(M^{\frac{1}{2}}h\big),h\big)_{L^2_v}\gtrsim |\<v\>^{\frac{\gamma}{2}}\{I-P_3\}h|_{L^2_v}^2. 
\end{align*}
Following similar arguments as in \cite[Lemma 2.14, Proposition 2.16]{Alexandre2012}, we can obtain the coercive estimates in the form of local Maxwellian:
\begin{align*}
	-\big(M^{-\frac{1}{2}}L_M\big(M^{\frac{1}{2}}h\big),h\big)_{L^2_v}\gtrsim |h|_{L^2_D}^2 - C|h|_{L^2_{\gamma/2}}^2. 
\end{align*}
Combining the above two estimates, we have 
\begin{align}\label{317}
	-\big(M^{-\frac{1}{2}}L_M\big(M^{\frac{1}{2}}h\big),h\big)_{L^2_v}\gtrsim |\{I-P_3\}h|_{L^2_D}^2. 
\end{align}
Using similar calculations \cite[Lemma 2.6]{Gressman2011}, one can obtain the estimate for $h=\{I-P_3\}h$:
\begin{align}
	\label{317a}
	-\big(w^2_lM^{-\frac{1}{2}}L\big(M^{\frac{1}{2}}h\big),h\big)_{L^2_v}\gtrsim |w_lh|_{L^2_D}^2 - C|h|_{L^2_{\gamma/2}}^2. 
\end{align}
For any $K>0$, taking linear combination $\eqref{317}+\kappa\times\eqref{317a}$ with small enough $\kappa>0$ and denoting $g=P_1M^{\frac{1}{2}}h=M^{\frac{1}{2}}\{I-P_3\}h$, we have 
\begin{align}\label{439}
	-\kappa(w^2_{l}M^{-\frac{1}{2}}L_Mg,M^{-\frac{1}{2}}g\big)_{L^2_v}-(L_Mg,M^{-1}g)_{L^2_v}
	\gtrsim |w_{l} h|_{L^2_D}^2 = |w_{l} M^{-\frac{1}{2}}g|_{L^2_D}^2.
\end{align}
Then \eqref{439} and \eqref{123} imply that for $g=P_1g$, 
\begin{align*}
	|w_{l} M^{-\frac{1}{2}}g|_{H^s_{\gamma/2}}^2
	&\lesssim |w_{l}M^{-\frac{1}{2}}L_Mg|_{H^{-s}_{-\gamma/2}}|w_{l}M^{-\frac{1}{2}}g|_{H^s_{\gamma/2}}, 
\end{align*}
and thus, for $g=L_M^{-1}f$, 
\begin{align*}
	|w_{l} M^{-\frac{1}{2}}L_M^{-1}f|_{H^s_{\gamma/2}}
	\lesssim |w_{l}M^{-\frac{1}{2}}f|_{H^{-s}_{-\gamma/2}}.
\end{align*}
This gives the first part of \eqref{LM1a}.  
Using \eqref{eta}, we have $R\th>1$ and hence, $M^{-\frac{1}{2}}\mu^{\frac{1}{2}}\le C$. It follows that 
\begin{align*}
	|w_{l} \mu^{-\frac{1}{2}}L_M^{-1}f|_{H^s_{\gamma/2}}
	&\lesssim |w_{l} M^{-\frac{1}{2}}L_M^{-1}f|_{H^s_{\gamma/2}} + \big|w_{l} \frac{M^{\frac{1}{2}}-\mu^{\frac{1}{2}}}{\mu^{\frac{1}{2}}M^{\frac{1}{2}}}L_M^{-1}f\big|_{H^s_{\gamma/2}}\\
	&\lesssim |w_{l}M^{-\frac{1}{2}}f|_{H^{-s}_{-\gamma/2}} + \eta_0|w_{l} \mu^{-\frac{1}{2}}L_M^{-1}f|_{H^s_{\gamma/2}}.
\end{align*}
Choosing $\eta_0>0$ in \eqref{eta} sufficiently small, we obtain 
\begin{align*}
		|w_{l}\mu^{-\frac{1}{2}}L_M^{-1}f|_{H^s_{\gamma/2}}\lesssim |w_{l}M^{-\frac{1}{2}}f|_{H^{-s}_{-\gamma/2}}\lesssim |w_{l}\mu^{-\frac{1}{2}}f|_{H^{-s}_{-\gamma/2}}. 
\end{align*}
As a consequence, we can obtain the estimate on Burnett functions $A_j$ and $B_{ij}$ from \eqref{21}: 
\begin{align*}
	\big|w_l\mu^{-1/2}A_j\big(\frac{v-u}{\sqrt{R\th}}\big)\big|_{H^s_{\gamma/2}}^2+\big|w_l\mu^{-1/2}B_{ij}\big(\frac{v-u}{\sqrt{R\th}}\big)\big|_{H^s_{\gamma/2}}^2\le C<\infty. 
\end{align*} 
This completes the proof of Lemma \ref{LemLM}. 
\end{proof}

\subsection{Estimates on Collision Terms}
In this section, we will give some estimates on collision terms.   
\begin{Lem}\label{Lem24}
	If we choose $\eta_0>0$ in \eqref{eta} and $\delta>0$ in \eqref{delta} small enough, then for $|\al|\le 3$ and any function $g,h$, we have 
	\begin{multline}\label{220b}
		\big|\big(\pa^\al \Gamma\big(g,\frac{M-\mu}{\sqrt{\mu}}\big),w^2(\al) h\big)_{L^2_{x,v}}\big|+\big|\big(\pa^\al \Gamma\big(\frac{M-\mu}{\sqrt{\mu}},g\big),w^2(\al) h\big)_{L^2_{x,v}}\big|\\
		\lesssim \eta_0\|w(\al)\pa^{\al}g\|_{L^2_xL^2_D}\|w(\al)h\|_{L^2_xL^2_D}\\ + \big(\eta_0+\delta^{\frac{1}{3}}+\sqrt{\E_k(t)}\big)\sum_{|\al_1|\le3}\|w(\al_1)\pa^{\al_1}g\|_{L^2_xL^2_D}\|w(\al)h\|_{L^2_xL^2_D},
	\end{multline}
	 where $\E_k(t)$ is given by \eqref{E}. 
\end{Lem}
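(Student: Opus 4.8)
The plan is to estimate the term $\Gamma\bigl(g,\tfrac{M-\mu}{\sqrt\mu}\bigr)$ by distributing the derivative $\pa^\al$ via the Leibniz rule and then invoking the bilinear estimate \eqref{Gamma1} of Lemma \ref{LemGamma} (the non-cutoff $\Gamma$-estimate in weighted $L^2_D$), with the gain coming from the smallness of $\tfrac{M-\mu}{\sqrt\mu}$ and its derivatives. The key structural observation is that $\tfrac{M-\mu}{\sqrt\mu}$ itself is controlled in weighted velocity norms by $\eta_0$ (since $M=M_{[\rho,u,\th]}$ and $\mu$ is the global Maxwellian, and by \eqref{eta} the macroscopic quantities $(\rho,u,\th)$ stay within $\eta_0$ of $(1,0,\tfrac32)$, with $R\th>1$ so that $\mu^{-1/2}M$ decays like a Gaussian), while its $x$-derivatives of order $|\al_1|\ge1$ fall on the profile $(\bar\rho,\bar u,\bar\th)$ or the perturbation $(\wt\rho,\wt u,\wt\th)$, hence are bounded by $\delta^{1/3}(1+t)^{-2/3}$ (Lemma \ref{Lem21}) plus $\sqrt{\E_k(t)}$ in the relevant Sobolev norm.

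The steps, in order. First, apply $\pa^\al$ and the Leibniz rule to split into the ``main'' term where all derivatives hit $g$, i.e. $\Gamma\bigl(\pa^\al g,\tfrac{M-\mu}{\sqrt\mu}\bigr)$, and ``commutator'' terms $\Gamma\bigl(\pa^{\al_1}g,\pa^{\al-\al_1}\tfrac{M-\mu}{\sqrt\mu}\bigr)$ with $\al_1<\al$. Second, for the main term, pair with $w^2(\al)h$ and use \eqref{Gamma1}: since $\gamma\ge0$ or $\gamma+2s\ge0$ in the regimes considered, we get a bound
\begin{align*}
\bigl|\bigl(\Gamma(\pa^\al g,\tfrac{M-\mu}{\sqrt\mu}),w^2(\al)h\bigr)_{L^2_v}\bigr|\lesssim \bigl(|\<v\>^{\frac{\gamma+2s}{2}}\tfrac{M-\mu}{\sqrt\mu}|_{L^2_v}\,|w(\al)\pa^\al g|_{L^2_D}+|\<v\>^{\frac{\gamma+2s}{2}}w(\al)\tfrac{M-\mu}{\sqrt\mu}|_{L^2_v}\,|\pa^\al g|_{L^2_D}\bigr)|w(\al)h|_{L^2_D},
\end{align*}
then estimate $|\<v\>^{\frac{\gamma+2s}{2}}w(\al)\tfrac{M-\mu}{\sqrt\mu}|_{L^2_v}\lesssim\eta_0$ using the pointwise smallness of $M-\mu$ from \eqref{eta} together with the Gaussian decay of $\mu^{-1/2}M$, and integrate in $x$ with Cauchy–Schwarz; the symmetric term $\Gamma\bigl(\tfrac{M-\mu}{\sqrt\mu},g\bigr)$ is handled identically. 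Third, for each commutator term with $|\al-\al_1|\ge1$, the factor $\pa^{\al-\al_1}\tfrac{M-\mu}{\sqrt\mu}$ involves $x$-derivatives of $(\bar\rho,\bar u,\bar\th)$ and of $(\wt\rho,\wt u,\wt\th)$; pull the $L^\infty_x$ norm of the lowest-order factor via the Gagliardo–Nirenberg embeddings of Lemma \ref{Lemembedd} (using $|\al|\le3$ so at least one factor has at most one derivative beyond the baseline regularity), bound $\|\pa^{\al-\al_1}(\bar\rho,\bar u,\bar\th)\|_{L^\infty_x}\lesssim\delta^{1/3}(1+t)^{-2/3}$ by Lemma \ref{Lem21}(ii)–(iii) and $\|\pa^{\al-\al_1}(\wt\rho,\wt u,\wt\th)\|\lesssim\sqrt{\E_k(t)}$, and absorb the remaining $g$-derivative into $\sum_{|\al_1|\le3}\|w(\al_1)\pa^{\al_1}g\|_{L^2_xL^2_D}$ and the $h$-factor into $\|w(\al)h\|_{L^2_xL^2_D}$ via \eqref{Gamma1} again. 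Finally, collect: the main terms give the $\eta_0\|w(\al)\pa^\al g\|_{L^2_xL^2_D}\|w(\al)h\|_{L^2_xL^2_D}$ contribution, and the commutators give the $(\eta_0+\delta^{1/3}+\sqrt{\E_k(t)})\sum_{|\al_1|\le3}\|w(\al_1)\pa^{\al_1}g\|\|w(\al)h\|$ contribution.

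The main obstacle is bookkeeping the weights: the weight $w(\al)=\<v\>^{k-|\al|+2}$ decreases in $|\al|$, so in a commutator term $\Gamma\bigl(\pa^{\al_1}g,\cdot\bigr)$ the natural weight $w(\al)$ associated to $h$ is \emph{smaller} than $w(\al_1)$ would be for $\pa^{\al_1}g$ — but this is the favorable direction, so $w(\al)\le w(\al_1)$ lets us freely replace $w(\al)$ by $w(\al_1)$ on the $g$-factor, at the cost of noting that the Gaussian factor $\tfrac{M-\mu}{\sqrt\mu}$ absorbs any polynomial weight. The only genuinely delicate point is ensuring that when $|\al_1|\le 3$ the $L^\infty_x$-$L^2_v$ control of one factor is available without exceeding the $H^3_x$ budget; here one uses that $|\al|\le3$ forces the complementary index to be low, and applies \eqref{Gag} or \eqref{Gag1} accordingly. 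This gives the claimed bound, completing the proof of Lemma \ref{Lem24}.
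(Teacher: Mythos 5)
Your proposal is correct and follows essentially the same route as the paper's proof: apply the Leibniz rule, invoke \eqref{Gamma}/\eqref{Gamma1} from Lemma \ref{LemGamma}, bound the undifferentiated factor $\tfrac{M-\mu}{\sqrt\mu}$ by $\eta_0$ (using \eqref{eta} and the Gaussian decay of $\mu^{-1/2}M$ coming from $1<R\theta<2$), split the commutator factors into profile $(\bar\rho,\bar u,\bar\theta)$ and perturbation $(\wt\rho,\wt u,\wt\theta)$ pieces, and close with $L^\infty_x$-$L^2_x$ and $L^3_x$-$L^6_x$ H\"older plus Lemma \ref{Lemembedd}, keeping track of the monotonicity of $w(\cdot)$ in $|\alpha|$. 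The only small imprecision is the aside that ``$\gamma+2s\ge0$ in the regimes considered'' — the Boltzmann case allows $\gamma+2s<0$, which is why the paper works with $\nu(v)=\min\{1,\<v\>^{(\gamma+2s)/2}\}$ and estimate \eqref{Gamma1}; but since you do in fact invoke \eqref{Gamma1}, this does not affect the argument.
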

\begin{proof}
	For brevity, we denote $\nu(v)=\min\{1,\<v\>^{\frac{\gamma+2s}{2}}\}$. From \eqref{Gamma} and \eqref{Gamma1}, 
	we have 
	\begin{align}\label{212}\notag
		&\quad\,\big|\big(\pa^\al \Gamma\big(g,\frac{M-\mu}{\sqrt{\mu}}\big),w^2(\al) h\big)_{L^2_{x,v}}\big|+\big|\big(\pa^\al \Gamma\big(\frac{M-\mu}{\sqrt{\mu}},g\big),w^2(\al) h\big)_{L^2_{x,v}}\big|\\
		&\lesssim\notag \sum_{\alpha_1\le\al}\Big(\int_\Omega|\nu(v)w(\al)\pa^{\al-\al_1}g|^2_{L^2_v}\big|w(\al)\pa^{\al_1}(\frac{M-\mu}{\sqrt{\mu}})\big|^2_{L^2_D}\\
		&\qquad\qquad+\big|\nu(v)w(\al)\pa^{\al_1}\big(\frac{M-\mu}{\sqrt{\mu}}\big)\big|^2_{L^2_v}|w(\al)\pa^{\al-\al_1}g|^2_{L^2_D}\,dx\Big)^{\frac{1}{2}}\|w(\al) h\|_{L^2_xL^2_D}.
	\end{align}
	For the term with $\frac{M-\mu}{\sqrt{\mu}}$, 
	it follows from \eqref{123} that 
	\begin{align}\label{325a}
		|w(\al)\pa^{\al_1}(\frac{M-\mu}{\sqrt{\mu}})|^2_{L^2_D}+|\nu(v)w(\al)\pa^{\al_1}(\frac{M-\mu}{\sqrt{\mu}})|^2_{L^2_v}\lesssim \sum_{|\beta|\le 2}\int_{\R^3}\<v\>^b|\pa^{\al_1}_{\beta}(\frac{M-\mu}{\sqrt{\mu}})|^2\,dv, 
	\end{align}
	for some large $b>0$. 
	We first consider the case $|\al_1|=0$. 
	From \eqref{eta}, we have $1<R\th<2$ and thus, $\|\<v\>^be^{-\frac{|v-u|^2}{2R\th}+\frac{|v|^2}{4}}\|_{L^2_v}\le C<\infty$. 
	Then the integration on the right hand side of \eqref{325a} is finite and hence, there exists $R>0$ such that 
	\begin{align*}
		\int_{|v|\ge R}\<v\>^b|\pa_{\beta}(\frac{M-\mu}{\sqrt{\mu}})|^2\,dv\le \eta_0^2.
	\end{align*}
	For the low velocity part, we have 
	\begin{align*}
		\int_{|v|\le R}\mu^{-\ve_0}\big|\pa_{\beta}\big(\frac{M-\mu}{\sqrt{\mu}}\big)\big|^2\,dv
		&\lesssim |(\rho-1,u,\theta-\frac{3}{2})|^2.
	\end{align*}
	Combining the above estimates, we have that when $|\al_1|=0$, 
	\begin{align}\label{33}
		\|w(\al)(\frac{M-\mu}{\sqrt{\mu}})\|^2_{L^\infty_xL^2_D}+\|\nu(v)w(\al)(\frac{M-\mu}{\sqrt{\mu}})\|^2_{L^\infty_xL^2_v}\lesssim\eta_0^2.
	\end{align}
For the case $|\al_1|\ge 1$, we have $\pa^{\al_1}(M-\mu)=\pa^{\al_1}M$ and hence, 
\begin{align*}
	\sum_{|\beta|\le 2}|\<v\>^b\pa^{\al_1}_{\beta}(\frac{M-\mu}{\sqrt{\mu}})|^2_{L^2_v}\lesssim \sum_{\al'_1+\cdots+\al'_K=\al_1}|\pa^{\al'_1}(\rho,u,\th)|^{\text{sgn}(\al'_1)}\cdots|\pa^{\al'_K}(\rho,u,\th)|^{\text{sgn}(\al'_K)}, 
\end{align*}
where $K=|\al_1|$, $\text{sgn}(\al'_j)$ equal to $1$ if $|\al'_j|\ge 1$ and equal to $0$ if $|\al'_j|=0$. 
Thus, 
\begin{multline*}
	\sum_{|\beta|\le 2}\|\<v\>^b\pa^{\al_1}_{\beta}(\frac{M-\mu}{\sqrt\mu})\|^2_{L^2_xL^2_{v}}\lesssim
	\|\pa^{\al_1}(\rho,u,\th)\|_{L^2_x}^2 
	+\sum_{\substack{\al'_1+\al'_2=\al_1\\|\al'_1|\ge 1,\,|\al'_2|\ge 1}}\|\pa^{\al'_1}(\rho,u,\th)\|_{L^6_x}^2\|\pa^{\al'_2}(\rho,u,\th)\|^2_{L^3_x}\\
	\lesssim \|\pa^{\al_1}(\wt\rho,\wt u,\wt\th)\|_{L^2_x}^2 + \E_k(t)\min\{\D_k(t),\E_k(t)\}+\delta^{\frac{2}{3}}(1+t)^{-\frac{4}{3}},
\end{multline*}
where we decompose $(\rho,u,\th)=(\wt\rho,\wt u,\wt\th)+(\bar\rho,\bar u,\bar\th)$ and use Lemma \ref{Lem21} and \eqref{LL31} to estimate the second part. 
 This implies 
\begin{multline}\label{333}
	\|w(\al)\pa^{\al_1}(\frac{M-\mu}{\sqrt{\mu}})\|^2_{L^2_xL^2_D}+\|\nu(v)w(\al)\pa^{\al_1}(\frac{M-\mu}{\sqrt{\mu}})\|^2_{L^2_xL^2_v}\\
	\lesssim \|\pa^{\al_1}(\wt\rho,\wt u,\wt\th)\|_{L^2_x}^2 + \E_k(t)\min\{\D_k(t),\E_k(t)\}+\delta^{\frac{2}{3}}(1+t)^{-\frac{4}{3}}. 
\end{multline}
Now we return to \eqref{212}. Applying $L^2_x-L^\infty_x$ and $L^3-L^6$ H\"{o}lder's inequality and Sobolev embedding in Lemma \ref{Lemembedd}, 
	we have 
	\begin{align}\label{217}\notag
		&\quad\,\Big(\int_\Omega|\nu(v)w(\al)\pa^{\al-\al_1}g|^2_{L^2_v}|w(\al)\pa^{\al_1}(\frac{M-\mu}{\sqrt{\mu}})|^2_{L^2_D}\,dx\Big)^{\frac{1}{2}}\\
		&\notag\lesssim \sum_{|\al_1|=0}\|\nu(v)w(\al)\pa^{\al}g\|_{L^2_xL^2_v}\|w(\al)(\frac{M-\mu}{\sqrt{\mu}})\|_{L^\infty_xL^2_D}\\
		&\qquad\notag+\sum_{|\al_1|=1}\|\nu(v)w(\al)\pa^{\al-\al_1}g\|_{L^6_xL^2_v}\|w(\al)\pa^{\al_1}(\frac{M-\mu}{\sqrt{\mu}})\|_{L^3_xL^2_D}\\
		&\notag\qquad+\sum_{2\le|\al_1|\le 3} \|\nu(v)w(\al)\pa^{\al-\al_1}g\|_{L^\infty_xL^2_v}\|w(\al)\pa^{\al_1}(\frac{M-\mu}{\sqrt{\mu}})\|_{L^2_xL^2_D}\\
		&\lesssim \eta_0\|\nu(v)w(\al)\pa^{\al}g\|_{L^2_xL^2_v} + \big(\eta_0+\delta^{\frac{1}{3}}+\sqrt{\E_k(t)}\big)\sum_{|\al_1|\le3}\|\nu(v)w(\al_1)\pa^{\al_1}g\|_{L^2_xL^2_v}.  
	\end{align}	
Note that in the summation in \eqref{217}, we always have $|\al|\ge |\al_1|$ and the weight index $w(\al_1)$ matches derivative index $\pa^{\al_1}$. Similarly, 
	\begin{align*}
		&\Big(\int_\Omega|\nu(v)w(\al)\pa^{\al_1}(\frac{M-\mu}{\sqrt{\mu}})|^2_{L^2_v}|w(\al)\pa^{\al-\al_1}g|^2_{L^2_D}\,dx\Big)^{\frac{1}{2}}\\
		&\quad\lesssim  \eta_0\|w(\al)\pa^{\al}g\|_{L^2_xL^2_D} + \big(\eta_0+\delta^{\frac{1}{3}}+\sqrt{\E_k(t)}\big)\sum_{|\al_1|\le3}\|w(\al_1)\pa^{\al_1}g\|_{L^2_xL^2_D}. 
	\end{align*}
	Therefore, noticing $|\nu(v)(\cdot)|_{L^2_v}\lesssim |\cdot|_{L^2_D}$ for both hard and soft potential, \eqref{212} becomes 
	\begin{align*}
		&\big|\big(\pa^\al \Gamma\big(g,\frac{M-\mu}{\sqrt{\mu}}\big),w^2(\al) h\big)_{L^2_{x,v}}\big|+\big|\big(\pa^\al \Gamma\big(\frac{M-\mu}{\sqrt{\mu}},g\big),w^2(\al) h\big)_{L^2_{x,v}}\big|\\
		&\quad\lesssim\eta_0\|w(\al)\pa^{\al}g\|_{L^2_xL^2_D}\|w(\al)h\|_{L^2_xL^2_D}\\&\qquad\quad + \big(\eta_0+\delta^{\frac{1}{3}}+\sqrt{\E_k(t)}\big)\sum_{|\al_1|\le3}\|w(\al_1)\pa^{\al_1}g\|_{L^2_xL^2_D}\|w(\al)h\|_{L^2_xL^2_D}. 
	\end{align*}
	 This completes the proof of Lemma \ref{Lem24}.	
\end{proof}

Next we consider the estimates on the nonlinear term $\Gamma(\mu^{-\frac{1}{2}}G,\mu^{-\frac{1}{2}}G)$ and $\Gamma(\mu^{-\frac{1}{2}}G,\f)$. 
\begin{Lem}\label{Lem25}
	Assume the same conditions as in Lemma \ref{Lem24}. Then 
	\begin{multline}
		\label{220}
		\big|\big(\pa^\al\Gamma(\mu^{-\frac{1}{2}}G,\mu^{-\frac{1}{2}}G),w^2(\al)\pa^\al\g\big)_{L^2_{x,v}}\big|+\big|\big(\pa^\al\Gamma(\mu^{-\frac{1}{2}}G,\f),w^2(\al)\pa^\al\f\big)_{L^2_{x,v}}\big|\\
		\lesssim \big(\delta^{\frac{1}{2}}+\sqrt{\E_k(t)}\big)\D_k(t)+\delta(1+t)^{-2}, 
	\end{multline}
	where $\E_k(t)$ and $\D_k(t)$ are given by \eqref{E} and \eqref{D} respectively. 
\end{Lem}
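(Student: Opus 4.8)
The plan is to write $\mu^{-1/2}G=\g+\mu^{-1/2}\ol G$, as in \eqref{g}, with $\ol G$ given explicitly by \eqref{olG2}, and use bilinearity: $\Gamma(\mu^{-1/2}G,\mu^{-1/2}G)$ is the sum of $\Gamma(\g,\g)$, the two cross terms $\Gamma(\g,\mu^{-1/2}\ol G)$ and $\Gamma(\mu^{-1/2}\ol G,\g)$, and the pure wave term $\Gamma(\mu^{-1/2}\ol G,\mu^{-1/2}\ol G)$, while $\Gamma(\mu^{-1/2}G,\f)=\Gamma(\g,\f)+\Gamma(\mu^{-1/2}\ol G,\f)$ (the $\f$-slot carries no $\ol G$, so there is no pure wave term here). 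On each piece I apply the Leibniz rule to $\pa^\al$, $|\al|\le 3$, and estimate term by term using Lemma \ref{LemGamma}, i.e. \eqref{Gamma1} together with the dual estimate \eqref{2134} (or \eqref{Gamma} in the hard-potential case), always reducing $|\nu(v)\cdot|_{L^2_v}$ to $|\cdot|_{L^2_D}$ via \eqref{123} so that everything is measured in the dissipation norm and the argument is uniform in $\gamma+2s$.

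For the genuinely nonlinear pieces $\Gamma(\g,\g)$ and $\Gamma(\g,\f)$, after Leibniz one factor carries $\pa^{\al_1}$ and the other $\pa^{\al-\al_1}$; since $|\al|\le 3$, one of $|\al_1|,|\al-\al_1|$ is $\le 1$, so that factor is placed in $L^\infty_x$ via \eqref{Gag} and the other is kept in $L^2_xL^2_D$, with the $|\al|=2$ middle case handled by an $L^6$–$L^3$–$L^2$ Hölder split using \eqref{GagL3} and \eqref{GagL6}; the weight monotonicity $w(\al)\le w(\al_1)$ for $\al_1\le\al$ (recall $w(\al)=\<v\>^{k-|\al|+2}$ with $k\ge 0$) lets one match each weight to the correct derivative count, exactly as in the proof of Lemma \ref{Lem24}. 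Invoking the a priori bound \eqref{priori2} to absorb the $L^\infty_x$-factor as $\sqrt{\E_k(t)}$, and using that both $\|w(\al)\pa^\al\g\|_{L^2_xL^2_D}$ and $\|w(\al)\pa^\al\f\|_{L^2_xL^2_D}$ enter $\D_k(t)$, these terms are bounded by $\sqrt{\E_k(t)}\,\D_k(t)$.

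For the cross terms I use that, by \eqref{olG2}, the chain rule, and the Burnett bounds \eqref{decayAB}, every derivative $\pa^{\al_1}(\mu^{-1/2}\ol G)$ is a finite sum of terms carrying the small amplitude factor $(\bar\th_{x_1},\bar u_{1x_1})$ — or a higher wave derivative $\pa^j_{x_1}(\bar\rho,\bar u,\bar\th)$, $j\ge 2$ — times velocity functions of the type $w_l\mu^{-1/2}A_1(\tfrac{v-u}{\sqrt{R\th}})$, $w_l\mu^{-1/2}B_{11}(\tfrac{v-u}{\sqrt{R\th}})$, and possibly extra $\pa(\wt\rho,\wt u,\wt\th)$- or $\pa(\bar\rho,\bar u,\bar\th)$-factors. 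Since by Lemma \ref{Lem21} all wave derivatives obey $\|\pa_{x_1}(\bar\rho,\bar u,\bar\th)\|_{L^\infty_x},\|\pa^j_{x_1}(\bar\rho,\bar u,\bar\th)\|_{L^\infty_x}\lesssim\min\{\delta,(1+t)^{-1}\}\le\delta^{1/2}$, I place the wave factor in $L^\infty_x$ and the $\g,\f$ (and any $\pa(\wt\rho,\wt u,\wt\th)$) factors in $L^2_x$ or $L^6_x$; applying Lemma \ref{LemGamma}, Cauchy–Schwarz in $x$, the embeddings of Lemma \ref{Lemembedd}, and \eqref{priori2}, the cross terms are controlled by $(\delta^{1/2}+\delta^{1/2}\sqrt{\E_k(t)})\D_k(t)\lesssim\delta^{1/2}\D_k(t)$. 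Finally, for the pure wave term $\Gamma(\mu^{-1/2}\ol G,\mu^{-1/2}\ol G)$ paired with $w^2(\al)\pa^\al\g$, \eqref{2134} and \eqref{decayAB} reduce it to $\lesssim\int_\Omega|(\text{products of wave derivatives})|\,|w(\al)\pa^\al\g|_{L^2_D}\,dx$; by Cauchy–Schwarz and a $\delta$-weighted Young inequality, together with the sharp $L^q_{x_1}$ decay of Lemma \ref{Lem21} (in particular $\|\pa_{x_1}\bar w\|_{L^4_{x_1}}\lesssim\delta^{1/4}(1+t)^{-3/4}$ and the $\min\{\delta,(1+t)^{-1}\}$ bounds for $j\ge 2$), this is $\lesssim\delta^{1/2}\D_k(t)+\delta(1+t)^{-2}$. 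Collecting all contributions and summing over $|\al|\le 3$ yields \eqref{220}; the $\f$-version is identical, with $\Gamma(\g,\f)$ giving $\sqrt{\E_k}\D_k$ and $\Gamma(\mu^{-1/2}\ol G,\f)$ giving $\delta^{1/2}\D_k$.

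The main obstacle is the bookkeeping in the cross and wave terms: one must simultaneously (i) track which factor after Leibniz receives how many derivatives and match the weight index in $w(\al)$ accordingly; (ii) always keep at least one small wave-amplitude factor and retain enough of its $\delta$-smallness — through the uniform bound $\min\{\delta,(1+t)^{-1}\}\le\delta^{1/2}$ and the $\delta$-weighted Young inequality — so that the cross and wave terms close as $\delta^{1/2}\D_k(t)$ rather than merely a fixed constant times $\D_k(t)$; and (iii) keep all estimates in the dissipation norm $|\cdot|_{L^2_D}$, using \eqref{2134} and $|\nu(v)\cdot|_{L^2_v}\lesssim|\cdot|_{L^2_D}$, so that the soft-potential range is treated uniformly with the hard-potential one.
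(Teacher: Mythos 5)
Your decomposition and estimation strategy is correct and essentially equivalent to the paper's, but it is organized differently. The paper first establishes a single reusable trilinear bound (its \eqref{332a})
\begin{equation*}
|(\pa^\al \Gamma(f,g), w^2(\al)h)_{L^2_{x,v}}|\lesssim \sum_{|\al_1|\le 3}\|\nu w(\al_1)\pa^{\al_1}f\|_{L^2_xL^2_v}\sum_{|\al_1|\le3}\|w(\al_1)\pa^{\al_1}g\|_{L^2_xL^2_D}\|w(\al)h\|_{L^2_xL^2_D},
\end{equation*}
in which the $L^\infty_x$--$L^2_x$ and $L^3_x$--$L^6_x$ H\"older placements have already been converted into sums of pure $L^2_x$-norms via Lemma \ref{Lemembedd}, and then substitutes $\mu^{-1/2}G=\g+\mu^{-1/2}\ol G$ all at once, absorbing the wave part through the $L^2_x$-decay estimates \eqref{34}--\eqref{344}, i.e.\ $\|\<v\>^b\mu^{-1/2}\pa^{\al'}_\beta\ol G\|_{L^2_xL^2_v}\lesssim\delta^{1/2}(1+t)^{-1/2}$. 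You instead split $\Gamma$ by bilinearity into four pieces before applying any velocity--space estimate. Both routes yield the same product $\big(\delta^{1/2}(1+t)^{-1/2}+\sqrt{\E_k}\big)\big(\delta^{1/2}(1+t)^{-1/2}+\sqrt{\D_k}\big)\sqrt{\D_k}$, and hence $(\delta^{1/2}+\sqrt{\E_k})\D_k+\delta(1+t)^{-2}$ after the $\delta$-weighted Young step you describe. What the paper's phrasing buys is uniformity: it never has to decide case-by-case which factor of $\ol G$ goes in $L^\infty_x$; what your phrasing buys is transparency about exactly where the $\delta$-smallness and $(1+t)^{-2}$ decay originate.

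Two small cautions. First, the blanket statement ``I place the wave factor in $L^\infty_x$'' needs the refinement you already hint at for $\pa(\wt u,\wt\th)$ sub-factors: when $\pa^{\al-\al_1}(\mu^{-1/2}\ol G)$ carries $2$ or $3$ derivatives, the chain rule produces terms like $\bar\th_{x_1}\cdot\pa^{\le 3}(\wt u,\wt\th)$, and only the pure wave factor $\bar\th_{x_1}$ may go in $L^\infty_{x_1}$; the $\pa^{\le 3}(\wt u,\wt\th)$ piece must stay in $L^2_x$ (it is not $L^\infty$-controllable from $\E_k$), so in those cases the whole $\ol G$-slot ends up measured in $L^2_x$ exactly as in the paper's \eqref{34}--\eqref{344}. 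Second, for the pure wave term, the sharp placement is $L^\infty_{x_1}\times L^2_{x_1}\times L^2_x$: this gives $\min\{\delta,(1+t)^{-1}\}\cdot\delta^{1/2}(1+t)^{-1/2}\cdot\sqrt{\D_k}\lesssim\delta(1+t)^{-1}\sqrt{\D_k}$, and then Young with weight $\delta^{-1/2}$ yields $\delta^{1/2}\D_k+\delta^{3/2}(1+t)^{-2}$, matching \eqref{220}. The $L^4_{x_1}\times L^4_{x_1}\times L^2_x$ split you also mention only gives $\delta^{1/2}(1+t)^{-3/2}\sqrt{\D_k}$, and after the same Young step this becomes $\delta^{1/2}\D_k+\delta^{1/2}(1+t)^{-3}$, whose time-independent prefactor $\delta^{1/2}$ is too large to be absorbed into the stated $\delta(1+t)^{-2}$; drop the $L^4$ route and keep $L^\infty$--$L^2$.
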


\begin{proof}
	Denote $\nu(v)=\min\{1,\<v\>^{\frac{\gamma+2s}{2}}\}$ for brevity. Then for any function $f,g\in H^2_xL^2_D$, we have from \eqref{Gamma} and \eqref{Gamma1} that 
	\begin{multline}\label{216}
		|(\pa^\al \Gamma(f,g), w^2(\al)h)_{L^2_{x,v}}|
		\lesssim \sum_{\al_1\le\al}\int_{\Omega}|\nu(v)\pa^{\al_1}f|_{L^2_v}|w(\al)\pa^{\al-\al_1}g|_{L^2_D}|w(\al)h|_{L^2_D}\,dx\\ +\sum_{\al_1\le\al}\int_{\Omega}|\nu(v)w(\al)\pa^{\al_1}f|_{L^2_v}|\pa^{\al-\al_1}g|_{L^2_D}|w(\al)h|_{L^2_D}\,dx. 
	\end{multline}
The first right hand term of \eqref{216} can be estimated as 
\begin{align*}
&\int_{\Omega}|\nu(v)\pa^{\al_1}f|_{L^2_v}|w(\al)\pa^{\al-\al_1}g|_{L^2_D}|w(\al)h|_{L^2_D}\,dx\\
&\quad\lesssim \Big(\sum_{|\al_1|=0}\|\nu(v)\pa^{\al_1}f\|_{L^\infty_xL^2_v}\|w(\al)\pa^{\al-\al_1}g\|_{L^2_xL^2_D}\\
&\quad\quad+\sum_{|\al_1|=1}\|\nu(v)\pa^{\al_1}f\|_{L^3_xL^2_v}\|w(\al)\pa^{\al-\al_1}g\|_{L^6_xL^2_D}\\
&\quad\quad+\sum_{2\le|\al_1|\le 3}\|\nu(v)\pa^{\al_1}f\|_{L^2_xL^2_v}\|w(\al)\pa^{\al-\al_1}g\|_{L^\infty_xL^2_D}\Big)\|w(\al)h\|_{L^2_xL^2_D}\\
&\quad\lesssim \sum_{|\al_1|\le 3}\|\nu(v)w(\al_1)\pa^{\al_1}f\|_{L^2_xL^2_v}\sum_{|\al_1|\le 3}\|w(\al_1)\pa^{\al_1}g\|_{L^2_xL^2_D}\|w(\al)h\|_{L^2_xL^2_D}. 
\end{align*}
 Similarly, the second right hand term of \eqref{216} can be estimated as 
\begin{align*}
	&\int_{\Omega}|\nu(v)w(\al)\pa^{\al_1}f|_{L^2_v}|\pa^{\al-\al_1}g|_{L^2_D}|w(\al)h|_{L^2_D}\,dx\\
	&\quad\lesssim \Big(\sum_{|\al-\al_1|=0}\|\nu(v)w(\al)\pa^{\al_1}f\|_{L^2_xL^2_v}\|\pa^{\al-\al_1}g\|_{L^\infty_xL^2_D}\\
	&\quad\quad+\sum_{|\al-\al_1|=1}\|\nu(v)w(\al)\pa^{\al_1}f\|_{L^3_xL^2_v}\|\pa^{\al-\al_1}g\|_{L^6_xL^2_D}\\
	&\quad\quad+\sum_{2\le|\al-\al_1|\le3}\|\nu(v)w(\al)\pa^{\al_1}f\|_{L^\infty_xL^2_v}\|\pa^{\al-\al_1}g\|_{L^2_xL^2_D}\Big)\|w(\al)h\|_{L^2_xL^2_D}\\
	&\quad\lesssim \sum_{|\al_1|\le3}\|\nu(v)w(\al_1)\pa^{\al_1}f\|_{L^2_xL^2_v}\sum_{|\al_1|\le3}\|w(\al_1)\pa^{\al_1}g\|_{L^2_xL^2_D}\|w(\al)h\|_{L^2_xL^2_D}.
\end{align*}
As a conclusion, inserting the above estimates into \eqref{216}, we have 
\begin{multline}\label{332a}
	|(\pa^\al \Gamma(f,g), w^2(\al)h)_{L^2_{x,v}}|\\\lesssim \sum_{|\al_1|\le 3}\|\nu(v)w(\al_1)\pa^{\al_1}f\|_{L^2_xL^2_v}\sum_{|\al_1|\le3}\|w(\al_1)\pa^{\al_1}g\|_{L^2_xL^2_D}\|w(\al)h\|_{L^2_xL^2_D}.
\end{multline}

Next we give the estimate on $\ol G$. 
	Recall \eqref{olG2} that 
	\begin{align*}
		\ol G = \frac{\sqrt{R}}{\sqrt{\th}}\bar{\th}_{x_1} A_1\big(\frac{v-u}{\sqrt{R\th}}\big) + \bar u_{1x_1}B_{11}\big(\frac{v-u}{\sqrt{R\th}}\big). 
	\end{align*}
Then using \eqref{decayAB}, for $b\ge 0$ and $|\al'|\le 2$, one has 
	\begin{align*}
		|\<v\>^b\mu^{-1/2}\pa^{\al'}_{\beta}\ol G|_{L^2_{v}}
		\lesssim 
		\sum_{\al'_1+\al'_2+\al'_3=\al'}|\pa^{\al'_1}(\bar{u}_{x_1},\bar\th_{x_1})||\pa^{\al'_2}(u,\th)|^{\text{sgn}(\al'_2)}|\pa^{\al'_K}(u,\th)|^{\text{sgn}(\al'_3)}. 
	\end{align*}
where $\text{sgn}(\al'_j)$ equal to $1$ if $|\al'_j|\ge 1$ and equal to $0$ if $|\al'_j|=0$. 
Taking integration over $x\in\Omega$, we have 
\begin{multline}\label{35d}
	\|\<v\>^b\mu^{-1/2}\pa^{\al'}_{\beta}\ol G\|_{L^2_xL^2_{v}}
	\lesssim 
	\|\pa^{\al'}(\bar{u}_{x_1},\bar\th_{x_1})\|_{L^2_x} \\+\sum_{\al'_1+\al'_2=\al',\,|\al'_2|\ge 1}\|\pa^{\al'_1}(\bar{u}_{x_1},\bar\th_{x_1})\|_{L^\infty_x}\|\pa^{\al'_2}(u,\th)\|_{L^2_x}\\
	\qquad+ \sum_{\substack{\al'_1+\al'_2+\al'_3=\al',\\ |\al'_2|,|\al'_3|\ge 1}}\|\pa^{\al'_1}(\bar{u}_{x_1},\bar\th_{x_1})\|_{L^\infty_x}\|\pa^{\al'_2}(u,\th)\|_{L^3_x}\|\pa^{\al'_3}(u,\th)\|_{L^6_x}.  
\end{multline}
Note that $(\bar u,\bar\th)$ depends only on $(t,x_1)$ and $\|\cdot\|_{L^\infty_{x_1}}\lesssim \|\na_x(\cdot)\|^{1/2}_{L^2_{x_1}}\|\cdot\|_{L^2_{x_1}}^{1/2}$. Moreover, using \eqref{GagL3}, \eqref{GagL6} and decomposition $(u,\th)=(\wt u,\wt\th)+(\bar u,\bar\th)$, we have 
\begin{align*}
	\|\pa^{\al'_2}(u,\th)\|_{L^2_x}+\|\pa^{\al'_2}(u,\th)\|_{L^3_x}+\|\pa^{\al'_3}(u,\th)\|_{L^6_x}\lesssim \sqrt{\E_k(t)}+\delta^{\frac{1}{2}}\le 1, 
\end{align*} 
by choosing $\ve>0$ in \eqref{priori2} and $\delta>0$ in \eqref{delta} small.   
Then applying Lemma \ref{Lem21}, \eqref{35d} becomes 
\begin{align}\label{34}
	\|\<v\>^b\mu^{-1/2}\pa^{\al'}_{\beta}\ol G\|_{L^2_xL^2_{v}}
	\lesssim \delta^{\frac{1}{2}}(1+t)^{-\frac{1}{2}}, 
\end{align}
when $|\al'|\ge 0$ and 
\begin{align}\label{344}
	\|\<v\>^b\mu^{-1/2}\pa^{\al'}_{\beta}\ol G\|_{L^2_xL^2_{v}}
	\lesssim \delta^{\frac{1}{3}}(1+t)^{-\frac{2}{3}}, 
\end{align}
when $|\al'|\ge 1$, for any $b>0$.

\smallskip 
Applying \eqref{332a} and \eqref{34}, we obtain from $G=\ol G+\mu^{1/2}\g$ that 
\begin{align*}
	\big|\big(\pa^\al\Gamma(\mu^{-\frac{1}{2}}G,\mu^{-\frac{1}{2}}G),\pa^\al\g\big)_{L^2_{x,v}}\big|
	&\lesssim \big(\delta^{\frac{1}{2}}(1+t)^{-\frac{1}{2}}+\sum_{|\al_1|\le 2}\|\nu(v)w(\al_1)\pa^{\al_1}\g\|_{L^2_xL^2_v}\big)\\&\quad\times\big(\delta^{\frac{1}{2}}(1+t)^{-\frac{1}{2}}+\sum_{|\al_1|\le2}\|w(\al_1)\pa^{\al_1}\g\|_{L^2_xL^2_D}\big)\|w(\al)\pa^\al\g\|_{L^2_xL^2_D}\\
	&\lesssim \big(\delta^{\frac{1}{2}}+\sqrt{\E_k(t)}\big)\D_k(t) + \delta^{}(1+t)^{-2}. 
\end{align*}
and 
\begin{align*}
	\big|\big(\pa^\al\Gamma(\mu^{-\frac{1}{2}}G,\f),\pa^\al\f\big)_{L^2_{x,v}}\big|\lesssim \big(\delta^{\frac{1}{2}}+\sqrt{\E_k(t)}\big)\D_k(t). 
\end{align*}
This completes the proof of Lemma \ref{Lem25}. 		
\end{proof}

\section{Estimates on Fluid Quantities}\label{Sec4}
In this section, we will derive the estimates on fluid quantities $(\wt\rho,\wt u,\wt\th)$. 

\subsection{Estimates on Time Derivatives and $\wt\rho$}
We begin with the estimate on dissipation rate of $\pa_t(\wt\rho,\wt u,\wt\th)$ and $\na_x \wt\rho$. 
Here we follow the arguments in \cite{Duan2020a,Duan2021} but with some modification for deducing the dissipation rate. 

\begin{Lem}\label{Lem32}
	Let $(F_\pm,\phi)$ be the solution to \eqref{1}. Assume $\ve>0$ in \eqref{priori2} is small enough. 
	Then
	for $|\al|\le 2$, we have 
	\begin{multline}\label{pat}
		 \|\pa^\al \pa_t(\wt\rho,\wt u,\wt\th)\|_{L^2_x}^2\lesssim \|\pa^\al\na_x(\wt\rho,\wt u,\wt\th)\|_{L^2_x}^2 + \|\pa^\al\na_x\g\|_{L^2_xL^2_{\gamma/2}}^2 + \delta^{\frac{2}{3}}(1+t)^{-\frac{4}{3}}\\ + \E_k(t)\min\{\E_k(t),\D_k(t)\}.
	\end{multline}
	 For $|\al|\le 2$, we have 
	\begin{multline}
		\label{rho}
		\pa_t(\pa^\al \wt u,\pa^\al \na_x\wt\rho)_{L^2_x}+\lam\|\pa^\al \na_x\wt\rho\|^2_{L^2_x}\lesssim  \|\pa^\al\na_x\g\|_{L^2_xL^2_{\gamma/2}}^2 + \|\pa^\al\na_x(\wt u,\wt\th)\|_{L^2_x}^2 \\+ \delta^{\frac{2}{3}}(1+t)^{-\frac{4}{3}} +\E_k(t)\D_k(t), 
	\end{multline}
for some generic constant $\lam>0$.
\end{Lem}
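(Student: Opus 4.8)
The plan is to obtain both estimates from the fluid-type system \eqref{fluid} rewritten as a symmetric-hyperbolic-like system for the perturbation $(\wt\rho,\wt u,\wt\th)$ around the approximate rarefaction wave $(\bar\rho,\bar u,\bar\th)$. First I would subtract the Euler system \eqref{Euler2} satisfied by $(\bar\rho,\bar u,\bar\th)$ from \eqref{fluid} to get evolution equations of the schematic form $\pa_t\wt\rho + \ldots = 0$, $\pa_t\wt u + \ldots = -\int v\otimes v\cdot\na_x G\,dv - \na_x\phi\int F_2\,dv + (\text{errors from the wave})$, and likewise for $\wt\th$, where the ``errors from the wave'' are products of $\na_x(\bar\rho,\bar u,\bar\th)$ with $(\wt\rho,\wt u,\wt\th)$ plus the source terms measuring that $(\bar\rho,\bar u,\bar\th)$ only solves Euler and not the viscous fluid system. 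For \eqref{pat}, I would simply solve these equations algebraically for $\pa_t(\wt\rho,\wt u,\wt\th)$: the time derivative is bounded by $\na_x$-derivatives of $(\wt\rho,\wt u,\wt\th)$, by the microscopic flux term $\int v\otimes v\cdot\na_x G\,dv$ (which, after writing $G=\ol G+\sqrt\mu\,\g$ and using \eqref{34}, \eqref{344} for $\ol G$ and a velocity-moment bound for $\g$, contributes $\|\na_x\g\|_{L^2_xL^2_{\gamma/2}}^2 + \delta^{2/3}(1+t)^{-4/3}$), by the electric term $\na_x\phi\int F_2\,dv$ (controlled by $\E_k$ via the Poisson equation and the smallness of $a=\int\sqrt\mu\,\f$), and by the wave-interaction terms (handled by Lemma \ref{Lem21}, Lemma \ref{LL31}, and Cauchy--Schwarz, producing $\delta^{2/3}(1+t)^{-4/3}$ and $\E_k\min\{\E_k,\D_k\}$). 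Applying $\pa^\al$ with $|\al|\le 2$ and using the Sobolev embeddings of Lemma \ref{Lemembedd} to distribute $L^\infty/L^3/L^6$ norms onto the low-order factors in every nonlinear product gives \eqref{pat}.

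For \eqref{rho}, the idea is the classical one for recovering dissipation of $\na_x\wt\rho$ in a hyperbolic-parabolic system: use the momentum equation, in which $\na_x\wt\rho$ appears (through $\na_x p = R\na_x(\rho\th)$), as a source to test against $\na_x\wt\rho$. Concretely I would differentiate $\pa^\al$ of the momentum equation, take the $L^2_x$ inner product with $\pa^\al\na_x\wt\rho$, and integrate by parts in $x$ — here is where the compatible boundary conditions of Lemma \ref{LemMacro} are essential, since on $\Gamma_i$ the odd-order normal derivatives of $(\rho,\th,u_j)$ and of $\phi$ vanish, so all boundary contributions from the integration by parts drop out (for the torus case periodicity does the same). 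The principal term yields $+\lam\|\pa^\al\na_x\wt\rho\|_{L^2_x}^2$ after using $p_\rho>0$ and \eqref{eta}; the term $\pa_t\wt u$ paired with $\na_x\wt\rho$ is rewritten as $\pa_t(\pa^\al\wt u,\pa^\al\na_x\wt\rho)_{L^2_x}$ minus $(\pa^\al\wt u,\pa^\al\na_x\pa_t\wt\rho)_{L^2_x}$, and in the latter I substitute $\pa_t\wt\rho$ from the continuity equation and integrate by parts once more to convert it into $\|\pa^\al\na_x\wt u\|_{L^2_x}^2$ plus lower-order/wave terms; the microscopic flux, the electric term, and the wave errors are bounded exactly as in \eqref{pat}, contributing $\|\pa^\al\na_x\g\|_{L^2_xL^2_{\gamma/2}}^2$, $\delta^{2/3}(1+t)^{-4/3}$, and $\E_k\D_k$. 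Absorbing a small multiple of $\|\pa^\al\na_x\wt\rho\|_{L^2_x}^2$ coming from error terms into the good term completes \eqref{rho}.

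The main obstacle I anticipate is twofold. First, bookkeeping the nonlinear products after applying $\pa^\al$: one must place each factor in the right Lebesgue space so that the highest-order factor sits in $L^2_x$ (and is controlled by $\D_k$ or $\E_k$) while the remaining factors go into $L^\infty_x$, $L^3_x$ or $L^6_x$ and are absorbed into $\sqrt{\E_k}$, using precisely the interpolation inequalities \eqref{Gag}, \eqref{Gag1}, \eqref{GagL3}, \eqref{GagL6} — and the decomposition $(\rho,u,\th)=(\wt\rho,\wt u,\wt\th)+(\bar\rho,\bar u,\bar\th)$ so that wave factors are estimated by Lemma \ref{Lem21} rather than by $\E_k$. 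Second, and more delicate, is keeping the wave-error source terms at the level $\delta^{2/3}(1+t)^{-4/3}$: this hinges on the fact that $(\bar\rho,\bar u,\bar\th)$ solves \eqref{Euler2} exactly (so the ``inviscid residual'' is zero) and that the genuine residual — the missing viscosity and heat-conduction terms $\na_x\cdot(\mu(\bar\th)\na_x\bar u)$, $\na_x\cdot(\kappa(\bar\th)\na_x\bar\th)$ that would appear if $(\bar\rho,\bar u,\bar\th)$ solved the compressible Navier--Stokes system — involves second $x_1$-derivatives of the wave, which by Lemma \ref{Lem21}(ii) with $q=2$ are bounded by $\min\{\delta,(1+t)^{-1}\}$, hence their $L^2_x$-square by $\delta^{2/3}(1+t)^{-4/3}$ after interpolating the two bounds. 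Getting the exponents of $\delta$ and $(1+t)$ to match the stated $\delta^{2/3}(1+t)^{-4/3}$ uniformly over $|\al|\le 2$ is the part requiring the most care.
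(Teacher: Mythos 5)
Your proposal is correct and follows essentially the same route as the paper's proof: both start from the perturbation system obtained by subtracting \eqref{Euler2} from the fluid equations (equation \eqref{350} in the paper), read off $\pa_t(\wt\rho,\wt u,\wt\th)$ algebraically for \eqref{pat} with the nonlinear products distributed via the $L^2/L^3/L^6/L^\infty$ interpolations of Lemma \ref{Lemembedd} and the decay of Lemma \ref{Lem21}, and for \eqref{rho} test the $\pa^\al$-differentiated momentum equation against $\pa^\al\na_x\wt\rho$, rewriting the $\pa_t\pa^\al\wt u$ pairing as $\pa_t(\pa^\al\wt u,\pa^\al\na_x\wt\rho)$ minus a commutator term that is converted by spatial integration by parts (using the boundary values of Lemma \ref{LemMacro} on $\Gamma_i$) and the continuity equation into $\|\pa^\al\na_x\wt u\|_{L^2_x}^2$ plus lower-order terms. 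Your identification of where the $\delta^{2/3}(1+t)^{-4/3}$ rate comes from (second $x_1$-derivatives of the wave in $\na_x\ol G$, interpolated from $\min\{\delta,(1+t)^{-1}\}$) and your account of why the boundary contributions vanish are both consistent with the paper's argument.
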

\begin{proof}	
	It follows from \eqref{fluid} that  
	\begin{equation*}
		\left\{
		\begin{aligned}
			&\pa_t\rho + \na_x\rho \cdot u +\rho \na_x\cdot u = 0,\\
			& \pa_tu +  \na_xu\cdot u+\frac{2\th}{3\rho}\na_x \rho + \frac{2}{3}\na_x\th +\frac{\na_x\phi}{\rho}\int_{\R^3}F_2\,dv = -\frac{1}{\rho}\int_{\R^3}v\otimes v\cdot\na_x G\,dv,\\
			&\pa_t\th + \frac{2}{3}\th\na_x\cdot u+u\cdot\na_x\th + \frac{\na_x\phi}{\rho}\cdot\int_{\R^3}(v-u)F_2\,dv\\&\qquad = \frac{1}{\rho} \Big(-\int_{\R^3}\frac{|v|^2}{2} v\cdot\na_x G\,dv + u\cdot\int_{\R^3}v\otimes v\cdot\na_x G\,dv\Big). 
		\end{aligned}\right. 
	\end{equation*}
	Together with \eqref{Euler2}, one has 
	\begin{equation}\label{350}\left\{
		\begin{aligned}
			&\pa_t\wt\rho + \na_x\wt\rho \cdot u +\rho \na_x\cdot\wt u + \wt\rho\na_x\cdot \bar{u} + \na_x\bar\rho\cdot\wt u = 0,\\
			& \pa_t\wt u +  \na_x\wt u\cdot  u+\frac{2\th}{3\rho}\na_x\wt\rho + \frac{2}{3}\na_x\wt\th +\wt u\cdot\na_x\bar u+\frac{2}{3}\big(\frac{\th}{\rho}-\frac{\bar\th}{\bar\rho}\big)\na_x\bar\rho
			\\&\qquad +\frac{\na_x\phi}{\rho}\int_{\R^3}F_2\,dv  = -\frac{1}{\rho}\int_{\R^3}v\otimes v\cdot\na_x G\,dv,\\
			&\pa_t\wt\th + \na_x\wt\th\cdot u+ \frac{2}{3}\th\na_x\cdot\wt u+\wt u\cdot\na_x\bar\th + \frac{2}{3}\wt\th\na_x\cdot \bar u + \frac{\na_x\phi}{\rho}\cdot\int_{\R^3}(v-u)F_2\,dv\\
			&\qquad\qquad= \frac{1}{\rho} \Big(-\int_{\R^3}\frac{|v|^2}{2} v\cdot\na_x G\,dv + u\cdot\int_{\R^3}v\otimes v\cdot\na_x G\,dv\Big). 
		\end{aligned}\right. 
	\end{equation}
For the estimate with derivative on $t$, noticing $\|\na_x(\wt\rho,\wt u,\wt\th)\|_{H^1_x}^2\le \ve<1$, we have from \eqref{350}, \eqref{344} and Lemma \ref{Lem21} that 
\begin{align}\label{rhopat}\notag
	\|\pa_t\wt\rho\|^2_{L^2_x}&\le \|\na_x\wt\rho\|^2_{L^2_x} +\|\na_x\cdot\wt u\|^2_{L^2_x} + \|\wt\rho\|^2_{L^3_x}\|\na_x\cdot \bar{u}\|^2_{L^6_x} + \|\na_x\bar\rho\|_{L^6_x}\|\wt u\|_{L^3_x}\\
	&\lesssim \|\na_x(\wt\rho,\wt u)\|_{L^2_x}^2 + \delta^{\frac{2}{3}}(1+t)^{-\frac{4}{3}},
%
\end{align}
\begin{multline*}
	\|\pa_t\wt u\|^2_{L^2_x} \lesssim \|\na_x(\wt\rho,\wt u,\wt\th)\|_{L^2_x}^2 +\|\wt u\|^2_{L^3_x}\|\na_x\bar u\|^2_{L^6_x}\\
	 +\|(\wt\rho,\wt\th)\|^2_{L^3_x}\|\na_x\bar\rho\|^2_{L^6_x} +\|\na_x\phi\|^2_{L^3_x}\|F_2\|^2_{L^6_xL^2_2}+ \|\<v\>^4\na_x G\|^2_{L^2_{x,v}}\\
	\lesssim \|\na_x(\wt\rho,\wt u,\wt\th)\|^2_{L^2_x} + \|\na_x\g\|^2_{L^2_{x}L^2_{\gamma/2}} + \delta^{\frac{2}{3}}(1+t)^{-\frac{4}{3}} + \E_k(t)\min\{\E_k(t),\D_k(t)\},
\end{multline*}
and
\begin{multline*}
	\|\pa_t\wt\th\|^2_{L^2_x} \lesssim \|\na_x(\wt\rho,\wt u,\wt\th)\|_{L^2_x}^2+\|\wt u\|^2_{L^3_x}\|\na_x\bar\th\|^2_{L^6_x}\\
	  + \|\wt\th\|^2_{L^3_x}\|\na_x\cdot \bar u\|^2_{L^6_x} +\|\na_x\phi\|^2_{L^3_x}\|F_2\|^2_{L^6_xL^2_3}+\|\<v\>^5\na_x G\|^2_{L^2_{x,v}}\\
	\lesssim \|\na_x(\wt\rho,\wt u,\wt\th)\|^2_{L^2_x} + \|\na_x\g\|^2_{L^2_xL^2_{\gamma/2}} + \delta^{\frac{2}{3}}(1+t)^{-\frac{4}{3}} + \E_k(t)\min\{\E_k(t),\D_k(t)\}.
\end{multline*}
Here we used \eqref{GagL3} and \eqref{GagL6}. 
When $|\al|=1$, we apply \eqref{pat1} and Lemma \ref{Lem21} to find that 
\begin{align}\label{rhopat1}\notag
	\|\pa^\al\pa_t\wt\rho\|^2_{L^2_x}&\le \|\pa^\al\na_x(\wt\rho,\wt u)\|^2_{L^2_x}+\|\na_x(\wt\rho,\wt u)\|^2_{L^3_x}\|\pa^\al(\rho,u)\|^2_{L^6_x} \\\notag&\quad+ \|\pa^\al(\wt\rho,\wt u)\|^2_{L^3_x}\|\na_x(\bar\rho, \bar{u})\|^2_{L^6_x} + \|(\wt\rho,\wt u)\|^2_{L^\infty_x}\|\pa^\al\na_x(\bar\rho,\bar u)\|^2_{L^2_x}\\
	&\lesssim  \|\pa^\al\na_x(\wt\rho,\wt u)\|^2_{L^2_x} + \delta^{\frac{2}{3}}(1+t)^{-\frac{4}{3}},
\end{align}
	\begin{multline*}
	 \|\pa^\al\pa_t\wt u\|^2_{L^2_x} \notag
	 \lesssim \|\pa^\al\na_x(\wt\rho,\wt u,\wt\th)\|^2_{L^2_x}+\|\pa^\al(\wt\rho,\wt u,\wt\th)\|^2_{L^3_x}\|\na_x(\bar\rho,\bar u)\|^2_{L^6_x}\\
	  +\|\pa^\al\rho\|^2_{L^3_x}\|\na_x\phi\|^2_{L^2_x}\|F_2\|_{L^6_xL^2_3}^2+\|\pa^\al\na_x\phi\|^2_{L^3_x}\|F_2\|^2_{L^6_xL^2_2}+\|\na_x\phi\|^2_{L^3_x}\|\pa^\al F_2\|^2_{L^6_xL^2_2}\\
	  +\|\pa^\al\rho\|^2_{L^6_x}\|\<v\>^4\na_x G\|^2_{L^3_{x}L^2_v}+ \|\<v\>^4\pa^\al\na_x G\|^2_{L^2_{x,v}}\\
	 \lesssim \|\pa^\al\na_x(\wt\rho,\wt u,\wt\th)\|^2_{L^2_x} + \|\pa^\al\na_x\g\|^2_{L^2_{x}L^2_{\gamma/2}}  + \delta^{\frac{2}{3}}(1+t)^{-\frac{4}{3}}\\+\E_k(t)\min\{\E_k(t),\D_k(t)\},
 \end{multline*}
and
\begin{multline*}
	 \|\pa^\al\pa_t\wt\th\|^2_{L^2_x}
	  \lesssim \|\pa^\al\na_x(\wt\rho,\wt u,\wt\th)\|^2_{L^2_x} + \|\pa^\al(\wt u,\wt\th)\|^2_{L^3_x}\|\na_x(\bar u,\bar\th)\|^2_{L^6_x}
	   + \|(\wt u,\wt\th)\|^2_{L^\infty_x}\|\pa^\al\na_x(\bar u,\bar\th)\|^2_{L^2_x}\\
	   +\|\pa^\al\rho\|^2_{L^3_x}\|\na_x\phi\|^2_{L^2_x}\|F_2\|_{L^6_xL^2_3}^2
	   +\|\pa^\al\na_x\phi\|^2_{L^3_x}\| F_2\|^2_{L^6_xL^2_3} +\|\na_x\phi\|^2_{L^3_x}\|\pa^\al F_2\|^2_{L^6_xL^2_3}\\
	   +\|\pa^\al\rho\|^2_{L^6_x}\|\<v\>^5\na_x G\|^2_{L^3_{x}L^2_v}+\|\<v\>^5\pa^\al\na_x G\|^2_{L^2_{x,v}}\\
	 \lesssim \|\na_x(\wt\rho,\wt u,\wt\th)\|^2_{L^2_x} + \|\pa^\al\na_x\g\|^2_{L^2_xL^2_{\gamma/2}} + \delta^{\frac{2}{3}}(1+t)^{-\frac{4}{3}}+\E_k(t)\min\{\E_k(t),\D_k(t)\}.
\end{multline*}
The above estimates give \eqref{pat} when $|\al|\le 1$. The estimates for $|\al|=2$ can be obtained similarly.  

Next we calculate \eqref{rho} for the case $|\al|\le 1$, since the case of $|\al|=2$ can be derived by using similar calculations. 
For $|\al|\le 1$, applying $\pa^\al$ to the second equation of \eqref{350} and
	taking inner product with $\pa^\al \na_x\wt\rho$ over $\Omega$, we have 
	\begin{multline}\label{325}
		\big(\frac{2\th}{3\rho}\pa^\al \na_x\wt\rho,\pa^\al \na_x\wt\rho\big)_{L^2_x} = -\big(\pa_t\pa^\al \wt u,\pa^\al \na_x\wt\rho\big)_{L^2_x} 
		- \big(\frac{2}{3}\pa^\al \na_x\wt\th+\pa^\al \na_x\wt u\cdot  u,\pa^\al \na_x\wt\rho\big)_{L^2_x} \\
		\quad
		- \big(\pa^\al (\wt u\cdot\na_x\bar u)-\frac{2}{3}\pa^\al \big(\big(\frac{\th}{\rho}-\frac{\bar\th}{\bar\rho}\big)\na_x\bar\rho\big),\pa^\al \na_x\wt\rho\big)_{L^2_x} \\
		-\big(\pa^\al\big(\frac{\na_x\phi}{\rho}\int_{\R^3}F_2\,dv\big),\pa^\al \na_x\wt\rho\big)_{L^2_x} 
		- \big(\pa^\al(\frac{1}{\rho}\int_{\R^3}v\otimes v\cdot\na_x  G\,dv),\pa^\al \na_x\wt\rho\big)_{L^2_x}
	\end{multline}
	For the first term on the right hand side of \eqref{325}, noticing the boundary values from \eqref{boundaryrhouth} on boundary $\Gamma_i$ for $i=2,3$ for the case of rectangular duct, we can take integration by parts to obtain  
	\begin{align*}
		(\pa^\al \wt u,\pa^\al \na_x\wt\rho_t)_{L^2_x} = -(\pa^\al \na_x\wt u,\pa^\al \wt\rho_t)_{L^2_x},
	\end{align*}
	Using \eqref{rhopat} and \eqref{rhopat1} for the case $|\al|=0,1$ respectively, we have 
	\begin{align*}
		&\quad\,-\big(\pa_t\pa^\al \wt u,\pa^\al \na_x\wt\rho\big)_{L^2_x}
		= -\pa_t(\pa^\al \wt u,\pa^\al \na_x\wt\rho)_{L^2_x} -(\pa^\al \na_x\wt u,\pa^\al \wt\rho_t)_{L^2_x}\\
		&\le -\pa_t(\pa^\al \wt u,\pa^\al \na_x\wt\rho)_{L^2_x}
		+\eta\|\pa^\al \na_x\wt\rho\|_{L^2_x}^2 + C_\eta\|\pa^\al \na_x\wt u\|_{L^2_x}^2 +
		\delta^{\frac{2}{3}}(1+t)^{-\frac{4}{3}},  
	\end{align*}
for any $\eta>0$. 
	For the second and third terms on the right hand side of \eqref{325}, when $|\al|=0$, we have 
	\begin{align*}
		&\quad\,- \big(\frac{2}{3}\na_x\wt\th+\na_x\wt u\cdot  u
		-\wt u\cdot\na_x\bar u-\frac{2}{3}\big(\frac{\bar\rho\wt\th-\wt\rho\bar\th}{\rho\bar\rho}\big)\na_x\bar\rho,\na_x\wt\rho\big)_{L^2_x} \\
		&\lesssim \eta\|\na_x\wt\rho\|_{L^2_x}^2 + C_\eta\|\na_x(\wt u,\wt\th)\|_{L^2_x}^2 + C_\eta\|(\wt\rho,\wt u,\wt\th)\|^2_{L^3_x}\|\na_x(\bar\rho,\bar u)\|^2_{L^6_x}\\
		&\lesssim \eta\|\na_x\wt\rho\|_{L^2_x}^2 + C_\eta\|\na_x(\wt u,\wt\th)\|_{L^2_x}^2 + C_\eta \delta^{\frac{2}{3}}(1+t)^{-\frac{4}{3}}. 
	\end{align*}
	When $|\al|=1$, we have 
	\begin{align*}
		&\quad\,- \big(\frac{2}{3}\pa^\al\na_x\wt\th+\pa^\al(\na_x\wt u\cdot  u)
		-\pa^\al(\wt u\cdot\na_x\bar u)-\frac{2}{3}\pa^\al\big(\big(\frac{\bar\rho\wt\th-\wt\rho\bar\th}{\rho\bar\rho}\big)\na_x\bar\rho\big),\pa^\al\na_x\wt\rho\big)_{L^2_x} \\
		&\lesssim \eta\|\pa^\al\na_x\wt\rho\|_{L^2_x}^2 + C_\eta\big(\|\pa^\al\na_x(\wt u,\wt\th)\|_{L^2_x}^2 +\|\na_x\wt u\|^2_{L^3_x}\|\pa^\al u\|_{L^6_x}^2 \\
		&\qquad\qquad\qquad+ \|\pa^\al(\wt\rho,\wt u,\wt\th)\|^2_{L^3_x}\|\na_x(\bar\rho,\bar u)\|^2_{L^6_x}+ \|(\wt\rho,\wt u,\wt\th)\|^2_{L^3_x}\|\pa^\al\na_x(\bar\rho,\bar u)\|^2_{L^6_x}\big)\\
		&\lesssim \eta\|\pa^\al\na_x\wt\rho\|_{L^2_x}^2 + C_\eta\|\pa^\al\na_x(\wt u,\wt\th)\|_{L^2_x}^2 + C_\eta\delta^{\frac{2}{3}}(1+t)^{-\frac{4}{3}}  + C_\eta\E_k(t)\D_k(t). 
	\end{align*}
For the fourth term on the right hand side of \eqref{325}, when $|\al|=0$, we have 
\begin{align*}
	\big|\big(\frac{\na_x\phi}{\rho}\int_{\R^3}F_2\,dv,\na_x\wt\rho\big)_{L^2_x}\big|
	&\lesssim \|\na_x\phi\|_{L^3_x}\|F_2\|_{L^6_xL^2_2}\|\na_x\wt\rho\|_{L^2_x}
	\lesssim \eta\|\na_x\wt\rho\|_{L^2_x}^2+C_\eta\E_k(t)\D_k(t).
\end{align*}
When $|\al|= 1$, we have 
\begin{multline*}
	\big|\big(\pa^\al\big(\frac{\na_x\phi}{\rho}\int_{\R^3}F_2\,dv\big),\pa^\al \na_x\wt\rho\big)_{L^2_x}\big|
	\lesssim \big(\|\pa^\al\na_x\phi\|_{L^3_x}\|F_2\|_{L^6_xL^2_2}+\|\na_x\phi\|_{L^\infty_x}\|\pa^\al F_2\|_{L^2_xL^2_2}\\
	\qquad +\|\pa^\al\rho\|_{L^3_x}\|\na_x\phi\|_{L^2_x}\|F_2\|_{L^6_xL^2_2}\big)\|\pa^\al\na_x\wt\rho\|_{L^2_x}\\
	\lesssim \eta\|\pa^\al\na_x\wt\rho\|_{L^2_x}^2
+C_\eta\E_k(t)\D_k(t). 
\end{multline*}
	For the fifth term on the right hand of \eqref{325}, 
	recalling $G=\ol G+\sqrt\mu\g$ and applying \eqref{344} to control $\na_x\ol G$, when $|\al|=0$, we have 
	\begin{align*}
		- \big(\frac{1}{\rho}\int_{\R^3}v\otimes v\cdot \na_x G\,dv,\na_x\wt\rho\big)_{L^2_x}
		&\le \eta\|\na_x\wt\rho\|_{L^2_x}^2 + C_\eta\|\na_x\g\|_{L^2_xL^2_{\gamma/2}}^2\\&\quad + C_\eta\delta^{\frac{2}{3}}(1+t)^{-\frac{4}{3}}. 
	\end{align*}
When $|\al|=1$, we have 
\begin{multline*}
- \,\big(\pa^\al\big(\frac{1}{\rho}\int_{\R^3}v\otimes v\cdot \na_x G\,dv\big),\pa^\al \na_x\wt\rho\big)_{L^2_x}
\le \eta\|\pa^\al \na_x\wt\rho\|_{L^2_x}^2 + C_\eta\|\pa^\al \na_x\g\|_{L^2_xL^2_{\gamma/2}}^2\\ + C_\eta\|\pa^\al \na_x\ol G\|_{L^2_xL^2_{4}}^2 + C_\eta\|\pa^\al\rho\|^2_{L^6_x}\|\na_x(\ol G+\sqrt\mu\g)\|^2_{L^3_x}\\
\lesssim \eta\|\pa^\al \na_x\wt\rho\|_{L^2_x}^2 + C_\eta\|\pa^\al \na_x\g\|_{L^2_xL^2_{\gamma/2}}^2 + C_\eta\delta^{\frac{2}{3}}(1+t)^{-\frac{4}{3}} +\E_k(t)\D_k(t).
\end{multline*}
	Substituting the above estimates into \eqref{325} and choosing $\eta>0$ small enough, we obtain
	\begin{multline*}
		\pa_t(\pa^\al \wt u,\pa^\al \na_x\wt\rho)_{L^2_x}+\lam\|\pa^\al \na_x\wt\rho\|^2_{L^2_x}\lesssim  \|\pa^\al\na_x\g\|_{L^2_xL^2_{\gamma/2}}^2 + \|\pa^\al\na_x(\wt u,\wt\th)\|_{L^2_x}^2 \\+ \delta^{\frac{2}{3}}(1+t)^{-\frac{4}{3}} +\E_k(t)\D_k(t), 
	\end{multline*}
for any $|\al|\le 1$ and some $\lam>0$. The case $|\al|=2$ can be obtained similarly. 
	This completes the proof of Lemma \ref{Lem32}. 
\end{proof}

\subsection{Low Order Estimates on Macroscopic Components}
In this section, we will derive the low order energy estimate of the macroscopic components $(\wt\rho,\wt u,\wt\th)$ by using entropy-entropy flux. 
We will use the estimate \eqref{LM1a} on $L_M^{-1}$ to obtain dissipation term $\|\na_x(\wt u,\wt\th)\|_{L^2_x}$. 

\begin{Lem}\label{Lem31}
	Let $(F_\pm,\phi)$ be the solution to \eqref{1}. Assume $\ve>0$ in \eqref{priori2} is small enough. Then for any $\eta>0$, 
\begin{multline}\label{32a}
	\pa_t\int_\Omega\eta(t)\,dx + \kappa\pa_t(\wt u,\na_x\wt\rho)_{L^2_x}
	+ \lam\sum_{|\al|=1}\|\pa^\al(\wt\rho,\wt u,\wt \th)\|_{L^2_x}^2 
	+ \lam\|\sqrt{\bar u_{1x_1}}(\wt\rho,\wt u_1,\wt\th)\|^2_{L^2_x}\\
	\lesssim \big(\delta^{\frac{1}{3}}+\sqrt{\E_k(t)}\big)\D_k(t)
	+\delta^{\frac{1}{6}}(1+t)^{-\frac{7}{6}} +\sum_{|\al|=1}\|\pa^{\al}\g\|^2_{L^2_xL^2_{\gamma/2}},
\end{multline}
	for some $\lam>0$, where $\eta(t)$ is given by \eqref{238}. 
\end{Lem}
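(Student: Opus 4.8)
The plan is to carry out a standard entropy--entropy flux argument for the Euler-like system \eqref{350}, combined with the $L^{-1}_M$ estimate \eqref{LM1a} to extract dissipation in $\na_x(\wt u,\wt\th)$, and to couple it with the already-established estimate \eqref{rho} from Lemma~\ref{Lem32} in order to recover the missing dissipation in $\na_x\wt\rho$. First I would introduce the relative entropy (mechanical energy) density
\begin{align*}
	\eta(t) = \rho\Big(\th-\bar\th-\bar\th\ln\frac{\th}{\bar\th}\Big) + \frac{3}{2}\bar\th\,\rho\Big(\ln\frac{\rho}{\bar\rho} - 1 + \frac{\bar\rho}{\rho}\Big) + \frac{1}{2}\rho|\wt u|^2 + \frac{1}{2}|\na_x\phi|^2,
\end{align*}
(this is the quantity called $\eta(t)$ and referenced as \eqref{238}), which is quadratic in $(\wt\rho,\wt u,\wt\th)$ near the constant state by \eqref{eta}. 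Differentiating $\int_\Omega\eta\,dx$ in $t$ and using the conservation-law structure of \eqref{fluid} and the Euler profile equations \eqref{Euler2}, the leading inviscid terms combine into an entropy flux that either integrates to zero (using the specular boundary values \eqref{boundaryrhouth}, \eqref{boundphi} on $\Gamma_i$, or periodicity on $\T^2$) or produces the good term $-\lambda\|\sqrt{\bar u_{1x_1}}(\wt\rho,\wt u_1,\wt\th)\|^2_{L^2_x}$ thanks to $\bar u_{1x_1}>0$ (Lemma~\ref{Lem21}(i)); this is the classical mechanism by which a rarefaction wave is ``expansive'' and hence stable.

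The error terms fall into three groups. The genuinely inhomogeneous ones involve $\na_x(\bar\rho,\bar u,\bar\th)$ multiplied by quadratic combinations of $(\wt\rho,\wt u,\wt\th)$; these I would bound by $L^3_x$--$L^6_x$ Hölder together with \eqref{GagL3}, \eqref{GagL6} and the decay estimates in Lemma~\ref{Lem21}(ii) and Lemma~\ref{LL31}, producing a term like $\delta^{1/6}(1+t)^{-7/6}$ after a Young-type split (one keeps a small multiple of $\|\sqrt{\bar u_{1x_1}}(\wt\rho,\wt u_1,\wt\th)\|^2$ to be absorbed, and throws the rest into the time-integrable remainder; the exponents $1/6$ and $7/6$ come from interpolating $\delta^{2/3}(1+t)^{-4/3}$ against $\|w_{x_1}\|_{L^1_{x_1}}$-type bounds). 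The microscopic contributions are the terms $\int v\otimes v\cdot\na_x G$, etc.; writing $G=\ol G+\sqrt\mu\g$ and using \eqref{344} for $\na_x\ol G$ and $\|\pa^\al\na_x\g\|_{L^2_xL^2_{\gamma/2}}\lesssim \D_k(t)^{1/2}$ (recall $|\cdot|_{L^2_{\gamma/2}}\lesssim|\cdot|_{L^2_D}$), these give $\delta^{1/3}\D_k(t)$ plus the $\sum_{|\al|=1}\|\pa^\al\g\|^2_{L^2_xL^2_{\gamma/2}}$ term that appears on the right-hand side. The Poisson terms $\na_x\phi\int F_2\,dv$ are handled by $L^3_x$--$L^6_x$ Hölder as in Lemma~\ref{Lem32}, yielding $\sqrt{\E_k(t)}\,\D_k(t)$.

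The subtle point — and where the $L^{-1}_M$ machinery enters — is getting the \emph{full} first-order dissipation $\|\na_x(\wt u,\wt\th)\|^2_{L^2_x}$ rather than merely the degenerate $\|\sqrt{\bar u_{1x_1}}(\cdots)\|^2$. Following the line indicated in the text before the statement, I would exploit that $G$ contains, via \eqref{G} and \eqref{olG2}, the Navier--Stokes-type dissipative part $L^{-1}_M$ applied to $P_1v\cdot(\cdots)M$, whose coefficients are the viscosity $\mu(\th)>0$ and heat conductivity $\kappa(\th)>0$ of \eqref{mu}--\eqref{kappa}; substituting this into the momentum and energy balances in \eqref{350} and testing against $\na_x\wt u$, $\na_x\wt\th$ (or equivalently using the Burnett-function representation) produces $+\lambda(\|\na_x\wt u\|^2+\|\na_x\wt\th\|^2)$ modulo terms controlled by $\|\na_x\g\|_{L^2_xL^2_{\gamma/2}}$ and higher-order remainders — here \eqref{decayAB} and \eqref{LM1a} guarantee the relevant velocity integrals are finite with the non-cutoff weights. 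Finally I would add a small constant $\kappa>0$ times \eqref{rho} (with $|\al|=1$) to the resulting differential inequality; the term $+\kappa\lambda\|\na_x\wt\rho\|^2_{L^2_x}$ produced there absorbs the $\|\na_x(\wt u,\wt\th)\|^2$ cross terms from \eqref{rho} and upgrades the dissipation to cover all of $\sum_{|\al|=1}\|\pa^\al(\wt\rho,\wt u,\wt\th)\|^2_{L^2_x}$, which is exactly the left side of \eqref{32a}. The main obstacle I anticipate is bookkeeping the rarefaction-wave error terms so that every contribution is either absorbable into $\D_k(t)$ with a small prefactor ($\delta^{1/3}$ or $\sqrt{\E_k}$), or time-integrable of the stated form $\delta^{1/6}(1+t)^{-7/6}$; the non-cutoff aspect itself is essentially painless once Lemmas~\ref{LemLM} and \ref{Lem24} are in hand, since everything microscopic is measured in $|\cdot|_{L^2_D}$.
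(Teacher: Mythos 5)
Your plan follows essentially the same route as the paper: the entropy--entropy flux computation around the smoothed rarefaction profile gives the degenerate dissipation $\|\sqrt{\bar u_{1x_1}}(\wt\rho,\wt u_1,\wt\th)\|^2$ thanks to $\bar u_{1x_1}>0$; the full dissipation $\|\na_x(\wt u,\wt\th)\|^2$ comes from substituting $G=L^{-1}_M(P_1v\cdot\na_xM)+L^{-1}_M\Theta$ and pairing the Burnett-function leading part with the entropy source, using \eqref{mu}, \eqref{kappa}, \eqref{LM1a}, \eqref{decayAB}; and the missing $\|\na_x\wt\rho\|^2$ is then bought by adding a small multiple of \eqref{rho}. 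Two small corrections: the functional $\eta$ in \eqref{238} is purely fluid, namely $\eta=\frac{3}{2}\{\frac{1}{2}\rho|u-\bar u|^2+\frac{2}{3}\rho\bar\th\Psi(\bar\rho/\rho)+\rho\bar\th\Psi(\th/\bar\th)\}$ --- the paper does \emph{not} include $\frac{1}{2}|\na_x\phi|^2$ in it (the field contribution is the error term $I_3$ in \eqref{340}, bounded by $\sqrt{\E_k}\,\D_k$; it is accounted for inside $\E_{k,3}$ in Lemma~\ref{Lem53}, not here); and to produce the $\kappa\pa_t(\wt u,\na_x\wt\rho)_{L^2_x}$ term and the first-order $\|\na_x\wt\rho\|^2$ dissipation appearing in \eqref{32a} you need \eqref{rho} with $|\al|=0$ (together with \eqref{pat}), not $|\al|=1$, which would instead give second-order quantities. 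Neither affects the overall validity of the argument.
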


\begin{proof}
	As in \cite{Liu2004,Liu2006a}, we define the macroscopic entropy $S$ by 
	\begin{align}\label{S1}
		-\frac{3}{2}\rho S \equiv \int_{\R^3}M\ln M\,dv.
	\end{align}
	Multiplying \eqref{2} by $\ln M$, integrating over $v$ and using $G\in(\ker L_M)^{\perp}$ and \eqref{fluid}, it holds that (cf. \cite[Eq. (4.8), (4.9)]{Liu2006a})
	\begin{align}\label{32}
		-\frac{3}{2}(\rho S)_t - \frac{3}{2}\na_x\cdot (\rho uS) -\na_x\phi\cdot\int_{\R^3}\na_vF_2\ln M\,dv + \na_x\cdot \int_{\R^3}v\ln M G\,dv = \int_{\R^3}\frac{Gv\cdot\na_xM}{M}\,dv,
	\end{align}
	and 
	\begin{align}\label{S}
		S &= -\frac{2}{3}\ln\rho + \ln (2\pi R\th) + 1,\\
		\notag	p &= \frac{2}{3}\rho\th = k\rho^{\frac{5}{3}}\exp (S) ,\quad k=\frac{1}{2\pi e},\quad e=\frac{3}{2}R\th.
	\end{align}
	A convex entropy-entropy flux pair $(\eta,q)$ around a Maxwellian $\ol M= M_{[\bar\rho,\bar u,\bar{\th}]}$ $(\bar u_2=\bar u_3=0)$ can be given as follows. Denote the conservation laws \eqref{fluid} by 
	\begin{align*}
		m_t + \na_x\cdot n = -\begin{pmatrix}
			0\\\int_{\R^3}v_1v\cdot\na_x G\,dv\\
			\int_{\R^3}v_2v\cdot\na_x G\,dv\\
			\int_{\R^3}v_3v\cdot\na_x G\,dv\\
			\int_{\R^3}\frac{|v|^2}{2}v\cdot\na_x G\,dv
		\end{pmatrix}
	-\begin{pmatrix}
		0\\\pa_{x_1}\phi\int_{\R^3}F_2\,dv\\
		\pa_{x_2}\phi\int_{\R^3}F_2\,dv\\
		\pa_{x_3}\phi\int_{\R^3}F_2\,dv\\
		\na_x\phi\cdot\int_{\R^3}vF_2\,dv
	\end{pmatrix}.
	\end{align*}
	Here 
	\begin{align*}
		m &= (m_0,m_1,m_2,m_3,m_4)^t = \big(\rho,\rho u_1,\rho u_2,\rho u_3, \rho(\frac{1}{2}|u|^2+\th)\big)^t,\\
		n &= (n_0,n_1,n_2,n_3,n_4)^t = \big(\rho,\rho uu_1+p,\rho uu_2+p,\rho uu_3+p, \rho u(\frac{1}{2}|u|^2+\frac{5}{3}\th)\big)^t.
	\end{align*}
	Then we define an entropy-entropy flux pair $(\eta,q)$ as 
	\begin{align*}
		\eta &= \bar\th\big\{-\frac{3}{2}\rho S+\frac{3}{2}\bar\rho \bar S+\frac{3}{2}\na_m(\rho S)|_{m=\bar{m}}\cdot(m-\bar m)\big\},\\
		q_j &= \bar\th \big\{-\frac{3}{2}\rho u_jS+\frac{3}{2}\bar\rho\bar u_j\bar S+\frac{3}{2}\na_m(\rho S)|_{m=\bar{m}}\cdot(n-\bar n)\big\}.
	\end{align*}
	Since 
	\begin{align*}
		(\rho S)_{m_0}=S+\frac{|u|^2}{2\th}-\frac{5}{3},\ (\rho S)_{m_i}=-\frac{u_i}{\th},\ i=1,2,3,\ (\rho S)_{m_4}=\frac{1}{\th},
	\end{align*}
	we have 
	\begin{equation}\label{238}\begin{aligned}
			\eta &= \frac{3}{2}\Big\{\rho\th -\bar\th\rho S+\rho\Big[\big(\bar S-\frac{5}{3}\big)\bar\th+\frac{|u-\bar u|^2}{2}\Big]+\frac{2}{3}\bar\rho\bar\th\Big\},\\
			&= \frac{3}{2}\Big\{\frac{1}{2}\rho|u-\bar u|^2+\frac{2}{3}\rho\bar\th\Psi\Big(\frac{\bar\rho}{\rho}\Big)+\rho\bar\th\Psi\Big(\frac{\th}{\bar\th}\Big)\Big\}\\
			q_j &= u_j\eta + (u_j-\bar u_j)(\rho\th-\bar\rho\bar\th),\ j=1,2,3,
		\end{aligned}
	\end{equation}
	where $\Psi(s) = s-\ln s-1 $ is a strictly convex function near $s=1$. Using Taylor's expansion and direct calculations, one can obtain 
	\begin{align}\label{eta1}
		C^{-1}|(\wt\rho,\wt u,\wt\th)|^2 \le \eta \le C|(\wt\rho,\wt u,\wt\th)|^2,
	\end{align}
	for some generic constant $C>1$. 
	The entropy-entropy flux constructed has the following properties: $\eta(\bar m)=0$, $\na_m\eta(\bar m)=0$, and the Hessian $\na_m^2\eta$ is positive definite for any $m$ satisfying $\rho,\th>0$ (cf. \cite{Liu2004}). Thus, for $m$ in any bounded region in $\Sigma=\{m:\rho>0,\th>0\}$, there exists a positive constant $C>1$ such that 
	\begin{align*}
		C^{-1}|m-\bar m|^2\le \eta\le C|m-\bar m|^2. 
	\end{align*}	
	From \eqref{238}, it is straightforward to derive the following entropy equation 
	\begin{align}\label{35}
		\eta_t + \na_x\cdot q=\na_{(\bar\rho,\bar u,\bar S)}\eta\cdot (\bar\rho,\bar u,\bar S)_t+\na_{(\bar\rho,\bar u,\bar S)}q\cdot (\bar\rho,\bar u,\bar S)_{x_1}+I_1+I_2,
	\end{align}
	where $I_1$ and $I_2$ are given by 
	\begin{align}\label{i1i2}
		I_1 = \bar\th \big\{\big(-\frac{3}{2}\rho S\big)_t-\frac{3}{2}\na_x\cdot(\rho uS)\big\},\quad
		I_2 = \frac{3}{2}\bar\th\big\{\na_m(\rho S)|_{m=\bar m}\cdot(m_t+\na_x\cdot n)\big\}.
	\end{align}
	Using \eqref{Euler2}, \eqref{S} and \eqref{238}, the similar arguments as \cite{Yang2005,Wang2019,Duan2021} imply 
	\begin{align*}
		&-\Big[\na_{(\bar\rho,\bar u,\bar S)}\eta\cdot (\bar\rho,\bar u,\bar S)_t+\na_{(\bar\rho,\bar u,\bar S)}q\cdot (\bar\rho,\bar u,\bar S)_{x_1}\Big]\\
		&= \bar u_{1x_1}\Big[\frac{3}{2}\rho(u_1-\bar u_1)^2+\frac{2}{3}\rho\bar\th\Psi\Big(\frac{\bar\rho}{\rho}\Big)+\rho\bar\th\Psi\Big(\frac{\th}{\bar\th}\Big)\Big]+\frac{3}{2}\rho\bar\th_{x_1}(u_1-\bar u_1)(\frac{2}{3}\ln\frac{\bar\rho}{\rho}+\ln\frac{\th}{\bar\th})\\
		&\gtrsim \bar u_{1x_1}(\wt\rho^2+\wt u_1^2+\wt\th^2).
	\end{align*}
	For the term $I_1$ in \eqref{i1i2}, we have from \eqref{32} that 
	\begin{align*}
		I_1 
		&= -\bar\th \na_x\cdot \Big(\int_{\R^3}vG\ln M\,dv\Big) + \bar\th \int_{\R^3}\frac{Gv\cdot\na_xM}{M}\,dv+\bar\th\na_x\phi\cdot\int_{\R^3}\na_vF_2\ln M\,dv\\
		&=  -\na_x\cdot \Big(\int_{\R^3}v \bar\th G\ln M\,dv\Big) +  \int_{\R^3}Gv\cdot\na_x(\bar\th\ln M)\,dv+\bar\th\na_x\phi\cdot\int_{\R^3}\na_vF_2\ln M\,dv.
	\end{align*}
	For $I_2$ in \eqref{i1i2}, recalling $\bar u_2=\bar u_3=0$, we have 
	\begin{align*}
		I_2 &= \na_x\cdot\Big(\int_{\R^3}\Big[\frac{3}{2}v_1\bar u_1v-\frac{3}{2}v|v|^2\Big]G\,dv\Big)-\frac{3}{2}\bar u_{1x_1}\int_{\R^3}v^2_1G\,dv\\
		&\quad+\frac{3}{2}\bar u_1\pa_{x_1}\phi\int_{\R^3}F_2\,dv-\frac{3}{2}\na_x\phi\cdot\int_{\R^3}vF_2\,dv. 
	\end{align*}
	Combining the above estimates and integrating \eqref{35} over $x\in\Omega$, we have 
	\begin{align}\label{340}
		\frac{d}{dt}\int_\Omega\eta(t)\,dx + \lam\|\sqrt{\bar u_{1x_1}}(\wt\rho,\wt u_1,\wt\th)\|^2_{L^2_x}\lesssim \int_\Omega\int_{\R^3}\big(v\cdot\na_x(\bar\th\ln M)-\frac{3}{2}\bar u_{1x_1}v^2_1\big)G\,dvdx+I_3,
	\end{align}
for some $\lam>0$, 
where 
\begin{align*}
	I_3 = \int_{\Omega}\Big(\bar\th\na_x\phi\cdot\int_{\R^3}\na_vF_2\ln M\,dv+\frac{3}{2}\bar u_1\pa_{x_1}\phi\int_{\R^3}F_2\,dv-\frac{3}{2}\na_x\phi\cdot\int_{\R^3}vF_2\,dv\Big)\,dx. 
\end{align*}
The terms with divergence vanishes for the case of torus is direct from periodic property. For the case of rectangular duct, we used the fact that on boundary $\Gamma_j$ $(j=2,3)$,
	\begin{align*}
		q_j=\int_{\R^3}v_j\bar\th G\ln M\,dv = \int_{\R^3}\Big[\frac{3}{2}v_1\bar u_1v_j-\frac{3}{2}v_j|v|^2\Big]G\,dv = 0,
	\end{align*}
which follows from \eqref{boundaryrhouth}, \eqref{233a}, \eqref{229a} and changing of variable $v\mapsto R_xv$. 
Thus, 
\begin{align*}
	\int_{\Omega}\na_x\cdot q\,dx = \int_{\Omega}\na_x\cdot\int_{\R^3}v\bar\th G\ln M\,dvdx=\int_{\Omega}\int_{\R^3}\Big[\frac{3}{2}v_1\bar u_1v-\frac{3}{2}v|v|^2\Big]G\,dvdx = 0. 
\end{align*}
	
	Now we calculate the term $I_3$ in \eqref{340}. 
	By integration by parts on $v$, we have 
	\begin{align*}
		\int_{\Omega}\bar\th\na_x\phi\cdot\int_{\R^3}\na_vF_2\ln M\,dvdx = \int_{\Omega}\frac{\bar\th}{R\th}\na_x\phi\cdot\int_{\R^3}(v-u)F_2\,dvdx. 
	\end{align*}
Then 
	\begin{align*}
		I_3&= \int_{\Omega}\int_{\R^3}\Big(\frac{3\wt\th}{2\th}\na_x\phi\cdot vF_2+\frac{3\wt\th}{2\th}\bar u_1\pa_{x_1}\phi\,F_2 - \frac{3\bar\th}{2\th}\na_x\phi\cdot\wt u F_2\Big)\,dvdx\\
		&\lesssim \|(\wt u,\wt\th)\|_{L^2_x}\|\na_x\phi\|_{L^3_x}\|F_2\|_{L^6_xL^2_3}
		\lesssim \sqrt{\E_k(t)}\D_k(t).
	\end{align*}

	Next we calculate the first right-hand term of \eqref{340}. 
	Notice that $(P_1f,\frac{g}{M})_{L^2_v}=(f,\frac{P_1g}{M})_{L^2_v}$ and $L_M^{-1}=P_1L_M^{-1}$.  Using \eqref{G}, the first right-hand term of \eqref{340} can be represented as  
	\begin{multline}\label{341}
		\big(\big(v\cdot\na_x(\bar\th\ln M)-\frac{3}{2}\bar u_{1x_1}v^2_1\big)M,\frac{G}{M}\big)_{L^2_{x,v}}\\
		= \Big(P_1\big(v\cdot\na_x(\bar\th\ln M)M-\frac{3}{2}\bar u_{1x_1}v^2_1M\big),\frac{L^{-1}_MP_1v\cdot\na_xM +L^{-1}_M\Theta}{M}\Big)_{L^2_{x,v}}. 
	\end{multline}
We begin with the first right hand term of \eqref{341}. 
	Similar to \eqref{olG2}, a direct calculation gives 
	\begin{align}\label{418}\notag
		P_1v\cdot\na_xM &= P_1v\cdot \Big(\frac{|v-u|^2\na_x\th}{2R\th^2}+\frac{(v-u)\cdot\na_x u}{R\th}\Big)M\\
		&= \frac{\sqrt{R}}{\sqrt{\th}}\sum_{j=1}^3\pa_{x_j}\th \wh A_j\big(\frac{v-u}{\sqrt{R\th}}\big)M + \sum_{i,j=1}^3\pa_{x_i}u_j\wh B_{ij}\big(\frac{v-u}{\sqrt{R\th}}\big)M, 
	\end{align}
	and 
	\begin{align}\label{I5I8}\notag
		&P_1\big(v\cdot\na_x(\bar\th\ln M)-\frac{3}{2}\bar u_{1x_1}v_1^2\big)M 
		= P_1v_1\bar\th_{x_1}M\ln M +\bar\th P_1v\cdot\na_xM-\frac{3}{2}P_1\bar u_{1x_1}v^2_1M\\
		&=\notag - \bar\th_{x_1}P_1v_1M\frac{|v-u|^2}{2R\th}+(\th-\wt \th)P_1v\cdot\Big\{\frac{|v-u|^2\na_x\th}{2R\th^2}+\frac{(v-u)\cdot\na_xu}{R\th}\Big\}M-\frac{3}{2}P_1\bar u_{1x_1}v^2_1M\\
		&\notag=\bar\th_{x_1}P_1v\cdot\Big\{\frac{|v-u|^2\na_x\wt\th}{2R\th^2}+\frac{(v-u)\cdot\na_x\wt u}{R\th}\Big\}M - \wt\th P_1v\cdot \Big(\frac{(v-u)^2\na_x\th}{2R\th^2}+\frac{(v-u)\cdot\na_x u}{R\th}\Big)M\\
		&\notag= \sqrt{R\th}\sum_{j=1}^3\pa_{x_j}\wt\th \wh A_j\big(\frac{v-u}{\sqrt{R\th}}\big)M + \th\sum_{i,j=1}^3\pa_{x_i}\wt u_j\wh B_{ij}\big(\frac{v-u}{\sqrt{R\th}}\big)M \\
		&\qquad+\frac{\sqrt{R}\wt\th}{\sqrt{\th}}\sum_{j=1}^3\pa_{x_j}\th \wh A_j\big(\frac{v-u}{\sqrt{R\th}}\big)M + \wt\th\sum_{i,j=1}^3\pa_{x_i}u_j\wh B_{ij}\big(\frac{v-u}{\sqrt{R\th}}\big)M = \sum_{j=4}^{7}I_j. 
	\end{align}
	Then by definition \eqref{21} of $A_j$ and $B_{ij}$, we have from \eqref{418} that 
	\begin{align}\label{I8I9}
		L_M^{-1}P_1v\cdot\na_xM =
		\frac{\sqrt{R}}{\sqrt{\th}}\sum_{j=1}^3\pa_{x_j}\th  A_j\big(\frac{v-u}{\sqrt{R\th}}\big) + \sum_{i,j=1}^3\pa_{x_i}u_j B_{ij}\big(\frac{v-u}{\sqrt{R\th}}\big)
		=:I_8 +I_9. 
	\end{align}
	Using the velocity decay from \eqref{decayAB}, we have 
	\begin{align*}
		\big|\big(\frac{I_6+I_7}{M},I_8+I_9\big)_{L^2_{x,v}}\big|
		&\lesssim \int_\Omega |\frac{\mu^{-1/2}(I_6+I_7)}{M}|_{L^2_v}|\mu^{1/2}(I_8+I_9)|_{L^2_v}\,dx
		\lesssim 
		\int_{\Omega}|\wt\th||\na_x(u,\th)|^2\,dx
		\\
		&\lesssim \|\wt\th\|_{L^3_x}\big(\|\na_x(\wt u,\wt\th)\|_{L^3_x}^2+\|\pa_{x_1}(\bar u,\bar\th)\|_{L^3_x}^2\big)\\
		&\lesssim \|\wt\th\|_{H^1_x}\|\na_x(\wt u,\wt\th)\|_{L^2_x}^2+\|\wt\th\|_{H^1_x}\|\pa_{x_1}(\bar u,\bar\th)\|^2_{L^3_x}\\
		&\lesssim \delta^{\frac{2}{3}}(1+t)^{-\frac{4}{3}}+\sqrt{\E_k(t)}\D_k(t), 
	\end{align*}
	where we used \eqref{GagL3} to obtain $\|\wt\th\|_{L^3_x}\lesssim \sqrt{\E_k(t)}\le 1$ and
	Lemma \ref{Lem21}. 
	Using decomposition $\pa_{x_j}\th= \pa_{x_j}\wt\th+\pa_{x_j}\bar\th$ and $\pa_{x_i}u_j=\pa_{x_i}\wt u_j+\pa_{x_i}\bar u_j$, we denote the terms $I_8=\wt I_8+\bar I_8$ and $I_9=\wt I_9+\bar I_9$ correspondingly. 
	From Lemma \ref{LemBur}, identities \eqref{mu} and \eqref{kappa}, we have 
	\begin{align*}
		(A_i,\wh A_j)_{L^2_x}=-\frac{\kappa(\th)}{R^2\th}\delta_{ij}, \quad (B_{ij},\wh B_{kl})=-\frac{\mu(\th)}{R\th}(\delta_{ik}\delta_{jl}+\delta_{il}\delta_{jk}-\frac{2}{3}\delta_{ij}\delta_{kl}),\ \forall\, i,j,k,l=1,2,3.
	\end{align*}
	Since $\th$ is close to $\frac{3}{2}$ and $\mu(\th)$, $\kappa(\th)$ are smooth about $\th$, there exists $C_1,C_2>0$ such that $\mu(\th),\kappa(\th)\in[C_1,C_2]$ and thus, 
	\begin{align*}
		\big(\frac{I_4+I_5}{M},\wt I_8+\wt I_9\big)_{L^2_{x,v}}
		&= -\int_{\Omega}\Big(\frac{\kappa(\th)}{R\th}|\na_x\wt\th|^2+\frac{\mu(\th)}{R}\Big[\frac{|\na_x\wt u+(\na_x\wt u)^t|^2}{2}-\frac{2}{3}(\na_x\cdot\wt u)^2\Big]\Big)dx\\
		&\le -c_0(\|\na_x\wt u\|_{L^2_x}^2+\|\na_x\wt \th\|_{L^2_x}^2),
	\end{align*}
	for some generic constant $c_0>0$. 
	For the part $\bar I_8+\bar I_9$, using orthogonality of $A_j$ and $B_{ij}$ and estimate \eqref{decayAB}, 
	 there exists function smooth $a(u,\th)$ and $b(u,\th)$ such that 
	\begin{align*}
		&\quad\,\big|\big(\frac{I_4+I_5}{M},\bar I_8+\bar I_9\big)_{L^2_{x,v}}\big|
		\lesssim \Big|\int_{\Omega}\big(\wt\th_{x_1}\bar\th_{x_1}a(u,\th)+\wt u_{x_1}\bar u_{1x_1}b(u,\th)\big)\,dx\Big|\\
		&\lesssim \Big|\int_{\Omega}\big(|(\wt u,\wt\th)||(\bar\th_{x_1x_1},\bar u_{1x_1x_1})|+|(\wt u,\wt\th)||(\bar\th_{x_1},\bar u_{1x_1})||(u_{x_1},\th_{x_1})|\big)\,dx\Big|\\
		&\lesssim \|(\wt u,\wt\th)\|_{L^\infty_x}\|(\bar\th_{x_1x_1},\bar u_{1x_1x_1})\|_{L^1_x}+
		\|(\wt u,\wt\th)\|_{L^6_x}\|(\bar\th_{x_1},\bar u_{1x_1})\|_{L^3_x}\big(\|(\wt u_{x_1},\wt\th_{x_1})\|_{L^2_x}+\|(\bar u_{x_1},\bar\th_{x_1})\|_{L^2_x}\big)\\
		&\lesssim \|\na_x(\wt u,\wt\th)\|^{\frac{1}{2}}_{H^2_x}\|(\wt u,\wt\th)\|_{H^2_x}^{\frac{1}{2}}\delta^{\frac{1}{8}}(1+t)^{-\frac{7}{8}}
		+\|\na_x(\wt u,\wt\th)\|_{L^2_x}\|(\wt u,\wt\th)\|_{H^1_x}\delta^{\frac{1}{3}}(1+t)^{-\frac{2}{3}}+\delta^{\frac{5}{6}}(1+t)^{-\frac{7}{6}}\\
		&\lesssim \delta^{\frac{1}{6}}(1+t)^{-\frac{7}{6}} +
		 \big(\delta^{\frac{1}{3}}+\E_k(t)\big)\D_k(t), 
	\end{align*}
where we used \eqref{Gag1}. 
	Here we take integration by parts on $x_1\in\R$ in the second inequality, 
and Young's inequality $ab\lesssim a^4+b^{3/4}$ in the fifth inequality. 
	It follows from \eqref{341}, \eqref{I5I8}, \eqref{I8I9} and the above estimates that 
	\begin{multline}\label{342a}
		\Big(\frac{P_1\big(v\cdot\na_x(\bar\th\ln M)M-\frac{3}{2}\bar u_{1x_1}v^2_1M\big)}{M},L^{-1}_M(P_1v\cdot\na_xM)\Big)_{L^2_{x,v}}\\
		\le -c_0\|\na_x(\wt u,\wt \th)\|_{L^2_x}^2 + C\delta^{\frac{1}{6}}(1+t)^{-\frac{7}{6}}+C\big(\delta^{\frac{1}{3}}+\sqrt{\E_k(t)}\big)\D_k(t),
	\end{multline}for some generic constant $c_0>0$. 
	
	Next we estimate the term with $\Theta$ in \eqref{341}. From \eqref{Theta2}, we know that 
	\begin{align}\label{Theta}
		\Theta = \pa_tG +P_1v\cdot\na_xG-P_1\na_x\phi\cdot\na_vF_2- 2Q(G,G). 
	\end{align}
	For the first factor on the right hand side of \eqref{341}, 
	using 
	\eqref{I5I8}, 
	we have 
	\begin{multline}\label{343a}
		\Big\|\frac{P_1\big(v\cdot\na_x(\bar\th\ln M)-\frac{3}{2}\bar u_{1x_1}v_1^2\big)M}{M^{1/2}}\Big\|_{L^2_{x}L^2_v}
		\lesssim \|\na_x(\wt u,\wt\th)\|_{L^2_{x}} + \|\wt \th\|_{L^\infty_x}\|\na_x(u,\th)\|_{L^2_x}\\
		\lesssim \|\na_x(\wt u,\wt\th)\|_{L^2_{x}} + \|\wt \th\|_{H^2_x}\big(\|\na_x(\wt u,\wt\th)\|_{L^2_x}+\delta^{\frac{1}{2}}(1+t)^{-\frac{1}{2}}\big)\\
		\lesssim \|\na_x(\wt u,\wt\th)\|_{L^2_{x}} + \delta(1+t)^{-1}+ \sqrt{\E_k(t)\D_k(t)}.
	\end{multline}
	Here we used \eqref{Gag} in the second inequality. For the term $\pa_tG+v\cdot\na_xG$ in \eqref{Theta}, 
	we have from \eqref{LM1a} and \eqref{344} that 
	\begin{align*}
		&\|M^{-1/2}L_M^{-1}{\pa_t G}\|_{L^2_{x}L^2_v}+\|M^{-1/2}L_M^{-1}{P_1v\cdot\na_x G}\|_{L^2_{x}L^2_v}\\
		&\quad\lesssim \|M^{-1/2}\pa_t G\|_{L^2_xL^2_{-\gamma/2}}+\|M^{-1/2}P_1v\cdot\na_xG\|_{L^2_{x}L^2_{-\gamma/2}}\\
		&\quad\lesssim \delta^{\frac{1}{3}}(1+t)^{-\frac{2}{3}} + \|\na_{x,t}\g\|_{L^2_xL^2_{\gamma/2}},
	\end{align*}
where we use 
\begin{align}\label{bbbb}
	|\<v\>^bM^{-1/2}\mu^{1/2}|\le C_b,
\end{align}
 for any $b>0$ to control the extra weight $-\gamma/2$, which is from the fact that $R\th>1$, i.e. \eqref{eta}.  
Then we obtain 
\begin{multline}\label{347}
	\Big(\frac{P_1\big(v\cdot\na_x(\bar\th\ln M)-\frac{3}{2}\bar u_{1x_1}v_1^2\big)M}{M},{L_M^{-1}(\pa_tG+P_1v\cdot\na_x G)}{}\Big)_{L^2_{x,v}}\\
	\le 
	\frac{c_0}{8}\|\na_x(\wt u,\wt\th)\|_{L^2_{x}}^2 + C\|\na_{x,t}\g\|^2_{L^2_xL^2_{\gamma/2}}
	+ C\delta^{\frac{2}{3}}(1+t)^{-\frac{4}{3}}
	+ \big(\delta+\sqrt{\E_k(t)}\big)\D_k(t), 
\end{multline}
where $c_0$ is given in \eqref{342a}. 
For the term $-P_1\na_x\phi\cdot\na_vF_2+2Q(G,G)$ in \eqref{Theta}, which only occurs for the VPB case, we have $\gamma\ge 0$ and $s\ge 1/2$. Then we have from \eqref{343a}, \eqref{bbbb}, \eqref{LM1a} and \eqref{2134} that 
\begin{align}\notag\label{348}
	&\quad\,\Big(\frac{ P_1\big(v\cdot\na_x(\bar\th\ln M)-\frac{3}{2}\bar u_{1x_1}v_1^2\big)M}{M},L_M^{-1}\big(-P_1\na_x\phi\cdot\na_vF_2+2Q(G,G)\big)\Big)_{L^2_{x,v}}\\
	&\notag\lesssim \Big\|\frac{P_1\big(v\cdot\na_x(\bar\th\ln M)-\frac{3}{2}\bar u_{1x_1}v_1^2\big)M}{M^{1/2}}\Big\|_{L^2_{x}L^2_v}\\&\quad\times\notag\Big(\|\na_x\phi\|_{H^1_x}\|M^{-1/2}\na_vF_2\|_{H^1_xH^{-s}_{-\gamma/2}}+\|M^{-1/2}Q(G,G)\|_{L^2_xH^{-s}_{-\gamma/2}}\Big)\\
	&\notag\lesssim \big(\|\na_x(\wt u,\wt\th)\|_{L^2_{x}} + \delta(1+t)^{-1}+ \sqrt{\E_k(t)\D_k(t)}\big)_{L^2_{x,v}}\\
	&\notag\quad\times
	\Big(
	\|\na_x\phi\|_{H^1_x}\|\f\|_{H^1_xH^{s}_{\gamma/2}}+\big\|\frac{\nu G}{\sqrt\mu}\big\|_{H^1_xL^2_v}\big\|\frac{G}{\sqrt\mu}\big\|_{H^1_xL^2_D}\Big)\\
	&\le C\delta^{\frac{2}{3}}(1+t)^{-\frac{4}{3}}+ \frac{c_0}{8}\|\na_x(\wt u,\wt\th)\|_{L^2_{x}}^2  + C\sqrt{\E_k(t)}\D_k(t).
\end{align}
where we use \eqref{344} to estimate $\ol G$ and $|\na_v(\mu^{1/2}(\cdot))|_{H^{-s}_{-\gamma/2}}\lesssim |\cdot|_{H^s_{\gamma/2}}$. 
Substituting estimates \eqref{342a}, \eqref{347} and \eqref{348} into \eqref{341}, and then inserting the resultant estimate into  \eqref{340}, we obtain  
\begin{multline}\label{430aa}
	\frac{d}{dt}\int_\Omega\eta(t)\,dx
	+ \frac{c_0}{2}\|\na_x(\wt u,\wt \th)\|_{L^2_x}^2 
	+ \lam\|\sqrt{\bar u_{1x_1}}(\wt\rho,\wt u_1,\wt\th)\|^2_{L^2_x}\\
	\lesssim \big(\delta^{\frac{1}{3}}+\sqrt{\E_k(t)}\big)\D_k(t)
	+\delta^{\frac{1}{6}}(1+t)^{-\frac{7}{6}} +\sum_{|\al|=1}\|\pa^{\al}\g\|^2_{L^2_xL^2_{\gamma/2}},
\end{multline}
for some generic constant $\lam>0$, where we choose $\delta,\eta>0$ sufficiently small. 
For the estimate on $\wt\rho$ and time derivative estimates, we take linear combination $\eqref{430aa}+\sum_{|\al|=0}\big(\kappa\times\eqref{rho}+\kappa^2\times\eqref{pat}\big)$ to deduce 
\begin{multline*}
	\frac{d}{dt}\int_\Omega\eta(t)\,dx + \kappa\pa_t(\wt u,\na_x\wt\rho)_{L^2_x}
	+ \lam\sum_{|\al|=1}\|\pa^\al(\wt\rho,\wt u,\wt \th)\|_{L^2_x}^2 
	+ \lam\|\sqrt{\bar u_{1x_1}}(\wt\rho,\wt u_1,\wt\th)\|^2_{L^2_x}\\
	\lesssim \big(\delta^{\frac{1}{3}}+\sqrt{\E_k(t)}\big)\D_k(t)
	+\delta^{\frac{1}{6}}(1+t)^{-\frac{7}{6}} +\|\na_{x,t}\g\|^2_{L^2_xL^2_{\gamma/2}},
\end{multline*}
for some $\lam>0$. 
This completes the proof of Lemma \ref{Lem31}. 
\end{proof}

\subsection{High Order Estimates on Macroscopic Components}
Next we consider the second order energy estimates of the macroscopic components $(\wt\rho,\wt u,\wt\th)$. 
\begin{Lem}\label{Lem41} 
Let
 $k\ge 0$ and 
  $(F_\pm,\phi)$ be the solution to \eqref{F1} (satisfying \eqref{specular} and \eqref{Neumann} for the case of rectangular duct). Assume $\ve>0$ in \eqref{priori2} is small enough. Let $\E_{int}(t)$ be given by 
\begin{align}\label{Eint}
	\E_{int}(t) := \sum_{\substack{1\le|\al|\le2\\\al_0=0}}\int_{\Omega}\Big(\frac{R\th}{2\rho^2}|\pa^\al\wt\rho|^2+\frac{1}{2}|\pa^\al\wt u|^2+\th^{-1}|\pa^\al\wt\th|^2\Big)\,dx + \kappa\sum_{\substack{1\le|\al|\le2\\\al_0=0}}(\pa^\al \wt u,\pa^\al \na_x\wt\rho)_{L^2_x}, 
\end{align}
for some small $\kappa>0$. 
Then we have 
\begin{multline}\label{highmacro}
	\pa_t\E_{int}(t)  
	+\lam \sum_{2\le|\al|\le3}\|\pa^\al(\wt\rho,\wt u,\wt\th)\|_{L^2_x}^2
	\lesssim \big(\delta^{\frac{1}{3}}+\sqrt{\E_k(t)}\big)\D_k(t)\\ + \delta^{\frac{2}{3}}(1+t)^{-\frac{4}{3}}+ \sum_{2\le|\al|\le3}\|\pa^\al \g\|^2_{L^2_xL^2_{\gamma/2}}, 
\end{multline}
for some generic constants $\lam>0$. 
\end{Lem}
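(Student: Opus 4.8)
# Plan of Proof for Lemma \ref{Lem41}

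\textbf{Overall strategy.} The plan is to derive a differential inequality for $\E_{int}(t)$ by applying the spatial derivative $\pa^\al$ with $1\le|\al|\le 2$, $\al_0=0$, to the fluid-type system \eqref{350}, then testing the resulting equations against the natural weighted combinations $\frac{R\th}{\rho^2}\pa^\al\wt\rho$, $\pa^\al\wt u$ and $2\th^{-1}\pa^\al\wt\th$ over $\Omega$. This produces, up to lower-order commutator terms, an exact time derivative of the quadratic form defining the first sum in \eqref{Eint}, together with a transport-type good term proportional to $\bar u_{1x_1}$, plus error terms. The dissipative terms in $\pa^\al\na_x(\wt u,\wt\th)$ for $2\le|\al|\le 3$ are \emph{not} produced by this step; as in Lemma \ref{Lem31}, these must be recovered from the microscopic representation \eqref{G} of $G$, namely by re-running the entropy--entropy-flux computation at the level of $\pa^\al$ for $|\al|$ in the relevant range, using the coercivity $(A_i,\wh A_j)_{L^2_v}=-\frac{\kappa(\th)}{R^2\th}\de_{ij}$ and $(B_{ij},\wh B_{kl})_{L^2_v}=-\frac{\mu(\th)}{R\th}(\de_{ik}\de_{jl}+\de_{il}\de_{jk}-\frac23\de_{ij}\de_{kl})$ from Lemma \ref{LemBur}, \eqref{mu}, \eqref{kappa}. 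The $\pa^\al\na_x\wt\rho$ dissipation for $2\le|\al|\le 3$ comes, exactly as in \eqref{rho}, from pairing $\pa^\al\pa_t\wt u$ with $\pa^\al\na_x\wt\rho$ and integrating by parts in time; this is why \eqref{Eint} carries the correction term $\kappa(\pa^\al\wt u,\pa^\al\na_x\wt\rho)_{L^2_x}$, and why \eqref{pat} (the control of time derivatives by spatial ones) is invoked. Finally one forms the linear combination $(\text{entropy-type estimate at order }\al)+\kappa\cdot\eqref{rho}_\al+\kappa^2\cdot\eqref{pat}_\al$ summed over $1\le|\al|\le 2$, absorbs the $\pa^\al\na_x\wt\rho$-smallness, and chooses $\kappa,\delta,\ve$ small to conclude \eqref{highmacro}.

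\textbf{Key steps in order.} First, I would differentiate \eqref{350} by $\pa^\al$, carefully tracking the commutators: terms like $\pa^\al(\na_x\wt\rho\cdot u)$, $\pa^\al(\frac{2\th}{3\rho}\na_x\wt\rho)$, $\pa^\al(\wt u\cdot\na_x\bar u)$, $\pa^\al((\frac{\th}{\rho}-\frac{\bar\th}{\bar\rho})\na_x\bar\rho)$, the Poisson-force term $\pa^\al(\frac{\na_x\phi}{\rho}\int F_2\,dv)$, and the flux term $\pa^\al(\frac1\rho\int v\otimes v\cdot\na_x G\,dv)$. The products of background derivatives are handled by Lemma \ref{Lem21} and Lemma \ref{LL31}, producing the $\delta^{2/3}(1+t)^{-4/3}$ contributions; the genuinely nonlinear cross-terms (two or more perturbation factors) are handled by the Gagliardo--Nirenberg/Sobolev inequalities of Lemma \ref{Lemembedd} (i.e.\ \eqref{GagL3}, \eqref{GagL6}, \eqref{Gag}, \eqref{Gag1}) to yield $\sqrt{\E_k(t)}\,\D_k(t)$ or $\E_k(t)\D_k(t)$. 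Second, I would handle boundary terms: on $\Gamma_i$ ($i=2,3$), integration by parts in $x$ is legitimate because of the compatible specular boundary values \eqref{boundaryrhouth}, \eqref{233a}, \eqref{229a} and \eqref{boundphi} from Lemma \ref{LemMacro}; for $\R\times\T^2$ periodicity suffices. Third, for the $G$-flux terms I would insert $G=\ol G+\sqrt\mu\,\g$, estimate $\pa^\al\na_x\ol G$ by \eqref{34}--\eqref{344}, and use \eqref{123}, \eqref{decayAB} to turn $\pa^\al\na_x\g$ integrals into $\|\pa^\al\na_x\g\|^2_{L^2_xL^2_{\gamma/2}}$ — which, since $|\al|\le 2$ gives $|\al|+1\le 3$, is absorbed into the last term on the right of \eqref{highmacro}. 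Fourth, I would recover $\pa^\al\na_x(\wt u,\wt\th)$-dissipation via the Burnett-function computation as in \eqref{341}--\eqref{348}, differentiated to order $\al$, and $\pa^\al\na_x\wt\rho$-dissipation via the momentum equation paired with $\pa^\al\na_x\wt\rho$ and time-integration by parts, exactly mirroring the proof of Lemma \ref{Lem32}. Fifth, assemble the linear combination, absorb small terms, and conclude.

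\textbf{Main obstacle.} The hard part will be the top-order ($|\al|=2$, hence third-order $\na_x\pa^\al$) terms where the entropy method needs to produce $\|\pa^\al\na_x(\wt u,\wt\th)\|^2_{L^2_x}$: one must show the error terms coupling a third-order perturbation derivative to background-wave derivatives, e.g.\ schematically $\int_\Omega \pa^\al\na_x\wt\th\cdot\pa^\al\bar\th_{x_1}\cdot(\text{lower order})\,dx$ and the analogous $\bar u_{1x_1}$ pieces, are controlled by $\delta^{2/3}(1+t)^{-4/3}$ plus a small multiple of the good terms. Here the decay $\|\pa^j_{x_1}(\bar\rho,\bar u,\bar\th)\|_{L^q_{x_1}}\le C_q\min\{\delta,(1+t)^{-1}\}$ for $j\ge 2$ from Lemma \ref{Lem21}(ii) is essential, combined with $L^\infty_x$ Sobolev bounds \eqref{Gag}--\eqref{Gag1} and Young's inequality to split powers; one must be careful that the weight index on $\g$ never exceeds what $\D_k$ allows (the constraint $|\al|\le 3$ is exactly tight) and that the Poisson term, present only for VPB with $\gamma\ge0$, $s\ge\frac12$, is estimated using $\|F_2\|_{L^6_xL^2_\ell}$ against $\|\na_x\phi\|_{H^1_x}$ so that it falls under $\sqrt{\E_k(t)}\,\D_k(t)$. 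A secondary technical point is the correct sign and absorption of the $\kappa(\pa^\al\wt u,\pa^\al\na_x\wt\rho)_{L^2_x}$ cross-term in $\E_{int}(t)$: one needs $\kappa$ small relative to the coercivity constants $c_0$ (for $(\wt u,\wt\th)$) so that $\E_{int}(t)$ remains comparable to $\sum_{1\le|\al|\le2}\|\pa^\al(\wt\rho,\wt u,\wt\th)\|^2_{L^2_x}$, exactly as in the passage to \eqref{32a}.
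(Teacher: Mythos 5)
Your plan matches the paper's proof in essentially every respect: differentiate the fluid system \eqref{350} by $\pa^\al$ (equivalently apply $\pa_{x_n}$ repeatedly as in \eqref{ruth1}), test against $\frac{R\th}{\rho^2}\pa^\al\wt\rho$, $\pa^\al\wt u$, and $\th^{-1}\pa^\al\wt\th$ over $\Omega$, kill boundary terms via the compatible specular values of Lemma~\ref{LemMacro}, recover the $\pa^\al\na_x(\wt u,\wt\th)$ dissipation from the $G$-flux by inserting $G=L_M^{-1}(P_1v\cdot\na_xM)+L_M^{-1}\Theta$, integrating by parts once and using the Burnett identities \eqref{mu}, \eqref{kappa}, control the $\Theta$-part and $\ol G$ via \eqref{LM1a}, \eqref{2134}, \eqref{344}, and finally take the linear combination with $\kappa\times\eqref{rho}+\kappa^2\times\eqref{pat}$ to obtain the $\na_x\wt\rho$ dissipation, the $\E_{int}(t)$ cross-term, and close. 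The only mislabeling is that at higher order the paper does not re-run the entropy--entropy-flux computation (that pointwise convexity structure is used only at zeroth order in Lemma~\ref{Lem31}); instead it performs a direct weighted $L^2$ energy estimate, and no surviving good term proportional to $\bar u_{1x_1}$ appears in \eqref{highmacro} --- but the dissipation-recovery mechanism you describe (Burnett coercivity) is indeed identical, so this affects neither the logic nor the estimates.
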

\begin{proof}
Recall \eqref{350} that 
	\begin{equation}\label{ruth}\left\{
	\begin{aligned}
		&\pa_t\wt\rho + \na_x\wt\rho \cdot u +\rho \na_x\cdot\wt u + \wt\rho\na_x\cdot \bar{u} + \na_x\bar\rho\cdot\wt u = 0,\\
		& \pa_t\wt u +  \na_x\wt u\cdot  u+\frac{R\th}{\rho}\na_x\wt\rho + R\na_x\wt\th +\wt u\cdot\na_x\bar u+R\big(\frac{\wt\th\bar\rho-\wt\rho\bar\th}{\rho\bar\rho}\big)\na_x\bar\rho
		\\&\qquad +\frac{\na_x\phi}{\rho}\int_{\R^3}F_2\,dv  = -\frac{1}{\rho}\int_{\R^3}v\otimes v\cdot\na_x G\,dv,\\
		&\pa_t\wt\th + \na_x\wt\th\cdot u+ R\th\na_x\cdot\wt u+\wt u\cdot\na_x\bar\th + R\wt\th\na_x\cdot \bar u + \frac{\na_x\phi}{\rho}\cdot\int_{\R^3}(v-u)F_2\,dv\\
		&\qquad\qquad= \frac{1}{\rho} \Big(-\int_{\R^3}\frac{|v|^2}{2} v\cdot\na_x G\,dv + u\cdot\int_{\R^3}v\otimes v\cdot\na_x G\,dv\Big). 
	\end{aligned}\right. 
\end{equation}
For any $n=1,2,3$, applying $\pa_{x_n}$ to \eqref{ruth}, we obtain 
\begin{equation}\label{ruth1}\left\{
\begin{aligned}
	&\pa_t\pa_{x_n}\wt\rho + \pa_{x_n}\na_x\wt\rho \cdot u + \na_x\wt\rho \cdot\pa_{x_n} u +\pa_{x_n}\rho \na_x\cdot\wt u +\rho \na_x\cdot\pa_{x_n}\wt u \\&\qquad+\pa_{x_n}\wt\rho\na_x\cdot \bar{u} + \wt\rho\na_x\cdot\pa_{x_n} \bar{u} + \pa_{x_n}\na_x\bar\rho\cdot\wt u   + \na_x\bar\rho\cdot\pa_{x_n}\wt u= 0,\\
	& \pa_t\pa_{x_n}\wt u + \pa_{x_n}\na_x\wt u\cdot  u+\na_x\wt u\cdot \pa_{x_n}u+\pa_{x_n}(\frac{R\th}{\rho})\na_x\wt\rho
	+\frac{R\th}{\rho}\na_x\pa_{x_n}\wt\rho \\&\qquad+ R\pa_{x_n}\na_x\wt\th +\pa_{x_n}\wt u\cdot\na_x\bar u +\wt u\cdot\na_x\pa_{x_n}\bar u+R\pa_{x_n}\big(\frac{\wt\th\bar\rho-\wt\rho\bar\th}{\rho\bar\rho}\big)\na_x\bar\rho\\&\qquad+R\big(\frac{\wt\th\bar\rho-\wt\rho\bar\th}{\rho\bar\rho}\big)\pa_{x_n}\na_x\bar\rho +\pa_{x_n}\Big(\frac{\na_x\phi}{\rho}\int_{\R^3}F_2\,dv\Big) = -\pa_{x_n}\Big(\frac{1}{\rho}\int_{\R^3}v\, v\cdot\na_x G\,dv\Big),\\
	&\pa_t\pa_{x_n}\wt\th + \pa_{x_n}\na_x\wt\th\cdot u+ \na_x\wt\th\cdot\pa_{x_n} u+ R\pa_{x_n}\th\na_x\cdot\wt u+ R\th\pa_{x_n}\na_x\cdot\wt u\\&\qquad+\pa_{x_n}\wt u\cdot\na_x\bar\th +\wt u\cdot\na_x\pa_{x_n}\bar\th + R\pa_{x_n}\wt\th\na_x\cdot \bar u+ R\wt\th\na_x\cdot\pa_{x_n} \bar u
	\\&\qquad + \pa_{x_n}\Big(\frac{\na_x\phi}{\rho}\cdot\int_{\R^3}(v-u)F_2\,dv\Big) = \pa_{x_n}\Big(-\frac{1}{\rho} \int_{\R^3}\frac{v\cdot(v-2u)}{2} v\cdot\na_x G\,dv\Big). 
\end{aligned}\right. 
\end{equation}
We will take a proper linear combination of equations in \eqref{ruth1}. 
%
Multiplying $\eqref{ruth1}_1$ by $\frac{R\th}{\rho^2}\pa_{x_n}\wt\rho$ and taking integration over $x\in\Omega$, we deduce that 
\begin{multline}\label{ru1}
	\frac{1}{2}\pa_t\int_{\Omega}\frac{R\th}{\rho^2}|\pa_{x_n}\wt\rho|^2\,dx -\frac{1}{2}\int_{\Omega}\frac{R\pa_t\th}{\rho^2}|\pa_{x_n}\wt\rho|^2\,dx + \int_{\Omega}\frac{R\th\pa_t\rho}{\rho^3}|\pa_{x_n}\wt\rho|^2\,dx\\
	 + 
	\int_{\Omega}(\pa_{x_n}\na_x\wt\rho \cdot u + \na_x\wt\rho \cdot\pa_{x_n} u +\pa_{x_n}\rho \na_x\cdot\wt u +\rho \na_x\cdot\pa_{x_n}\wt u)\,\frac{R\th}{\rho^2}\pa_{x_n}\wt\rho\,dx \\
	+\int_{\Omega}(\pa_{x_n}\wt\rho\na_x\cdot \bar{u} + \wt\rho\na_x\cdot\pa_{x_n} \bar{u} + \pa_{x_n}\na_x\bar\rho\cdot\wt u   + \na_x\bar\rho\cdot\pa_{x_n}\wt u)\,\frac{R\th}{\rho^2}\pa_{x_n}\wt\rho\,dx = 0. 
\end{multline}
The typical trilinear terms will be estimated in \eqref{e1} and \eqref{e11}. Here we should deal with the fourth and seventh terms in \eqref{ru1}. 
For the case of rectangular duct, using the boundary values from \eqref{boundaryrhouth}, 
 we have that for $i=2,3$, if $n=i$, then $\pa_{x_n}\wt\rho=0$ on $\Gamma_i$ and if $n\neq i$, then $\pa_{x_n} u_i=u_i=0$ on $\Gamma_i$ ($\pa_{x_n}$ is tangent derivative on $\Gamma_i$ when $k\neq i$). Thus, 
we can take integration by parts to obtain 
\begin{align}\label{425}
	\int_{\Omega}\pa_{x_n}\na_x\wt\rho\cdot u\,\frac{R\th}{\rho^2}\pa_{x_n}\wt\rho\,dx
	=-\frac{1}{2}\int_{\Omega}\pa_{x_n}\wt\rho\na_x\cdot u\,\frac{R\th}{\rho^2}\pa_{x_n}\wt\rho\,dx
	- \frac{1}{2}\int_{\Omega}\pa_{x_n}\wt\rho u\cdot\na_x\big(\frac{R\th}{\rho^2}\big)\pa_{x_n}\wt\rho\,dx,
\end{align}
and 
\begin{align}\label{426}
	\int_{\Omega}\rho\na_x\cdot\pa_{x_n}\wt u\,\frac{R\th}{\rho^2}\pa_{x_n}\wt\rho\,dx
	= 
	-\int_{\Omega}\pa_{x_n}\wt u\cdot\na_x\big(\frac{R\th}{\rho}\big)\pa_{x_n}\wt\rho\,dx
	-\int_{\Omega}\pa_{x_n}\wt u\cdot\na_x\pa_{x_n}\wt\rho\,\frac{R\th}{\rho}\,dx. 
\end{align}
The right hand side of \eqref{425} and first right-hand term of \eqref{426} are typical trilinear terms. 
Taking the inner product of $\eqref{ruth1}_2$ with $\pa_{x_n}\wt u$ over $\Omega$, we have 
\begin{multline}\label{ru2}
	\frac{1}{2}\pa_t\int_{\Omega}|\pa_{x_n}\wt u|^2\,dx +\sum_{i=1}^3\int_{\Omega}\big(\na_x\pa_{x_n}\wt u_i\cdot u+\na_x\wt u_i\cdot \pa_{x_n}u\big)\,\pa_{x_n}\wt u_i\,dx\\
	+\int_{\Omega}\big(\pa_{x_n}(\frac{R\th}{\rho})\na_x\wt\rho
	+\frac{R\th}{\rho}\na_x\pa_{x_n}\wt\rho+R\pa_{x_n}\na_x\wt\th +\pa_{x_n}\wt u\cdot\na_x\bar u \big)\cdot\pa_{x_n}\wt u\,dx \\
	+ \int_{\Omega}\big(\wt u\cdot\na_x\pa_{x_n}\bar u+R\pa_{x_n}\big(\frac{\wt\th\bar\rho-\wt\rho\bar\th}{\rho\bar\rho}\big)\na_x\bar\rho+R\big(\frac{\wt\th\bar\rho-\wt\rho\bar\th}{\rho\bar\rho}\big)\pa_{x_n}\na_x\bar\rho\big)\cdot\pa_{x_n}\wt u\,dx\\ +\int_{\Omega}\pa_{x_n}\wt u\cdot\pa_{x_n}\Big(\frac{\na_x\phi}{\rho}\int_{\R^3}F_2\,dv\Big)\,dx = -\int_{\Omega}\pa_{x_n}\wt u\cdot\pa_{x_n}\Big(\frac{1}{\rho}\int_{\R^3}v\, v\cdot\na_x G\,dv\Big)\,dx.
\end{multline}
We again estimate the trouble terms, i.e. the second, fifth and sixth terms in \eqref{ru2}. The fifth term of \eqref{ru2} will be eliminated by the second right-hand term of \eqref{426} after taking summation of \eqref{ru1} and \eqref{ru2}. For the second and sixth terms in \eqref{ru2}, taking boundary values \eqref{boundaryrhouth} into account, we have   
that for $i=2,3$, if $n=i$, then $u_i=\pa_{x_n}\rho=0$ on $\Gamma_i$ and if $n\neq i$, then $\pa_{x_n}u_i=0$ on $\Gamma_i$. 
Then by integration by parts, we have 
\begin{align}\label{425a}
	\int_{\Omega}\na_x\pa_{x_n}\wt u_i\cdot u\,\pa_{x_n}\wt u_i\,dx
	= -\frac{1}{2}\int_{\Omega}\pa_{x_n}\wt u_i\na_x\cdot u\,\pa_{x_n}\wt u_i\,dx,
\end{align}
and 
\begin{align}\label{426a}
	\int_{\Omega}R\pa_{x_n}\na_x\wt\th\cdot\pa_{x_n}\wt u\,dx
	= -\int_{\Omega}R\pa_{x_n}\wt\th\na_x\cdot\pa_{x_n}\wt u\,dx. 
\end{align}
The right hand side of \eqref{425a} becomes typical trilinear term. 
Multiplying $\eqref{ruth1}_3$ by $\frac{\pa_{x_n}\wt\th}{\th}$ and taking integration over $\Omega$, we have 
\begin{multline}\label{ru3}
	\pa_t\int_{\Omega}\th^{-1}|\pa_{x_n}\wt\th|^2\,dx 
	+\int_{\Omega}\frac{\pa_t\th}{\th^2}|\pa_{x_n}\wt\th|^2\,dx
	 + \int_{\Omega}\big(\pa_{x_n}\na_x\wt\th\cdot u+ \na_x\wt\th\cdot\pa_{x_n} u\big)\,\frac{\pa_{x_n}\wt\th}{\th}\,dx\\+\int_{\Omega}\big(R\pa_{x_n}\th\na_x\cdot\wt u+ R\th\pa_{x_n}\na_x\cdot\wt u + \pa_{x_n}\wt u\cdot\na_x\bar\th +\wt u\cdot\na_x\pa_{x_n}\bar\th\big)\,\frac{\pa_{x_n}\wt\th}{\th}\,dx\\ +\int_{\Omega}\big( R\pa_{x_n}\wt\th\na_x\cdot \bar u+ R\wt\th\na_x\cdot\pa_{x_n} \bar u\big)\,\frac{\pa_{x_n}\wt\th}{\th}\,dx
	 + \int_{\Omega}\frac{\pa_{x_n}\wt\th}{\th}\pa_{x_n}\Big(\frac{\na_x\phi}{\rho}\cdot\int_{\R^3}(v-u)F_2\,dv\Big)\,dx\\ = \int_{\Omega}\frac{\pa_{x_n}\wt\th}{\th}\pa_{x_n}\Big( 
	 -\frac{1}{\rho}\int_{\R^3}\frac{v\cdot(v-2u)}{2} v\cdot\na_x G\,dv
	  \Big)\,dx.
\end{multline}
The sixth term of \eqref{ru3} can be eliminated by \eqref{426a}. 
Here we need to deal with the third term in \eqref{ru3}. It follows from \eqref{boundaryrhouth} that
$u_i=0$ on $\Gamma_i$ for $i=2,3$. Then by taking integration by parts, 
\begin{align*}
	\int_{\Omega}u\cdot\na_x\pa_{x_n}\wt\th\,\frac{\pa_{x_n}\wt\th}{\th}\,dx
	= -\frac{1}{2}\int_{\Omega}\na_x\cdot\big(\frac{u}{\th}\big)|\pa_{x_n}\wt\th|^2\,dx, 
\end{align*}
which becomes a typical trilinear term.

\smallskip
Next we deal with the typical trilinear terms in \eqref{ru1}, \eqref{ru2} and \eqref{ru3} as the followings. 
Applying boundary values \eqref{boundaryrhouth}, we can take integration by parts to obtain 
\begin{align*}
	\|\na_x(\wt\rho,\wt u,\wt\th)\|_{L^2_x}\le \|(\wt\rho,\wt u,\wt\th)\|^{\frac{1}{2}}_{L^2_x}\|\na^2_x(\wt\rho,\wt u,\wt\th)\|^{\frac{1}{2}}_{L^2_x}. 
\end{align*}
Then using \eqref{pat1} and \eqref{pat} for time derivatives and \eqref{GagL3}, \eqref{GagL6} for Sobolev embedding, we have 
\begin{align}\label{e1}\notag
	&\quad\,\int_{\Omega}\big(|\na_{t,x}(\rho,u,\th)|+|\na_{x}(\bar\rho,\bar u,\bar\th)|\big)|\na_{x}(\wt\rho,\wt u,\wt\th)||\na_x(\wt\rho,\wt u,\wt\th)|\,dx\\
	&\notag\le \big(\|\na_{t,x}(\rho,u,\th)\|_{L^6_x}+\|\na_{x}(\bar\rho,\bar u,\bar\th)\|_{L^6_x}\big)\|\na_{x}(\wt\rho,\wt u,\wt\th)\|_{L^3_x}\|\na_x(\wt\rho,\wt u,\wt\th)\|_{L^2_x}\\
	&\notag\lesssim \big(\|\na_x(\wt\rho,\wt u,\wt\th)\|_{H^1_x}+\|\mu^{1/2}\na_x\g\|_{H^1_{x}L^2_5}+\delta^{\frac{1}{3}}(1+t)^{-\frac{2}{3}}+\sqrt{\E_k(t)\D_k(t)}\big)\\&\notag\qquad\times\|\na_x(\wt\rho,\wt u,\wt\th)\|^2_{H^1_x}\\
	&\lesssim \sqrt{\E_k(t)}\D_k(t) +\delta^{\frac{2}{3}}(1+t)^{-\frac{4}{3}},  
\end{align}
and 
\begin{align}\label{e11}\notag
	&\quad\,\int_{\Omega}|(\wt\rho,\wt u,\wt\th)||\na^2_{x}(\bar\rho,\bar u,\bar\th)||\na_x(\wt\rho,\wt u,\wt\th)|\,dx\\
	&\notag\le \|(\wt\rho,\wt u,\wt\th)\|_{L^6_x}\|\na^2_{x}(\bar\rho,\bar u,\bar\th)\|_{L^2_x}\|\na_x(\wt\rho,\wt u,\wt\th)\|_{L^3_x}\\
	&\notag\lesssim \delta^{\frac{1}{3}}(1+t)^{-\frac{2}{3}}\|(\wt\rho,\wt u,\wt\th)\|_{H^1_x}\|\na_x(\wt\rho,\wt u,\wt\th)\|_{H^1_x}\\
	&\lesssim \sqrt{\E_k(t)}\D_k(t) +\delta^{\frac{2}{3}}(1+t)^{-\frac{4}{3}}.
\end{align}
The trilinear terms including $\na_x\phi$ and $F_2$ can be estimated as 
\begin{align*}
	&\quad\,\int_{\Omega}|\na_x(\wt u,\wt\th)|\big(|\na_x\rho||\na_x\phi||F_2|_{L^2_3}+|\na^2_x\phi||F_2|_{L^2_3}+|\na_x\phi||\na_xF_2|_{L^2_3}\big)\,dx\\
	&\le\notag \|\na_x(\wt u,\wt\th)\|_{L^2_x}\big(\|\na_x\rho\|_{L^2_x}\|\na_x\phi\|_{L^\infty_x}\|F_2\|_{L^6_xL^2_3}+\|\na^2_x\phi\|_{L^2_x}\|F_2\|_{L^\infty_xL^2_3}+\|\na_x\phi\|_{L^6_x}\|\na_xF_2\|_{L^3_xL^2_3}\big)\\
	&\lesssim\sqrt{\E_k(t)}\D_k(t). 
\end{align*}
For the trilinear terms induced from the last terms of \eqref{ru2} and \eqref{ru3}, we have from \eqref{344} that 
\begin{align*}
	&\quad\,\int_{\Omega}|\na_x(\wt\rho,\wt u,\wt\th)||\na_x(\rho, u,\th)||\na_xG|\,dx\\
	&\notag\le \|\na_x(\wt\rho,\wt u,\wt\th)\|_{L^6_x}\|\na_x(\rho, u,\th)\|_{L^3_x}\big(\|\na_x\ol G\|_{L^2_x}+\|\mu^{1/2}\na_x\g\|_{L^2_x}\big)\\
	&\notag\lesssim \|\na_x(\wt\rho,\wt u,\wt\th)\|_{H^1_x}\big(\|\na_x(\wt\rho,\wt u,\wt\th)\|_{H^1_x}+\delta^{\frac{1}{3}}(1+t)^{-\frac{2}{3}}\big)\\
	&\notag\qquad\times\big(\delta^{\frac{1}{3}}(1+t)^{-\frac{2}{3}}+\|\mu^{1/2}\na_x\g\|_{L^2_x}\big)\\
	&\lesssim \big(\delta^{\frac{1}{3}}+\sqrt{\E_k(t)}\big)\D_k(t)+\delta^{\frac{2}{3}}(1+t)^{-\frac{4}{3}}. 
\end{align*}
Recalling \eqref{G} that 
\begin{align}\label{444}
	G &= L_M^{-1}\big(P_1v\cdot\na_xM\big) + L_M^{-1}\Theta,
\end{align}
where  
	$\Theta = \pa_tG +P_1v\cdot\na_xG-P_1\na_x\phi\cdot\na_vF_2- 2Q(G,G)$.
Recalling \eqref{P1M}, similar to \eqref{olG2}, a direct calculation gives 
\begin{align*}
	P_1v\cdot\na_xM
	= \frac{\sqrt{R}}{\sqrt{\th}}\sum_{i=1}^3\pa_{x_i}\th \wh A_i\big(\frac{v-u}{\sqrt{R\th}}\big)M + \sum_{i,j=1}^3\pa_{x_i}u_j\wh B_{ij}\big(\frac{v-u}{\sqrt{R\th}}\big)M, 
\end{align*}
and hence, 
\begin{align*}
	L_M^{-1}P_1v\cdot\na_xM
	= \frac{\sqrt{R}}{\sqrt{\th}}\sum_{i=1}^3\pa_{x_i}\th  A_i\big(\frac{v-u}{\sqrt{R\th}}\big) + \sum_{i,j=1}^3\pa_{x_i}u_j B_{ij}\big(\frac{v-u}{\sqrt{R\th}}\big). 
\end{align*} 
Using the fact from \cite[pp. 31]{Wang2019a} or \cite[pp. 51]{Duan2021}: 
\begin{align*}
	\big(B_{ij}\big(\frac{v-u}{\sqrt{R\th}}\big),\wh B_{ml}\big(\frac{v-u}{\sqrt{R\th}}\big)\big)_{L^2_v} = -\frac{\mu(\th)}{R\th}\big(\delta_{im}\delta_{jl}+\delta_{il}\delta_{jm}-\frac{2}{3}\delta_{ij}\delta_{ml}\big),
\end{align*}
and 
\begin{align*} 
	\big(A_i\big(\frac{v-u}{\sqrt{R\th}}\big),\wh A_j\big(\frac{v-u}{\sqrt{R\th}}\big)\big)_{L^2_v} = -\frac{\kappa(\th)}{R^2\th}\delta_{ij},
	\quad\big(A_i\big(\frac{v-u}{\sqrt{R\th}}\big),\wh B_{jm}\big(\frac{v-u}{\sqrt{R\th}}\big)\big)_{L^2_v} = 0,  
\end{align*}
we have 
\begin{align*}
	\int_{\R^3}v_m\, v\cdot\na_x L_M^{-1}P_1v\cdot\na_xM\,dv
	&= -\sum_{l=1}^3\pa_{x_l}\Big\{\mu(\th)\big(\pa_{x_m}u_l+\pa_{x_l}u_m-\frac{2}{3}\delta_{ml}\na_x\cdot u\big)\Big\},
\end{align*}
and 
\begin{align*}
	&\int_{\R^3}\frac{|v|^2}{2} v\cdot\na_x L_M^{-1}P_1v\cdot\na_xM\,dv\\
	&\quad= -\sum_{l=1}^3\pa_{x_l}\Big\{\kappa(\th)\pa_{x_l}\th + \sum_{m=1}^3\mu(\th)u_m\big(\pa_{x_m}u_l+\pa_{x_l}u_m-\frac{2}{3}\delta_{ml}\na_x\cdot u\big)\Big\}, 
\end{align*}
Note that $\kappa(\th)$ and $\mu(\th)$ are smooth in $\th$ and hence, 
$\kappa(\th),\mu(\th),\kappa'(\th),\mu'(\th)\in [C_1,C_2]$, for some $C_1,C_2>0$. 
Then the bilinear terms induced from the last terms in \eqref{ru2} and \eqref{ru3} with respect to the first term in \eqref{444}, 
after taking summation over $1\le n\le 3$, 
can be represented as 
\begin{align*}
&\quad\,-\sum_{n,m=1}^3\int_{\Omega}\frac{\pa_{x_n}\wt u_m}{\rho}\pa_{x_n}\int_{\R^3}v_m\, v\cdot\na_x L_M^{-1}\big(P_1v\cdot\na_xM\big)\,dvdx\\
&\notag\le\sum_{n,l,m=1}^3\int_{\Omega}\frac{\mu(\th)}{\rho}\pa_{x_n}\wt u_m\pa_{x_n}\pa_{x_l}\big(\pa_{x_m}\wt u_l+\pa_{x_l}\wt u_m-\frac{2}{3}\delta_{ml}\na_x\cdot\wt u\big)\,dx\\&\qquad\notag+C\int_{\Omega}|\na_x^2\wt u||\na_x^2\bar u|\,dx+C\int_{\Omega}|\na_x\wt u||\na^2_xu||\na_x\th|\,dx\\
&\notag\le\sum_{n,m=1}^3\int_{\Omega}\frac{\mu(\th)}{\rho}\pa_{x_n}\wt u_m\pa_{x_n}\big(\frac{1}{3}\pa_{x_m}\na_x\cdot\wt u+\Delta_x\wt u_m\big)\,dx
+C\|\na_x^2\wt u\|_{L^2_x}\delta^{\frac{1}{3}}(1+t)^{-\frac{2}{3}}
\\\notag&\qquad+C\|\na_x\wt u\|_{L^6_x}\big(\|\na^2_x\wt u\|_{L^2_x}+\delta^{\frac{1}{3}}(1+t)^{-\frac{2}{3}}\big)\big(\|\na_x\wt\th\|_{L^3_x}+\delta^{\frac{1}{3}}(1+t)^{-\frac{2}{3}}\big)\\
&\notag\le-\sum_{n=1}^3\int_{\Omega}\frac{\mu(\th)}{\rho}\big(\frac{1}{3}\|\pa_{x_n}\na_x\cdot u\|_{L^2_x}^2  + \|\pa_{x_n}\na_xu\|_{L^2_x}^2\big)\,dx+\eta\|\na^2_x\wt u\|_{L^2_x}^2\\&\qquad+\big(\delta^{\frac{1}{3}}+C\sqrt{\E_k(t)}\big)\D_k(t) + C_\eta\delta^{\frac{2}{3}}(1+t)^{-\frac{4}{3}},  
\end{align*}
and 
\begin{align*}
	 &\quad\,-\sum_{n=1}^3\int_{\Omega}\frac{\pa_{x_n}\wt\th}{\th\rho}\pa_{x_n}\int_{\R^3}\frac{v\cdot(v-2u)}{2} v\cdot\na_x L_M^{-1}\big(P_1v\cdot\na_xM\big)\,dvdx\\
	&\notag\le \sum_{n,l=1}^3\int_{\Omega}\frac{\pa_{x_n}\wt\th}{\rho\th}\pa_{x_l}\Big\{\kappa(\th)\pa_{x_nx_l}\th\Big\}\,dx +\sum_{n=1}^3\int_{\Omega}\frac{\mu(\th)\pa_{x_n}\wt\th}{\rho\th}\pa_{x_n}\Big\{\frac{|\na_xu+(\na_xu)^t|^2}{2}-\frac{2}{3}(\na_x\cdot u)^2\Big\}\,dx\\
	&\notag\le - \int_{\Omega}\frac{\kappa(\th)|\na_x^2\wt\th|^2}{\rho\th}\,dx + \int_\Omega\big(|\na_x(\rho,\th)||\na_x\wt\th||\na^2_x\th|+|\na^2_x\wt\th||\na_x^2\bar\th|+|\na_x\wt\th||\na_xu||\na^2_xu|\big)\,dx\\
	&\le - \int_{\Omega}\frac{\kappa(\th)|\na_x^2\wt\th|^2}{\rho\th}\,dx +\eta\|\na^2_x\wt \th\|_{L^2_x}^2+ \big(\delta^{\frac{1}{3}}+\sqrt{\E_{k_0}(t)}\big)\D_k(t) + C_\eta\delta^{\frac{2}{3}}(1+t)^{-\frac{4}{3}}, 
\end{align*}
for any $\eta>0$, 
where we apply integration by parts by using zero boundary values from \eqref{boundaryrhouth} for the case of rectangular duct and use the fact that 
\begin{align*}
	&\quad\,\int_\Omega\big(|\na_x(\rho,u,\th)||\na_x\wt\th||\na^2_x(u,\th)|+|\na^2_x\wt\th||\na_x^2\bar\th|+|\na_x\wt\th||\na_xu||\na^2_xu|\big)\,dx\\
	&\lesssim \|\na_x(\rho,u,\th)\|_{L^3_x}\|\na_x\wt\th\|_{L^6_x}\|\na^2_x(u,\th)\|_{L^2_x}+\|\na^2_x\wt\th\|_{L^2_x}\|\na_x^2\bar\th\|_{L^2_x}\\
	&\lesssim \big(\|\na_x(\wt\rho,\wt u,\wt\th)\|_{H^1_x}+\delta^{\frac{1}{3}}(1+t)^{-\frac{2}{3}}\big)\|\na_x\wt\th\|_{H^1_x}\big(\|\na_x^2(\wt u,\wt\th)\|_{L^2_x}+\delta^{\frac{1}{2}}(1+t)^{-\frac{2}{3}}\big)\\&\qquad + \|\na^2_x\wt\th\|_{L^2_x}\delta^{\frac{1}{3}}(1+t)^{-\frac{2}{3}}\\
	&\lesssim \big(\delta^{\frac{1}{3}}+\sqrt{\E_{k_0}(t)}\big)\D_k(t) + C_\eta\delta^{\frac{1}{3}}(1+t)^{-\frac{4}{3}} +\eta\|\na^2_x\wt\th\|_{L^2_x}^2.
\end{align*}
Finally, we estimate the last terms in \eqref{ru2} and \eqref{ru3} with respect to the second term in \eqref{444}. 
Denote 
\begin{align*}
	\Theta_1 = \pa_tG +P_1v\cdot\na_xG,\qquad
	\Theta_2 = -P_1\na_x\phi\cdot\na_vF_2- 2Q(G,G). 
\end{align*}
Using boundary values from Lemma \ref{LemMacro}, we can obtain for $x\in \Gamma_n$ $(n=2,3)$ that 
\begin{align*}
	\pa_{x_n}\wt\th =\pa_{x_n}\wt u_m &=0, \ \ m\neq k,\\
	\int_{\R^3}v_m\, v\cdot\na_x L_M^{-1}\Theta_1\,dv &= 0, \ \ m= k. 
\end{align*}
Also, by duality on $P_1$ and $L_M^{-1}$, i.e. $(P_1f,\frac{g}{M})_{L^2_v}=(f,\frac{P_1g}{M})_{L^2_v}$ and $(L_M^{-1}f,\frac{g}{M})_{L^2_v}=(f,\frac{L_M^{-1}g}{M})_{L^2_v}$, we have 
\begin{align*}
	\int_{\R^3}v_m\, v\cdot\na_x L_M^{-1}\Theta_1\,dv&=\int_{\R^3}L_M^{-1}P_1(v_m\, v)M\cdot \frac{\na_x\Theta_1}{M}\,dv,\\ \int_{\R^3}\frac{v\cdot(v-2u)}{2} v\cdot\na_x L_M^{-1}\Theta_1\,dv
	&=\int_{\R^3}L_M^{-1}P_1\big(\frac{v\cdot(v-2u)}{2} vM\big)\cdot \frac{\na_x\Theta_1}{M}\,dv.   
\end{align*} 
It follows from \eqref{LM1a} that 
\begin{align*}
	|M^{-\frac{1}{2}}L_M^{-1}P_1(v_m\, v)M|_{L^2_v}+|M^{-\frac{1}{2}}L_M^{-1}P_1\big(\frac{v\cdot(v-2u)}{2} vM\big)|_{L^2_v}\le C<\infty. 
\end{align*}
Thus, we can take integration by parts about $\pa_{x_n}$ to obtain 
\begin{multline*}
	-\sum_{n,m=1}^3\int_{\Omega}\pa_{x_n}\wt u_m\pa_{x_n}\Big(\frac{1}{\rho}\int_{\R^3}v_m\, v\cdot\na_x L_M^{-1}\Theta\,dv\Big)\,dx\\+\sum_{n=1}^3\int_{\Omega}\frac{\pa_{x_n}\wt\th}{\th}\pa_{x_n}\Big(-\frac{1}{\rho}\int_{\R^3}\frac{v\cdot(v-2u)}{2} v\cdot\na_x L_M^{-1}\Theta\,dv\Big)\,dx\\
	\le \eta\|\na^2_x\wt u\|_{L^2_x}^2+\eta\|\na_x^2\wt\th\|_{L^2_x}^2 + C_\eta\|M^{-\frac{1}{2}}\na_x\Theta_1\|_{L^2_xL^2_v}^2 + C_\eta\|\<v\>^5\na_xL_M^{-1}\Theta_2\|_{L^2_xL^2_v}^2. 
\end{multline*}
Using \eqref{eta}, we have $R\th>1$ and thus $\<v\>^bM^{-\frac{1}{2}}\mu^{\frac{1}{2}}\le C_b$ for any $b>0$. 
Then applying \eqref{344}, we can deduce 
\begin{align*}
	\|M^{-\frac{1}{2}}\na_x\Theta_1\|_{L^2_xL^2_v}^2
	&\lesssim 
	\|\mu^{-\frac{1}{2}}(\pa_t\na_xG +P_1v\cdot\na^2_xG)\|^2_{L^2_xL^2_{\gamma/2}}\\
	&\lesssim \delta^{\frac{2}{3}}(1+t)^{-\frac{4}{3}} + \sum_{|\al|=2}\|\pa^\al\g\|^2_{L^2_xL^2_{\gamma/2}}.  
\end{align*} 
Applying \eqref{LM1a} and \eqref{2134}, we have 
\begin{align*}
	\|\<v\>^5\na_xL_M^{-1}\Theta_2\|_{L^2_xL^2_v}^2&\lesssim \|-\na_x(P_1\na_x\phi\cdot\na_vF_2)- 2\na_xQ(G,G)\|^2_{L^2_xH^{-s}_{-\gamma/2}}\\
	&\lesssim\|\na_x\phi\|^2_{H^2_x}\|\f\|^2_{H^2_xH^{s}_{\gamma/2}}+\|\nu\mu^{-\frac{1}{2}}G\|^2_{H^2_xL^2_v}\|\mu^{-\frac{1}{2}}G\|^2_{H^2_xL^2_D}\\
	&\lesssim \delta^{\frac{2}{3}}(1+t)^{-\frac{4}{3}} +\big(\delta^{\frac{1}{3}}+\sqrt{\E_k(t)}\big)\D_k(t).  
\end{align*}

\smallskip 
Taking linear combination $\eqref{ru1}+\eqref{ru2}+\eqref{ru3}$, letting $\eta>0$ sufficiently small and applying the above estimates, 
we obtain 
\begin{multline*}
	\pa_t\int_{\Omega}\Big(\frac{R\th}{2\rho^2}|\na_x\wt\rho|^2+\frac{1}{2}|\na_x\wt u|^2+\th^{-1}|\na_x\wt\th|^2\Big)\,dx 
	+\lam \|\na^2_x(\wt u,\wt\th)\|_{L^2_x}^2\\
	\lesssim \big(\delta^{\frac{1}{3}}+\sqrt{\E_k(t)}\big)\D_k(t) + \delta^{\frac{2}{3}}(1+t)^{-\frac{4}{3}}+ \sum_{|\al|=2}\|\pa^\al\g\|^2_{L^2_xL^2_{\gamma/2}}, 
\end{multline*}
for some generic constant $\lam>0$. The estimates for third-order dissipation terms can be obtained by using similar computations and one can deduce 
\begin{multline}\label{450}
	\pa_t\sum_{\substack{1\le|\al|\le2\\\al_0=0}}\int_{\Omega}\Big(\frac{R\th}{2\rho^2}|\pa^\al\wt\rho|^2+\frac{1}{2}|\pa^\al\wt u|^2+\th^{-1}|\pa^\al\wt\th|^2\Big)\,dx 
	+\lam \|\na^2_x(\wt u,\wt\th)\|_{L^2_x}^2+\lam \|\na^3_x(\wt u,\wt\th)\|_{L^2_x}^2\\
	\lesssim \big(\delta^{\frac{1}{3}}+\sqrt{\E_k(t)}\big)\D_k(t) + \delta^{\frac{2}{3}}(1+t)^{-\frac{4}{3}}+ \sum_{2\le|\al|\le3}\|\pa^\al\g\|^2_{L^2_xL^2_{\gamma/2}}, 
\end{multline}
%
In order to obtain the estimate on $\wt\rho$ and time derivative, we take linear combination $\eqref{450}+\sum_{1\le|\al|\le2,\,|\al_0|=0}\kappa\times\eqref{rho}+\kappa^2\times\eqref{pat}$ with sufficiently small $\kappa>0$ to obtain 
\begin{multline*}
	\pa_t\E_{int}(t)  
	+\lam \sum_{2\le|\al|\le3}\|\pa^\al(\wt\rho,\wt u,\wt\th)\|_{L^2_x}^2\\
	\lesssim \big(\delta^{\frac{1}{3}}+\sqrt{\E_k(t)}\big)\D_k(t) + \delta^{\frac{2}{3}}(1+t)^{-\frac{4}{3}}+ \sum_{2\le|\al|\le3}\|\pa^\al \g\|^2_{L^2_xL^2_{\gamma/2}}, 
\end{multline*}
where $\E_{int}(t)$ is given by \eqref{Eint}. 
This completes the proof of Lemma \ref{Lem41}.

\end{proof}

\section{Estimates on Non-Fluid Quantities}\label{Sec5}

In this section, we will derive energy estimates on $\na_x\phi$, $\g$ and $\f$. 
Here we will modify the arguments in \cite{Duan2020a,Duan2021} to obtain the estimations for the physical boundary case.

\subsection{Estimates on $\f$ and $\na_x\phi$}
In this subsection, we derive the energy estimates on $\f=\mu^{-1/2}F_2$ and $\na_x\phi$.

\begin{Lem}\label{Lem53}
	Assume $\gamma>\max\{-3,-2s-\frac{3}{2}\}$ for Boltzmann case and $\gamma\ge 0$, $\frac{1}{2}\le s<1$ for VPB case. 
	Let $(F_\pm,\phi)$ be the solution to \eqref{1}. Then there exists functional $\E_{k,3}$ 
	satisfying 
	\begin{align}\label{Ek2}
		\E_{k,3}\approx \sum_{|\al|\le 3}\Big(\|w(\al)\pa^\al \f\|^2_{L^2_{x,v}}+\|\pa^\al\na_x\phi\|_{L^2_x}^2\Big),
	\end{align}
such that 
\begin{multline}\label{f1}
	\pa_t\E_{k,3} + \lam\sum_{|\al|\le 3}\|w(\al)\pa^\al\f\|^2_{L^2_xL^2_D}+
	\lam\sum_{|\al|\le 3}\big(\|\pa^\al a\|_{L^2_x}^2+\|\pa^\al\na_x\phi\|_{L^2_x}^2\big)
	\\\lesssim \delta(1+t)^{-2}+ 
	\big(\eta_0+\delta^{\frac{1}{3}}+\sqrt{\E_k(t)}\big)\D_k(t). 
\end{multline}
\end{Lem}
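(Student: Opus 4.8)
In the pure Boltzmann case $F_2=\phi=0$ both sides of \eqref{f1} vanish, so I only treat the VPB case, where $\gamma\ge 0$, $\tfrac12\le s<1$ and $\gamma+2s\ge 1$. The plan starts from \eqref{F2}. Writing $F_1=M+G=\mu+(M-\mu)+G$ and using $\na_v\mu=-v\mu$, the Vlasov term becomes
\begin{equation*}
-\frac{\na_x\phi\cdot\na_vF_1}{\sqrt\mu}=\na_x\phi\cdot v\sqrt\mu-\frac{\na_x\phi\cdot\na_v(M-\mu)}{\sqrt\mu}-\frac{\na_x\phi\cdot\na_vG}{\sqrt\mu},
\end{equation*}
so \eqref{F2} takes the linearized form $\pa_t\f+v\cdot\na_x\f+\na_x\phi\cdot v\sqrt\mu=\L_2\f+R$, where $R$ collects the last two terms above together with $\Gamma(\tfrac{M-\mu}{\sqrt\mu},\f)+\Gamma(\tfrac{G}{\sqrt\mu},\f)$. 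Besides testing with weighted $\pa^\al\f$, I also take the $\sqrt\mu$- and $v_i\sqrt\mu$-moments to get the macroscopic laws $\pa_t a+\na_x\cdot b=h_a$ and $\pa_t b+\na_x a+\na_x\cdot\Theta+\rho\na_x\phi=\ell(\f)$, with $b_i=\int_{\R^3}v_i\sqrt\mu\,\f\,dv$, $\Theta_{ij}=\int_{\R^3}(v_iv_j-\delta_{ij})\sqrt\mu\,\f\,dv$, and $h_a,\ell(\f)$ moments of $R$; here $-\Delta_x\phi=2a$ (recall $a\sqrt\mu=P_{\mu,2}\f$).

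\textbf{Weighted energy estimate.} For $|\al|\le 3$ apply $\pa^\al$ to the linearized equation and test with $(w^2(\al)+\Lambda)\pa^\al\f$ over $\Omega\times\R^3$, with $\Lambda>0$ a large fixed constant. The $\pa_t$ term gives $\tfrac12\pa_t\|(w^2(\al)+\Lambda)^{1/2}\pa^\al\f\|_{L^2_{x,v}}^2$; the transport term produces only the boundary integral $\tfrac12\int_{\R^3}\!\int_{\pa\Omega}(v\cdot n)(w^2(\al)+\Lambda)|\pa^\al\f|^2$, which vanishes because $F_2=\tfrac{F_+-F_-}{2}$ inherits the specular condition, so (as in \eqref{29}) $\pa^\al\f(x,R_xv)=(-1)^{|\al_i|}\pa^\al\f(x,v)$ on $\Gamma_i$ and $v\mapsto R_xv$ flips the sign of $(v\cdot n)$ while preserving the radial weight. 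Since $\L_2$ has no $x$-dependence, $\pa^\al\L_2\f=\L_2\pa^\al\f$; combining \eqref{L1} (with $\wt\L=\L_2$), the spectral gap \eqref{Lg}, and $|\pa^\al\f|_{L^2_{B_C}}^2\lesssim|(I-P_{\mu,2})\pa^\al\f|_{L^2_D}^2+|\pa^\al a|^2$, the collision term yields a dissipation $\gtrsim\|w(\al)\pa^\al\f\|_{L^2_xL^2_D}^2+\Lambda\lam_0\|(I-P_{\mu,2})\pa^\al\f\|_{L^2_xL^2_D}^2-C\|\pa^\al a\|_{L^2_x}^2$. The electric coupling contributes $\sum_{\al'\le\al}(\pa^{\al'}\na_x\phi\cdot v\sqrt\mu,(w^2(\al)+\Lambda)\pa^\al\f)$; for $\al'<\al$ this is $\sqrt{\E_k(t)}\D_k(t)$–small by Sobolev embedding (Lemma \ref{Lemembedd}), while for $\al'=\al$, oddness in $v$ kills the pairing with $P_{\mu,2}\pa^\al\f$, so it only sees $(I-P_{\mu,2})\pa^\al\f$; its $\Lambda$-part equals $\Lambda\int_\Omega\pa^\al\na_x\phi\cdot\pa^\al b\,dx$, which after integration by parts and use of the $a$-equation, $-\Delta_x\phi=2a$ and the boundary values of Lemma \ref{LemMacro} becomes exactly $\tfrac{\Lambda}{4}\pa_t\|\pa^\al\na_x\phi\|_{L^2_x}^2$ plus a lower-order term, and its $w^2(\al)$-part is bounded by $\epsilon\|\pa^\al\na_x\phi\|_{L^2_x}^2+C_\epsilon\|(I-P_{\mu,2})\pa^\al\f\|_{L^2_xL^2_D}^2$, the second term being absorbed by the large dissipation $\Lambda\lam_0\|(I-P_{\mu,2})\pa^\al\f\|^2_{L^2_D}$ once $\Lambda$ is large enough.

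\textbf{Dissipation of $a$ and $\na_x\phi$, and the functional.} The missing dissipation comes from the interactive term $\kappa\sum_{|\al|\le3}\int_\Omega\pa^\al b\cdot\pa^\al\na_x\phi\,dx$ with $\kappa>0$ small. Differentiating in $t$, inserting the $b$-equation, integrating by parts using the boundary values of Lemma \ref{LemMacro} and \eqref{boundphi}, and using $-\Delta_x\pa^\al\phi=2\pa^\al a$ with elliptic regularity $\|\pa^\al\na_x^2\phi\|_{L^2_x}\lesssim\|\pa^\al a\|_{L^2_x}$ and $L^2$-boundedness of $\na_x(-\Delta_x)^{-1}\na_x\cdot$ (for $\na_x\pa_t\phi$), one gets $\pa_t\big(\kappa\int\pa^\al b\cdot\pa^\al\na_x\phi\big)\le-\kappa c\|\pa^\al\na_x\phi\|_{L^2_x}^2-2\kappa\|\pa^\al a\|_{L^2_x}^2+\kappa C\|\pa^\al\f\|_{L^2_xL^2_D}^2+(\text{rarefaction-wave errors})$. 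Setting
\begin{equation*}
\E_{k,3}:=\sum_{|\al|\le3}\|(w^2(\al)+\Lambda)^{1/2}\pa^\al\f\|_{L^2_{x,v}}^2+\frac{\Lambda}{4}\sum_{|\al|\le3}\|\pa^\al\na_x\phi\|_{L^2_x}^2+\kappa\sum_{|\al|\le3}\int_\Omega\pa^\al b\cdot\pa^\al\na_x\phi\,dx,
\end{equation*}
the cross term is a small perturbation (since $\|\pa^\al b\|_{L^2_x}\lesssim\|\pa^\al\f\|_{L^2_{x,v}}$), so $\E_{k,3}$ satisfies \eqref{Ek2}; choosing first $\Lambda$ large, then $\kappa$ small, then $\epsilon\sim\kappa$ small, the surviving negative terms are $-\lam\sum\|w(\al)\pa^\al\f\|_{L^2_xL^2_D}^2-\lam\sum\|\pa^\al a\|_{L^2_x}^2-\lam\sum\|\pa^\al\na_x\phi\|_{L^2_x}^2$.

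\textbf{Remainders and main obstacle.} The terms of $R$ and their moments are controlled by what is already available: $\Gamma(\tfrac{M-\mu}{\sqrt\mu},\f)$ and $\Gamma(\f,\tfrac{M-\mu}{\sqrt\mu})$ by Lemma \ref{Lem24} (with $g=\f$), giving $(\eta_0+\delta^{1/3}+\sqrt{\E_k(t)})\D_k(t)$; $\Gamma(\mu^{-1/2}G,\f)$ by Lemma \ref{Lem25}, giving $(\delta^{1/2}+\sqrt{\E_k(t)})\D_k(t)+\delta(1+t)^{-2}$; the Vlasov remainders $\na_x\phi\cdot\na_v(M-\mu)/\sqrt\mu$ and $\na_x\phi\cdot\na_vG/\sqrt\mu$ (with $G=\ol G+\sqrt\mu\g$) by H\"older/Sobolev (Lemma \ref{Lemembedd}) together with the $\eta_0$-smallness of $(M-\mu)/\sqrt\mu$ as in \eqref{33}--\eqref{333}, the bounds \eqref{34}--\eqref{344} for $\ol G$, and $\D_k(t)$ for $\g$; the rarefaction-wave factors enter only through Lemma \ref{Lem21} and \eqref{34}--\eqref{344} and produce the $\delta(1+t)^{-2}$ term. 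Assembling the weighted energy estimates summed over $|\al|\le3$, the identity for $\tfrac\Lambda4\sum\|\pa^\al\na_x\phi\|^2$, and the interactive estimate, and choosing $\delta_0,\eta_0,\ve$ small yields \eqref{f1}. I expect the main obstacle to be precisely the electric-field book-keeping: one must feed $\|\pa^\al\na_x\phi\|^2$ into $\E_{k,3}$ through the Poisson identity and into the dissipation through the interactive functional, and then order the constants $\Lambda$ (large), $\kappa$, $\epsilon$ (small) so that every residual is either absorbed by the coercive/interactive dissipation or is genuinely of the admissible form $\delta(1+t)^{-2}+(\eta_0+\delta^{1/3}+\sqrt{\E_k(t)})\D_k(t)$; the non-symmetry of $\L_2$ (so $\ker\L_2=\mathrm{span}\{\sqrt\mu\}$ and $F_2$ carries no conserved momentum/energy) is what forces recovering the $a$- and $\na_x\phi$-dissipation from the Poisson coupling rather than from a direct macroscopic estimate, and on the rectangular duct the boundary terms in all integrations by parts must be cleared using the high-order compatible specular conditions of Lemma \ref{LemMacro}.
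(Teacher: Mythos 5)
Your architecture is genuinely different from the paper's: you recover the dissipation of $\|\pa^\al a\|_{L^2_x}^2$ and $\|\pa^\al\na_x\phi\|_{L^2_x}^2$ from a Kawashima-type compensating functional $\kappa\sum_{|\al|\le 3}\int_\Omega\pa^\al b\cdot\pa^\al\na_x\phi\,dx$, and you compress the weighted and unweighted tests into a single test against $(w^2(\al)+\Lambda)\pa^\al\f$. The paper instead derives, from the $\sqrt\mu$- and $v\sqrt\mu$-moments of \eqref{F2} together with the Poisson equation and the boundary conditions of Lemma \ref{LemMacro}, a \emph{static} inequality $\sum_{|\al|\le 3}\bigl(\|\pa^\al a\|_{L^2_x}^2+\|\pa^\al\na_x\phi\|_{L^2_x}^2\bigr)\lesssim\sum_{|\al|\le3}\|\pa^\al\{I-P_{\mu,2}\}\f\|_{L^2_xL^2_D}^2 + \bigl(\eta_0+\delta^{1/2}+\sqrt{\E_k}\bigr)\D_k$, and then forms the three-way linear combination $\kappa\times(\text{static macroscopic}) + \kappa^2\times(\text{$w^2(\al)$-weighted}) + (\text{unweighted})$. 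Both recovery mechanisms are legitimate; your boundary bookkeeping, the treatment of the remainders via Lemmas \ref{Lem24}--\ref{Lem25} and \eqref{33}--\eqref{344}, and the identity $(\pa^\al\na_x\phi\cdot v\mu^{1/2},\pa^\al\f)_{L^2_{x,v}}=\tfrac14\pa_t\|\pa^\al\na_x\phi\|_{L^2_x}^2$ all match the paper.

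There is, however, a concrete gap in the ordering of constants. Testing against $(w^2(\al)+\Lambda)\pa^\al\f$ puts a \emph{fixed $O(1)$ coefficient} on the $w^2(\al)$-part. The weighted coercivity \eqref{L1} then yields, besides the good terms, a loss of $C\|\pa^\al\f\|_{L^2_xL^2_{B_C}}^2\gtrsim -C'\|\{I-P_{\mu,2}\}\pa^\al\f\|_{L^2_xL^2_D}^2-C'\|\pa^\al a\|_{L^2_x}^2$ with $C'$ of order one; the $\Lambda$-part only feeds $\Lambda\lam_0\|\{I-P_{\mu,2}\}\pa^\al\f\|_{L^2_D}^2$ and carries no information about $a$. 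Your Kawashima functional, weighted by the small $\kappa$, supplies only $-\kappa c\|\pa^\al a\|_{L^2_x}^2$ of dissipation. Hence ``choosing first $\Lambda$ large, then $\kappa$ small'' cannot make the $a$-dissipation survive: after combination the net coefficient on $\|\pa^\al a\|_{L^2_x}^2$ is $C'-\kappa c>0$, on the wrong side, and the stated conclusion fails. The fix is precisely what the paper's linear combination encodes: the $w^2(\al)$-part must itself carry a small prefactor, e.g.\ test against $\bigl(\kappa_1 w^2(\al)+\Lambda\bigr)\pa^\al\f$ with $\kappa_1\lesssim\kappa$, so the loss $\kappa_1 C'\|\pa^\al a\|_{L^2_x}^2$ is dominated by the $\kappa c\|\pa^\al a\|_{L^2_x}^2$ gain from the interactive functional; the weighted part of $\E_{k,3}$ then enters with the same small coefficient $\kappa_1$, which still yields \eqref{Ek2}. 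Two minor remarks: the Vlasov source $\na_x\phi\cdot v\sqrt\mu$ is linear in $\phi$ and $(t,x)$-independent in $v$, so $\pa^\al$ produces no Leibniz sum there; and the Kawashima loss should be recorded as $\kappa C\|\pa^\al\{I-P_{\mu,2}\}\f\|_{L^2_xL^2_D}^2$ rather than $\kappa C\|\pa^\al\f\|_{L^2_xL^2_D}^2$, since the full $L^2_D$-norm already contains $\|\pa^\al a\|^2$ and would reintroduce the same imbalance.
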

\begin{proof}
	We proceed by considering macroscopic and microscopic components $P_{\mu,2}\f$ and $\{I-P_{\mu,2}\}\f$. 
	
	\smallskip\noindent{\bf Step 1. Macroscopic estimates on $P_{\mu,2}\f$.}
	We first derive the time derivative estimates. Splitting $\f = a\mu^{1/2} + \{I-P_{\mu,2}\}\f$, we rewrite \eqref{F2} as  
	\begin{multline}\label{556}
		\pa_ta\mu^{1/2}+v\cdot\na_xa\mu^{1/2}+\na_x\phi\cdot v\mu^{1/2} - \frac{\na_x\phi\cdot\na_v(M-\mu+G)}{\sqrt\mu} \\= - (\pa_t+v\cdot\na_x)\{I-P_{\mu,2}\}\f+\L_2\f+\Gamma\big(\frac{M-\mu}{\sqrt\mu},\f\big) + \Gamma\big(\frac{G}{\sqrt\mu},\f\big).
	\end{multline}
Note that $(\Gamma(f,g),\mu^{1/2})_{L^2_v}=0$. 
	Taking inner product of \eqref{556} with $\mu^{1/2}$ over $\R^3_v$, we have 
	\begin{align}\label{557}
		\pa_ta = -\big(v\cdot\na_x\{I-P_{\mu,2}\}\f,\mu^{1/2}\big)_{L^2_v}.
	\end{align}
	For any $|\al|\le 2$, we apply $\pa^\al$ to \eqref{557} and deduce 
	\begin{align*}
		\|\pa^\al\pa_t a\|_{L^2_x}^2 \lesssim \|\pa^\al\na_x\{I-P_{\mu,2}\}\f\|_{L^2_xL^2_D}^2.
	\end{align*}
	Applying $\pa^\al$ to \eqref{557} and taking inner product with $\pa_t\pa^\al\phi$ over $\Omega\times\R^3$, we have 
	\begin{equation}
		\begin{aligned}\label{550q}
			(\pa_t\pa^\al a,\pa_t\pa^\al\phi)_{L^2_x} = -\big(v\cdot\na_x\pa^\al\{I-P_{\mu,2}\}\f,\mu^{1/2}\pa_t\pa^\al\phi\big)_{L^2_xL^2_v}. 
		\end{aligned}
	\end{equation}
Here, we will take integration by parts on \eqref{550q} and prepare the following boundary values for the case of rectangular duct. 
	Using Lemma \ref{Lemspecular} and similar to \eqref{P0h}, we have $\pa^\al\{I-P_{\mu,2}\}\f(R_xv)=(-1)^{|\al_i|}\pa^\al\{I-P_{\mu,2}\}\f(v)$ on $\Gamma_i$, for $i=2,3$. Then by change of variable $v\mapsto R_xv$, when $\al_i=0,2$ for some $i=2,3$, we have for $x\in\Gamma_i$ that 
	\begin{align*}
		(v_i\pa^\al\{I-P_{\mu,2}\}\f,\mu^{1/2})_{L^2_v} &= ((R_xv)_i\pa^\al\{I-P_{\mu,2}\}\f(R_xv),\mu^{1/2})_{L^2_v}
	\\&	= -(v_i\pa^\al\{I-P_{\mu,2}\}\f,\mu^{1/2})_{L^2_v} =0.  
	\end{align*}
Also, using boundary values \eqref{Neumann}, if $\al_i=1,3$ for some $i=2,3$, then 
\begin{equation*}
	\begin{aligned}
		\pa^{\al}\phi&=0\ \ 
		\text{ on }\Gamma_i. 
	\end{aligned}
\end{equation*}
	In view of these zero boundary values, we can take integration by parts in \eqref{550q} to obtain 
	\begin{align*}
		(\pa_t\pa^\al a,\pa_t\pa^\al\phi)_{L^2_x} = -(\pa_t\pa^\al \Delta_x\phi,\pa_t\pa^\al\phi)_{L^2_x}
		= \|\pa_t\pa^\al\na_x\phi\|_{L^2_x}^2,
	\end{align*}
	and 
	\begin{align*}
		-\big(v\cdot\na_x\pa^\al\{I-P_{\mu,2}\}\f,\mu^{1/2}\pa^\al\phi\big)_{L^2_xL^2_v}
		= \big(\pa^\al\{I-P_{\mu,2}\}\f,\mu^{1/2}v\cdot\pa_t\pa^\al\na_x\phi\big)_{L^2_xL^2_v}. 
	\end{align*}
Thus, \eqref{550q} yields  
	\begin{equation*}
			\|\pa_t\pa^\al\na_x\phi\|^2_{L^2_x} \lesssim \|\pa^\al\{I-P_{\mu,2}\}\f\|^2_{L^2_xL^2_D}, 
	\end{equation*}
and hence, 
\begin{align}\label{550a}
	\|\pa_t^3\na_x\phi\|^2_{L^2_x} &\lesssim \|\pa^2_t\{I-P_{\mu,2}\}\f\|^2_{L^2_xL^2_D}.
\end{align}
	
	To derive the spatial derivative estimate, 
we take inner product of \eqref{556} with $v\mu^{1/2}$ over $\R^3_v$ 
to obtain 
	\begin{align*}\notag
		\na_xa+\na_x\phi &= \big(\na_x\phi\cdot\na_v(M-\mu+G),v\mu^{1/2}\big)_{L^2_v}\\&\quad - \big((\pa_t+v\cdot\na_x)\{I-P_{\mu,2}\}\f+\L_2\f+\Gamma\big(\frac{M-\mu}{\sqrt\mu},\f\big) + \Gamma\big(\frac{G}{\sqrt\mu},\f\big),v\mu^{1/2}\big)_{L^2_v}. 
	\end{align*}
Note that $G$ is microscopic (satisfying \eqref{micro}). Then 
\begin{align*}
		\big(\na_x\phi\cdot\na_v(M-\mu+G),v\big)_{L^2_v}&\notag= -\big(\na_x\phi(M-\mu+G),1\big)_{L^2_v}= -(\rho-1)\na_x\phi,  
	\end{align*}
and hence, 
\begin{align}\label{558}
	\na_xa+\rho\na_x\phi = - \big((\pa_t+v\cdot\na_x)\{I-P_{\mu,2}\}\f+\L_2\f+\Gamma\big(\frac{M-\mu}{\sqrt\mu},\f\big) + \Gamma\big(\frac{G}{\sqrt\mu},\f\big),v\mu^{1/2}\big)_{L^2_v}. 
\end{align}
%
	Similar to the calculations for deriving \eqref{boundaryrhouth}, one can obtain 
	\begin{align*}
		\pa_{x_ix_ix_i}a=\pa_{x_i}a=0\ \ \text{ on $\Gamma_i$ for $i=2,3$. }
	\end{align*} 
Together with \eqref{Neumann}, we can take integration by parts and apply the third equation of \eqref{F1} to obtain  
	\begin{align}\label{54}
		(\pa^\al\na_x\phi,\pa^\al\na_xa)_{L^2_x} = -(\pa^\al\Delta_x\phi,\pa^\al a)_{L^2_x} = \|\pa^\al a\|_{L^2_x}^2 = \frac{1}{2}\|\pa^\al\Delta_x\phi\|_{L^2_x}^2=\frac{1}{2}\|\pa^\al\na_x^2\phi\|_{L^2_x}^2, 
	\end{align} 
	For $|\al|\le 2$, applying $\pa^\al$ to \eqref{558} and taking inner product with $\pa^\al\na_xa$, we have  
	\begin{multline}\label{550b}
		\|\pa^\al\na_xa\|_{L^2_x}^2+\|\pa^\al a\|_{L^2_x}^2 \lesssim 
 \sum_{\substack{|\al'|\le1}}\|\pa^{\al'}\pa^\al\{I-P_{\mu,2}\}\f\|_{L^2_xL^2_D}^2 + \big(\eta_0+\delta^{\frac{1}{2}}+\sqrt{\E_k(t)}\big)\D_k(t),  
	\end{multline}
where we use \eqref{332a}, \eqref{33}, \eqref{333}, \eqref{34} and \eqref{344} to estimate the collision terms and use the fact that when $1\le|\al|\le 2$, 
\begin{align}\label{514aa}
	\|\pa^\al(\rho\,\na_x\phi)\|^2_{L^2_x}
	\lesssim (\delta^{\frac{1}{3}}+\|\pa^\al\wt\rho\|_{H^2_x})^2\|\na_x\phi\|_{H^2_x}^2
	&\lesssim \big(\delta^{\frac{1}{2}}+\sqrt{\E_k(t)}\big)\D_k(t). 
\end{align}
Similarly, applying $\pa^\al$ to \eqref{558} and taking inner product with $\pa^\al\na_x\phi$, we have from \eqref{54}, \eqref{514aa} and \eqref{332a} that 
\begin{align}\label{550c}
	\|\pa^\al \na^2_x\phi\|_{L^2_x}^2+\lam\|\pa^\al\na_x\phi\|_{L^2_x}^2
	\lesssim \sum_{\substack{|\al'|\le1}}\|\pa^{\al'}\pa^\al\{I-P_{\mu,2}\}\f\|_{L^2_xL^2_D}^2 + \big(\eta_0+\delta^{\frac{1}{2}}+\sqrt{\E_k(t)}\big)\D_k(t). 
\end{align}
%
Taking linear combination $\eqref{550b}+\eqref{550c}+\eqref{550a}$ and summation over 
$|\al|\le 2$, we obtain 
\begin{align}\label{516a}
	\sum_{|\al|\le 3}\big(\|\pa^\al a\|_{L^2_x}^2+\|\pa^\al\na_x\phi\|_{L^2_x}^2\big)
	\lesssim \sum_{\substack{|\al|\le 3}}\|\pa^\al\{I-P_{\mu,2}\}\f\|_{L^2_xL^2_D}^2 + \big(\eta_0+\delta^{\frac{1}{2}}+\sqrt{\E_k(t)}\big)\D_k(t). 
\end{align}

\smallskip\noindent{\bf Step 2. Microscopic estimates on $\{I-P_{\mu,2}\}\f$.}
For $|\al|\le 3$, applying $\pa^\al$ to \eqref{F2} and taking inner product with $w^2(\al)\pa^\al \f$, we deduce 
\begin{multline}\label{518}
	\frac{1}{2}\pa_t\|w(\al)\pa^\al \f\|^2_{L^2_{x,v}} + \frac{1}{2}\int_{\pa\Omega}\int_{\R^3}v\cdot n|w(\al)\pa^\al \f|^2\,dvdS(x) + \big(\pa^\al\na_x\phi\cdot v\mu^{\frac{1}{2}},w^2(\al)\pa^\al \f\big)_{L^2_{x,v}} \\-  \big(\frac{\pa^\al(\na_x\phi\cdot\na_v(M-\mu+\ol G))}{\sqrt\mu},w^2(\al)\pa^\al \f\big)_{L^2_{x,v}} -  \big(\frac{\pa^\al(\na_x\phi\cdot\na_v(\sqrt\mu\g))}{\sqrt\mu},w^2(\al)\pa^\al \f\big)_{L^2_{x,v}}\\= (\L_2\pa^\al\f,w^2(\al)\pa^\al \f)_{L^2_{x,v}}
	+\big(\pa^\al\Gamma\big(\frac{M-\mu}{\sqrt\mu},\f\big) + \pa^\al\Gamma\big(\frac{G}{\sqrt\mu},\f\big),w^2(\al)\pa^\al \f\big)_{L^2_{x,v}}. 
\end{multline}
We denote the second to seventh terms in \eqref{518} to be $I_i$ $(1\le i\le 6)$. 
For the case of rectangular duct, applying Lemma \ref{Lemspecular}, we can obtain $\pa^\al \f(R_xv)=(-1)^{|\al_i|}\pa^\al \f(v)$ for $x\in\Gamma_i$ $(i=2,3)$. Then by change of variable $v\mapsto R_xv$, the boundary integral is 
\begin{align*}
	I_1&=\int_{\pa\Omega}\int_{\R^3}v\cdot n|w(\al)\pa^\al \f|^2\,dvdS(x)\\
	& = \int_{\pa\Omega}\int_{\R^3}R_xv\cdot n|w(\al)\pa^\al \f(R_xv)|^2\,dvdS(x)\\
	& = -\int_{\pa\Omega}\int_{\R^3}v\cdot n|w(\al)\pa^\al \f(v)|^2\,dvdS(x) =0. 
\end{align*}
For the case of torus, the boundary integral also vanishes due to periodic property. 
For $I_2$, we have 
\begin{align*}
	|I_2|\lesssim C_\eta\|\pa^\al\na_x\phi\|^2_{L^2_x} + \eta\|w(\al)\pa^\al\f\|_{L^2_{x}L^2_{\gamma/2}}^2. 
\end{align*}
The terms $I_3$ and $I_4$ only occur for the VPB system, in which case $1/2\le s<1$ and $\gamma\ge 0$. Then it follows from \eqref{33}, \eqref{333}, \eqref{34}, \eqref{344} and \eqref{vdv} that 
\begin{align*}
	|I_3|
&\lesssim \sum_{\al'\le\al}\big\|\frac{|\pa^{\al'}\na_x\phi||w(\al)\na_v\pa^{\al-\al'}(M-\mu+\ol G)|}{\sqrt\mu}\big\|_{L^2_xL^2_v}\|w(\al)\pa^\al \f\|_{L^2_{x,v}}\\
	&\lesssim \Big(\sum_{|\al-\al'|=0}\|\pa^{\al'}\na_x\phi\|_{L^2_x}\|\frac{w(\al)\na_v\pa^{\al-\al'}(M-\mu+\ol G)}{\sqrt\mu}\|_{L^\infty_xL^2_v}\\
	&\quad+\sum_{|\al-\al'|=1}\|\pa^{\al'}\na_x\phi\|_{L^6_x}\|\frac{w(\al)\na_v\pa^{\al-\al'}(M-\mu+\ol G)}{\sqrt\mu}\|_{L^3_xL^2_v}\\
	&\quad+\sum_{2\le|\al-\al'|\le3}\|\pa^{\al'}\na_x\phi\|_{L^\infty_x}\|\frac{w(\al)\na_v\pa^{\al-\al'}(M-\mu+\ol G)}{\sqrt\mu}\|_{L^2_xL^2_v}\Big)\|w(\al)\pa^\al \f\|_{L^2_xL^2_D}\\
	&\lesssim \big(\eta_0+\delta^{\frac{1}{3}}\big)\|\pa^{\al}\na_x\phi\|_{L^2_x}\|w(\al)\pa^\al \f\|_{L^2_xL^2_D}\\&\qquad
	+\sum_{|\al'|\le 3}\|\pa^{\al'}\na_x\phi\|_{L^2_x}\big(\sqrt{\E_k(t)}+\delta^{\frac{1}{3}}\big)\|w(\al)\pa^\al \f\|_{L^2_xL^2_D}\\
	&\lesssim 
		\big(\eta_0+\delta^{\frac{1}{3}}+\sqrt{\E_k(t)}\big)\D_k(t). 
\end{align*}
Similarly, for $I_4$, noticing $\na_v(\sqrt\mu\g)=-\frac{v}{2}\sqrt\mu\g+\sqrt\mu\na_v\g$, we have 
\begin{align*}
	I_4&\lesssim\sum_{\al'\le\al}\big\||\pa^{\al'}\na_x\phi||w(\al)\big(\<v\>^{\frac{1}{2}}+\<D_v\>^s\big)\pa^{\al-\al'}\g|\big\|_{L^2_{x,v}}\|w(\al)(\<v\>^{\frac{1}{2}}+\<D_v\>^s)\pa^\al \f\|_{L^2_xL^2_v}\\
	&\lesssim \big(\eta_0+\delta^{\frac{1}{3}}+\sqrt{\E_k(t)}\big)\D_k(t). 
\end{align*}
For $I_5$, we have from \eqref{L1} that 
\begin{align*}
	I_5\le -\lam\|w(\al)\pa^\al \f\|^2_{L^2_xL^2_D}+C\|\pa^\al \f\|^2_{L^2_xL^2_{B_C}}. 
\end{align*}
For $I_6$, it follows from \eqref{220b}
 and \eqref{220} that 
\begin{align*}
	|I_6| \lesssim \big(\eta_0+\delta^{\frac{1}{3}}+\sqrt{\E_k(t)}\big)\D_k(t)+\delta(1+t)^{-2}. 
\end{align*}
Combining the above estimates and choosing $\eta>0$ small enough, we deduce from \eqref{518} that 
\begin{multline}\label{517}
	\frac{1}{2}\pa_t\|w(\al)\pa^\al \f\|^2_{L^2_{x,v}} + \lam\|w(\al)\pa^\al\f\|^2_{L^2_xL^2_D} \lesssim \|\pa^\al\na_x\phi\|^2_{L^2_x} +\delta(1+t)^{-2}+\|\pa^\al\f\|^2_{L^2_xL^2_{B_C}}\\ + \big(\eta_0+\delta^{\frac{1}{3}}+\sqrt{\E_k(t)}\big)\D_k(t). 
\end{multline}
Next we consider the energy estimate without weight. Applying $\pa^\al$ to \eqref{F2} and taking inner product with $\pa^\al\f$, we deduce 
\begin{multline}\label{518a}
	\frac{1}{2}\pa_t\|\pa^\al \f\|^2_{L^2_{x,v}} + \frac{1}{2}\int_{\pa\Omega}\int_{\R^3}v\cdot n|\pa^\al \f|^2\,dvdS(x) + \big(\pa^\al\na_x\phi\cdot v\mu^{\frac{1}{2}},\pa^\al \f\big)_{L^2_{x,v}} \\-  \big(\frac{\pa^\al(\na_x\phi\cdot\na_v(M-\mu+\ol G))}{\sqrt\mu},\pa^\al \f\big)_{L^2_{x,v}} -  \big(\frac{\pa^\al(\na_x\phi\cdot\na_v(\sqrt\mu\g))}{\sqrt\mu},\pa^\al \f\big)_{L^2_{x,v}}\\= (\L_2\pa^\al\f,\pa^\al \f)_{L^2_{x,v}}
	+\big(\pa^\al\Gamma\big(\frac{M-\mu}{\sqrt\mu},\f\big) + \pa^\al\Gamma\big(\frac{G}{\sqrt\mu},\f\big),\pa^\al \f\big)_{L^2_{x,v}}. 
\end{multline}
The estimates on second, forth, fifth and seventh terms of \eqref{518a} are similar to $I_1$, $I_3$, $I_4$ and $I_6$. Thus, we consider the third and sixth terms of \eqref{518a}. 
For the third term, taking inner product of \eqref{F2} with $\sqrt\mu$, we have 
\begin{align*}
	\pa_t\int_{\R^3}\f\mu^{\frac{1}{2}}\,dv + \na_x\cdot\int_{\R^3}v\f\mu^{\frac{1}{2}}\,dv = 0. 
\end{align*} 
Also, for $i=2,3$, if $\al_i=0$  or $2$, then we have from Lemma \ref{Lemspecular} and change of variable $v\mapsto R_xv$ that 
\begin{align*}
	\int_{\R^3}v_i\mu^{\frac{1}{2}}\pa^\al \f\,dv = 0, \ \ \text{ on }\Gamma_i. 
\end{align*}
If $\al_i=1$ or $3$, then $\pa^\al\phi=0$ on $\Gamma_i$, which follows from \eqref{Neumann} and \eqref{boundphi}. 
Then by integration by parts and using the third equation of \eqref{F1}, we have 
\begin{multline*}
	\big(\pa^\al\na_x\phi\cdot v\mu^{\frac{1}{2}},\pa^\al \f\big)_{L^2_{x,v}}
	= -\big(\pa^\al\phi,\na_x\cdot\int_{\R^3}v\pa^\al \f\mu^{\frac{1}{2}}\,dv\big)_{L^2_x}\\
	= \big(\pa^\al\phi,\pa_t\int_{\R^3}\pa^\al \f\mu^{\frac{1}{2}}\,dv\big)_{L^2_x}
	= -\frac{1}{2}\big(\pa^\al\phi,\pa_t\pa^\al\Delta_x\phi\big)_{L^2_x}
	= \frac{1}{4}\pa_t\|\pa^\al\na_x\phi\|_{L^2_x}^2. 
\end{multline*}
For the sixth term, we have from \eqref{Lg} that 
\begin{align*}
	(\L_2\pa^\al\f,\pa^\al \f)_{L^2_{x,v}}\le -\lam \|\{I-P_{\mu,2}\}\pa^\al\f\|^2_{L^2_xL^2_D}.
\end{align*}
Combining these estimates, we have 
\begin{multline}\label{520}
	\frac{1}{2}\pa_t\|\pa^\al \f\|^2_{L^2_{x,v}} + \frac{1}{4}\pa_t\|\pa^\al\na_x\phi\|_{L^2_x}^2
	+\lam \|\{I-P_{\mu,2}\}\pa^\al\f\|^2_{L^2_xL^2_D}\lesssim \delta(1+t)^{-2}\\+\big(\eta_0+\delta^{\frac{1}{3}}+\sqrt{\E_k(t)}\big)\D_k(t). 
\end{multline}

Now we take linear combination $\kappa\times\eqref{516a}+\kappa^2\times\eqref{517}+\eqref{520}$ 
 and summation over $|\al|\le 3$
   with sufficiently small $\kappa>0$ to deduce 
\begin{multline*}
	\pa_t\E_{k,3} + \lam\sum_{|\al|\le 3}\|w(\al)\pa^\al\f\|^2_{L^2_xL^2_D}+
	\kappa\lam\sum_{|\al|\le 3}\big(\|\pa^\al a\|_{L^2_x}^2+\|\pa^\al\na_x\phi\|_{L^2_x}^2\big)
	\\\lesssim \delta(1+t)^{-2}+ 
	\big(\eta_0+\delta^{\frac{1}{3}}+\sqrt{\E_k(t)}\big)\D_k(t),
\end{multline*}
where $\E_{k,3}$ 
 is given respectively by 
\begin{align*}
	\E_{k,3} = \sum_{|\al|\le 3}\Big(\frac{1}{2}\|\pa^\al \f\|^2_{L^2_{x,v}} + \frac{1}{4}\|\pa^\al\na_x\phi\|_{L^2_x}^2
	+\frac{\kappa^2}{2}\|w(\al)\pa^\al \f\|^2_{L^2_{x,v}}\Big). 
\end{align*}
It's direct to check \eqref{Ek2}. 
%
%
These estimates conclude Lemma \ref{Lem53}.  
	
\end{proof}

\subsection{Estimates on $\g$}
In this subsection, we make use of equation \eqref{g2} to derive the zeroth and first order derivative energy estimate of $\g$.

\begin{Lem}\label{Lem33}
	Assume $\gamma>\max\{-3,-2s-\frac{3}{2}\}$ for Boltzmann case and $\gamma\ge 0$, $\frac{1}{2}\le s<1$ for VPB case. 
	Let $(F_\pm,\phi)$ be the solution to \eqref{1}. Assume $\ve>0$ in \eqref{priori2} is small enough. Then for any $|\al|\le 2$, 
	\begin{multline}\label{esg}
			\frac{1}{2}\pa_t\|\pa^\al\g\|^2_{L^2_{x,v}}+\frac{\kappa}{2}\pa_t\|w(\al)\pa^\al\g\|^2_{L^2_{x,v}}
			+\lam\kappa \|w(\al)\pa^\al\g\|^2_{L^2_xL^2_D}+\lam \|\pa^\al\g\|^2_{L^2_xL^2_D}\\
			\lesssim\|\pa^\al\na_x\g\|^2_{L^2_xL^2_D} + \|\pa^\al\na_x(\wt u,\wt\th)\|^2_{L^2_x}+ \delta^{\frac{2}{3}}(1+t)^{-\frac{4}{3}}+\big(\eta_0+\delta^{\frac{1}{3}}+\sqrt{\E_k(t)}\big)\D_k(t), 
	\end{multline}
for some constants $\kappa,\lam>0$. 
\end{Lem}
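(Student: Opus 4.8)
The plan is to obtain the estimate \eqref{esg} by performing two energy estimates on the microscopic equation \eqref{g2}: one weighted by $w^2(\al)$ and one without weight, and then combining them with a small constant $\kappa>0$ so that the unweighted estimate controls the bad low-velocity remainder $|\pa^\al\g|^2_{L^2_{B_C}}$ produced by the weighted coercivity inequality \eqref{L1}. First I would apply $\pa^\al$ to \eqref{g2} and take the inner product with $w^2(\al)\pa^\al\g$ over $\Omega\times\R^3$. The term $v\cdot\na_x$ produces a boundary integral $\frac12\int_{\pa\Omega}\int_{\R^3}v\cdot n|w(\al)\pa^\al\g|^2\,dvdS(x)$, which vanishes for the torus by periodicity and, for the rectangular duct, vanishes by the change of variable $v\mapsto R_xv$ using the specular boundary relation \eqref{29} from Lemma \ref{LemMacro} exactly as in the proof of Lemma \ref{Lem53}. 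The linear term $\L\pa^\al\g$ is handled by \eqref{L1}, giving $-\lam|w(\al)\pa^\al\g|^2_{L^2_xL^2_D}+C\|\pa^\al\g\|^2_{L^2_xL^2_{B_C}}$.

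Next I would estimate the remaining terms on the right-hand side of \eqref{g2} one by one. The terms $\Gamma\big(\frac{M-\mu}{\sqrt\mu},\g\big)+\Gamma\big(\g,\frac{M-\mu}{\sqrt\mu}\big)$ are controlled by Lemma \ref{Lem24}, yielding $\big(\eta_0+\delta^{1/3}+\sqrt{\E_k(t)}\big)$ times dissipation-type norms. The quadratic term $\Gamma\big(\frac{G}{\sqrt\mu},\frac{G}{\sqrt\mu}\big)$ is controlled by Lemma \ref{Lem25}, giving $\big(\delta^{1/2}+\sqrt{\E_k(t)}\big)\D_k(t)+\delta(1+t)^{-2}$. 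The streaming-correction term $\frac{P_0(v\sqrt\mu\cdot\na_x\g)}{\sqrt\mu}$ is a macroscopic projection, hence controlled by $\|\pa^\al\na_x\g\|_{L^2_xL^2_{\gamma/2}}$ after using the boundedness of $P_0$ in weighted norms and $R\th>1$ from \eqref{eta}. The terms $-\frac{P_1(v\cdot\na_x\ol G)}{\sqrt\mu}-\frac{\pa_t\ol G}{\sqrt\mu}$ are handled using the decay estimates \eqref{34} and \eqref{344} on $\ol G$, producing $\delta^{2/3}(1+t)^{-4/3}$. The inhomogeneous term $\frac{1}{\sqrt\mu}P_1v\cdot\{\frac{|v-u|^2\na_x\wt\th}{2R\th^2}+\frac{(v-u)\cdot\na_x\wt u}{R\th}\}M$ is, by the Burnett-function identity \eqref{olG3}-type computation, a bounded-weight multiple of $\na_x(\wt u,\wt\th)$, hence absorbed into $\|\pa^\al\na_x(\wt u,\wt\th)\|^2_{L^2_x}$ plus $\eta\|w(\al)\pa^\al\g\|^2_{L^2_xL^2_D}$. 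Finally the electric-field term $\frac{P_1\na_x\phi\cdot\na_vF_2}{\sqrt\mu}$ (VPB only, where $\gamma\ge 0$, $s\ge 1/2$) is treated as in $I_3$, $I_4$ of Lemma \ref{Lem53}, using $\na_v(\sqrt\mu\f)=-\frac v2\sqrt\mu\f+\sqrt\mu\na_v\f$ and \eqref{vdv}, giving $\big(\eta_0+\delta^{1/3}+\sqrt{\E_k(t)}\big)\D_k(t)$.

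Then I would repeat the same computation taking the inner product of $\pa^\al\eqref{g2}$ with $\pa^\al\g$ (no weight): the boundary term vanishes as above, the linear term now gives the clean coercive bound $-\lam\|\pa^\al\g\|^2_{L^2_xL^2_D}$ via \eqref{Lg} since $\pa^\al\g$ is (up to controllable macroscopic corrections) microscopic, and all the other terms are estimated exactly as before but with $w(\al)$ replaced by $1$. Taking the linear combination $\kappa\times(\text{weighted})+(\text{unweighted})$ with $\kappa>0$ small, the term $C\kappa\|\pa^\al\g\|^2_{L^2_xL^2_{B_C}}$ is absorbed by $\lam\|\pa^\al\g\|^2_{L^2_xL^2_D}$ from the unweighted estimate, and choosing $\eta>0$ small absorbs the remaining $\|w(\al)\pa^\al\g\|^2_{L^2_xL^2_D}$ contributions, yielding \eqref{esg}.

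The main obstacle I expect is the careful bookkeeping of the boundary terms for the rectangular duct: one must verify that each term in \eqref{g2} respects the parity relation $\pa^\al(\cdot)(R_xv)=(-1)^{|\al_i|}\pa^\al(\cdot)(v)$ on $\Gamma_i$, so that after integration by parts (needed, e.g., to move $v\cdot\na_x$ off $\g$ or to rewrite the electric-field term using the Poisson equation) the boundary contributions cancel. This requires invoking \eqref{29}, \eqref{233a}, \eqref{boundaryrhouth} and \eqref{boundphi} from Lemma \ref{LemMacro} in the right combinations, exactly the delicate point that distinguishes the physical-boundary case from the torus case. The secondary difficulty is ensuring all weight indices match the derivative orders (so that the $L^2_xL^2_D$ norms that appear are exactly those in $\D_k(t)$), which is routine but must be tracked through the product-rule expansions of $\pa^\al$ acting on products like $\frac{M-\mu}{\sqrt\mu}\cdot\g$.
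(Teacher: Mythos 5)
Your proposal is correct and follows essentially the same route as the paper's proof: apply $\pa^\al$ to \eqref{g2} and test against $w^2(\al)\pa^\al\g$ (boundary term vanishing via \eqref{29} and the specular parity), handle $\L$ by \eqref{L1}, the bilinear $\Gamma$ terms by Lemmas \ref{Lem24}--\ref{Lem25}, the $\ol G$ and $P_0$ streaming corrections by \eqref{344} and the Gaussian-weight bound, the inhomogeneous Burnett term by \eqref{olG3}, and the electric-field term by the $I_3,I_4$-style estimates; then repeat without weight using the clean coercivity \eqref{Lg} (since $\g$ and hence $\pa^\al\g$ already lie in $(\ker\L)^\perp$, so no macroscopic correction is actually needed there) and take the combination $\kappa\times(\text{weighted})+(\text{unweighted})$ to absorb the $L^2_{B_C}$ remainder.
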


\begin{proof}
Recall \eqref{g2} that 
\begin{multline}\label{326}
\pa_t \g+v\cdot\na_x\g-\frac{P_1\na_x\phi\cdot\na_vF_2}{\sqrt{\mu}} - \L\g = \Gamma\big(\frac{M-\mu}{\sqrt\mu},\g\big)+ \Gamma\big(\g,\frac{M-\mu}{\sqrt\mu}\big) + \Gamma\big(\frac{G}{\sqrt\mu},\frac{G}{\sqrt\mu}\big) \\+ \frac{P_0(v\sqrt\mu\cdot\na_x\g)}{\sqrt\mu} - \frac{P_1(v\cdot\na_x\ol G)}{\sqrt\mu} - \frac{\pa_t\ol G}{\sqrt\mu} - \frac{1}{\sqrt\mu}P_1v\cdot\Big\{\frac{|v-u|^2\na_x\wt\th}{2R\th^2}+\frac{(v-u)\cdot\na_x\wt u}{R\th}\Big\}M.
\end{multline}
For $|\al|\le 1$, applying $\pa^\al$ to \eqref{326} and taking inner product with $w^2(\al)\pa^\al\g$ over $\Omega\times\R^3$ yields  
	\begin{align}\label{327}\notag
		&\quad\,\frac{1}{2}\pa_t\|w(\al)\pa^\al\g\|^2_{L^2_{x,v}}+ \frac{1}{2}\int_{\pa\Omega}\int_{\R^3}v\cdot n|w(\al)\pa^\al\g|^2\,dvdS(x)\\
		&\notag\quad-\big( \pa^\al\big(\frac{P_1\na_x\phi\cdot\na_vF_2}{\sqrt{\mu}}\big),w^2(\al)\pa^\al\g\big)_{L^2_{x,v}}\\
		&\notag\lesssim \big(\pa^\al \L\g,w^2(\al)\pa^\al\g\big)_{L^2_{x,v}}\\&\quad\notag
		+\big(\Gamma\big(\frac{M-\mu}{\sqrt\mu},\g\big)+ \Gamma\big(\g,\frac{M-\mu}{\sqrt\mu}\big)+\Gamma\big(\frac{G}{\sqrt\mu},\frac{G}{\sqrt\mu}\big),w^2(\al)\pa^\al\g\big)_{L^2_{x,v}}\\
		&\notag\quad+\big(\pa^\al (\frac{P_0(v\sqrt\mu\cdot\na_x\g)}{\sqrt\mu})-\pa^\al (\frac{P_1(v\cdot\na_x\ol G)}{\sqrt\mu})-\frac{\pa_t\ol G}{\sqrt\mu},w^2(\al)\pa^\al\g\big)_{L^2_{x,v}}\\
		&\quad-\big(\pa^\al \frac{1}{\sqrt\mu}P_1v\cdot\Big\{\frac{|v-u|^2\na_x\wt\th}{2R\th^2}+\frac{(v-u)\cdot\na_x\wt u}{R\th}\Big\}M,w^2(\al)\pa^\al\g\big)_{L^2_{x,v}},
	\end{align}
	where $dS(x)$ is the spherical measure on $\pa\Omega$. 
	We denote the second to seventh term of \eqref{327} by $J_i$ $(1\le i\le 6)$. Then we estimate \eqref{327} term by term.
	For the boundary term, we have from \eqref{29} that for $x\in \Gamma_i$ $(i=2,3)$, 
	\begin{equation*}
		\begin{aligned}
			\pa^\al\g(x,R_xv) &= (-1)^{|\al_i|}\pa^\al\g(x,v). 
		\end{aligned}
	\end{equation*} 
 Then using change of variable $v\mapsto R_xv$, one has 
	\begin{align*}
		J_1&=\int_{\pa\Omega}\int_{\R^3}v\cdot n|w(\al)\pa^\al\g(v)|^2\,dvdS(x)\\
		&\notag= \int_{\pa\Omega}\int_{\R^3}R_xv\cdot n|w(\al)\pa^\al\g(R_xv)|^2\,dvdS(x)\\
		&= -\int_{\pa\Omega}\int_{\R^3}v\cdot n|w(\al)\pa^\al\g(v)|^2\,dvdS(x)=0.
	\end{align*}
The term $J_2$ vanishes for Boltzmann equation and occurs only for the case of VPB system. For VPB system, we assume $\gamma\ge 0$ and $1/2\le s<1$. Then we write $F_2=\sqrt\mu \f$ and decompose 
\begin{align*}
	J_2&= \big(\pa^\al\big(\frac{\na_x\phi\cdot{v}\f}{2}\big),w^2(\al)\pa^\al\g\big)_{L^2_{x,v}}
	-\big(\pa^\al\big({\na_x\phi\cdot\na_v\f}{}\big),w^2(\al)\pa^\al\g\big)_{L^2_{x,v}}\\
	&\quad+\big(\pa^\al\big(\frac{P_0\na_x\phi\cdot\na_vF_2}{\sqrt{\mu}}\big),w^2(\al)\pa^\al\g\big)_{L^2_{x,v}}
	=: J_{2,1}+J_{2,2}+J_{2,3}. 
\end{align*} 
For $J_{2,1}$, noticing $\<v\>\lesssim \<v\>^{\gamma+2s}$, we have 
\begin{align*}
	|J_{2,1}|&\lesssim \Big(\sum_{|\al'|=0}\|\pa^{\al'}\na_x\phi\|_{L^\infty_x}\|w(\al)\pa^{\al-\al'}\f\|_{L^2_xL^2_D}\\
	&\quad+\sum_{|\al'|=1}\|\pa^{\al'}\na_x\phi\|_{L^3_x}\|w(\al)\pa^{\al-\al'}\f\|_{L^6_xL^2_D}\\
	&\quad+\sum_{2\le|\al'|\le3}\|\pa^{\al'}\na_x\phi\|_{L^2_x}\|w(\al)\pa^{\al-\al'}\f\|_{L^\infty_xL^2_D}\Big)\|w(\al)\pa^\al\g\|_{L^2_xL^2_D}\\
	&\lesssim \sqrt{\E_k(t)}\D_k(t).
\end{align*}
Similarly, for $J_{2,2}$, noticing $|\<D_v\>^{\frac{1}{2}}(\cdot)|_{L^2_v}\lesssim |\cdot|_{L^2_D}$ for VPB case, we have 
\begin{align*}
	|J_{2,2}|&\lesssim \sum_{\al'\le\al}\|\pa^{\al'}\na_x\phi\,w(\al)\pa^{\al-\al'}\f\|_{L^2_xL^2_D}\|w(\al)\pa^\al\g\|_{L^2_xL^2_D}\\
	&\lesssim \sqrt{\E_k(t)}\D_k(t). 
\end{align*}
For $J_{2,3}$, it follows from \eqref{P0} that 
\begin{align*}
	|P_0\na_vF_2| = \big|-\sum_{i=0}^4\big(F_2,\na_v(\frac{\chi_i}{M})\big)_{L^2_v}\,\chi_i\big|
	\lesssim |\f|_{L^2_D}\sum_{i=0}^4\,|\chi_i|. 
\end{align*}
Also, since $1<R\th<2$, we have 
\begin{align}\label{bbb}
	|\<v\>^bM\mu^{-1/2}|_{L^2_v}\le C_b, 
\end{align} for any $b>0$. Then 
\begin{align*}
	|J_{2,3}| &\lesssim \sum_{\al'\le\al}\|\pa^{\al'}\na_x\phi\,|\f|_{L^2_D}\|_{L^2_x}\|w(\al)\pa^\al\g\|_{L^2_xL^2_D}
	\lesssim\sqrt{\E_k(t)}\D_k(t). 
\end{align*}
Collecting the above estimates, we obtain 
\begin{align*}
	|J_2|\lesssim\sqrt{\E_k(t)}\D_k(t). 
\end{align*}
For $J_3$, we have from \eqref{L1} that 
\begin{align*}
	J_3 &\le -\lam \|w(\al)\pa^\al\g\|^2_{L^2_xL^2_D} + C\|\pa^\al\g\|^2_{L^2_xL^2_{B_C}}. 
\end{align*}
For $J_4$, it follows from \eqref{220b} and \eqref{220} that 
\begin{align*}
	|J_4|\lesssim \delta(1+t)^{-2}+ \big(\eta_0+\delta^{\frac{1}{3}}+\sqrt{\E_k(t)}\big)\D_k(t).
\end{align*}
For $J_5$, it follows from \eqref{344} and \eqref{bbb} that 
\begin{align*}
	|J_5|\lesssim C_\eta\|\pa^\al\na_x\g\|^2_{L^2_xL^2_D} + \eta\|\pa^\al\g\|^2_{L^2_xL^2_D} + \delta^{\frac{2}{3}}(1+t)^{-\frac{4}{3}}. 
\end{align*}
For $J_6$, similar to \eqref{olG3}, we have 
\begin{align*}
	P_1v\cdot\Big\{\frac{|v-u|^2\na_x\wt\th}{2R\th^2}+\frac{(v-u)\cdot\na_x\wt u}{R\th}\Big\}M
	=\frac{\sqrt R}{\sqrt\th}\sum_{j=1}^3\pa_{x_j}\wt\th \wh A_j\big(\frac{v-u}{\sqrt{R\th}}\big)M + \sum_{i,j=1}^3\pa_{x_i}\wt u_j\wh B_{ij}\big(\frac{v-u}{\sqrt{R\th}}\big)M. 
\end{align*}
Then by \eqref{bbb}, we have 
\begin{align*}\notag
	J_7&= 
	\Big(\frac{1}{\sqrt\mu}\pa^\al\big(\frac{\sqrt R}{\sqrt\th}\sum_{j=1}^3\pa_{x_j}\wt\th \wh A_j\big(\frac{v-u}{\sqrt{R\th}}\big)M + \sum_{i,j=1}^3\pa_{x_i}\wt u_j\wh B_{ij}\big(\frac{v-u}{\sqrt{R\th}}\big)M\big),w^2(\al)\pa^\al\g\Big)_{L^2_{x,v}}\\
	&\notag\lesssim \|\pa^\al\na_x(\wt u,\wt\th)\|_{L^2_x}\|\pa^\al\g\|_{L^2_xL^2_D} + \sum_{|\al_1|\le 1}\|\pa^{\al_1}\na_x(\wt u,\wt\th)\|_{L^2_x}\sum_{|\al_1|\le 2}\|\pa^{\al_1}(u,\th)\|_{L^2_x}\|\pa^\al\g\|_{L^2_xL^2_D}\\
	&\lesssim \eta\|\pa^\al\g\|^2_{L^2_xL^2_D}+C_\eta\|\pa^\al\na_x(\wt u,\wt\th)\|^2_{L^2_x}+\big(\delta^{\frac{1}{3}}+\sqrt{\E_k(t)}\big)\D_k(t).
\end{align*}
In a summary, combining the above estimates and choosing $\eta>0$ sufficiently small, \eqref{327} yields 
\begin{multline}\label{442}
	\frac{1}{2}\pa_t\|w(\al)\pa^\al\g\|^2_{L^2_{x,v}}
	+\lam \|w(\al)\pa^\al\g\|^2_{L^2_xL^2_D} \lesssim\|\pa^\al\g\|^2_{L^2_{B_C}}+\|\pa^\al\na_x\g\|^2_{L^2_xL^2_D} + \|\pa^\al\na_x(\wt u,\wt\th)\|^2_{L^2_x}\\+ \delta^{\frac{2}{3}}(1+t)^{-\frac{4}{3}} +\big(\eta_0+\delta^{\frac{1}{3}}+\sqrt{\E_k(t)}\big)\D_k(t).
\end{multline}
Next we calculate the energy estimate without weight. For $|\al|\le 1$, applying $\pa^\al$ to \eqref{326} and taking inner product with $\pa^\al\g$ over $\Omega\times\R^3$, we have 
	\begin{multline*}
	\frac{1}{2}\pa_t\|\pa^\al\g\|^2_{L^2_{x,v}}+ \frac{1}{2}\int_{\pa\Omega}\int_{\R^3}v\cdot n|\pa^\al\g|^2\,dvdS(x)
	-\big( \pa^\al\big(\frac{P_1\na_x\phi\cdot\na_vF_2}{\sqrt{\mu}}\big),\pa^\al\g\big)_{L^2_{x,v}}\\
	\lesssim \big(\pa^\al \L\g,\pa^\al\g\big)_{L^2_{x,v}}
	+\big(\Gamma\big(\frac{M-\mu}{\sqrt\mu},\g\big)+ \Gamma\big(\g,\frac{M-\mu}{\sqrt\mu}\big)+\Gamma\big(\frac{G}{\sqrt\mu},\frac{G}{\sqrt\mu}\big),\pa^\al\g\big)_{L^2_{x,v}}\\
	+\big(\pa^\al (\frac{P_0(v\sqrt\mu\cdot\na_x\g)}{\sqrt\mu})-\pa^\al (\frac{P_1(v\cdot\na_x\ol G)}{\sqrt\mu})-\frac{\pa_t\ol G}{\sqrt\mu},\pa^\al\g\big)_{L^2_{x,v}}\\
	-\big(\pa^\al \frac{1}{\sqrt\mu}P_1v\cdot\Big\{\frac{|v-u|^2\na_x\wt\th}{2R\th^2}+\frac{(v-u)\cdot\na_x\wt u}{R\th}\Big\}M,\pa^\al\g\big)_{L^2_{x,v}},
\end{multline*}
The calculations are similar to the weighted case and we calculate the different terms, i.e. the forth term. If follows from \eqref{Lg} and $\g=\{I-P_{\mu}\}\g$ that 
\begin{align*}
\big(\pa^\al \L\g,\pa^\al\g\big)_{L^2_{x,v}}\le -\lam \|\pa^\al\g\|_{L^2_xL^2_D}^2. 
\end{align*}
Then similar to the calculations for deriving \eqref{442}, we have 
\begin{multline}\label{442a}
	\frac{1}{2}\pa_t\|\pa^\al\g\|^2_{L^2_{x,v}}
	+\lam \|\pa^\al\g\|^2_{L^2_xL^2_D} \lesssim\|\pa^\al\na_x\g\|^2_{L^2_xL^2_D} + \|\pa^\al\na_x(\wt u,\wt\th)\|^2_{L^2_x}\\+ \delta^{\frac{2}{3}}(1+t)^{-\frac{4}{3}} +\big(\eta_0+\delta^{\frac{1}{3}}+\sqrt{\E_k(t)}\big)\D_k(t).
\end{multline}
Taking combination $\kappa\times\eqref{442}+\eqref{442a}$ with small enough $\kappa>0$, we obtain \eqref{esg}. 
This completes the proof of Lemma \ref{Lem33}.

\end{proof}

\subsection{Estimates on $F_1$}

In this section, we will derive the second-order time-spatial derivative estimates on $F_1$ combining with the high-order macroscopic estimate \eqref{highmacro}, which implies the dissipation rate of $\pa^\al\g$ for the highest order $|\al|=2$.

	\begin{Lem}\label{Lem42} 
		Assume $\gamma>\max\{-3,-2s-\frac{3}{2}\}$ for Boltzmann case and $\gamma\ge 0$, $\frac{1}{2}\le s<1$ for VPB case. 
		Let $(F_1,\phi)$ be the solution to \eqref{F1} (satisfying \eqref{specular} and \eqref{Neumann} for the case of rectangular duct) and $\ve>0$ in \eqref{priori2} is small enough. Then 
	\begin{multline}\label{esF}
		\kappa\pa_t\E_{int}(t)  +
		\frac{\kappa^2}{2}\pa_t\sum_{|\al|=3}\big\|w(\al)\frac{\pa^\al F_1}{\sqrt\mu}\big\|^2_{L^2_xL^2_v}
		+\frac{1}{2}\pa_t\sum_{|\al|=3}\big\|\frac{\pa^\al F_1}{\sqrt\mu}\big\|^2_{L^2_xL^2_v}\\
		+\lam\kappa\sum_{2\le|\al|\le3}\|\pa^\al(\wt\rho,\wt u,\wt\th)\|_{L^2_x}^2
		+\lam\kappa^2\sum_{|\al|=3}\|w(\al)\pa^\al\g\|^2_{L^2_xL^2_D}
		+\lam\sum_{|\al|=3}\|\pa^\al\g\|^2_{L^2_xL^2_D}\\
		\lesssim \big(\eta_0+\delta^{\frac{1}{3}}+\sqrt{\E_k(t)}\big)\D_k(t)+\delta^{\frac{2}{3}}(1+t)^{-\frac{4}{3}}. 
	\end{multline}
	for some generic constant $\lam>0$, any $\eta>0$ and sufficiently small $\kappa>0$. Here, $\E_{int}(t)$ is given in \eqref{Eint}. 
\end{Lem}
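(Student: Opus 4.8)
The plan is to derive the highest-order ($|\al|=3$) dissipation estimate for $\g$ from the corresponding estimate on $F_1=M+G$ itself, rather than from the $\g$-equation directly, since applying three spatial derivatives to the $\g$-equation \eqref{326} would produce terms like $\pa^\al\ol G$ and $\pa^\al M$ of order three that are not controllable by the dissipation rate. First I would apply $\pa^\al$ with $|\al|=3$ to the equation \eqref{F1}$_1$ for $F_1$, written in the form $\pa_tF_1+v\cdot\na_xF_1-\na_x\phi\cdot\na_vF_2=2Q(F_1,F_1)=L_MG+2Q(G,G)+(\text{lower order})$, and take the inner product with $w^2(\al)\mu^{-1}\pa^\al F_1$ and separately with $\mu^{-1}\pa^\al F_1$ over $\Omega\times\R^3_v$. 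Splitting $\pa^\al F_1=\pa^\al M+\pa^\al\ol G+\sqrt\mu\,\pa^\al\g$, the macroscopic part $\pa^\al M$ is controlled by $\|\pa^\al(\wt\rho,\wt u,\wt\th)\|_{L^2_x}$ plus lower-order terms with decaying coefficients via Lemma~\ref{Lem21} and \eqref{LL31}, the $\pa^\al\ol G$ part by \eqref{344}, and the leading dissipation comes from $(\pa^\al L_MG,\mu^{-1}\pa^\al F_1)$, which after the splitting and using \eqref{L1} and the coercivity of $L_M$ on $(\ker L_M)^\perp$ gives $-\lam\|w(\al)\pa^\al\g\|^2_{L^2_xL^2_D}-\lam\|\pa^\al\g\|^2_{L^2_xL^2_D}$ up to a compact error $\|\pa^\al\g\|^2_{L^2_xL^2_{B_C}}$ that is absorbed.

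Next I would handle the boundary terms produced by the transport operator $v\cdot\na_x$. Here the key input is Lemma~\ref{Lemspecular} and Lemma~\ref{LemMacro}: for $x\in\Gamma_i$ the reflection symmetry $\pa^\al(F_1/\sqrt\mu)(R_xv)=(-1)^{|\al_i|}\pa^\al(F_1/\sqrt\mu)(v)$ together with the vanishing boundary values \eqref{boundaryrhouth}, \eqref{233a}, \eqref{29} makes the boundary integral $\int_{\pa\Omega}\int_{\R^3}v\cdot n\,|w(\al)\pa^\al(F_1/\sqrt\mu)|^2\,dvdS(x)$ vanish after the change of variable $v\mapsto R_xv$, exactly as in the proof of Lemma~\ref{Lem53}. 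The electric-field term $\na_x\phi\cdot\na_vF_2$ contributes (only in the VPB case, where $\gamma\ge0$, $s\ge1/2$) terms estimated by $\sqrt{\E_k(t)}\D_k(t)+\eta_0\D_k(t)$ via \eqref{2134}, \eqref{33}, \eqref{333} and the $L^2$–$L^\infty$, $L^3$–$L^6$ Hölder splitting of Lemma~\ref{Lemembedd}, just as $I_3,I_4$ in Lemma~\ref{Lem53}. The nonlinear $Q(G,G)$ term is bounded by \eqref{220}, i.e.\ $(\delta^{1/2}+\sqrt{\E_k(t)})\D_k(t)+\delta(1+t)^{-2}$, and the collision commutator terms $Q(M-\mu,\cdot)$ by Lemma~\ref{Lem24}.

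Then I would combine: form $\kappa\times\eqref{highmacro}+\kappa^2\times(\text{weighted }|\al|=3\text{ }F_1\text{-estimate})+(\text{unweighted }|\al|=3\text{ }F_1\text{-estimate})$ with $\kappa>0$ small. The point of including $\kappa\times\pa_t\E_{int}(t)$ is that \eqref{highmacro} already furnishes $\lam\sum_{2\le|\al|\le3}\|\pa^\al(\wt\rho,\wt u,\wt\th)\|^2_{L^2_x}$ on the left with only $\sum_{2\le|\al|\le3}\|\pa^\al\g\|^2_{L^2_xL^2_{\gamma/2}}$ on the right; choosing $\kappa$ small lets this $\g$-term be absorbed into the $\|\pa^\al\g\|^2_{L^2_xL^2_D}$ coming from the $F_1$-estimates (since $|\cdot|_{L^2_{\gamma/2}}\lesssim|\cdot|_{L^2_D}$), while conversely the macroscopic $\|\pa^\al(\wt\rho,\wt u,\wt\th)\|^2_{L^2_x}$ appearing on the right of the $F_1$-estimates (from the $\pa^\al M$ piece) is absorbed by the $\lam\kappa\sum_{2\le|\al|\le3}\|\pa^\al(\wt\rho,\wt u,\wt\th)\|^2_{L^2_x}$ term. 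One must also feed in \eqref{rho} and \eqref{pat} to recover $\|\pa^\al\wt\rho\|^2$ and the time-derivative components hidden in $\E_{int}(t)$ — but these are already incorporated in \eqref{highmacro} via \eqref{Eint}. The resulting differential inequality is precisely \eqref{esF}, with the time-decaying inhomogeneity $\delta^{2/3}(1+t)^{-4/3}$ accumulated from Lemma~\ref{Lem21}, \eqref{LL31}, \eqref{34}, \eqref{344}.

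The main obstacle I anticipate is the bookkeeping of the highest-order transport term $(\pa^\al(v\cdot\na_xF_1),w^2(\al)\mu^{-1}\pa^\al F_1)$: after substituting $F_1=M+\ol G+\sqrt\mu\g$ one gets a term $(\pa^\al(v\cdot\na_x M),w^2(\al)\mu^{-1}\pa^\al(\sqrt\mu\g))$ in which $\pa^\al(v\cdot\na_x M)$ carries a full third derivative of $(\rho,u,\th)$; this has to be paired against $\pa^\al\g$ in $L^2_xL^2_D$ and the $\na_x$-hitting-$M$ piece must be integrated by parts (using again the zero boundary values of Lemma~\ref{LemMacro}) to move one derivative off, leaving a genuinely fourth-order-looking object that is tamed only because one derivative lands back on $(\rho,u,\th)$ producing either a $\wt{}$-quantity in the dissipation or a decaying $\bar{}$-coefficient. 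Getting every such term to close without a spurious $\|\pa^\al\g\|^2_{L^2_xL^2_D}$ with too large a constant, and correctly tracking which weight index $w(\al_1)$ matches which derivative order so that the a priori bound \eqref{priori2} applies, is the delicate part; the rest is a routine repetition of the estimates already established in Lemmas~\ref{Lem24}–\ref{Lem25} and \ref{Lem53}–\ref{Lem33}.
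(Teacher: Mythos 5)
Your overall route matches the paper's proof: you pass from the $\g$-equation to the $F_1/\sqrt\mu$-equation at highest order, pair against $w^2(\al)\pa^\al F_1/\sqrt\mu$ (weighted) and $\pa^\al F_1/\sqrt\mu$ (unweighted), kill the boundary integral by specular reflection, draw the dissipation from the linearized collision operator via \eqref{L1} and \eqref{Lg}, bound the remaining collision pieces by Lemmas~\ref{Lem24}--\ref{Lem25}, and close by the combination $\kappa\times\eqref{highmacro}+\kappa^2\times(\text{weighted})+(\text{unweighted})$ for small $\kappa$; the absorption bookkeeping you describe is exactly what the paper does.

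The one genuine gap is your final paragraph, which is a red herring pointing at a misconception about the transport term. You anticipate that $\big(\pa^\al(v\cdot\na_x F_1),\,w^2(\al)\mu^{-1}\pa^\al F_1\big)$, after substituting $F_1=M+\ol G+\sqrt\mu\g$, produces a cross term carrying $\pa^\al\na_x(\rho,u,\th)$ (a fourth derivative) that needs a further integration by parts. Neither the problem nor the proposed cure exists. Since $w(\al)=\<v\>^{k-|\al|+2}$ and $\mu$ are $x$-independent, the transport inner product, with $F_1/\sqrt\mu$ kept whole on both sides, is already a perfect derivative:
\begin{align*}
\big(v\cdot\na_x\pa^\al\tfrac{F_1}{\sqrt\mu},\,w^2(\al)\pa^\al\tfrac{F_1}{\sqrt\mu}\big)_{L^2_{x,v}}
=\tfrac12\int_{\pa\Omega}\int_{\R^3}v\cdot n\,\big|w(\al)\pa^\al\tfrac{F_1}{\sqrt\mu}\big|^2\,dv\,dS(x),
\end{align*}
which vanishes by Lemma~\ref{Lemspecular}. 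That is the entire point of working with the $F_1$-equation: transport is exact and your ``fourth-order-looking object'' never appears. The macro--micro split $F_1=M+\ol G+\sqrt\mu\g$ is applied only on the collision side of \eqref{41} and in the test function when extracting $\|w(\al)\pa^\al\g\|_{L^2_xL^2_D}$ from the $\L$-term, never inside the transport inner product; if you split there you would create genuinely uncontrollable cross terms and no integration by parts would rescue them, because that integration by parts has already been spent producing the boundary term. Drop that concern and the remainder of your plan reduces to the paper's proof. A minor secondary point: the paper writes the collision right-hand side of \eqref{41} as $\L\g$ plus $\Gamma(\cdot,\tfrac{M-\mu}{\sqrt\mu})$-commutators and invokes the coercivity \eqref{L1}, \eqref{Lg} of $\L$ and $\L_2$; your appeal to ``coercivity of $L_M$'' would need Lemma~\ref{LemLM} instead, which also works but is not what the stated collision estimates in Lemma~\ref{LemGamma} supply.
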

\begin{proof}
	We can obtain from the first equation of \eqref{F1} and \eqref{olG} that  
	\begin{multline}\label{41}
		\pa_t\big(\frac{F_1}{\sqrt\mu}\big)+v\cdot\na_x\big(\frac{F_1}{\sqrt\mu}\big) = \L\g+\Gamma\big(\g,\frac{M-\mu}{\sqrt\mu}\big)+\Gamma\big(\frac{M-\mu}{\sqrt\mu},\g\big)+\Gamma\big(\frac{G}{\sqrt\mu},\frac{G}{\sqrt\mu}\big)\\
		+\frac{1}{\sqrt\mu}P_1v_1M\Big\{	\frac{|v-u|^2\bar\th_{x_1}}{2R\th^2}+\frac{(v-u)\cdot \bar{u}_{x_1}}{R\th}\Big\}
	\end{multline}
	For $|\al|=3$, applying derivative $\pa^\al$ to \eqref{41} and taking inner product with $w^2(\al)\frac{\pa^\al F_1}{\sqrt\mu}$ over $\Omega\times\R^3$, we have 
	\begin{multline}\label{44}
		\frac{1}{2}\pa_t\big\|w(\al)\frac{\pa^\al F_1}{\sqrt\mu}\big\|^2_{L^2_xL^2_v} + \frac{1}{2}\int_{\pa\Omega}\int_{\R^3}v\cdot n\big|w(\al)\frac{\pa^\al F_1}{\sqrt\mu}\big|^2\,dvdS(x) = \big(\pa^\al \L\g,w^2(\al)\frac{\pa^\al F_1}{\sqrt\mu}\big)_{L^2_{x,v}}\\
		+\big(\pa^\al\Gamma\big(\g,\frac{M-\mu}{\sqrt\mu}\big)+\pa^\al\Gamma\big(\frac{M-\mu}{\sqrt\mu},\g\big)+\pa^\al \Gamma\big(\frac{G}{\sqrt\mu},\frac{G}{\sqrt\mu}\big),w^2(\al)\frac{\pa^\al F_1}{\sqrt\mu}\big)_{L^2_{x,v}}\\ + \Big(\frac{1}{\sqrt\mu}\pa^\al P_1v_1M\Big\{	\frac{|v-u|^2\bar\th_{x_1}}{2R\th^2}+\frac{(v-u)\cdot \bar{u}_{x_1}}{R\th}\Big\},w^2(\al)\frac{\pa^\al F_1}{\sqrt\mu}\Big)_{L^2_{x,v}}. 
	\end{multline}
We denote the second to fifth terms in \eqref{44} by $K_i$ $(1\le i\le 4)$. 	
	For the boundary term $K_1$, we have from Lemma \ref{Lemspecular} that $\big|\frac{\pa^\al F_1}{\sqrt\mu}(R_xv)\big|=\big|\frac{\pa^\al F_1}{\sqrt\mu}(v)\big|$ on $x\in\pa\Omega$. Noticing $R_xv\cdot n=-v\cdot n$ and using change of variable $v\mapsto R_xv$, we know that 
	\begin{align*}
		K_1= \int_{\pa\Omega}\int_{\R^3}v\cdot n\big|w(\al)\frac{\pa^\al F_1}{\sqrt\mu}(v)\big|^2\,dvdS(x) &= \int_{\pa\Omega}\int_{\R^3}R_xv\cdot n\big|w(\al)\frac{\pa^\al F_1}{\sqrt\mu}(R_xv)\big|^2\,dvdS(x)\\
		&=-\int_{\pa\Omega}\int_{\R^3}v\cdot n\big|w(\al)\frac{\pa^\al F_1}{\sqrt\mu}(v)\big|^2\,dvdS(x)=0. 
	\end{align*}
	For $K_2$, we split $F_1=M+\ol G+\sqrt\mu \g$. For the term with $\g$, using \eqref{L1}, we have 
	\begin{align*}
		\big(\pa^\al \L\g,w^2(\al){\pa^\al \g}\big)_{L^2_{x,v}}\le -\lam \|w(\al)\pa^\al\g\|^2_{L^2_xL^2_D}+C\|\pa^\al\g\|^2_{L^2_xL^2_{B_C}}.
	\end{align*}
	For the term with $M+\ol G$, using \eqref{Gamma}, \eqref{Gamma1}, \eqref{333} and \eqref{344}, we have 
	\begin{multline*}
		|(\L\pa^\al\g,w^2(\al)\frac{\pa^\al(M+\ol G)}{\sqrt\mu})_{L^2_{x,v}}|\lesssim \|\pa^\al\g\|_{L^2_xL^2_D}\|w^2(\al)\frac{\pa^\al(M+\ol G)}{\sqrt\mu}\|_{L^2_xL^2_D}
		\\\lesssim \|\pa^\al\g\|^2_{L^2_xL^2_D}+\|\pa^{\al}(\wt\rho,\wt u,\wt\th)\|^2_{L^2_x} + \E_k(t)\D_k(t)+\delta^{\frac{2}{3}}(1+t)^{-\frac{4}{3}}.  
	\end{multline*}
	Note that $|\<v\>^b\frac{M}{\sqrt\mu}|_{L^2_v}\le C_b$ for any $b>0$ due to $1<R\th<2$, which is from \eqref{eta}. 
	Collecting the above estimates for $K_2$ and, we have 
	\begin{multline*}
		K_2\le -\lam \|w(\al)\pa^\al\g\|^2_{L^2_xL^2_D}+C\|\pa^\al\g\|^2_{L^2_xL^2_D}
		+C\|\pa^{\al}(\wt\rho,\wt u,\wt\th)\|^2_{L^2_x} + C\E_k(t)\D_k(t)\\+C\delta^{\frac{2}{3}}(1+t)^{-\frac{4}{3}}, 
	\end{multline*}
	for some $\lam>0$. 
	For $K_3$, it follows from \eqref{220b}, \eqref{333}, \eqref{332a} and \eqref{344} that 
	\begin{align*}
		|K_3|\lesssim \big(\eta_0+\delta^{\frac{1}{3}}+\sqrt{\E_k(t)}\big)\D_k(t)+\delta^{\frac{2}{3}}(1+t)^{-\frac{4}{3}}. 
	\end{align*}
Note that $\|\pa^\al(\wt\rho,\wt u,\wt\th)\|_{L^2_x}\lesssim \sqrt{\D_k(t)}$. 
For $K_4$, 
	using \eqref{olG3}, \eqref{333}, 
	\eqref{pat1} and Lemma \ref{Lem21}, we have 
	\begin{align*}
		&\quad\,\Big(\frac{1}{\sqrt\mu}\pa^\al P_1v_1M\Big\{	\frac{|v-u|^2\bar\th_{x_1}}{2R\th^2}+\frac{(v-u)\cdot \bar{u}_{x_1}}{R\th}\Big\},w^2(\al)\frac{\pa^\al M}{\sqrt\mu}\Big)_{L^2_{x,v}}\\
		&\notag\lesssim \Big(\|\pa^\al\na_x(\bar\rho,\bar u,\bar\th)\|_{L^2_x}+\sum_{|\al'|\le2}\|\pa^{\al'}\na_x(\bar\rho,\bar u,\bar\th)\|_{L^3_x}\sum_{1\le|\al'|\le2}\|\pa^{\al'}(\rho,u,\th)\|_{L^6_x}\\&\qquad+\|\na_x(\bar\rho,\bar u,\bar\th)\|_{L^\infty_x}\|\pa^\al(\rho,u,\th)\|_{L^2_x}\Big)\big\|w^2(\al)\frac{\pa^\al M}{\sqrt\mu}\big\|_{L^2_{x,v}}\\
		&\lesssim \Big(\delta^{\frac{1}{3}}\sum_{1\le|\al'|\le3}\|\pa^{\al'}(\wt\rho,\wt u,\wt\th)\|_{L^2_x}+\delta^{\frac{1}{3}}(1+t)^{-\frac{2}{3}}\Big)\\
		&\qquad\times
		\Big(\|\pa^{\al}(\wt\rho,\wt u,\wt\th)\|_{L^2_x} + \sqrt{\E_k(t)\D_k(t)}+\delta^{\frac{1}{3}}(1+t)^{-\frac{2}{3}}\Big)\\
		&\lesssim \eta\|\pa^\al(\wt\rho,\wt u,\wt\th)\|_{L^2_x}^2+ \big(\delta^{\frac{1}{3}}+\sqrt{\E_k(t)}\big)\D_k(t)+C_\eta\delta^{\frac{2}{3}}(1+t)^{-\frac{4}{3}}, 
	\end{align*}
for $\eta>0$.  Similarly, we deduce from \eqref{344} that 
	\begin{align*}
		&\notag\quad\,\Big(\frac{1}{\sqrt\mu}\pa^\al P_1v_1M\Big\{	\frac{|v-u|^2\bar\th_{x_1}}{2R\th^2}+\frac{(v-u)\cdot \bar{u}_{x_1}}{R\th}\Big\},w^2(\al)\frac{\pa^\al (\ol G+\sqrt\mu\g)}{\sqrt\mu}\Big)_{L^2_{x,v}}\\
		&\notag\lesssim \Big(\delta^{\frac{1}{3}}\sum_{1\le|\al'|\le3}\|\pa^{\al'}(\wt\rho,\wt u,\wt\th)\|_{L^2_x}+\delta^{\frac{1}{3}}(1+t)^{-\frac{2}{3}}\Big)
		\big(\delta^{\frac{1}{3}}(1+t)^{-\frac{2}{3}}+\|\pa^\al\g\|_{L^2_xL^2_D}\big)\\
		&\lesssim \eta\|\pa^\al\g\|_{L^2_xL^2_D}^2+\big(\delta^{\frac{1}{3}}+\sqrt{\E_k(t)}\big)\D_k(t)+C_\eta\delta^{\frac{2}{3}}(1+t)^{-\frac{4}{3}},
	\end{align*}
	for any $\eta>0$. 
	Combining the above estimates, we have 
	\begin{multline}\label{522}
		\frac{1}{2}\pa_t\big\|w(\al)\frac{\pa^\al F_1}{\sqrt\mu}\big\|^2_{L^2_xL^2_v}
		+\lam\|w(\al)\pa^\al\g\|^2_{L^2_xL^2_D} \lesssim \|\pa^\al\g\|^2_{L^2_xL^2_D}
		+\|\pa^{\al}(\wt\rho,\wt u,\wt\th)\|^2_{L^2_x}\\ +\big(\eta_0+\delta^{\frac{1}{3}}+\sqrt{\E_k(t)}\big)\D_k(t)+C_\eta\delta^{\frac{2}{3}}(1+t)^{-\frac{4}{3}}, 
	\end{multline}
	for some generic constant $\lam>0$. 
	
	Next we calculate the estimate without weight. Applying derivative $\pa^\al$ to \eqref{41} and taking inner product with $w^2(\al)\frac{\pa^\al F_1}{\sqrt\mu}$ over $\Omega\times\R^3$, we apply similar calculations as above and calculate the different term $$\big(\pa^\al \L\g,\frac{\pa^\al F_1}{\sqrt\mu}\big)_{L^2_{x,v}}.$$ 
	Splitting $F_1=M+\ol G+\sqrt\mu\g$, we begin with calculating the term $\pa^\al M$. For $|\al|=3$, we write 
	\begin{align*}
		\pa^\al M &= \big(\mu+(M-\mu)\big) \Big(\frac{\pa^\al\rho}{\rho}-\frac{3\pa^\al\th}{2\th}+\frac{(v-u)\cdot\pa^\al u}{R\th}+\frac{\pa^\al\th|v-u|^2}{2R\th^2}\Big)+M U\\
		&:= I_1+I_2+I_3,
	\end{align*}
where $U$ is some function satisfying  
\begin{align*}
	|U|\lesssim \sum_{1\le|\al_1|\le 2}|\pa^{\al_1}(\rho,u,\th)|^2+\sum_{|\al_1|=1}|\pa^{\al_1}(\rho,u,\th)|^3. 
\end{align*}
	Here $I_1$ and $I_2$ contain the high order derivatives of $(\rho,u,\th)$ with $\mu$ and $M-\mu$ respectively. $I_3$ contains the low order derivatives with $M$. Then $(\L\g,\frac{I_1}{\sqrt\mu})_{L^2_{x,v}}=0$ due to $\frac{I_1}{\sqrt\mu}\in\ker \L$. For the term $I_2$, we use \eqref{Gamma} and \eqref{Gamma1} to control the upper bound of $\L$ and then use \eqref{33} to obtain that 
	\begin{align*}
		|(\L\pa^\al\g,\frac{I_2}{\sqrt\mu})_{L^2_{x,v}}|&\lesssim \|\pa^\al\g\|_{L^2_xL^2_D}\|\frac{I_2}{\sqrt\mu}\|_{L^2_xL^2_D}\\
		&\lesssim \eta_0\|\pa^\al\g\|_{L^2_xL^2_D}\big(\|\pa^\al(\wt\rho,\wt u,\wt\th)\|_{L^2_x} + \|\pa^\al(\bar\rho,\bar u,\bar\th)\|_{L^2_x}\big)\\
		&\lesssim \eta_0\D_k(t) + \delta^{\frac{2}{3}}(1+t)^{-\frac{4}{3}},
	\end{align*}
	where $\eta_0$ is given in \eqref{eta}. Note that $|\<v\>^b\frac{M}{\sqrt\mu}|_{L^2_v}\le C_b$ for any $b>0$ due to \eqref{eta}. 
	Similarly, 
	\begin{align*}
		|(\L\pa^\al\g,\frac{I_3}{\sqrt\mu})_{L^2_{x,v}}|
		&\lesssim \|\pa^\al\g\|_{L^2_xL^2_D}\sum_{1\le|\al_1|\le 2}\|\pa^{\al_1}(\rho, u,\th)\|_{L^3_x}\sum_{1\le|\al_1|\le 2}\|\pa^{\al_1}(\rho, u,\th)\|_{L^6_x}\\
		&\lesssim \|\pa^\al\g\|_{L^2_xL^2_D}\sum_{1\le|\al_1|\le 2}\big(\|\pa^{\al_1}(\wt\rho,\wt u,\wt\th)\|_{H^1_x}+\delta^{\frac{1}{3}}(1+t)^{-\frac{2}{3}}\big)^2\\
		&\lesssim \big(\delta^{\frac{1}{3}}+\sqrt{\E_k(t)}\big)\D_k(t) + \delta^{\frac{2}{3}}(1+t)^{-\frac{4}{3}}, 
	\end{align*}
	for $\eta>0$.
	Collecting the above estimates for $\L$, we have 
	\begin{align*}
		\big(\pa^\al \L\g,\frac{\pa^\al F_1}{\sqrt\mu}\big)_{L^2_{x,v}}\le -\lam \|\pa^\al\g\|^2_{L^2_xL^2_D}+\big(\eta_0+\delta^{\frac{1}{3}}+\sqrt{\E_k(t)}\big)\D_k(t) + \delta^{\frac{2}{3}}(1+t)^{-\frac{4}{3}}, 
	\end{align*}
	for some $\lam>0$. 
	Using this as the estimate of $\L$ and applying similar calculations in \eqref{522} for the other terms, we have the energy estimate without weight:
	\begin{multline}\label{522a}
		\frac{1}{2}\pa_t\big\|\frac{\pa^\al F_1}{\sqrt\mu}\big\|^2_{L^2_xL^2_v}
		+\lam\|\pa^\al\g\|^2_{L^2_xL^2_D} \lesssim\eta\|\pa^{\al}(\wt\rho,\wt u,\wt\th)\|^2_{L^2_x}+ \big(\eta_0+\delta^{\frac{1}{3}}+\sqrt{\E_k(t)}\big)\D_k(t)\\+C_\eta\delta^{\frac{2}{3}}(1+t)^{-\frac{4}{3}}.  
	\end{multline}
	Now we combine these estimates with macroscopic estimate \eqref{highmacro}. Taking linear combination $\kappa\times\eqref{highmacro}+\sum_{|\al|=3}\big(\kappa^2\times\eqref{522}+\eqref{522a}\big)$, we obtain 
	\begin{multline*}
		\kappa\pa_t\E_{int}(t)  +
		\frac{\kappa^2}{2}\pa_t\sum_{|\al|=3}\big\|w(\al)\frac{\pa^\al F_1}{\sqrt\mu}\big\|^2_{L^2_xL^2_v}
		+\frac{1}{2}\pa_t\sum_{|\al|=3}\big\|\frac{\pa^\al F_1}{\sqrt\mu}\big\|^2_{L^2_xL^2_v}\\
		+\lam\kappa\sum_{2\le|\al|\le3}\|\pa^\al(\wt\rho,\wt u,\wt\th)\|_{L^2_x}^2
		+\lam\kappa^2\sum_{|\al|=3}\|w(\al)\pa^\al\g\|^2_{L^2_xL^2_D}
		+\lam\sum_{|\al|=3}\|\pa^\al\g\|^2_{L^2_xL^2_D}\\
		 \lesssim \big(\eta_0+\delta^{\frac{1}{3}}+\sqrt{\E_k(t)}\big)\D_k(t)+\delta^{\frac{2}{3}}(1+t)^{-\frac{4}{3}},  
	\end{multline*}
where we choose $\kappa>0$ small enough and then $\eta>0$ sufficiently small. 
	Then we conclude Lemma \ref{Lem42}. 
\end{proof}

\section{Global Existence}\label{Sec6}
In this section, we will prove main theorem \ref{Main1}. 
Before we go into the global existence, we first combine the energy estimate together and deduce the following Lemma. 

\begin{proof}[Proof of Theorem \ref{Main1}]
We first assume the {\it a priori} assumption as \eqref{priori2}:
\begin{align}\label{priori1}
	\sup_{0\le t\le T}{\E_k}(t)\le \ve, 
\end{align}
for any $T>0$ and some small $\ve>0$.

For the global
 energy estimate, we take linear combination $\kappa^3\times\eqref{32a}+\sum_{|\al|=0}\kappa^4\times\eqref{esg}+\sum_{1\le|\al|\le2}\kappa^2\times\eqref{esg}+\eqref{f1}+\eqref{esF}$ with small enough $\kappa>0$ to deduce 
\begin{multline*}
	\pa_t\ol\E_k(t) + \lam\sum_{|\al|\le 3}\|w(\al)\pa^\al\f\|^2_{L^2_xL^2_D}+
	\lam\sum_{|\al|\le 3}\big(\|\pa^\al a\|_{L^2_x}^2+\|\pa^\al\na_x\phi\|_{L^2_x}^2\big)\\
	+\lam\kappa^4\|w(0)\g\|^2_{L^2_xL^2_D}
	+\lam\kappa^2\sum_{1\le|\al|\le 2}\|\pa^\al\g\|^2_{L^2_xL^2_D}
	+\lam\kappa^3\sum_{1\le|\al|\le 2}\|w(\al)\pa^\al\g\|^2_{L^2_xL^2_D}\\
	+\lam\sum_{|\al|=3}\|\pa^\al\g\|^2_{L^2_xL^2_D}+\lam\kappa^2\sum_{|\al|=3}\|w(\al)\pa^\al\g\|^2_{L^2_xL^2_D}\\
	+ \lam\kappa^3\sum_{|\al|=1}\|\pa^\al(\wt\rho,\wt u,\wt \th)\|_{L^2_x}^2
	+\lam\kappa\sum_{2\le|\al|\le 3}\|\pa^\al(\wt\rho,\wt u,\wt\th)\|_{L^2_x}^2
	+ \lam\kappa^3\|\sqrt{\bar u_{1x_1}}(\wt\rho,\wt u_1,\wt\th)\|^2_{L^2_x}\\
	\lesssim 
	\delta^{\frac{1}{6}}(1+t)^{-\frac{7}{6}}
+\big(\eta_0+\delta^{\frac{1}{3}}+\sqrt{\E_k(t)}\big)\D_k(t),
\end{multline*}
for some $\lam>0$, 
where $\ol\E_k(t)$ is given by 
\begin{multline}\label{olE}
	\ol\E_k(t) = \E_{k,3} + \kappa^3\int_\Omega\eta(t)\,dx + \kappa^4(\wt u,\na_x\wt\rho)_{L^2_x}
	+\frac{\kappa^4}{2}\|\g\|^2_{L^2_{x,v}}+\frac{\kappa^5}{2}\|w(0)\g\|^2_{L^2_{x,v}}\\
	+\frac{\kappa^2}{2}\sum_{1\le|\al|\le2}\|\pa^\al\g\|^2_{L^2_{x,v}}+\frac{\kappa^3}{2}\sum_{1\le|\al|\le2}\|w(\al)\pa^\al\g\|^2_{L^2_{x,v}} + \kappa\E_{int}(t) \\
	+\frac{\kappa^2}{2}\sum_{|\al|=3}\big\|w(\al)\frac{\pa^\al F_1}{\sqrt\mu}\big\|^2_{L^2_xL^2_v}
	+\frac{1}{2}\sum_{|\al|=3}\big\|\frac{\pa^\al F_1}{\sqrt\mu}\big\|^2_{L^2_xL^2_v}
\end{multline}
It follows that 
\begin{align}\label{glo2}
	\pa_t\ol\E_k(t) + \lam\D_k(t) \lesssim\delta^{\frac{1}{6}}(1+t)^{-\frac{7}{6}}+\big(\eta_0+\delta^{\frac{1}{3}}+\sqrt{\E_k(t)}\big)\D_k(t). 
\end{align}

Next we claim that 
\begin{equation}
	\begin{aligned}\label{Claim1}
		\E_k(t) -\delta^{\frac{2}{3}} \lesssim \ol\E_k(t) \lesssim \E_k(t) +\delta^{\frac{2}{3}},
	\end{aligned}
\end{equation}
where $\E_k(t)$ is given in \eqref{E}. 
Indeed, we first calculate $\sum_{|\al|=3}\big\|w(\al)\frac{\pa^\al F_1}{\sqrt\mu}\big\|^2_{L^2_xL^2_v}$. Applying \eqref{333} and \eqref{344}, we have 
\begin{align*}
	\sum_{|\al|=3}\big\|w(\al)\frac{\pa^\al F_1}{\sqrt\mu}\big\|^2_{L^2_xL^2_v}
	&\lesssim \sum_{|\al|=3}\|\pa^{\al}(\wt\rho,\wt u,\wt\th)\|_{L^2_x}^2 +\sum_{|\al|=3}\|w(\al)\pa^\al\g\|_{L^2_xL^2_v}\\&\qquad+ \E_k(t)+\delta^{\frac{2}{3}}(1+t)^{-\frac{4}{3}}\\
	&\lesssim \E_k(t) + \delta^{\frac{2}{3}}(1+t)^{-\frac{4}{3}}. 
\end{align*}
Using this, we obtain the upper bound for $\ol\E_k(t)$ in the second estimate of \eqref{Claim1}. 
%
For the lower bound, similar to \eqref{333}, we have for $|\al|=3$ that 
\begin{align*}
	\|w(\al)\frac{\pa^\al M}{\sqrt\mu}\|^2_{L^2_xL^2_v}&\gtrsim\|\pa^{\al}(\rho,u,\th)\|^2_{L^2_x}
	- \sum_{\substack{\al_1+\al_2=\al\\|\al_1|\ge 1,\,|\al_2|\ge 1}}\|\pa^{\al_1}(\rho,u,\th)\|_{L^6_x}^2\|\pa^{\al_2}(\rho,u,\th)\|^2_{L^3_x}\\
	&\gtrsim\|\pa^{\al}(\wt\rho,\wt u,\wt\th)\|^2_{L^2_x} -C\E_k(t)\min\{\D_k(t),\E_k(t)\} - C\delta^{\frac{2}{3}}(1+t)^{-\frac{4}{3}}.  
\end{align*}
Then applying $M=P_0F$ and $\E_k(t)\le \ve<1$, we have 
\begin{align}\label{E1}\notag
	\sum_{|\al|=3}\|\pa^\al(\wt\rho,\wt u,\wt\th)\|_{L^2_x}^2 &\lesssim \sum_{|\al|=3}\|w(\al)\frac{\pa^\al M}{\sqrt\mu}\|_{L^2_{x,v}}^2 + \E_k(t)+\delta^{\frac{2}{3}}\\
	&\notag\lesssim \sum_{|\al|=3}\|w(\al)\frac{\pa^\al P_0F_1}{\sqrt\mu}\|_{L^2_{x,v}}^2+ \E_k(t)+\delta^{\frac{2}{3}}\\
	&\lesssim \sum_{|\al|=3}\|\frac{\pa^\al F_1}{\sqrt\mu}\|_{L^2_{x,v}}^2+ \E_k(t)+\delta^{\frac{2}{3}}. 
\end{align}	
Note that $w(\al)M\mu^{-1/2}\le C$. 
Then we have from \eqref{344} and $\sqrt\mu\g=F_1-M-\ol G$ that 
	\begin{align}\label{E01}\notag
		\sum_{|\al|=3}\|w(\al)\pa^\al \g\|_{L^2_{x,v}}^2
		&\lesssim \sum_{|\al|=3}\Big(\|w(\al)\frac{\pa^\al F_1}{\sqrt\mu}\|_{L^2_{x,v}}^2 + \|w(\al)\frac{\pa^\al M}{\sqrt\mu}\|_{L^2_{x,v}}^2 + \|w(\al)\frac{\pa^\al \ol G}{\sqrt\mu}\|_{L^2_{x,v}}^2\Big)\\
		&\lesssim \sum_{|\al|=3}\|w(\al)\frac{\pa^\al F_1}{\sqrt\mu}\|_{L^2_{x,v}}^2 + \E_k(t)+\delta^{\frac{2}{3}}, 
	\end{align}
For the terms in $\E_{int}(t)$ given in \eqref{Eint}, we have 
\begin{align}\notag
	\label{E2}
	\kappa\sum_{1\le|\al|\le2,\,\al_0=0}(\pa^\al \wt u,\pa^\al \na_x\wt\rho)_{L^2_x}
	&\le \|\na_x\wt u\|_{L^2_x}\|\na_x^{2}\wt\rho\|_{L^2_x}\\
	&\le \eta\|\na_x\wt u\|^2_{L^2_x}+C_{\eta}\|\na_x^{2}\wt\rho\|_{L^2_x}^2, 
\end{align}
for any $\eta>0$. 
Choosing $\eta>0$ small enough, the right hand side of \eqref{E2} can be absorbed by the first term in \eqref{Eint} and the left hand side of \eqref{E1}. 
Similarly, we have 
\begin{align}\label{EEE}
	\kappa^4(\wt u,\na_x\wt\rho)_{L^2_x}\lesssim \kappa^4\|\wt u\|_{L^2_x}^2 + \kappa^4\|\na_x\wt\rho\|_{L^2_x}^2. 
\end{align}
It follows from \eqref{eta1} that 
\begin{align}\label{E0}
	\int_{\Omega}\eta(x)\,dx \approx \|(\wt\rho,\wt u,\wt\th)\|_{L^2_x}^2. 
\end{align}
For the macroscopic components, we have from \eqref{Eint} that 
\begin{align}\label{E3}
	\sum_{\substack{1\le|\al|\le2\\\al_0=0}}\|\pa^\al(\wt\rho,\wt u,\wt\th)\|_{L^2_x}^2
	&\approx \sum_{\substack{1\le|\al|\le2\\\al_0=0}}\int_{\Omega}\Big(\frac{R\th}{2\rho^2}|\pa^\al\wt\rho|^2+\frac{1}{2}|\pa^\al\wt u|^2+\th^{-1}|\pa^\al\wt\th|^2\Big)\,dx. 
\end{align} 
For the time derivative, we deduce from \eqref{pat} that for $|\al|\le 1$, 
\begin{align}\label{E3a}\notag
	\|\pa_t\pa^\al(\wt\rho,\wt u,\wt\th)\|_{L^2_x}^2&\lesssim \|\pa^\al\na_x(\wt\rho,\wt u,\wt\th)\|_{L^2_x}^2 + \|\pa^\al\na_x\g\|_{L^2_xL^2_{\gamma/2}}^2 + \delta^{\frac{2}{3}}+ \E_k(t)^2\\
	&\lesssim \ol\E_k(t) + \E_k(t)^2 + \delta^{\frac{2}{3}}. 
\end{align}
Combining \eqref{E1}, \eqref{E01}, \eqref{E2}, \eqref{EEE}, \eqref{E0}, \eqref{E3} and \eqref{E3a} with small enough $\eta>0$ in \eqref{E2}, we obtain 
\begin{align*}
	\E_k(t) \lesssim \ol\E_k(t) + \E_k(t)^2 + \delta^{\frac{2}{3}}. 
\end{align*}
Choosing $\ve>0$ in the {\it a priori} assumption \eqref{priori1} sufficiently small, we obtain 
\begin{align*}
	\E_k(t) \lesssim \ol\E_k(t) + \delta^{\frac{2}{3}}, 
\end{align*}
which gives the lower bound in \eqref{Claim1}. 


	\medskip
	
	Next, for the global existence, choosing $\eta_0>0$ in \eqref{eta}, $\ve>0$ in \eqref{priori1} and $\delta>0$ in \eqref{delta} sufficiently small, we deduce from \eqref{glo2} that 
	\begin{align}\label{gloa}
		\pa_t\ol\E_k(t) + \lam\D_k(t) \lesssim \delta^{\frac{1}{6}}(1+t)^{-\frac{7}{6}},
	\end{align}
	for some small constant $\lam>0$. 
	Solving ODE \eqref{gloa} and combining \eqref{Claim1}, we obtain 
	\begin{align*}
		\sup_{0\le t\le T}\ol{\E}_k(t)+\lam\int^T_0\D_k(t)\,dt \lesssim \ol{\E}_k(0) + \delta^{\frac{1}{6}}\lesssim \E_k(0)+ \delta^{\frac{1}{6}}. 
	\end{align*}
	Choosing $\delta$ in \eqref{delta} and $\ve_0$ in \eqref{small2} sufficiently small, we close the {\it a priori} assumption \eqref{priori1} and obtain \eqref{es1}. 
Then it's standard to apply the continuity argument to obtain the global existence and uniqueness of solution to non-cutoff VPB system \eqref{1} or Boltzmann equation \eqref{FB} (satisfying \eqref{specular} and \eqref{Neumann} for the case of rectangular duct). 
	The estimate \eqref{es2} follows from Lemma \ref{Lem21} and estimate \eqref{es1}.
	
	\smallskip
	
	Next we prove the time-asymptotic stability of rarefaction wave in \eqref{es3}. 
	From dissipation rate in \eqref{gloa}, if we assume $\E_{k-\gamma/2}(0)\le A$ for some $A>0$, then 
	\begin{equation*}
		\left\{
		\begin{aligned}
			&\int^\infty_0\big(\|\na_x(\wt{\rho},\wt u,\wt\th)(t)\|^2_{H^1_x} + \|\<v\>^{k-\gamma/2}\na_x (\g,\f)(t)\|^2_{H^1_xH^s_{\gamma/2}}\big)dt\le C,\\
			&\int^\infty_0\Big|\pa_t\big(\|\na_x(\wt{\rho},\wt u,\wt\th)(t)\|^2_{H^1_x} + \|\<v\>^{k-\gamma/2}\na_x (\g,\f)(t)\|^2_{H^1_xH^s_{\gamma/2}}\big)\Big|\,dt\\
			&\qquad\le \int^\infty_0\big(\|\pa_t\na_x(\wt{\rho},\wt u,\wt\th)(t)\|^2_{H^1_x} + \|\<v\>^{k-\gamma/2}\pa_t\na_x (\g,\f)(t)\|^2_{H^1_xH^s_{\gamma/2}}\\
			&\qquad\qquad\qquad+\|\na_x(\wt{\rho},\wt u,\wt\th)(t)\|^2_{H^1_x} + \|\<v\>^{k-\gamma/2}\na_x (\g,\f)(t)\|^2_{H^1_xH^s_{\gamma/2}}\big)\,dt\le C,
		\end{aligned}
		\right. 
	\end{equation*}
	and hence, 
	\begin{align}\label{tendt}
		\lim_{t\to\infty}\|\na_x(\wt{\rho},\wt u,\wt\th)(t)\|^2_{H^1_x} + \|\<v\>^{k-\gamma/2}\na_x (\g,\f)(t)\|^2_{H^1_xH^s_{\gamma/2}} = 0. 
	\end{align}
	By Sobolev embedding \eqref{Gag}, we know that 
	\begin{multline*}
		\|(\wt{\rho},\wt u,\wt\th)\|^2_{L^\infty_x} + \|\<v\>^k(\g,\f)\|^2_{L^\infty_xL^2_v}
		\lesssim \|(\wt{\rho},\wt u,\wt\th)\|^{17/9}_{H^2_x}\|\na_x(\wt{\rho},\wt u,\wt\th)\|^{1/9}_{L^2_x}\\ + \|\<v\>^k(\g,\f)\|^{17/9}_{H^2_xL^2_v}\|\<v\>^k\na_x(\g,\f)\|^{1/9}_{L^2_xL^2_v}.
	\end{multline*}
Then we have from \eqref{tendt} that 
\begin{align*}
	\lim_{t\to\infty}\big(\|(\wt{\rho},\wt u,\wt\th)\|^2_{L^\infty_x} + \|\<v\>^k(\g,\f)\|^2_{L^\infty_xL^2_v}\big)=0. 
\end{align*}
Also, it follows from Lemma \ref{Lem21} that 
	\begin{align*}
		\lim_{t\to\infty}\big\|\<v\>^k\big(M_{[\bar\rho,\bar u,\bar\th]}(x_1,t) -M_{[\rho^r,u^r,\th^r]}(\frac{x_1}{1+t})\big)\big\|^2_{L^\infty_xL^2_v}=0.
	\end{align*}
Therefore, we can obtain the time asymptotic behavior of $F_\pm=M+\ol G+\sqrt\mu\g\pm\sqrt\mu\f$:
	\begin{align*}
	\lim_{t\to\infty}\|\<v\>^{k}\mu^{-1/2}\big(F_\pm-M_{[\rho^r,u^r,\th^r]}(\frac{x_1}{1+t})\big)\|^2_{L^\infty_xL^2_v}=0.
	\end{align*}
	This completes the proof of Theorem \ref{Main1}.

\end{proof}

\medskip
\noindent {\bf Acknowledgements.} 
D.-Q Deng was partially supported by Direct Grant from BIMSA and International Postdoctoral Exchange Fellowship Program (Talent-Introduction Program) (YJ20220056).

\providecommand{\bysame}{\leavevmode\hbox to3em{\hrulefill}\thinspace}
\providecommand{\MR}{\relax\ifhmode\unskip\space\fi MR }
\providecommand{\MRhref}[2]{%
	\href{http://www.ams.org/mathscinet-getitem?mr=#1}{#2}
}
\providecommand{\href}[2]{#2}

\end{document}